\tikzstyle{circledvertex} = [draw, black, shape=circle, minimum size=8pt, inner sep=1pt]
\tikzstyle{invisivertex} = [black, shape=rectangle, minimum size=0pt, inner sep=2pt]
\tikzstyle{point}=[draw, black, fill,shape=circle, minimum size=4pt, inner sep=0pt]
\tikzstyle{label} = [draw, violet, shape=circle, minimum size=8pt, inner sep=1pt]
\tikzstyle{glabel} = [draw, teal, shape=circle, minimum size=4pt, inner sep=1pt]
\tikzstyle{graylabel} = [draw, gray, shape=circle, minimum size=4pt, inner sep=1pt]
\tikzstyle{BVertex}  = [draw, black, fill, shape=circle, minimum size=3pt, inner sep=0pt]
\newtheorem{theorem}{Theorem}[section]
\newtheorem{lemma}[theorem]{Lemma}
\newtheorem{prop}[theorem]{Proposition}
\newtheorem{cor}[theorem]{Corollary}
\newtheorem{definition}[theorem]{Definition}
\newtheorem{example}[theorem]{Example}
\newtheorem{remark}[theorem]{Remark}
\numberwithin{equation}{subsection}
\DeclareMathOperator{\Z}{\mathbb{Z}}
\DeclareMathOperator{\R}{\mathbb{R}}
\DeclareMathOperator{\C}{\mathbb{C}}
\DeclareMathOperator{\Q}{\mathbb{Q}}
\DeclareMathOperator{\PP}{\mathbb{P}}
\DeclareMathOperator{\SSS}{\mathbb{S}}
\DeclareMathOperator{\hilb}{Hilb}
\DeclareMathOperator{\lie}{Lie}
\DeclareMathOperator{\type}{type}
\DeclareMathOperator{\sh}{sh}
\DeclareMathOperator{\indeg}{in_{deg}}
\DeclareMathOperator{\inn}{in}
\DeclareMathOperator{\I}{\mathcal{I}}
\DeclareMathOperator{\K}{\mathcal{K}}
\DeclareMathOperator{\conf}{Conf}
\DeclareMathOperator{\pt}{pt}
\newcommand{\triv}{\mathds{1}}
\newcommand{\G}{\mathcal{G}}
\newcommand{\J}{\mathcal{J}}
\newcommand{\gr}{\mathfrak{gr}}
\newcommand{\repsgn}{\chi^{\emptyset,(1,1)}}
\newcommand{\repone}{\chi^{(1,1),\emptyset}}
\newcommand{\zminus}{z_{1\overline{2}}}
\newcommand{\zone}{z_{1}}
\newcommand{\ztwo}{z_{2}}
\newcommand{\zbad}{z_{\overline{1}2}}
\newcommand{\zbadbad}{z_{\overline{12}}}
\newcommand{\zi}{z_{i}}
\newcommand{\zj}{z_{j}}
\newcommand{\zk}{z_{k}}
\newcommand{\zijminus}{z_{i\overline{j}}}
\newcommand{\zijbad}{z_{\overline{i}j}}
\newcommand{\zijbadbad}{z_{\overline{ij}}}
\newcommand{\zikminus}{z_{i\overline{k}}}
\newcommand{\yi}{y_{0\overline{0}i}}
\newcommand{\yj}{y_{0\overline{0}j}}
\newcommand{\yijminus}{y_{0i\overline{j}}}
\newcommand{\yijbad}{y_{0\overline{i}j}}
\newcommand{\yk}{y_{0\overline{0}k}}
\newcommand{\yjkminus}{y_{0j\overline{k}}}
\newcommand{\yjkbad}{y_{0\overline{j}k}}
\newcommand{\yikminus}{y_{0i\overline{k}}}
\newcommand{\yikbad}{y_{0\overline{i}k}}
\newcommand{\rthree}{\R^{3} \setminus \{ 0 \}}
\newcommand{\ijminus}{i\overline{j}}
\DeclareMathOperator{\sgn}{sgn}
\DeclareMathOperator{\des}{des}
\DeclareMathOperator{\Des}{Des}
\DeclareMathOperator{\ind}{Ind}
\newcommand{\bconfhom}{H^{*}\mathcal{Z}_{n}}
\newcommand{\bconfhomn}[1]{H^{*}\mathcal{Z}_#1}
\newcommand{\bconfhomlift}{H^{*}\mathcal{Y}_{n+1}}
\newcommand{\bconfhomliftn}[1]{H^{*}\mathcal{Y}_{#1}}
\newcommand{\bconf}{\mathcal{Z}_{n}}
\newcommand{\bconfn}[1]{\mathcal{Z}_{#1}}
\newcommand{\bconflift}{\mathcal{Y}_{n+1}}
\newcommand{\bconfliftn}[1]{\mathcal{Y}_{#1}}
\DeclareMathOperator{\mrdes}{MRDes}
\def\frake{{\mathfrak{e}}}
\def\frakg{{\mathfrak{g}}}
\def\sgnpart{(\lambda^{+}, \lambda^{-})}
\def\vazrep{G_{\sgnpart}}
\def\vazidem{\frakg_{\sgnpart}}
\DeclareMathOperator{\Y}{\mathcal{Y}}
\DeclareMathOperator{\V}{\mathcal{V}}
\def\cc{{\mathscr{C}}}
\subjclass[2020]{05Exx, 20F55, 55R80, 55N91, 14N20, 52C35}
\keywords{configuration spaces, orbit configuration spaces, Eulerian idempotents, Solomon's descent algebra, the Mantaci-Reutenauer algebra, the hyperoctahedral group, equivariant cohomology}
\author{Sarah Brauner}
\title{A Type $B$ analog of the Whitehouse representation}
\address{Department of Mathematics, University of Minnesota, Twin Cities, MN}
\email{braun622@umn.edu}
\begin{document}
\maketitle

\begin{abstract}
We give a Type $B$ analog of Whitehouse's lifts of the Eulerian representations from $S_n$ to $S_{n+1}$ by introducing a family of $B_{n}$-representations that lift to $B_{n+1}$. As in Type $A$, we interpret these representations combinatorially via a family of orthogonal idempotents in the Mantaci-Reutenauer algebra, and topologically as the  graded pieces of the cohomology of a certain $\Z_{2}$-orbit configuration space of $\R^{3}$. We show that the lifted $B_{n+1}$-representations also have a configuration space interpretation, and further parallel the Type $A$ story by giving analogs of many of its notable properties, such as connections to equivariant cohomology and the Varchenko-Gelfand ring.
\end{abstract}
\tableofcontents

\section{Introduction}\label{sec:intro}
Let $V$ be a representation of a finite group $H$; then $V$ is said to have a \emph{lift} to a group $G$ containing $H$ if there is a representation of $G$ that restricts to $V$. The goal of this paper is to $(1)$ identify a family of representations of the hyperoctahedral group $B_{n}$ that decompose the regular representation $\Q[B_{n}]$ and lift to $B_{n+1}$, and $(2)$ interpret these representations combinatorially and topologically. 
\subsection{Type A Motivation}\label{sec:typeamotivation}
Our work is inspired by the well-documented Type $A$ story of a family of $S_{n}$-representations lifting to representations of $S_{n+1}$ studied by Whitehouse \cite{Whitehouse}, Early--Reiner \cite{Early-Reiner}, Mathieu \cite{mathieu1996hidden}, Getzler--Kapranov \cite{getzler1995cyclic}, Moseley--Proudfoot--Young \cite{moseley2017orlik}, and others. These $S_{n}$-representations and their lifts arose from two distinct perspectives. The first is via a family of orthogonal idempotents $\{\frake_{k} \}_{0 \leq k \leq n-1}$ known as the \emph{Eulerian idempotents}. The $\frake_k$ lie in \emph{Solomon's descent algebra} $\Sigma[S_{n}]$, the subalgebra of $\Q[S_{n}]$ generated by sums of permutations $\sigma=(\sigma_1,\ldots,\sigma_n)$ with the same descent set
\[ \Des(\sigma_{1}, \cdots, \sigma_{n}) := \{ i \in [n-1]: \sigma_{i}> \sigma(i+1) \}. \]
The Eulerian idempotents have been extensively researched in the world of algebraic combinatorics, and generate the \emph{Eulerian representations} $E^{(k)}_{n}:=\frake_k \Q[S_{n}]$, which lift to a family of $S_{n+1}$-representations called the \emph{Whitehouse representations} $F_{n+1}^{(k)}$ \cite{Whitehouse}.

The second viewpoint comes from the study of the configuration space
\[ \conf_{n}(\R^{d}):= \{ (x_1, \cdots, x_n) \in \R^{dn}: x_i \neq x_j \}. \] 
As a ring, the cohomology $H^{*}\conf_{n}(\R^{d})$ has an elegant description due to Arnol'd \cite{arnold} (for $d=2$), F. Cohen \cite{cohen} (for $d\geq 2$), and Varchenko-Gelfand (for $d=1$) \cite{varchenkogelfand}. When $d$ is even, $H^{*}\conf_{n}(\R^{d})$ is the Type $A$ \emph{Orlik-Solomon} algebra. The relevant scenario here will be when $d$ is odd, which can be split into two cases:
\begin{itemize}
    \item $d=1$: The space $\conf_n(\R)$ is the complement of the Braid arrangement, and its cohomology $H^{*}\conf_n(\R)$ is a disjoint union of $n!$ contractible pieces. As a representation, $H^{*}\conf_n(\R)$ is the regular representation $\Q[S_n]$. Varchenko--Gelfand gave a presentation of $H^{*}\conf_n(\R)$ in terms of certain combinatorial functions called \emph{Heaviside functions}; these functions impose an ascending filtration by degree with associated graded ring $\gr(H^{*}\conf_n(\R))$ \cite{varchenkogelfand}.
    For $0 \leq k \leq n-1$, denote by $\gr(H^{*}\conf_n(\R))_k$ the $k$-th graded piece of $\gr(H^{*}\conf_n(\R))$.
    \item $d \geq 3$ and odd: In this case, $H^{*}\conf_n(\R^d)$ is concentrated in degrees $0, \ d-1, \ 2(d-1), \cdots, \ (n-1)(d-1)$. The presentation for $H^{*}\conf_n(\R^d)$ and the representations carried on each graded piece are the same for all odd $d \geq 3$, and in this way one obtains a family of $n$ representations of $S_n$ which decompose $\Q [S_n]$.
\end{itemize}

When $d = 1$ and $d=3$, one obtains lifts from $S_n$ to $S_{n+1}$ of the representations carried by $H^*\conf_n(\R^d)$. Recall that $U(1)$, the unitary group, is homeomorphic to the 1-sphere $\SSS^1$ (e.g. the 1-point compactification of $\R$) and $SU_2$, the group of $2 \times 2$ unitary matrices of determinant $1$ over $\C$, is homeomorphic to $\SSS^{3}$ (e.g. the 1-point compactification of $\R^3$). Our lifts will come from the quotient spaces 
\begin{equation}\label{eq:aliftdef} \V_{n+1}^{1}:= \conf_{n+1}(U(1))/U(1),  \hspace{5em} \V_{n+1}^{3}:= \conf_{n+1}(SU_{2})/SU_{2}; \end{equation}
in both cases the quotient is by (left) diagonal multiplication.

As in the case of $\conf_n(\R)$, the cohomology of $\V_{n+1}^1$ has a presentation in terms of \emph{cyclic Heaviside functions} due to Moseley--Proudfoot--Young \cite{moseley2017orlik}, and again the cyclic Heaviside functions give rise to an ascending filtration with associated graded ring $\gr(H^{*}\V_{n+1}^{1}) = \bigoplus_{k=0}^{n-1} \gr(H^{*}\V_{n+1}^{1})_k$. The presentation of $H^{*}\V_{n+1}^{3}$ was computed by Early--Reiner in \cite{Early-Reiner}.

Though not obvious, these viewpoints---the Eulerian and Whitehouse representations on one hand and ``$d$ odd'' configuration space cohomology on the other---turn out to be equivalent and serve as a beautiful link between classical combinatorial objects and important topological ones. 
In particular they are connected by the following representation isomorphisms:
\begin{align}
     E_{n}^{(n-1-k)}  &\cong_{S_n} H^{2k}\conf_n(\R^3) \cong_{S_n} \gr(H^{*}\conf_n(\R))_k  \label{aiso1}\\
  F_{n+1}^{(n-1-k)} &\cong_{S_{n+1}} H^{2k}\V_{n+1}^3 \cong_{S_{n+1}}   \gr(H^{*}\V_{n+1}^{1})_k  \label{aiso2}.
\end{align} 
Note that each term in \eqref{aiso1} simultaneously lifts to the corresponding term in \eqref{aiso2}. The first isomorphism in \eqref{aiso1} was proved by the author in \cite{brauner2020eulerian} in the context of Coxeter groups, while the second is a result of Moseley \cite{moseley2017equivariant}. The isomorphisms in \eqref{aiso2} are due to Early--Reiner \cite{Early-Reiner} and Moseley--Proudfoot--Young \cite{moseley2017orlik}, respectively. In Section \ref{sec:propertywishlist}, we summarize the notable properties of the Eulerian and Whitehouse representations, including a recursion relating them, a 
description of $H^{*}\V_{n+1}^3$ as an induced representation, and connections to equivariant cohomology. Our main results provide a Type $B$ analog of each of these properties.

\subsection{Type $B$ analog}
Our goal is to construct an analog to both Type $A$ perspectives discussed above for Type $B$.
Perhaps the most obvious generalization of the Type $A$ descent algebra $\Sigma[S_n]$ is the Type $B$ descent algebra $\Sigma[B_n]$, with Coxeter length used to describe $\Des(\sigma)$ and Type $B$ Eulerian idempotents\footnote{In fact, in \cite{bergeron1992decomposition}, Bergeron--Bergeron--Howlett--Taylor defined analogous idempotents for any finite Coxeter group.} defined by Bergeron--Bergeron in \cite{Bergeron-Bergeron}. However, it turns out that the corresponding Eulerian representations of $B_{n}$ (studied by the author in \cite{brauner2020eulerian} for instance) do \emph{not} lift to $B_{n+1}$! 

Instead, the right analogy in the context of lifts is to replace Solomon's descent algebra by the \emph{Mantaci-Reutenauer algebra} $\Sigma'[B_{n}]$, a combinatorially defined subalgebra of $\Q[B_{n}]$ generalizing $\Sigma[S_{n}]$ and containing $\Sigma[B_{n}]$. It was introduced by Mantaci--Reutenauer in \cite{mantaci1995generalization} and has a basis indexed by signed integer compositions $(a_1, \cdots, a_\ell)$, where $a_i \in \Z \setminus \{0\}$ and $|a_1| + \cdots |a_\ell| = n$. The Mantaci-Reutenauer algebra has rich combinatorics and representation theory studied by Aguiar--Bergeron--Nyman \cite{aguiar2004peak},  Bonnaf\'e--Hohlweg \cite{bonnafe2006generalized} and Douglass--Tomlin \cite{douglass2018decomposition}, much of which generalizes properties of $\Sigma[S_n]$.

The role of the Eulerian idempotents will be played by a family of orthogonal idempotents $\{ \mathfrak{g}_{k} \}_{0 \leq k \leq n},$ obtained as a sum of idempotents $\vazidem$ in $\Sigma'[B_{n}]$ introduced by Vazirani \cite{vazirani}:
\begin{equation}
    \frakg_{k}:= \sum_{\substack{(\lambda^{+}, \lambda^{-}) \\ \ell(\lambda^{+}) = k}} \frakg_{(\lambda^{+}, \lambda^{-})}.
\end{equation}

The above analogies are quite natural in the following sense. Let $\tau: B_{n} \to S_{n}$ be the projection which forgets the signs of $\sigma \in B_{n}$. In \cite{aguiar2004peak}, Aguiar--Bergeron--Nyman study the properties of $\tau$ and show that it extends to a surjective algebra homomorphism  $\tau: \Sigma'[B_{n}] \to \Sigma[S_{n}]$. In Proposition \ref{sgnmap}, we show that $\tau(\frakg_0) = 0$ and $\tau(\frakg_k) = \frake_{k-1}$ for $0 < k \leq n$.

The $B_{n}$-representations of interest are then defined for $0 \leq k \leq n$ to be
\[ G_{n}^{(k)}:=\frakg_{k}\Q[B_{n}] . \] 

We obtain analogs of the  ``hidden'' action\footnote{The word hidden refers to the fact that there is not an obvious $S_{n+1}$ action on $\conf_n(\R^{d})$.} spaces $\V_{n+1}^{1}$ and $\V_{n+1}^{3}$ by considering the  $\Z_{2}$-orbit configuration spaces on $U(1)$ and $SU_2$ induced from the antipodal action on the sphere, and taking their respective quotients by the relevant diagonal action:
\begin{equation}\label{eq:bliftdef} \Y_{n+1}^{1}:= \conf_{n+1}^{\Z_{2}}(U(1))/U(1),  \hspace{5em} \Y_{n+1}^{3}:= \conf_{n+1}^{\Z_{2}}(SU_{2})/SU_{2}. \end{equation} 
Both $\Y_{n+1}^{1}$ and $\Y_{n+1}^{3}$ carry a natural action by $B_{n+1}$ (e.g. permutation and negation of each coordinate), and we show that there are $B_n$-equivariant homeomorphisms $\Y_{n+1}^d \cong \bconf^d$ for $d=1,3$, where
\[ \bconf^d:= \conf_{n}^{\langle \varphi \rangle}(\R^{d} \setminus \{ 0 \}) := \bigg \{ (x_1, \cdots, x_n) \in (\R^d \setminus \{ 0 \})^n: x_{i} \neq x_{j} \neq -\frac{x_{j}}{|x_j|^{2}} \bigg \}. \]
The strangeness in the definition of $ \bconf^d$ (first studied by Feichtner--Ziegler in \cite{feichtner2002orbit}) comes from the fact that the composition of the antipodal map $x \mapsto -x$ with stereographic projection $\pi: \SSS^d \to \R^d$ gives a map $\varphi$ sending $x$ to $ -x/|x|^2.$ (Here we take take $|x|$ to be the magnitude of $x \in \R^d \setminus \{ 0 \}$ and $\SSS^d$ to be embedded in $\R^{d+1}$ by placing its south pole at the origin of $\R^{d+1}$.)

Nonetheless, many of the properties of $\bconf^d$ mirror those of $\conf_n(
\R^{d})$ for odd $d$. In particular, we again have two cases:
\begin{itemize}
     \item $d=1$: In direct parallel with Type $A$, the spaces $\bconflift^1$ and $\bconf^1$ are disjoint unions of $2^{n}n!$ contractible pieces. In Theorems \ref{thm:lift1pres} and \ref{cor:bconfhom1}, we give a presentation for $H^{*}\Y_{n+1}^{1}$ and $\bconfhom^1$, respectively. Our presentation is in terms of \emph{signed cyclic Heaviside functions} and \emph{signed Heaviside functions}; again, these combinatorial functions impose an ascending filtration on both rings by degree, with corresponding associated graded rings     \begin{equation}\label{eq:bgradings} \gr( H^{*}\Y_{n+1}^{1}) = \bigoplus_{k=0}^{n}\gr( H^{*}\Y_{n+1}^{1})_k , \hspace{5 em} \gr( \bconfhom^1)=\bigoplus_{k=0}^{n}\gr( \bconfhom^1)_k. \end{equation}
 \item $d\geq 3$ and odd: Once more, the analysis of $\bconfhom^d$ is identical for all odd $d \geq 3$, so assume $d=3$. Feichtner-Ziegler prove that $\bconfhom^3$ is concentrated in degrees $0, 2, 4, \cdots, 2n$; in Theorem \ref{thm:3pres} and Corollary \ref{thm:bpres3lift} we give presentations for $\bconfhom^3$ and $H^{*}\Y_{n+1}^{3}$, respectively, that bear a strong resemblance to the F. Cohen \cite{cohen} presentation for $H^*\conf_n(\R^{3})$ and Early--Reiner \cite{Early-Reiner} presentation for $H^{*}\V_{n+1}^{3}$. 
\end{itemize}

 Our main results give an analogy of the Type $A$ story: 

\begin{theorem}\label{intro:mainthm}
For $0 \leq k \leq n$, there are $B_{n}$-representation isomorphisms:
\begin{equation}\label{eq:biso1} G_{n}^{(n-k)} \cong H^{2k} \bconf^3 \cong  \gr(H^{*}\bconf^1)_k,  \end{equation}
lifting to $B_{n+1}$-representation isomorphisms
\begin{equation}\label{eq:biso2}  H^{2k} \bconflift^3 \cong  \gr(H^{*}\bconflift^1)_k.  \end{equation}
\end{theorem}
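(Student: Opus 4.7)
The plan is to prove the three isomorphisms in sequence, closely adapting the Type $A$ arguments referenced in \eqref{aiso1}--\eqref{aiso2}. For the first isomorphism of \eqref{eq:biso1}, $G_n^{(n-k)} \cong H^{2k}\bconf^3$, I would combine the presentation of $H^{*}\bconf^3$ from Theorem \ref{thm:3pres} (which parallels F.~Cohen's presentation of $H^{*}\conf_n(\R^3)$) with the decomposition of $\Q[B_n]$ given by the Vazirani idempotents $\vazidem$. The approach is either to construct a $B_n$-equivariant map $\Q[B_n] \to H^{*}\bconf^3$ whose restriction to $\frakg_{n-k}\Q[B_n]$ lands isomorphically onto $H^{2k}\bconf^3$, or to compute and match both characters class-by-class --- using the Mantaci--Reutenauer combinatorics on one side and a Leray-type filtration of $\bconf^3$ by subspace arrangements (as in \cite{feichtner2002orbit}) on the other.

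For the second isomorphism of \eqref{eq:biso1}, $H^{2k}\bconf^3 \cong \gr(H^{*}\bconf^1)_k$, I would compare ring presentations directly. The signed Heaviside presentation of $\bconfhom^1$ from Corollary \ref{cor:bconfhom1} equips the ring with an ascending filtration by degree whose relations should degenerate on the associated graded to those of $H^{*}\bconf^3$ from Theorem \ref{thm:3pres}, yielding a graded $B_n$-equivariant ring isomorphism $\gr(H^{*}\bconf^1) \cong H^{*}\bconf^3$ matching $\gr(H^{*}\bconf^1)_k$ to $H^{2k}\bconf^3$. This parallels Moseley \cite{moseley2017equivariant}, with extra care needed to handle the ``bad'' generators involving $\varphi\colon x \mapsto -x/|x|^2$. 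The lifted isomorphism \eqref{eq:biso2} is then handled by the same method: Theorem \ref{thm:lift1pres} and Corollary \ref{thm:bpres3lift} provide the signed cyclic Heaviside presentation of $H^{*}\bconflift^1$ and the presentation of $H^{*}\bconflift^3$, respectively, and an analogous comparison yields the desired $B_{n+1}$-equivariant isomorphism, provided one verifies that the larger $B_{n+1}$-action is preserved through the filtration.

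The main obstacle is the first step: unlike the subsequent presentation comparisons, the isomorphism $G_n^{(n-k)} \cong H^{2k}\bconf^3$ bridges a purely algebraic object (defined via the idempotent structure of $\Sigma'[B_n]$) and a topological one (cohomology of a configuration space), with no canonical map between them. Character computations across all $B_n$-conjugacy classes --- indexed by pairs of partitions --- require careful analysis of how the idempotents $\vazidem$ distribute over signed compositions with $\ell(\lambda^{+}) = n-k$, and how this distribution matches the subspace arrangement combinatorics underlying $\bconf^3$. The decomposition $\frakg_{n-k} = \sum_{\ell(\lambda^{+}) = n-k} \vazidem$ suggests a natural stratification, but making this precise is the crux of the argument.
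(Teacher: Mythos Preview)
Your plan for the second isomorphism in \eqref{eq:biso1} is exactly what the paper does: Corollary~\ref{cor:gr1and3} is obtained by observing that the presentation of $\bconfhom^3$ in Theorem~\ref{thm:3pres} literally coincides with the presentation of $\gr(\bconfhom^1)$ in Corollary~\ref{cor:grbconfhom}.

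For the lifted isomorphism \eqref{eq:biso2}, however, your proposed argument is circular. You want to compare the presentation of $\gr(\bconfhomlift^1)$ from Corollary~\ref{cor:lifthom1gr} with the presentation of $\bconfhomlift^3$ from Corollary~\ref{thm:bpres3lift}. But in the paper, Corollary~\ref{thm:bpres3lift} is not established independently: it is \emph{deduced} from Theorem~\ref{thm:liftedfiltrationscoincide}, which is precisely the statement that $\gr(\bconfhomlift^1)\cong\bconfhomlift^3$. The actual mechanism the paper uses is $U(1)$-equivariant cohomology: equivariant formality of $\bconflift^3$ (Corollary~\ref{cor:applymoseley}) gives $\gr_T(\bconfhomlift^1)\cong\bconfhomlift^3$ automatically and $B_{n+1}$-equivariantly, and then Theorem~\ref{thm:liftedfiltrationscoincide} shows that the equivariant filtration agrees with the signed cyclic Heaviside filtration by exploiting the family of homeomorphisms $f_\ell:\bconflift^1\to\bconf^1$ for $0\le\ell\le n$. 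Your caveat ``provided one verifies that the larger $B_{n+1}$-action is preserved through the filtration'' is the whole difficulty, and the paper's answer to it is equivariant cohomology, which you do not invoke.

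For the first isomorphism $G_n^{(n-k)}\cong H^{2k}\bconf^3$, your two suggested routes (direct equivariant map, or class-by-class character match via a Leray filtration) are not what the paper does, and it is not clear either would go through without substantial new work. The paper instead introduces a further filtration on $\gr(\bconfhom^1)$ by $z_i$-degree (Proposition~\ref{prop:filbyz}), yielding a bi-graded ring $\G=\bigoplus\G_{k,\ell}$ that decomposes finely into pieces $\G_\alpha$ indexed by \emph{signed set partitions} $\alpha\in\Lambda_{[n]}^{\pm}$. The key step (Theorem~\ref{thm:gn1induce}) identifies the ``atomic'' pieces $\G_{n,1}$ and $\G_{n-1,0}$ with induced representations from Coxeter-element centralizers, using Berget's technique of recognizing a copy of $\lie_n$ inside $\G_{n,1}$ together with a case analysis (odd vs.\ even $n$) due to Douglass--Tomlin. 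One then bootstraps via transitivity of induction (Lemma~\ref{lemma:indGalpha}, Theorem~\ref{thm:BRiso}) to match each $\G_{\sgnpart}$ with Douglass--Tomlin's description $\vazrep=\ind_{Z_{\sgnpart}}^{B_n}\rho_{\sgnpart}$. The stratification you anticipate in your final sentence is correct in spirit, but the concrete realization is this signed-set-partition decomposition of the bi-graded ring $\G$, not a subspace-arrangement Leray filtration of $\bconf^3$.
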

We further
\begin{itemize}
    \item show that the circle group $U(1)$ acts on both $\bconf^3$ and $\bconflift^3$ (Proposition \ref{prop:U1acts}), and compute presentations for their $U(1)$-equivariant cohomologies (Theorem \ref{eqcohompres}); 
    \item  prove that for both $\bconfhom^1$ and $\bconfhomlift^1$ the associated graded ring with respect to the filtration induced by the $U(1)$-equivariant cohomology coincides with \eqref{eq:bgradings} (Theorem \ref{cor:filtrationscoincide});
    \item compute a recursion relating $\bconfhom^3$ and $\bconfhomliftn{n}^3$ (Corollary \ref{cor:recursion}); and
    \item show that as ungraded rings,
\[ \bconfhom^3 \cong_{B_{n}} \Q[B_n], \hspace{5em} \bconfhomlift^3 \cong_{B_{n+1}} \ind_{\langle c \rangle }^{B_{n+1}} \triv, \]
where $c$ is a Coxeter element of $B_{n+1}$ and $\triv$ is the trivial representation. (Theorem \ref{ungradedrep}). 
\end{itemize}
In addition to drawing upon and generalizing techniques of Berget \cite{berget2018internal}, Moseley \cite{moseley2017equivariant}, and  Moseley--Proudfoot--Young \cite{moseley2017orlik}, the primary novelties in our methodology will be to (1) study the ``lifted'' spaces $\bconflift^d$ to deduce information about the spaces $\bconf^d$ and (2) to utilize various filtrations in the signed Heaviside and signed cyclic Heaviside functions.  

\subsection{Outline of the paper}
The remainder of the paper proceeds as follows:
\begin{itemize}
    \item Section \ref{sec:background} first gives a more detailed description of the Type $A$ motivation including a ``property wishlist'' for the Type $B$ analog (\S \ref{sec:propertywishlist}). We then discuss a general framework for obtaining hidden action spaces from orbit configuration spaces (Proposition \ref{hiddenactionframework}). Our Type $B$ work will serve as the primary example of this framework. We conclude with a review of properties of $B_n$ and its representation theory.
    \item Section \ref{sec:d1} focuses on the spaces $\Y_{n+1}^{1}$ and $\bconf^1$, and their cohomology. We introduce the signed cyclic Heaviside functions and use their combinatorial properties to give presentations for $H^{*}\Y_{n+1}^{1}$ (Theorem \ref{thm:lift1pres}) and $\bconfhom^1$ (Theorem \ref{cor:bconfhom1}), as well as their respective associated graded rings (Corollaries \ref{cor:lifthom1gr} and \ref{cor:grbconfhom}.)
    \item Section \ref{sec:d3} then considers the $d=3$ case and the spaces $\Y_{n+1}^{3}$ and $\bconf^3$; we prove a recursion relating their cohomologies (Corollary \ref{cor:recursion}), compute the presentation of $\bconfhom^3$, and show that it has a bi-grading (Corollary \ref{cor:bigrading}).
    \item Section \ref{sec:equivcohom} turns to equivariant cohomology. We compute $H_{U(1)}^{*}\bconf^3$  (Theorem \ref{eqcohompres}), as well as the ungraded representations $\bconfhom^3$ and $\bconfhomlift^3$ (Theorem \ref{ungradedrep}). This allows us to fully understand the relationship between the $d=1$ and $d=3$ cases by showing the filtrations by cyclic Heaviside functions and equivariant cohomology coincide (Theorem \ref{thm:liftedfiltrationscoincide}).
    \item Section \ref{sec:MRalg} introduces the idempotents $\vazidem$ and $\frakg_k$, as well as the representations of $B_n$ they generate. From here we are able to conclude Theorem \ref{intro:mainthm} (Corollary \ref{cor:mainiso}) and more specifically, relate the $\vazidem$ to $\bconfhom^3$.
\end{itemize}

\section{Background}\label{sec:background}
\subsection{Type $A$ revisited}
We begin by fleshing out the Type $A$ motivation described in Section \ref{sec:typeamotivation}.

One way to define the Eulerian idempotents $\frake_k$ is via the generating function due to Garsia--Reutenauer \cite{garsia}:
\begin{equation}\label{eq:typeaeulerian}
\sum_{k=0}^{n-1} t^{k+1}\frake_{k} = \sum_{\sigma \in S_{n}} \binom{t-1+n-\des(\sigma)}{n}\sigma.
\end{equation}
For more equivalent definitions, see Aguiar--Mahajan \cite{aguiarmahajan}, Loday \cite{loday}, Gerstenhaber--Schack \cite{Gerstenhaber-Schack}, and Saliola \cite{saliola2009face}.
\begin{example} \rm When $n=3$, the Eulerian idempotents are 
\begin{align*}
& \frake_{0} = \frac{1}{6} \big( 2 - (12) - (23) - (123) - (132) + 2(13) \big), \\
& \frake_{1} = \frac{1}{2} \big( 1 - (13) \big), \\
& \frake_{2} = \frac{1}{6} \big(1 + (12) + (23) + (13) + (123) + (132) \big). \\
\end{align*}
\end{example}
\begin{definition}\label{def:eulrep} \rm
The \emph{$k$-th Eulerian representation} is the right ideal
\[ E_{n}^{(k)}:= \frake_k \Q[S_n]. \]
\end{definition}

The Whitehouse lifts $F_{n+1}^{(k)}$ of the Eulerian representations are obtained by introducing an idempotent $f_{n+1}^{(k)}$ in $\Q[S_{n+1}]$ as follows.
View $S_{n} \leq S_{n+1}$ as the subgroup fixing the element $n+1$, let $w_{n+1}$ be the $n+1$ cycle $(12\dots (n+1)) \in S_{n+1}$, and define
\[ \mathscr{W}_{n+1} := \frac{1}{n+1} \sum_{i=0}^{n} (w_{n+1})^{i} .\]
Whitehouse shows the element $f_{n+1}^{(k)} := \mathscr{W}_{n+1} e_{n}^{(k)}$
is an idempotent in $\Q[S_{n+1}]$, 
generating a family of representations
\[ F_{n+1}^{(k)} :=  f_{n+1}^{(k)} \Q[S_{n+1}]  \]
which we will call the \emph{Whitehouse representations}. She then proves that the $F_{n+1}^{(k)}$ are lifts of the $E_{n}^{(k)}$ \cite[Prop 1.4]{Whitehouse}. 

\begin{example}[$n = 3$]  
\rm
Denote by $S^\lambda$ the irreducible symmetric group representation indexed by the partition $\lambda$. Then the $S_{3}$ Eulerian representations and their $S_{4}$ lifts are
\[
\begin{array}{rlccll}
E_{3}^{(0)} &= S^{(2,1)} & & &  F_{4}^{(0)} &= S^{(2,2)} \\
    E_{3}^{(1)} &= S^{(2,1)} \oplus S^{(1,1,1)} & & & F_{4}^{(1)} &= S^{(2,1,1)}\\
    E_{3}^{(2)} &= S^{(3)}& && F_{4}^{(2)} &= S^{(4)}.
\end{array}
\]
Each $F_{4}^{(k)}$ restricts to the representation $E_{3}^{(k)}$ via the symmetric group branching rules.
\end{example}
\subsubsection{The spaces $\conf_n(\R^d)$ and their cohomology}
We begin with the case that $d =1$. Recall that the space $\conf_{n}(\R)$ is a disjoint union of $n!$ contractible pieces. Each piece is parametrized by a relative ordering of $x_{1}, \cdots, x_{n}$ in $\R$, and so $H^{*}\conf_{n}(\R, \Z) = H^{0}\conf_{n}(\R, \Z)$ can be understood as the space of $\Z$-valued functions on the set of connected components of $\conf_{n}(\R)$. 

Varchenko-Gelfand give a combinatorial set of generators for $H^{0}\conf_{n}(\R)$ called 
\emph{Heaviside functions}, defined by
\[ u_{ij}(x_{1}, \cdots, x_{n}) := \begin{cases} 1 & x_{i} < x_{j} \\
0 & x_{i} > x_{j}
\end{cases} \]
for $i \neq j \in [n] := \{ 1, \cdots, n \}$. A permutation $\sigma \in S_n$ naturally acts on $u_{ij}$:
\[ \sigma \cdot u_{ij} =  u_{\sigma(i)\sigma(j)}. \]
The space of such Heaviside functions forms a $\Z$-algebra, where the $u_{ij}$ add and multiply pointwise as functions, so that multiplication is given by
\[
    u_{ij}\cdot u_{k\ell}(x_{1}, \cdots, x_{n}) = \begin{cases} 1 & x_{i} < x_{j} \textrm{ and } x_{k} < x_{\ell} \\
    0 & \textrm{ otherwise.}
    \end{cases}
\]
This implies certain natural relations, for example that $u_{ij}^{2} = u_{ij}$. Similarly, one can deduce that $1-u_{ij} = u_{ji}$, so that $u_{ij}\cdot u_{jk} \cdot (1-u_{ik}) = 0$, since it is impossible that $x_{i} < x_{j} < x_{k}$ but $x_{i} > x_{k}$. This is the essential idea behind Theorem \ref{thm:vgring}.

\begin{theorem}\label{thm:vgring}{\rm (Varchenko--Gelfand \cite{varchenkogelfand}).}
The ring $H^{*}\conf_{n}(\R)$ has presentation $\Z[u_{ij}]/\mathcal{Q}$, where $\mathcal{Q}$ is generated by  
\[ (i) \hspace{.5em} u_{ij}^{2} = u_{ij}, \hspace{2.5em}(ii)\hspace{.5em} u_{ij} = (1-u_{ji}), \hspace{2.5em} (iii)\hspace{.5em} u_{ij}u_{jk}(1-u_{ik}) - (1-u_{ij})(1-u_{jk})u_{ik} = 0, \]
for distinct $i,j,k \in [n]$.
\end{theorem}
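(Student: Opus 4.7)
The plan is to construct the claimed isomorphism via the ring map
\[ \phi: \Z[u_{ij}]/\mathcal{Q} \longrightarrow H^{*}\conf_{n}(\R) \]
sending each indeterminate $u_{ij}$ to the Heaviside function of the same name, and then to show $\phi$ is both surjective and injective.

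First, I would record the basic topological input: the space $\conf_{n}(\R)$ is the complement of the braid hyperplane arrangement $\{x_{i}=x_{j}\}$ in $\R^{n}$, whose chambers are open convex polyhedra indexed by total orderings of the coordinates, hence by $S_{n}$. In particular each chamber is contractible, so $H^{*}\conf_{n}(\R) = H^{0}\conf_{n}(\R)$ is canonically identified with the free $\Z$-module on the set of chambers, i.e.\ the $\Z$-valued functions on $S_{n}$, with pointwise ring structure. This identification already pins down the target: it is a free $\Z$-module of rank $n!$, and multiplication is determined by the basis $\{\triv_{\sigma}\}_{\sigma\in S_{n}}$ of chamber indicators.

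Next I would verify that $\phi$ is well-defined by checking that each relation in $\mathcal{Q}$ holds pointwise on every chamber: (i) is immediate because each $u_{ij}$ takes values in $\{0,1\}$; (ii) reflects the dichotomy $x_{i}<x_{j}$ or $x_{j}<x_{i}$; and (iii) reflects transitivity of the ordering on $\{x_{i},x_{j},x_{k}\}$, since the two summands are precisely the indicators of the two impossible cyclic configurations $x_{i}<x_{j}<x_{k}<x_{i}$ and $x_{i}>x_{j}>x_{k}>x_{i}$. For surjectivity, for each $\sigma\in S_{n}$ I would exhibit the explicit preimage
\[ \phi\Bigl(\prod_{1\le k<\ell\le n} u_{\sigma(k)\sigma(\ell)}\Bigr) = \triv_{\sigma}, \]
whose right hand side is the indicator of the chamber $x_{\sigma(1)}<\cdots<x_{\sigma(n)}$; since these indicators form a $\Z$-basis of the target, $\phi$ is surjective.

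The hard part, and the heart of the argument, is to prove injectivity by bounding the $\Z$-rank of the domain from above by $n!$. I would set this up as a straightening procedure: using relation (ii), rewrite every monomial so that only indeterminates $u_{ij}$ with $i<j$ appear; using (i), reduce to squarefree monomials; and finally use relation (iii) to impose a ``consistency'' condition among any triple $\{i,j,k\}$. The cleanest way to make this precise is to fix a term order and show that the ideal $\mathcal{Q}$ has a Gr\"obner basis whose leading terms are exactly the monomials $u_{ij}u_{jk}$ with $i<j<k$ and their analogs, so that standard monomials correspond to acyclic tournaments on $[n]$, of which there are exactly $n!$. This straightening, together with the surjection in the previous step, forces both sides to be free $\Z$-modules of rank $n!$ with $\phi$ inducing a bijection of bases, which is the main obstacle. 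Once that is in hand, $\phi$ is an isomorphism of rings and the presentation in the theorem follows.
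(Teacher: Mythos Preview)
The paper does not prove this theorem at all; it is quoted as background from Varchenko--Gelfand \cite{varchenkogelfand} and no argument is supplied. So there is nothing in the paper to compare your proof against.

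That said, your outline is sound and in fact mirrors the method the paper \emph{does} carry out for its own Type~$B$ analogs (Theorems~\ref{thm:lift1pres} and~\ref{cor:bconfhom1}): verify the relations hold pointwise, exhibit a surjection onto a free module of known rank, and then bound the rank of the quotient from above via a Gr\"obner/straightening argument, invoking Lemma~\ref{lemma:GDB} (the Dorpalen-Barry lemma) to conclude. One small correction: your description of the standard monomials as ``acyclic tournaments on $[n]$'' is not accurate. With the natural degree order used throughout the paper, the $\inn_\prec$-standard monomials are the \emph{nbc monomials}: products obtained by choosing, for each $j\in\{2,\dots,n\}$, at most one generator $u_{ij}$ with $i<j$. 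There are $\prod_{j=2}^n j = n!$ of these, which is the count you want, but they are indexed by increasing labeled forests rather than tournaments. The relation that kills the ``broken circuits'' $u_{ik}u_{jk}$ for $i<j<k$ is obtained by expanding (iii) and using (i) to see that each summand of (iii) lies in $\mathcal{Q}$ individually (since $u_{ij}(1-u_{ij})=0$ forces $A^2=A$, $AB=0$, and $A=B$ implies $A=0$). With that refinement your plan goes through.
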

Call the ring $\Z[u_{ij}]/\mathcal{Q}$ the \emph{Varchenko--Gelfand ring.} The presentation in Theorem \ref{thm:vgring} imposes an ascending filtration on the Varchenko-Gelfand ring obtained from the natural degree grading on $\Z[u_{ij}]/\mathcal{Q}$: the $m^{th}$ layer in the filtration is the span of monomials in the variables $u_{ij}$ having degree at most $m$. The associated graded ring from this filtration has presentation $\Z[u_{ij}]/\inn_{\deg}(\mathcal{Q})$ with $\inn_{\deg}(\mathcal{Q})$ generated by\footnote{The notation $\inn_{\deg}(\mathcal{Q})$ refers to the fact that $\inn_{\deg}(\mathcal{Q})$ is an initial ideal for some degree (partial) ordering on monomials. This will be rigorously defined and discussed in \S \ref{sec:signedheaviside}} 
\[ (i) \hspace{.5em} u_{ij}^{2} = 0, \hspace{2.5em}(ii)\hspace{.5em} u_{ij} = -u_{ji}, \hspace{2.5em} (iii)\hspace{.5em} u_{ij}u_{jk} - u_{ij}u_{ik} - u_{jk}u_{ik} = 0. \]

When $d \geq 2$, the space $\conf_{n}(\R^{d})$ is no longer comprised of contractible, disjoint pieces but nonetheless has an elegant presentation due to F. Cohen ($d \geq 2$) and Ar'nold ($d=2$). 
\begin{theorem}\label{thm:cohen} {\rm (F. Cohen \cite{cohen}, Ar'nold\cite{arnold}).}
For $d \geq 2$, the ring $ H^{*}\conf_{n}(\R^{d})$ has presentation $\Z\langle u_{ij} \rangle/\mathcal{Q}'$ where $\mathcal{Q}'$ is generated by the relations $u_{ij}u_{k\ell} = (-1)^{d+1}u_{k\ell}u_{ij}$ and 
 \[(i)\hspace{.5em}u_{ij}^{2} = 0  \hspace{2em} (ii)\hspace{.5em}u_{ij} = (-1)^{d}u_{ji}, \hspace{2em} (iii)\hspace{.5em}u_{ij}u_{jk} + u_{jk}u_{ki} + u_{ki}u_{ij} = 0 \]
 for distinct $i,j,k,\ell \in [n]$.
\end{theorem}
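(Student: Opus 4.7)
The plan has three steps: (i) build explicit geometric classes realizing the $u_{ij}$, (ii) verify each listed relation by a direct geometric argument, and (iii) match Poincar\'e polynomials via the Fadell--Neuwirth fibration to conclude no further relations are needed.

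First I would set $u_{ij} := \pi_{ij}^{*}[\omega] \in H^{d-1}(\conf_n(\R^d))$, where $\pi_{ij}: \conf_n(\R^d) \to \SSS^{d-1}$ sends $(x_1,\ldots,x_n) \mapsto (x_i-x_j)/|x_i-x_j|$ and $[\omega] \in H^{d-1}(\SSS^{d-1})$ is the fundamental class. Three of the stated relations then fall out immediately. The relation $u_{ij}^2 = 0$ holds because it is pulled back from $H^{2(d-1)}(\SSS^{d-1}) = 0$ for $d \geq 2$. The symmetry $u_{ij} = (-1)^d u_{ji}$ holds because $\pi_{ji}$ is $\pi_{ij}$ post-composed with the antipodal map on $\SSS^{d-1}$, which acts by degree $(-1)^d$ on top cohomology. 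Finally, $u_{ij}u_{k\ell} = (-1)^{d+1}u_{k\ell}u_{ij}$ is simply graded commutativity of cup product, since $(d-1)^2 \equiv d+1 \pmod 2$.

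The subtle relation is the Arnold--Jacobi identity $u_{ij}u_{jk}+u_{jk}u_{ki}+u_{ki}u_{ij}=0$, which already lives on $\conf_3(\R^d)$ and pulls back to all larger $n$. Here I would argue indirectly: the Fadell--Neuwirth fibration $\conf_3(\R^d) \to \conf_2(\R^d) \simeq \SSS^{d-1}$ has fiber $\R^d \setminus \{ 2 \text{ points} \} \simeq \SSS^{d-1} \vee \SSS^{d-1}$, so $\dim H^{2(d-1)}(\conf_3(\R^d)) = 2$, forcing exactly one linear relation among the three products $u_{ij}u_{jk}$, $u_{jk}u_{ki}$, $u_{ki}u_{ij}$. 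The $S_3$ action and the signs from (ii) pin this relation down uniquely up to scalar, and a single test computation (for instance, evaluating the product of Gauss-type volume forms $\omega_{ij}$ on a fundamental cycle) fixes the scalar.

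Finally, I would iterate the Fadell--Neuwirth fibration $\conf_n(\R^d) \to \conf_{n-1}(\R^d)$, whose fiber $\R^d \setminus \{ n-1 \text{ points} \}$ is homotopy equivalent to $\bigvee_{i=1}^{n-1}\SSS^{d-1}$, and apply Leray--Hirsch (valid because the $u_{in}$ with $i<n$ restrict to the standard generators on the fiber) to obtain the Poincar\'e polynomial $\prod_{i=1}^{n-1}(1+it^{d-1})$. A straightening argument analogous to the broken-circuit basis for the Orlik--Solomon algebra then produces a spanning set for $\Z\langle u_{ij}\rangle / \mathcal{Q}'$ of matching rank in each degree, forcing $\mathcal{Q}'$ to exhaust the relations. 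The main obstacle I anticipate is this final combinatorial step: it is easy to believe the three families of relations suffice, but making the reduction algorithm precise and checking that it terminates in an honest basis (rather than a spanning set of too-large rank) requires careful bookkeeping that closely parallels the classical Orlik--Solomon argument.
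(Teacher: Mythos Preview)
The paper does not supply its own proof of this statement: Theorem~\ref{thm:cohen} is quoted as a classical background result with citations to Cohen \cite{cohen} and Arnold \cite{arnold}, and no argument is given. So there is nothing to compare against in the paper itself.

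That said, your sketch follows the standard line of proof found in the literature and is essentially correct. The definition of $u_{ij}$ via the Gauss map $\pi_{ij}$, the derivations of relations (i), (ii), and graded commutativity, and the Fadell--Neuwirth/Leray--Hirsch dimension count are all sound. Your treatment of the Arnold identity (iii) by a dimension-plus-symmetry argument is valid in principle, though the phrase ``a single test computation \ldots fixes the scalar'' hides real work; the cleanest route is usually to integrate an explicit differential-form identity (in the $d=2$ case this is Arnold's original $d\log(z_i-z_j)$ computation, and for general $d$ one can use the analogous volume-form identity). You correctly flag the final straightening step as the place where care is required; once you have the nbc-type basis $\{u_{i_1 j_1}\cdots u_{i_k j_k} : i_s < j_s,\ j_1 < j_2 < \cdots < j_k\}$ spanning the quotient and matching the Poincar\'e polynomial, you are done.
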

\noindent As in the $d=1$ case, $S_n$ acts on the $u_{ij}$ by permuting coordinates.

 The generator $u_{ij}$ lies in $H^{d-1}\conf_{n}(\R^{d})$, which together with the relations in $\mathcal{Q}'$, implies that $H^{*}\conf_{n}(\R^{d})$ is concentrated in degrees $0, (d-1), 2(d-1), \cdots, (n-1)(d-1).$ Direct comparison of the above presentations shows that $\Z[u_{ij}]/\inn(\mathcal{Q}) \cong H^{*}\conf_n(\R^{d})$ for $d \geq 3$ and odd.

\begin{example}[$\lie_n$]\label{ex:lie} \rm
The top degree cohomology of $\conf_n(\R^{d})$ has a particularly nice description: 
\[ H^{(d-1)(n-1)}\conf_n(\R^{d}) \cong_{S_n} \begin{cases} \lie_n  & d \geq 3 \textrm{ and odd}\\
\lie_n \otimes \sgn & d \textrm{ even,}
\end{cases}\]
where $\lie_n$ is the \emph{multilinear component of the free Lie algebra on $n$ generators}; see Reutenauer \cite{reutenauer2001free} for more on the many wonderful properties of $\lie_n$. A result of Kraskiewicz--Weyman \cite{kraskiewiczalgebra} says that $\lie_n$ can be described as the induced representation 
\[ \lie_n = \ind_{\Z_n}^{S_n} \omega, \]
where $\Z_n$ is the cyclic group generated by an $n$-cycle in $S_n$ and $\omega$ is the character sending the generator of $\Z_n$ to $e^{2 \pi i/n}$. 

The relations in Theorem \ref{thm:cohen} imply that $\lie_n$ has basis 
\[ u_{1i_{1}}u_{i_{1}i_{2}}u_{i_{2}i_{3}}\cdots u_{i_{n-1}i_{n}} \]
for distinct $i_{1}, i_{2}, \cdots, i_{n} \in [n]$, and is therefore $(n-1)!$ dimensional. There are other notable bases for $\lie_n$, in particular using Lyndon words; see Barcelo \cite{barcelo1990action}.
\end{example}

We will return to $\lie_n$ in Section \ref{sec:MRalg}, where it will play a key role in describing $\bconfhom^3$.

\subsubsection{Hidden actions and lifts}
Assume that $d = 1,3$. The fact that $H^{*}\conf_n(\R^{d})$ lifts to an $S_{n+1}$-representation comes from the fact that the spaces $\conf_{n}(\R^d)$ have \emph{hidden} (e.g. non-obvious) actions of $S_{n+1}$ via the $S_n$-equivariant maps 
\begin{align}
\label{eq:ahomeo}    f^{d}_{A}: \V_{n+1}^d &\xrightarrow[]{\cong} \conf_n(\R^d)\\
      (p_0, \cdots, p_{n-1}, p_{n}) &\mapsto (\pi(p_{0}^{-1}p_1), \cdots, \pi(p_{0}^{-1}p_n))
\end{align}
where $\pi: \SSS^d \setminus \{\infty\} \to \R^d$ is the stereographic projection, and as in the introduction
\begin{equation*} \V_{n+1}^{1}:= \conf_{n+1}(U(1))/U(1),  \hspace{5em} \V_{n+1}^{3}:= \conf_{n+1}(SU_{2})/SU_{2}. \end{equation*}
Here we identify the element $1$ in $SU_{2}$ and $U(1)$ with $\infty$ (the point at infinity) in $\SSS^3$ and $\SSS^1$, so that $\pi$ is a homeomorphism between $SU_{2} \setminus \{ 1 \} \to \R^3$ and $U(1) \setminus \{ 1 \} \to \R$.

One can thus define an $S_{n+1}$-action on
 $\conf_n(\R^d)$ as the natural $S_{n+1}$-action by coordinate permutation on $\V_{n+1}^{d}$. Let 
 \[ [n]_{0}:= \{ 0, 1, \cdots, n \}.\] 
 We will think of $S_{n+1}$ as the group of permutations on the set $[n]_{0}$.  The homeomorphism \eqref{eq:ahomeo} implies that one recovers the standard $S_{n}$-action on $\conf_n(\R^d)$ by permuting only the last $n$ coordinates of $ \V_{n+1}^{d}$. We will discuss a more general framework for obtaining hidden actions in \S \ref{hiddenactions}.

The space $ \V_{n+1}^{1}$ can seem unwieldy, but it actually has an intuitive description;
 $ \V_{n+1}^{1}$ has representatives $(1, p_{1}, \cdots, p_{n})$ for $p_{i} \neq p_{j} \neq 1$ and like $\conf_{n}(\R)$, is comprised of $n!$ contractible pieces. Each disjoint piece of $\V_{n+1}^{1}$ is parametrized by a relative ordering of $p_{0}, \cdots, p_{n}$ around the circle. These disjoint pieces ($S_{n}$-equivariantly) biject with the pieces of $\conf_{n}(\R)$. To move from $\V_{n+1}^{1}$ to $\conf_{n}(\R)$, read the ordering of $p_{1}, \cdots, p_{n}$ around $U(1)$ counter-clockwise beginning after $p_{0}$. 
 
 When we move to cohomology, the Heaviside functions $u_{ij}$ also lift to \emph{cyclic Heaviside functions} $v_{ijk} \in  \V_{n+1}^{1}$, defined in \cite{moseley2017orlik} by Moseley--Proudfoot--Young as:
\[ v_{ijk}(p_{0}, \cdots, p_{n}) := \begin{cases} 1 & p_{i} < p_{j} < p_{k} \textrm{ in counter-clockwise order on } U(1)\\
0 & \textrm{ otherwise,}
\end{cases} \]
where now $i,j,k \in [n]_{0}$.

\begin{example} \rm
Consider the two representatives $\vec{q}$ and $\vec{r}$ of $\V_{3}^{1}$ and their images under $f_{A}^{1}$:
\begin{center}
\begin{tikzpicture}[thick]
 \draw[thick](-3,.4) circle(.6);
 \draw[thick](-3+9,.4) circle(.6);

\draw (-5+5,0)-- (-2+5,0);
\node[point] (a) at (-4+5,0){};
\node[] (aa) at (-4.25 +5,.5){$\pi(p_{1})$};

\node[point] (b) at (-3+5,0){};
\node[] (bb) at (-2.75+5,.5){$\pi(p_{2})$};

\node[] (cc) at (-3.5+5,-.75){$f_{A}^{1}(\vec{q})$};

\node[] (A) at (.25-4,0){$p_1$};
 
\node[] (B) at (1.75-4,0){$p_2$};
 
\node[] (C) at (1-4,1.25){$p_0$};

\node[] (cc) at (1-4,-.75){$\vec{q}$};

\node[] (x) at (-1,0){$\longmapsto$};
\node[] (xx) at (-1,.5){$f_{A}^{1}$};

 
 
 
\draw (5+4,0)-- (8+4,0);
\node[point] (a) at (6+4,0){};
\node[] (aa) at (5.75+4,.5){$\pi(p_{2})$};

\node[point] (b) at (7+4,0){};
\node[] (bb) at (7.25+4,.5){$\pi(p_{1})$};

\node[] (cc) at (6.5+4,-.75){$f_{A}^{1}(\vec{r})$};

\node[] (A) at (9.25-4,0){$p_2$};
 
\node[] (B) at (10.75-4,0){$p_1$};
 
\node[] (C) at (10-4,1.25){$p_0$};

 \node[] (cc) at (10-4,-.75){$\vec{r}$};

\node[] (x) at (8,0){$\longmapsto$};
\node[] (xx) at (8,.5){$f_{A}^{1}$};
 
 
 

\end{tikzpicture}
\end{center}
Note that $v_{123}(\vec{q}) =u_{12}(f_{A}^{1}(\vec{q})) =  1$, while $v_{123}(\vec{r}) =u_{12}(f_{A}^{1}(\vec{r})) = 0.$ On the other hand $v_{213}(\vec{q}) = u_{21}(f_{A}^{1}(\vec{q}))  = 0$ and $v_{213}(\vec{r}) = u_{21}(f_{A}^{1}(\vec{r})) =1$.
\end{example}

The $v_{ijk}$ again form a $\Z$-algebra and provide an elegant combinatorial description for the ring $H^* \V_{n+1}^{1}$.  As in the case of $H^{*}\conf_{n}(\R)$, the degree grading on $H^{*} \V_{n+1}^{1}$ from the $v_{ijk}$ imposes an ascending filtration with associated graded ring $\gr(H^{*} \V_{n+1}^{1})$.

For both $d = 1, 3$, an explicit presentation for $H^{*}\V_{n+1}^d$ can be recovered from the presentation in Theorem \ref{thm:vgring} via the induced isomorphism $f_{A}^{*}$ sending $u_{ij}$ to $v_{0ij}$, along with the additional relation due to Early--Reiner \cite[Thm 3]{Early-Reiner} (see also Moseley--Proudfoot--Young \cite[Rmk 2.9]{moseley2017orlik} and Matherne--Miyata--Proudfoot--Ramos \cite[Thm. A.4]{matherne2021equivariant}):
\begin{equation*}\label{eq:liftarelation}
v_{ijk} - v_{ij\ell} + v_{ik \ell} - v_{jk\ell} = 0.
\end{equation*}
\begin{remark}[Cyclic operads]\label{rmk:cyclicoperads}\rm
Kontsevich in \cite{kontsevich1993formal} showed there was a hidden action of $S_{n+1}$ on $\lie_n$. In \cite{getzler1995cyclic}, Getzler--Kapranov then formalized his techniques to introduce \emph{cyclic operads} more generally. Using constructions of $H_{*}\conf_n(\R^d)$ and Poisson operads (see for instance Sinha \cite{sinha2006homology}), one can apply the work of Getzler--Kapranov (\cite[Prop 3.11]{getzler1995cyclic}) to show that there is a cyclic operad structure on all graded pieces of the homology of $\conf_n(\R^3)$. It would be interesting to relate this cyclic operad structure to the bases of $H^*\conf_n(\R^3)$ and $H_{*}\conf_n(\R^3)$ more directly. 
\end{remark}
\subsubsection{Property Wishlist}\label{sec:propertywishlist}
We will finish the Type $A$ summary by providing a ``property wishlist'' which will serve as the inspiration and guiding motivation of our Type $B$ work. In particular, we highlight the following: 
\begin{itemize}
    \item There is an isomorphism of $S_n$-representations\footnote{In fact \eqref{eq:typeaeulerianconfig} holds for any odd $d \geq 3$ by replacing $H^{2k}\conf_{n}(\R^{d})$ with $H^{(d-1)k}\conf_{n}(\R^{d})$.} for $0 \leq k \leq n-1$:
\begin{equation}\label{eq:typeaeulerianconfig}
     E^{(n-1-k)}_{n} \cong_{S_{n}} H^{2k}\conf_{n}(\R^{3}).
\end{equation}
 This was first deduced by comparing a result of Sundaram and Welker for subspace arrangements \cite[Thm 4.4(iii)]{sundaramwelker} with descriptions of the characters of $E^{(k)}_{n}$ by Hanlon \cite{Hanlon}, and was later proved in the context of Coxeter groups by the author in \cite{brauner2020eulerian}.
\item Equation \eqref{eq:typeaeulerianconfig} lifts to an isomorphism of $S_{n+1}$-representations \cite[Prop. 2]{Early-Reiner}:
\begin{equation}\label{eq:typeawhitehouseconfig}
    F^{(n-1-k)}_{n+1} \cong_{S_{n+1}} H^{2k}(\V_{n+1}^{3}).
\end{equation}
\item There is a recursion\footnote{We think of this as a recursion in the sense that the formula relates the representation $E_{n}^{(k)}$ (which lifts to $F_{n+1}^{(k)}$) to the representation $F_{n}^{(k)}$ (which restricts to $E_{n-1}^{(k)}$).} relating the Eulerian and Whitehouse representations of $S_n$:
\begin{equation}
    E_{n}^{(k)} = F_{n}^{(k-1)} \oplus \left( S^{(n-1,1)} \otimes F_{n}^{(k)} \right),
\end{equation}
where $S^{(n-1,1)}$ is the irreducible reflection representation of $S_{n}$ \cite[Prop. 1]{Early-Reiner}.
\item There is an $S_{n+1}$-representation isomorphism
\[ \sum_{k=0}^{n-1} F_{n+1}^{(k)} \cong \ind_{\Z_{n+1}}^{S_{n+1}} \triv, \]
where $\Z_{n+1}$ is the cyclic group generated by an $(n+1)$-cycle in $S_{n+1}$ and $\triv$ is the trivial representation \cite[Prop 1.5]{Whitehouse}.
\item The circle group $U(1)$ acts on $\R^{3}$ by rotation around the $x$-axis, thereby inducing an action on $\conf_{n}(\R^{3})$. The filtration induced from the $U(1)$-equivariant cohomology $H^{*}_{U(1)}\conf_{n}(\R^{3})$ implies an isomorphism of $S_{n}$-modules for $0 \leq k \leq n-1$ \cite{moseley2017equivariant}:
\begin{equation}\label{eq:grtypea}
    \gr(H^{*}\conf_{n}(\R))_k \cong_{S_{n}} H^{2k}\conf_{n}(\R^{3}),
\end{equation}
where $ \gr(H^{*}\conf_{n}(\R))$ coincides with the associated graded ring coming from the filtration by Heaviside functions \cite{moseley2017equivariant}. 

\item Equation \eqref{eq:grtypea} also lifts to an $S_{n+1}$-module isomorphism for $0 \leq k \leq n-1$ \cite{moseley2017orlik}:
\begin{equation}\label{eq:liftedtypeagradediso}
    \gr(H^{*}\V_{n+1}^{1})_k \cong_{S_{n+1}} H^{2k}(\V_{n+1}^{3}).
\end{equation}
where again \eqref{eq:liftedtypeagradediso} comes from a $U(1)$ action on $\V^{3}_{n+1}$ and subsequent computation of $H^{*}_{U(1)}\V^{3}_{n+1}$. The grading on the left-hand-side also coincides with the associated graded ring coming from the filtration by cyclic Heaviside functions.
\end{itemize}
The remainder of this paper is devoted to obtaining an analogous Type $B$ statement for each of the properties listed above.

\subsection{Orbit configuration spaces and hidden actions}\label{hiddenactions}
We now introduce a general framework with which to construct spaces with hidden actions. Our Type $B$ spaces will emerge from this general framework (see Example \ref{ex:typebhiddenaction}).

\begin{definition}\label{def:orbconfig} \rm
For a group $H$ acting freely on a topological space $X$, the \emph{$n$-th ordered orbit configuration space} is 
\[ \conf_{n}^{H}(X) := \{(x_{1}, \dots, x_{n}) \in X^{n}: h\cdot x_{i} \cap h \cdot x_{j} = \emptyset \text{ for } i \neq j \textrm{ and any } h\in H \}. \]
\end{definition}

Orbit configuration spaces were first defined by Xicot{\'e}ncatl in \cite{xico}, and have proved integral to the study of universal covers of certain configuration spaces \cite{xico}, hyperplane arrangements associated to root systems by Bibby \cite{bibby2018representation} and Moci \cite{moci2008combinatorics}, and equivariant loop spaces by Xicot{\'e}ncatl \cite{xicotencatl2002product}. Computing the cohomology of orbit configuration spaces is an active area of study developed by Casto \cite{Casto}, Denham--Suciu \cite{denham2018local}, Feichtner--Ziegler \cite{feichtner2002orbit}, Bibby--Nadish  \cite{bibby2019generating}, \cite{bibby2018combinatorics} and others.

If $H$ is the trivial group, Definition \ref{def:orbconfig} recovers the classical configuration space of $X$. More generally any orbit configuration space $\conf_{n}^{H}(X)$ has a natural action by the wreath product $H \wr S_{n}$. Recall that the wreath product $H \wr S_{n}$ is the group whose elements are of the form 
\[ H \wr S_n = \{ (h_1, \cdots, h_n, \sigma): h_i \in H, \sigma \in S_n \}  \]
with multiplication defined by
\[  (h'_1, h'_2, \cdots, h'_n, \sigma') \cdot (h_1, h_2, \cdots, h_n, \sigma) = (h'_{\sigma(1)}h_1, h'_{\sigma(2)}h_2, \cdots, h'_{\sigma(n)}h_n, \sigma' \sigma  ). \]
For instance if $H = \Z_2$, this wreath product $\Z_2 \wr S_n$ is $B_n$, and if $H = \Z_r$ for $r \geq 2$, one obtains the complex reflection group $G(r,1,n)$ (sometimes referred to as a \emph{generalized symmtric group}).

Our goal is to study orbit configuration spaces equipped with a  hidden action. In other words, given an orbit configuration space $\conf_{n}^{H}(X)$, we would like to identify a space $Y$ on which $H \wr S_{n+1}$ acts, and an $H \wr S_{n}$-equivariant homeomorphism 
\[ \conf_{n}^{H}(X) \cong Y. \]
If such a homeomorphism exists, we say that $\conf_{n}^{H}(X)$ has a \emph{hidden action} by $H \wr S_{n+1}$. 
\begin{prop}\label{hiddenactionframework}
Let $X$ be a topological space with 
\begin{itemize}
    \item A transitive (left)-action by a group $G$ and 
    \item A free (left)-action by a group $H$ in the center of $G$. 
\end{itemize}
Then there is an $H \wr S_{n}$-equivariant homeomorphism
\[
 \conf_{n+1}^{H}(X)/G \quad \cong \quad  \conf_{n}^{H}(X-\mathcal{O}_{H}(x_{0}))/G_{x_{0}},
\]
where both quotients are by left-diagonal multiplication, $\mathcal{O}_{H}(x_{0})$ is the $H$-orbit of $x_{0} \in X$ and $G_{x_{0}}$ is the $G$-stabilizer of $x_{0}$.
\end{prop}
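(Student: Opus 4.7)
The plan is to construct explicit mutually inverse $H \wr S_n$-equivariant maps between the two quotients, using the transitive $G$-action to translate the first coordinate of any configuration to the chosen basepoint $x_0$. Define the forward map $\Phi: \conf_{n+1}^{H}(X)/G \to \conf_{n}^{H}(X-\mathcal{O}_{H}(x_{0}))/G_{x_{0}}$ as follows: given $(y_0, y_1, \ldots, y_n)$, use transitivity of $G$ to choose some $g \in G$ with $g \cdot x_0 = y_0$, and send its orbit to $[g^{-1}y_1, \ldots, g^{-1}y_n]$. Two valid choices of $g$ differ by right-multiplication by an element of $G_{x_0}$, which matches the $G_{x_0}$-action on the target; replacing $(y_0, \ldots, y_n)$ by another $G$-representative changes $g$ by a compensating left factor that cancels. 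The inverse $\Psi$ is induced by the inclusion $(z_1, \ldots, z_n) \mapsto (x_0, z_1, \ldots, z_n)$, and a direct check shows $\Phi \circ \Psi = \mathrm{id}$ and $\Psi \circ \Phi = \mathrm{id}$.

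The delicate step is verifying that $\Phi$ actually lands in $\conf_{n}^{H}(X - \mathcal{O}_H(x_0))$ and respects the ambient $H$-structure; here the centrality of $H$ in $G$ is essential, via the identity $g^{-1}h = hg^{-1}$ for all $h \in H$, $g \in G$. Indeed, if $g^{-1}y_i = h \cdot x_0$ for some $h \in H$, then $y_i = ghx_0 = hgx_0 = h \cdot y_0$, contradicting the orbit-configuration condition on $(y_0, \ldots, y_n)$; so each $g^{-1}y_i$ avoids $\mathcal{O}_H(x_0)$. The pairwise conditions $g^{-1}y_i \neq h \cdot g^{-1}y_j$ transport identically from the original $y_i \neq h \cdot y_j$.

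To upgrade the bijection to a homeomorphism, I would note that $\Psi$ is induced from the continuous $G_{x_0}$-equivariant inclusion $\conf_n^H(X - \mathcal{O}_H(x_0)) \hookrightarrow \conf_{n+1}^H(X)$, hence continuous; by transitivity of $G$, every $G$-orbit in $\conf_{n+1}^H(X)$ meets the slice $\{x_0\} \times \conf_n^H(X - \mathcal{O}_H(x_0))$ in exactly one $G_{x_0}$-orbit, so $\Psi$ is bijective on orbits and the quotient topologies agree. For $H \wr S_n$-equivariance, an element $(h_1, \ldots, h_n, \sigma) \in H \wr S_n$ fixes $y_0$ and sends $y_i \mapsto h_i y_{\sigma(i)}$; under $\Phi$ the image tuple becomes $(g^{-1}h_i y_{\sigma(i)})_i = (h_i g^{-1}y_{\sigma(i)})_i$ by centrality, which is exactly the $H \wr S_n$-action on the target.

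The main obstacle is not any single computation but rather keeping the interlocking $H$-, $G$-, and $G_{x_0}$-actions compatible throughout, and recognizing that centrality of $H$ in $G$ is the precise hypothesis making everything cohere: without it, neither would the image of $\Phi$ remain an orbit configuration nor would the equivariance hold. Identifying and isolating the two uses of this commutation relation (in the image check and in the equivariance check) is the crux of the argument.
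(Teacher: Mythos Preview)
Your proposal is correct and follows essentially the same approach as the paper: construct the forward map by translating the zeroth coordinate to $x_0$ via some $g\in G$, construct the inverse by prepending $x_0$, and use centrality of $H$ in $G$ both to verify the image lands in the orbit configuration space and to check $H\wr S_n$-equivariance. If anything, you supply more detail than the paper does on the continuity and equivariance checks.
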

\begin{proof}

Since $H$ acts freely on $X$ it will also act freely on the space $X \setminus \mathcal{O}_{H}(x_{0}))$ obtained by removing the $H$-orbit of some point $x_{0}$ from $X$. The $G$-stabilizer of $x_{0}$, denoted $G_{x_{0}}$, then acts on $X \setminus \mathcal{O}_{H}(x_{0}))$ and since $H$ necessarily commutes with $G_{x_{0}},$ one also obtains a well-defined space $\conf_{n}^{H}(X-\mathcal{O}_{H}(x_{0}))/G_{x_{0}}.$

Fix a point $(p_{0},p_{1},\cdots, p_{n}) \in \conf_{n+1}^{H}(X)/G$; because $G$ acts transitively on $X$, there is some $g \in G$ sending $x_{0}$ to $p_{0}$. Thus in $\conf_{n+1}^{H}(X)/G$, 
\[ (p_{0},\cdots, p_{n}) \sim (x_{0},g^{-1}p_1,\ldots,g^{-1}p_n).\]
Then the maps back-and-forth giving inverse homeomorphisms are
\[
\begin{array}{rcl}
\conf^{H}_{n+1}(X)/G &\longrightarrow & \conf_{n}^{H}(X-\mathcal{O}_{H}(x_{0}))/G_{x_0}\\
(p_0,p_1,\ldots,p_n) & \longmapsto & (g^{-1}p_1,\ldots,g^{-1}p_n)\\
 & & \\
\conf_{n}^{H}(X-\mathcal{O}_{H}(x_{0}))/G_{x_0} &\longrightarrow & \conf^{H}_{n+1}(X)/G\\
(q_1,\ldots,q_n) & \longmapsto & 
(x_0,q_1,\ldots,q_n).
\end{array}
 \]
To check that the first map is well-defined, note that $p_{i}$ and $p_{j}$ are in the same $H$-orbit if and only if the same is true for $gp_{i}$ and $gp_{j}$. Thus $(g^{-1}p_{1}, \cdots g^{-1}p_{n})$ is indeed in $\conf_{n}^{H}(X-\mathcal{O}_{H}(x_{0}))/G_{x_{0}}$. Further, if $g_{1} x_{0} = p_{0}$, then for any $g_{2} \in G$ one has $g_{2}p_{0} = g_{2} g_{1} x_{0}$ and 
\[ \begin{array}{rcl}
(g_{2}p_{0}, \cdots, g_{2}p_{n}) & \longmapsto & (g_{1}^{-1}g_{2}^{-1}g_{2}p_{n}, \cdots, g_{1}^{-1}g_{2}^{-1}g_{2}p_{n}) = (g_{1}^{-1}p_{1}, \cdots, g_{1}^{-1}p_{n})\\
(p_{0}, \cdots, p_{n}) & \longmapsto &(g_{1}^{-1}p_{1}, \cdots, g_{1}^{-1}p_{n}).
\end{array} \]
One can similarly check that the second map is well-defined and that their composition gives the identity. The $H \wr S_{n}$ equivariance follows from the fact that $H$ commutes with $G$.
\end{proof}

We will be interested in the special case that $G$ acts simply transitively on $X$ so that $G_{x_{0}}$ is trivial, in which case one obtains the isomorphism
\[ \conf_{n}^{H}(X - \mathcal{O}_{H}(x_{0})) \cong \conf_{n+1}^{H}(X)/G. \]

\begin{example}\label{ex:typeahiddenaction} \rm
Take $H$ to be the trivial group. 
\begin{itemize}
    \item Let $X = G = SU_{2}$ and $x_0 = 1$, the identity of $SU_{2}$. Then Proposition \ref{hiddenactionframework} recovers the
   $S_{n}$-equivariant homeomorphism
\[ \conf_{n+1}(SU_{2})/ SU_{2} \cong \conf_{n}(\R^{3}), \]
where the map here is precisely \eqref{eq:ahomeo}, since $SU_{2} \setminus \{ 1 \} \cong \R^3$.
\item Let $X = G = U(1)$ and set $x_0 = 1 \in U(1)$, so that $U(1) \setminus \{ 1 \} \cong \R$. Then Proposition \ref{hiddenactionframework} (again via \eqref{eq:ahomeo}) gives
\[ \conf_{n+1}(U(1))/U(1)\cong \conf_{n}(\R). \]
\end{itemize}
\end{example}

One way of framing all subsequent work in this paper is to ask the question: what happens to Example \ref{ex:typeahiddenaction} when $H$ is replaced by $\Z_{2},$ acting via the antipodal map? 

\begin{example}\label{ex:typebhiddenaction} \rm
As suggested above, take $H = \Z_2$ acting as the antipodal map (e.g. by $-1$) on $SU_{2}$ and $U(1)$. Then
\begin{itemize}
    \item When $X = G = SU_{2}$ and $x_0 = 1$, the orbit of $1$ is $\pm 1$. Recall that for $x \in \R^{d} \setminus \{ 0 \}$, $\varphi(x) := -x/|x|^2$ and that $B_n \cong \Z_2 \wr S_n$. Proposition \ref{hiddenactionframework} then implies there is a
   $B_{n}$-equivariant homeomorphism
\begin{align*} \conf^{\Z_{2}}_{n+1}(SU_{2})/SU_{2} =: \bconflift^3 \cong_{B_{n}} \bconf^3 :=& \conf_{n}^{\langle \varphi \rangle}(\rthree)\\ =& \{ (x_1, \cdots, x_n) \in \rthree: x_i \neq x_j, \varphi(x_j) \}. \end{align*}

\item When $X = G = U(1),$ then Proposition \ref{hiddenactionframework} says that 
\begin{align*} \conf^{\Z_2}_{n+1}(U(1))/U(1) =:\bconflift^1 \cong_{B_{n}} \bconf^1 :=& \conf_{n}^{\langle \varphi \rangle}(\R \setminus \{ 0 \}) \\ =& \{ (x_1, \cdots, x_n) \in \R \setminus \{ 0 \}: x_i \neq x_j, \varphi(x_j) \}. \end{align*}
\end{itemize}
\end{example}

The space $\bconf^{d}$ arises naturally in the work of Feichtner--Ziegler in \cite{feichtner2002orbit} as part of the fiber sequence
\[ \bconf^{d} \longrightarrow \conf_{n+1}^{\Z_{2}}(\SSS^{d}) \longrightarrow \SSS^{d}, \]
where the last map sends a point $(x_1, \cdots, x_{n+1})$ to $x_{n+1}$.
They then use the above sequence to compute the cohomology of $\conf_{n+1}^{\Z_{2}}(\SSS^{d})$. In this sense, $\bconf^d$ is the natural analog of $\conf_{n}(\R^{d})$, which arises in the fiber sequence 
\[ \conf_{n}(\R^{d}) \longrightarrow \conf_{n+1}(\SSS^{d}) \longrightarrow \SSS^{d}. \]

\begin{remark} \rm
A priori, one might expect that to parallel the Type $A$ story, the analog of $\conf_n(\R^d)$ would be the more ``standard'' $\Z_{2}$-orbit configuration space 
\[ \conf_{n}^{\Z_{2}}(\R^{d}) = \{ (x_{1}, \cdots, x_{n}) \in \R^{dn}: x_{i} \neq \pm x_{j} \neq 0 \} \]
whose cohomology was studied by Xicot{\'e}ncatl in \cite{xico}, Moseley in \cite{moseley2017orlik} and related to the Type $B$ Eulerian idempotents in \cite{brauner2020eulerian} by the author. However, this is \emph{not} the case because there is no $B_n$-equivariant map between $ \conf_{n}^{\Z_{2}}(\R^{d})$ and $\bconflift^d$.

That being said, the space $\conf_{n}^{\Z_{2}}(\R^{d})$ and its cohomology will not be completely irrelevant. In particular, we will see that
\begin{itemize}
    \item some relations in $H^{*}\conf_{n}^{\Z_{2}}(\R^{3})$ will help us recover relations in $\bconfhom^3$;
    \item the spaces $H^{*}\conf_{n}^{\Z_{2}}(\R^{d})$ and $\bconfhom^d$ are isomorphic as \emph{vector spaces} but not as $B_n$ modules; and 
    \item when $d$ is odd, both $\bconfhom^d$ and $H^{*}\conf_{n}^{\Z_{2}}(\R^d)$ have total representation $\Q[B_n]$, despite carrying different representations on each graded piece.
\end{itemize} 
\end{remark}

\begin{example} \rm
It is helpful to see a concrete example of the space $\bconf^{d}$. Consider for instance 
\[ \bconfn{2}^{1} = \left \{ (x_{1}, x_{2}) \in \R^{2} \setminus \{ 0 \}: x_{1} \neq x_{2}, x_{1} \neq \varphi(x_{2}) = -\frac{x_{2}}{|x_{2}|^2} \right \},\]
(shown on the left below), compared to to the space
\[ \conf_{2}^{\Z_{2}}(\R):=  \left \{ (x_{1}, x_{2}) \in \R^{2} \setminus \{ 0 \}: x_{1} \neq x_{2}, x_{1} \neq -x_{2} \right \}, \]
which is the complement of the reflection arrangement associated to $B_2$ (shown on the right below).
\begin{center}
\begin{tikzpicture}
 \draw (-1,-1)-- (1,1);
 \draw (0,-1.5) -- (0,1.5);
 \draw (-1.5,0) -- (1.5,0);
\node[] (B) at (-1.65,.05){};
 
\node[] (C) at (-.05,1.65){}; 
\draw (C) to[bend left] node[right]{}(B);
 
 \node[] (D) at (1.65,-.05){};
 
\node[] (E) at (.05,-1.65){}; 
\draw (D) to[bend right] node[right]{}(E);
 \node[] (F) at (0,-2){$\bconfn{2}^1$}; 
 \end{tikzpicture}
 \hspace{8em}
 \begin{tikzpicture}
 \draw (-1,-1)-- (1,1);
  \draw (1,-1)-- (-1,1);

 \draw (0,-1.5) -- (0,1.5);
 \draw (-1.5,0) -- (1.5,0);
\node[] (B) at (-1.65,.05){};
 
\node[] (C) at (-.05,1.65){}; 
 
 \node[] (D) at (1.65,-.05){};
 
\node[] (E) at (.05,-1.65){}; 
\node[] (F) at (0,-2){$\conf^{\Z_2}_{2}{(\R)}$}; 
 \end{tikzpicture}
 \end{center}
Importantly, $\bconfn{2}^{1}$ is not the complement of linear subspaces (and the same is true of $\bconf^1$ in general). 
\end{example}
\begin{remark} \rm
In both Examples \ref{ex:typeahiddenaction} and \ref{ex:typebhiddenaction}, the existence of a hidden action at the level of topological spaces depends upon the sphere $\SSS^{d}$ having a realization as a Lie group. There is no such description for $\SSS^d$ in the case that $d$ is even. In \cite{gaiffi1996actions}, Giaffi obtains a hidden action of $S_{n+1}$ on the space $H^{*}\conf_{n}(\R^{2}),$ but he shows that this action is on the level of cohomology rather than topological spaces. It would be interesting to investigate whether the same is true in Type $B$---that is, determine whether there is some hidden action on $\bconfhomn{n}^{2}$ but not on $\bconfn{n}^{2}$.
\end{remark}
\subsection{The hyperoctahedral group}\label{sec:hyperoctahedralgroupbackground}
We have seen that $B_n$ is isomorphic to the wreath product $\Z_2 \wr S_n$. Now, we will view $B_n$ combinatorially as the group of signed permutations, and more abstractly as a Coxeter group. In what follows, we will discuss conventions for both perspectives, and then discuss the representation theory of $B_n$. 

\subsubsection{$B_n$ as the group of signed permutations}
Recall that $[n] = \{ 1, 2, \cdots, n\}$, and define the following sets:
\begin{itemize}
\item $[n]^{-} := \{ \overline{1}, \cdots, \overline{n} \}$, and
\item $[n]^{\pm} := \{ 1, \overline{1}, \cdots, n, \overline{n}\}$.
\end{itemize}
We adopt the convention that $\overline{i}$ behaves like $-i$, so that 
$\overline{\overline{i}} = i$ and $|\overline{i}|$ (e.g. the absolute value of $\overline{i}$) is $i$. We will first think of $B_n$ as a subgroup of the group of permutations of $[n]^{\pm}$:
\begin{definition}\rm
The \emph{hyperoctahedral group $B_n$} is the group of bijective maps from $[n]^{\pm}$ to $[n]^{\pm}$, subject to the condition that for any $\sigma \in B_n$, if $\sigma: i \mapsto j$, then $\sigma: \overline{i} \mapsto \overline{j}$ for $i,j \in [n]^{\pm}$.
\end{definition}

A permutation in $S_n$ can be written as a permutation matrix, in one-line notation, or in cycle notation. The same is true of signed permutations. That is, $\sigma \in B_n$ can be written 
\begin{itemize}
\item as a \emph{signed permutation matrix}; in this case, we realize $\sigma$ as an $n \times n$ matrix $M(\sigma)$ with $i,j$-th entry 
\[ M(\sigma)_{ij} = \begin{cases} 1 & \sigma: j \mapsto i\\
-1 & \sigma: j \mapsto \overline{i}\\
0 & \textrm{ otherwise;}
\end{cases} \]
\item in \emph{one-line notation}, meaning that $\sigma$ is written as $(\sigma_1, \cdots, \sigma_n)$ where $i \mapsto \sigma_i$ for $\sigma_i \in [n]^{\pm}$;  
\item in \emph{cycle notation}, meaning that $\sigma$ is factored as a product of positive and negative cycles. A negative cycle, written as $(\sigma_{1}\sigma_{2} \cdots \sigma_{\ell})^{-}$ for $\sigma_{i} \in [n]^{\pm}$ means that $\sigma$ sends $\sigma_{i} \mapsto \sigma_{i+1}$ for $1 \leq i \leq \ell-1$ and $\sigma_{\ell} \mapsto \overline{\sigma_{1}}$. On the other hand, a positive cycle $(\sigma_{1}\sigma_{2} \cdots \sigma_{\ell})$ sends $\sigma_{i} \mapsto \sigma_{i+1}$ for $1 \leq i \leq \ell-1$ and $\sigma_{\ell} \mapsto \sigma_{1}$.
\end{itemize}
We will make clear from context whether we are using one-line or cycle notation. 
\begin{example}\label{ex:signedperms} \rm
Consider $\sigma \in B_7$ sending 
\[ 
    1  \mapsto 2\hspace{2em}
    2  \mapsto \overline{3} \hspace{2em}
    3  \mapsto 1 \hspace{2em}
    4 \mapsto 7 \hspace{2em}
    5 \mapsto \overline{6} \hspace{2em}
    6 \mapsto 5 \hspace{2em}
    7  \mapsto 4. \]
Then $\sigma$ can be written
\begin{itemize}
\item as the matrix 
\[ \begin{pmatrix} 0 & 0 & 1 & 0 & 0& 0 & 0 \\
1 & 0 & 0 &  0 & 0& 0 & 0\\
0 & -1 & 0 & 0 & 0& 0 & 0\\
0 & 0 & 0 & 0 & 0& 0 & 1\\
0 & 0 & 0 & 0 & 0& 1 & 0\\
0 & 0 & 0 & 0 & -1& 0 & 0\\
0 & 0 & 0 & 1 & 0& 0 & 0
\end{pmatrix}
\]
    \item in one-line notation as $(2,\overline{3},1,7,\overline{6},5,4)$ and 
    \item in cycle notation as 
    \[ (12\overline{3})^{-} \,\, (47) \,\, (56)^{-}. \]
\end{itemize}
\end{example}

We will see in Section \ref{sec:MRalg} (for instance, Definition \ref{def:MRdes}) that one-line notation is useful for defining combinatorial statistics on $B_n$.

The primary use of cycle notation is to describe the conjugacy classes of $B_n$ via \emph{cycle type}. Signed permutations have cycle type described by \emph{signed partitions}, which are ordered pairs of partitions $\sgnpart$ where $|\lambda^{+}| + |\lambda^{-}| = n$. Note that we allow $\lambda^+$ or $\lambda^{-}$ to be $\emptyset$, the unique (empty) partition of zero.

 \begin{definition}\label{ex:conjclasses} \rm
 The signed permutation $\sigma \in B_n$ is said to have \emph{cycle type} 
 \[ \sgnpart = (\underbrace{\lambda_1, \cdots, \lambda_k}_{\lambda^{+}}, \underbrace{\lambda_{k+1}, \cdots, \lambda_\ell)}_{\lambda^{-}}  \] 
 if 
 $\sigma$ can be factored into cycles  
 \[ \sigma = c_1 \cdots c_k \cdot d_{1} \cdots d_\ell, \]
  where the $c_i$ are positive cycles of length $\lambda_i$ for $1 \leq i \leq k$ and the $d_j$ are negative cycles of length $\lambda_j$ for $k+1 \leq j \leq \ell$.

Define $\cc_{\sgnpart}$ to be the conjugacy class comprised of elements in $B_n$ of cycle type $\sgnpart$.
 \end{definition}
 For instance, the permutation $(12\overline{3})^{-}(47)(56)^{-}$ from Example~\ref{ex:signedperms} has cycle type $((2),(3,2))$. 
\subsubsection{$B_n$ as a Coxeter group}
Importantly, $B_n$ is also a \emph{Coxeter group}, meaning that it has a presentation of the form
\[ W = \langle r_1, \cdots, r_k: (r_i r_j)^{m_{ij}} = 1 \rangle,  \]
where $m_{ii}= 1$ and $m_{ij} \geq 2$ for $i \neq j$. The elements $r_1, \cdots, r_k$ are called \emph{simple reflections}. We will only briefly survey Coxeter groups here; for a comprehensive treatment, the reader should consult Bj\"orner--Brenti \cite{bjorner2006combinatorics}, Humphreys \cite{humphreys1990reflection}, or Kane \cite{kane2001reflection}.

When $W = B_n$, the simple reflections are (in cycle notation) the adjacent transpositions
\[ s_i := (i,i+1) \]
for $1 \leq i \leq n-1$ and the element
\[ t_n := (n)^{-}. \] 
More generally, it will be useful to think about the element $t_i = (i)^{-}$ for $1 \leq i \leq n$.

Any element of $B_n$ can be written as a product of the $s_i$ and $t_n$ (with repeats allowed), and $\sigma$ is said to be a \emph{reduced expression} if there is no way to rewrite $\sigma$ using fewer generators. In this case, the number of generators used to write $\sigma$ is the \emph{length} of $\sigma$. Every Coxeter group has a unique longest word, $w_0$. In the case of $B_n$, this is (in one-line notation)
\[ w_0 = (\overline{1}, \cdots, \overline{n}) = -1. \]
In cycle notation, $w_0 = (1)^{-}(2)^{-}\cdots (n)^{-} = t_1 t_2 \cdots t_n.$

We will also be interested in the \emph{Coxeter elements} of $B_n$. For any Coxeter group $W$, a Coxeter element $c$ is a product of each simple reflection, without repeats:
\[ c = r_{i_1} \cdots r_{i_k}. \] 
Coxeter elements are not unique but they are all conjugate to each other. 
In the case of $B_n$, Coxeter elements are of the form (in cycle notation)
   \[(i_1 i_2 i_3  \cdots i_n )^{-}.  \]
    for distinct $i_1, \cdots, i_n \in [n]$.
Thus all Coxeter elements have cycle type $(\emptyset, (n))$, and are in the conjugacy class $\cc_{\emptyset, (n)}$. 

\subsubsection{Representation theory of the hyperoctahedral group}\label{sec:reptheorytypeB}
This section will closely follow Stembridge's write-up in \cite{branchingrules}. Recall that $\tau: B_n \to S_n$ is the surjection which forgets the signs of permutations (e.g. $\tau(s_i) = s_i$ and $\tau(t_n) = 1$). Let $S = \{ s_{1}, \cdots, s_{n-1}, t_{n}\}$ and for $\sigma \in B_{n}$ and $s \in S$, let $c_s(\sigma)$ count the number of instances of $s$ in a reduced expression for $w$.
 Then define
 \[ \delta_{s}(\sigma) = (-1)^{c_s(\sigma)}. \]

Like conjugacy classes, irreducible representations of $B_{n}$ are indexed by signed partitions $\sgnpart$. 

We define the irreducible characters $\chi^{\lambda, \mu}$ of $B_n$ as follows:
\begin{itemize}
    \item $\chi^{\lambda, \emptyset}$ is obtained from the irreducible character $\chi^{\lambda}$ of $S_n$ by pulling back along $\tau;$ in other words
    \[ \chi^{\lambda, \emptyset}(s_i) = \chi^{\lambda}(s_i), \hspace{5em} \chi^{\lambda, \emptyset}(t_n) = 1. \]
    \item $\chi^{\emptyset, \lambda}$ is obtained from the tensor product
    \[\chi^{\emptyset, \lambda} = \chi^{\lambda, \emptyset } \otimes \delta_{t_{n}}. \]
    \item $\chi^{\lambda, \mu}$ is defined by the induction product:
     \[\chi^{\lambda, \mu} = \chi^{\lambda, \emptyset} \cdot \chi^{\emptyset, \mu}:=  \ind_{B_{|\lambda|} \times B_{|\mu|}}^{B_{n}}( \chi^{\lambda, \emptyset}\times \chi^{\emptyset, \mu}). \]
\end{itemize}
The induction product further implies that one inherits branching rules for $\chi^{\lambda, \mu} \cdot \chi^{\nu, \psi}$ from the Type $A$ Littlewood-Richardson Rule by rewriting
\begin{align*} \chi^{\lambda, \mu} \cdot \chi^{\nu, \psi} &= (\chi^{\lambda, \emptyset} \cdot \chi^{\nu, \emptyset}) \cdot (\chi^{\emptyset, \mu} \cdot \chi^{\emptyset, \psi}) \\
& = \left( \sum_{\alpha}c_{\lambda,\nu}^{\alpha}\chi^{\alpha,\emptyset} \right) \cdot \left( \sum_{\alpha}c_{\mu,\psi}^{\beta}\chi^{\emptyset,\beta}  \right)\\
&= \sum_{\alpha, \beta} c_{\lambda,\nu}^{\alpha}c_{\mu,\psi}^{\beta} \chi^{\alpha, \beta},
\end{align*}
where $c_{\lambda,\nu}^{\alpha}, c_{\mu,\psi}^{\beta}$ are the standard Littlewood-Richardson coefficients. 

\begin{example} \rm
There are four 1-dimensional representations of $B_{n}$, shown in Table \ref{tab:1dimreps} below. 
\begin{table}[!h]
\centering
\setlength{\tabcolsep}{10pt} 
\renewcommand{\arraystretch}{1.25} 
\begin{tabular}{|l||l|}
\hline
Representation & Character of $\sigma \in B_n$ \\ \hline  \hline
 $\chi^{(n), \emptyset}$ & 1\\ \hline
  $\chi^{\emptyset, (n)}$ & $\delta_{t_{n}}(\sigma)$\\ \hline
  $\chi^{(1^{n}), \emptyset}$ & $\delta_{s_{i}}(\sigma)$\\ \hline
  $\chi^{\emptyset, (1^{n})}$ & $\delta_{t_{n}} \otimes \delta_{s_{i}}(\sigma)$. \\ \hline
\end{tabular}
\caption{The 1-dimensional representations of $B_{n}$}\label{tab:1dimreps}
\end{table} 
Note that $\chi^{(1^{n}), \emptyset}$ has character $\delta_{s_{i}}$ for any $i \in [n-1]$ because all such $s_{i}$ are conjugate. The representation $\chi^{(n), \emptyset}$ is the trivial representation and $\chi^{\emptyset, (1^{n})}$ is the sign representation.
\end{example}

\begin{example} \rm
Consider the character tables for $B_1$ (Table \ref{charactertableb1}) and $B_2$ (Table \ref{charactertableb2}), which will be useful in the proof of Proposition~\ref{reptheory:n=2}.
\begin{table}[!h]
\centering
\setlength{\tabcolsep}{10pt} 
\renewcommand{\arraystretch}{1.25} 
\begin{tabular}{|l||l|l|}
\hline 
 & $\cc_{(1), \emptyset} =  \{ 1 \}$& $\cc_{\emptyset, (1)} = \{ t_{1} \}$ \\ \hline \hline
$\chi^{(1),\emptyset}$& 1 & 1 \\ \hline
$\chi^{\emptyset, (1)}$ &1 & -1 \\ \hline 
\end{tabular}
\caption{The character table for $B_{1}$.}\label{charactertableb1}
\end{table}

\begin{table}[!h]
\centering
\setlength{\tabcolsep}{10pt} 
\renewcommand{\arraystretch}{1.25} 
\begin{tabular}{|l||l|l|l|l|l|}
\hline 
 & $\cc_{(1,1),\emptyset} $ & $ \cc_{(2),\emptyset} $ & $\cc_{(1),(1)} $ & $\cc_{\emptyset,(2)} $ & $ \cc_{\emptyset, (1,1)} $ \\ 
  & $= \{ 1 \} $ & $  = \{ s_1, t_2 s_1 t_2 \}$ & $ = \{ t_2, s_1 t_2 s_1 \}$ & $ = \{ s_1t_2, t_2s_1 \}$ & $ = \{ (s_1t_2)^2 = w_0 \}$ \\ \hline \hline

$\chi^{(2),\emptyset}$& 1 & 1 & 1 & 1 & 1  \\ \hline

$\chi^{\emptyset, (1,1)}$ &1 & -1 & -1 & 1 & 1  \\ \hline

$\chi^{(1,1),\emptyset}$  & 1 & -1 & 1 & -1 & 1 \\ \hline

$\chi^{\emptyset, (2)}$  & 1 & 1 & -1 & -1 & 1 \\ \hline

$\chi^{(1),(1)}$  & 2 & 0 & 0 & 0 & -2 \\ \hline
\end{tabular}
\caption{The character table for $B_{2}$.}\label{charactertableb2}
\end{table}
\end{example}

\section{The $d=1$ case in Type $B$}\label{sec:d1}
Our first goal is to determine a presentation for the cohomology of $\bconf^1$ and $\bconflift^1$ with coefficients in $\Z$; later in Section \ref{sec:d3} we will change to having coefficients in $\Q$ (and indicate as much). In contrast to Type $A$, we will begin with $\bconflift^1$.
\subsection{Signed cyclic Heaviside functions and $\bconflift^1$}\label{sec:signedheaviside}
The space $\Y_{n+1}^{1}$ is comprised of $2^{n}n!$ contractible pieces, parametrized by arrangements of $p_{0},\cdots, p_{n}$ \emph{and their antipodal points} $-p_{0}, \dots, -p_{n}$ on $U(1)$. Given a point $\vec{p} = (p_{0}, \cdots, p_{n}) \in \Y_{n+1}^{1}$, write $C(\vec{p}) = C(p_{0}, \cdots, p_{n})$ as the arrangement of $\vec{p}$ \emph{with} antipodes on $U(1)$ and $-p_{i}$ as $p_{\overline{i}}$. By convention $\overline{\overline{i}} = i$. 
Recall that 
\[ [n] = \{ 1, \cdots, n \}, \hspace{2em} [n]_{0} = \{ 0, 1, 2, \cdots, n\}, \hspace{2em}[n]^{-} = \{ \overline{1}, \cdots, \overline{n} \}, \hspace{2em} [n]^{\pm} = [n] \cup [n]^{-}. \]
Further define
\[ [n]_{0}^{-} := \{ \overline{0}, \overline{1}, \cdots, \overline{n} \}, \hspace{2em} [n]^{\pm}_{0} := \{ 0, \overline{0}, 1, \overline{1}, \cdots, n, \overline{n}\}.\]

When considering $C(\vec{p})$, we will identify the point $p_i$ with $i$ and $-p_i = p_{\overline{i}}$ with $\overline{i}$, so that $C(\vec{p})$ has entries in $[n]^{\pm}_{0}$.
We take $B_{n+1}$ to be the group of signed permutations on $[n]^{\pm}_{0}$, so that $\sigma \in B_{n+1}$ acts on elements of $C(\vec{p})$ by sending $i \in [n]^{\pm}_{0}$ to $\sigma(i)$. We will also consider the restricted action of $B_n \leq B_{n+1}$, where $B_n$ is thought of as the subgroup of $B_{n+1}$ fixing $\overline{0}$ and $0$.
\begin{example} \rm
Let $\vec{p} = (x_0, x_1, x_2, x_3)$, where $x_j = e^{ i  \zeta_{j} }$ and $x_{\overline{j}} = e^{ i  \overline{\zeta_{j}} },$ where $\overline{\zeta_{j}} = \zeta_j + \pi$. Suppose that 
\[ 0 \leq \zeta_0 < \zeta_1 < \overline{\zeta_2} < \zeta_3 <  \overline{\zeta_0} <  \overline{\zeta_1} < \zeta_2 <  \overline{\zeta_3} < 2 \pi. \]
Then we can write $C(\vec{p}) = (0,1,\overline{2}, 3, \overline{0}, \overline{1}, 2, \overline{3})$, which is drawn below. Note that we are only interested in the relative order of the $x_j$, rather than their values.  
\begin{center}
\begin{tikzpicture}[thick][scale = .8]
  \draw[thick](0,0) circle(1.7);
\node[] (A) at (2,0){$0$};
 
\node[] (B) at (-2,0){$\overline{0}$};
 
\node[] (C) at (0,2){$\overline{2}$};
\node[] (D) at (0,-2){$2$};

\node[] (E) at (1.41,1.41){$1$};
\node[] (F) at (-1.41,-1.41){$\overline{1}$};

\node[] (G) at (-1.41,1.41){$3$};
\node[] (H) at (1.41,-1.41){$\overline{3}$};

 
 
\end{tikzpicture}
\end{center}
\end{example}

We define \emph{signed cyclic Heaviside functions} $y_{ijk}$ for distinct $i,j,k \in [n]^{\pm}_{0}$ as 
\[y_{ijk}(\vec{p}):=  \begin{cases} 1 & p_{i} < p_{j} < p_{k} \textrm{ counter-clockwise in }C(\vec{p}) \\
0 & \textrm{ otherwise.}
\end{cases} \]

Once again, the $y_{ijk}$ form a $\Z$-algebra with multiplication given by
\[y_{ijk}\cdot y_{qrs}(\vec{p}):=  \begin{cases} 1 & p_{i} < p_{j} < p_{k}\textrm{ and } p_{q} < p_{r} < p_{s}  \textrm{ counter-clockwise in }C(\vec{p}) \\
0 & \textrm{ otherwise.}
\end{cases} \]
The $B_{n+1}$ action on $\bconflift^1$ induces an action on the signed cyclic  Heaviside functions, where $\sigma \in B_{n+1}$ acts by
\[ \sigma \cdot y_{ijk} = y_{\sigma(i)\sigma(j)\sigma(k)}. \]

\begin{example} \rm
Consider the 8 representatives of $\Y_{3}^1$ and their evaluation via select cyclic Heaviside functions shown in Table \ref{tab:heavi}. Note that if $y_{ijk}$ is included, there is no need to include $y_{jik}$ since $y_{jik} = 1 - y_{ijk}$. Each entry in Table \ref{tab:heavi} indicates the value of a signed cyclic Heaviside function (columns) evaluated at a given representative in $\bconflift^1$ (rows). 
\begin{table}[!h]
\centering
\setlength{\tabcolsep}{10pt} 
\renewcommand{\arraystretch}{1.25} 
\begin{tabular}{|l||l|l|l|l|l|l|}
\hline
 $C(\vec{p})$ & $y_{0 \overline{0} 1}$ & $y_{0 \overline{0} 2}$ & $y_{012}$ & $y_{0 1 \overline{2}}$ & $y_{0\overline{1}2}$    & $y_{0\overline{1}\overline{2}}$  \\ 
 \hline \hline
$(0,1,2,\overline{0},\overline{1},\overline{2})$ & 0  & 0  & 1   & 1 & 0 & 1 \\ \hline
$(0,2,1,\overline{0}, \overline{2},\overline{1})$ & 0 & 0  & 0   & 1 & 0 & 0 \\ \hline
$(0,\overline{1},2,\overline{0},1,\overline{2})$ & 1  & 0  & 0  & 1 & 1 & 1 \\ \hline
$(0, 2,\overline{1},\overline{0}, \overline{2},1)$ & 1  & 0  & 0   & 0 & 0 & 1 \\ \hline
$(0,1,\overline{2},\overline{0},\overline{1},2)$ & 0 & 1  & 1   & 1 & 1 & 0 \\ \hline
$(0,\overline{2},1,\overline{0},2,\overline{1})$ & 0  & 1  & 1   & 0 & 0 & 0 \\ \hline
$(0,\overline{1},\overline{2},\overline{0},1,2)$ & 1  & 1  & 1   & 0 & 1 & 1 \\ \hline
$(0,\overline{2},\overline{1},\overline{0},2,1)$ & 1  & 1  & 0   & 0 & 1 & 0 \\ \hline
\end{tabular}
\caption{Signed cyclic Heaviside functions evaluating the 8 representatives in $\Y_{3}^1$}\label{tab:heavi}
\end{table} 
\end{example}

\begin{prop}\label{prop:ungradedpt1}
The $B_{n+1}$ action on the set of connected components of $\bconflift^1$ 
is isomorphic to the coset action \[ B_{n+1}/\langle c \rangle, \]
where $c$ is a Coxeter element of $B_{n+1}$.
The $B_n$ action on the set of connected components of $\bconflift^1$ is simply transitive. 
\end{prop}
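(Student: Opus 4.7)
\medskip

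\textbf{Proof proposal.} The plan is to describe the connected components of $\bconflift^1$ very concretely and then exhibit the two actions on this combinatorial model.

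First I would unpack the components. A representative $\vec p = (p_0,\ldots,p_n) \in \Y_{n+1}^1$, together with the antipodal points $p_{\overline i} := -p_i$, gives $2(n+1)$ distinct points on $U(1)$. Since the $U(1)$ quotient washes out rotations, each component is determined precisely by the cyclic arrangement $C(\vec p) \in (\text{sequences on }[n]^{\pm}_0)/\text{rotation}$, subject to the antipodal constraint that $\overline i$ sits directly opposite $i$. After choosing representatives with $p_0$ at angle $0$ (so $p_{\overline 0}$ at $\pi$), a component is uniquely encoded by the cyclic list
\[
C = (0,\, a_1,\, a_2,\, \ldots,\, a_n,\, \overline 0,\, \overline{a_1},\, \overline{a_2},\, \ldots,\, \overline{a_n})
\]
with $(a_1,\ldots,a_n)$ the one-line notation of a unique element of $B_n$. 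This recovers the count $2^n n!$, and distinguishes the \emph{base component}
\[
C_0 := (0,\,1,\,2,\,\ldots,\,n,\,\overline 0,\,\overline 1,\,\ldots,\,\overline n).
\]

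Next, I would verify that $B_n$ acts simply transitively. The map $B_n \to \pi_0(\bconflift^1)$ sending $\sigma \mapsto \sigma \cdot C_0$ is exactly the bijection above: reading off the coordinates of $C$ after $0$ recovers the one-line notation of $\sigma$. Freeness is automatic since $B_n$ acts faithfully on $[n]^\pm$, and transitivity is the surjectivity of this map.

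For the $B_{n+1}$-statement I would use counting plus an explicit stabilizer. Transitivity is inherited from the $B_n$-action. For the stabilizer of $C_0$, consider the Coxeter element
\[
c := (0\,1\,2\,\cdots\,n)^- \in B_{n+1},
\]
which sends $i \mapsto i+1$ for $0 \le i \le n-1$, $n \mapsto \overline 0$, and correspondingly on the barred elements, so $c$ cyclically shifts the sequence $(0,1,\ldots,n,\overline 0,\overline 1,\ldots,\overline n)$ by one position. Modulo rotation (that is, in $\Y_{n+1}^1$), this fixes $C_0$, so $\langle c \rangle \subseteq \mathrm{Stab}(C_0)$. Since $c$ has order $2(n+1)$ we get $|\mathrm{Stab}(C_0)| \ge 2(n+1)$. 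On the other hand, by the orbit--stabilizer theorem, transitivity forces
\[
|\mathrm{Stab}(C_0)| = \frac{|B_{n+1}|}{2^n n!} = \frac{2^{n+1}(n+1)!}{2^n n!} = 2(n+1),
\]
so $\mathrm{Stab}(C_0) = \langle c \rangle$ and the $B_{n+1}$-action on components is isomorphic to $B_{n+1}/\langle c \rangle$.

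The main obstacle is setting up the combinatorial model cleanly—especially the bookkeeping that a signed permutation's one-line notation is precisely the counter-clockwise reading of a component, and that the standard generator of a cyclic Coxeter element is exactly the one-position rotation of $C_0$. Once this dictionary is nailed down, both parts reduce to orbit--stabilizer counting.
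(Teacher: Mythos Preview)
Your proposal is correct and follows essentially the same approach as the paper: both identify the base component $C_0 = (0,1,\ldots,n,\overline 0,\ldots,\overline n)$, observe that the Coxeter element $c=(01\cdots n)^-$ acts on it as a one-step cyclic shift (hence fixes it modulo rotation), and read off the $B_n$ claim from the one-line-notation bijection. Your explicit orbit--stabilizer count to pin down $\mathrm{Stab}(C_0)=\langle c\rangle$ is a nice addition; the paper leaves that step implicit.
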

\begin{proof}
It is not difficult to see that the $B_{n+1}$ action on the connected components of $\bconflift^1$ is transitive. Consider a ``typical'' point $C(\vec{p}) = (0,1,2,\ldots,n,\overline{0},\overline{1},\ldots,\overline{n}) \in \bconflift^1$; it is fixed by the cyclic subgroup generated by the Coxeter element 
\[ c= (012\cdots n)^{-}. \]
This proves the first claim.

For the second, since $B_n$ fixes $0$ and $\overline{0}$, any $\sigma \in B_n$ will act on our typical point $C(\vec{p})$ as
\[ \sigma \cdot (0,1,2,\ldots,n,\overline{0},\overline{1},\ldots,\overline{n}) = (0,\sigma(1),\sigma(2),\ldots,\sigma(n),\overline{0},\sigma(\overline{1}),\ldots,\sigma(\overline{n})),  \]
and is thus fixed-point free. This $B_n$ action is also transitive, and so the second claim follows. 
\end{proof}

The next step is to give a presentation of $\bconfhomlift^1$ using the signed cyclic Heaviside functions as generators. First we determine relations that hold in $\bconfhomlift^1$. 
\begin{prop}\label{CHFrelations}
The following relations hold in $\bconfhomlift^1$ for $i,j,k,\ell \in [n]^{\pm}_{0}$:
\begin{equation*}
      (i) \hspace{.5em} y_{ijk}(1- y_{ijk}) = 0, \hspace{5em}   (ii)  \hspace{.5em} y_{ijk} =1 -y_{ikj}, \hspace{5em}   (iii)  \hspace{.5em}
\hspace{.5em} y_{\ \overline{ i  } j \, k} = y_{i \, \overline{ j \, k}},
\end{equation*}
\begin{equation*}
     (iv) \hspace{.5em} y_{ijk} - y_{ij\ell} + y_{ik \ell} - y_{jk\ell} = 0,  \hspace{5em}  (v) \hspace{.5em}  y_{ijk}y_{ik \ell }(1-y_{ij\ell}) + (1-y_{ijk})(1-y_{ik \ell })y_{ij\ell} = 0.
\end{equation*}
\end{prop}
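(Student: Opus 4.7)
Each signed cyclic Heaviside function $y_{ijk}$ is a $\{0,1\}$-valued locally constant function on $\bconflift^1$, so it suffices to verify each identity pointwise on every connected component of $\bconflift^1$. By Proposition \ref{prop:ungradedpt1}, a component is determined by the cyclic arrangement $C(\vec p)$ of the $2(n{+}1)$ points $\{p_i, p_{\overline i}\}_{i=0}^n$ on $U(1)$, subject to the constraint that $p_i$ and $p_{\overline i}$ are antipodal for every $i$. All indices $i,j,k,\ell$ appearing below are assumed distinct in $[n]^{\pm}_{0}$ and to correspond to distinct points in $C(\vec p)$.

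The first three identities are essentially formal. Identity $(i)$ is immediate because $y_{ijk}$ takes values in $\{0,1\}$. For $(ii)$, note that for any three distinct points on an oriented circle exactly one of the two cyclic orderings $p_i < p_j < p_k$ or $p_i < p_k < p_j$ holds counter-clockwise, giving $y_{ijk} + y_{ikj} = 1$. For $(iii)$, the antipodal map on $U(1)$ is rotation by $\pi$, hence orientation-preserving, so $(p_i, p_j, p_k)$ is counter-clockwise if and only if $(p_{\overline i}, p_{\overline j}, p_{\overline k})$ is; rewriting this using $\overline{\overline i} = i$ yields $y_{\overline{i}\,j\,k} = y_{i\,\overline{j}\,\overline{k}}$.

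Identity $(iv)$ is the cyclic four-term relation; it is the signed analog of the Early--Reiner relation \cite[Thm 3]{Early-Reiner} cited immediately above the Proposition. To prove it, fix the position of $p_i$ in $C(\vec p)$ and read the positions of $p_j, p_k, p_\ell$ counter-clockwise starting from $p_i$; this gives a linear ordering of $\{p_j,p_k,p_\ell\}$, and each of the four terms $y_{ijk}, y_{ij\ell}, y_{ik\ell}, y_{jk\ell}$ is then either a Heaviside comparison among this linear ordering (the first three) or unaffected by $p_i$ (the last). A direct check on each of the $3!=6$ linear orderings of $\{p_j,p_k,p_\ell\}$ confirms that the alternating sum vanishes; this is identical to the Type $A$ verification of Moseley--Proudfoot--Young \cite[Rmk 2.9]{moseley2017orlik}, which goes through verbatim here because the argument depends only on the oriented cyclic structure of $U(1)$, not on the antipodal decoration.

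Finally, identity $(v)$ encodes the transitivity of cyclic orders and in fact each of its two summands vanishes separately. For the first summand to be nonzero we would need $y_{ijk}=1$, $y_{ik\ell}=1$ and $y_{ij\ell}=0$; but $y_{ijk}=1$ and $y_{ik\ell}=1$ force the counter-clockwise cyclic ordering starting at $p_i$ to be $(p_i, p_j, p_k, p_\ell)$, whence $y_{ij\ell}=1$, a contradiction. The second summand is ruled out by the symmetric argument: $y_{ijk}=0$ and $y_{ik\ell}=0$ force the ordering $(p_i, p_\ell, p_k, p_j)$, which gives $y_{ij\ell}=0$. The only real obstacle is bookkeeping the cyclic (rather than linear) orderings and keeping track of the antipodal constraint when applying $(iii)$; the underlying combinatorics is a routine case-check.
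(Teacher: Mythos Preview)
Your proof is correct and follows essentially the same approach as the paper's: both arguments are direct pointwise verifications, using the $\{0,1\}$-valuedness for $(i)$--$(ii)$, the orientation-preserving nature of the antipodal map for $(iii)$, an explicit check of the six cyclic orderings for $(iv)$ (the paper records this in a table), and the transitivity/disjoint-support observation for $(v)$. The only cosmetic difference is that you articulate the reasoning for $(v)$ slightly more explicitly, showing each summand vanishes separately; the paper phrases this as the three factors having disjoint support.
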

\begin{proof}
Relations $(i)$ follows from the fact that $y_{ijk}$ and $(1-y_{ijk})$ have disjoint support. Relation $(ii)$ holds because $y_{ijk}$ and $y_{ikj}$ have disjoint support and $y_{ijk} + y_{ikj} = 1$. Relation $(iii)$ comes from noting that by definition of the antipode, $(\overline{i},j,k)$ appears in $\vec{p}$ if and only if $(i, \overline{j}, \overline{k})$ appears in $\vec{p}$. For relation $(iv)$, consider the six possible relative cyclic orderings of distinct elements $i,j,k$ and $\ell$, and their evaluations on $y_{ijk}, y_{ij\ell}, y_{ik\ell}$ and $y_{jk\ell}$, shown in Table \ref{tab:weirdrel}.
\begin{table}[!h]
\centering
\setlength{\tabcolsep}{10pt} 
\renewcommand{\arraystretch}{1.25} 
\begin{tabular}{|l||l|l|l|l||l|}
\hline
 & $y_{ijk}$ & $y_{ij\ell}$ & $y_{ik\ell}$ & $y_{jk\ell}$ &$y_{ijk} - y_{ij\ell} + y_{ik \ell} - y_{jk\ell}$  \\ 
 \hline \hline
$(i, j, k, \ell)$ & 1  & 1  & 1   & 1 & 0 \\ \hline
$(i,k,j,\ell)$ & 0  & 1  & 1   & 0 & 0 \\ \hline
$(j,i,k, \ell)$ & 0  & 0  & 1   & 1 & 0 \\ \hline
$(j, k, i, \ell)$ & 1  & 0  & 0   & 1 & 0 \\ \hline
$(k,i,j,\ell)$ & 1  & 1  & 0   & 0 & 0  \\ \hline
$(k,j,i,\ell)$ &0  & 0  & 0   & 0 & 0  \\
 \hline
\end{tabular}
\caption{Relation $(iv)$ on all possible cyclic orderings of elements $i,j,k,\ell$}\label{tab:weirdrel}
\end{table} 

 Relation $(v)$ follows because $y_{ijk}$, $y_{ik\ell }$ and $(1- y_{ij \ell})$ have disjoint support, implying that $(1-y_{ijk}),$ $(1-y_{ik \ell})$ and $y_{ij\ell}$ do as well.
\end{proof}
Note that relations $(i), (ii)$ and $(v)$ deal only with the two of the three coordinates in the $y_{ijk}$ (and will therefore be useful in the restriction to $\bconfhom^1$), whereas relations $(iii)$ and $(iv)$ use all three coordinates, and so are only useful to $\bconfhom^1$ in specific cases, discussed in Remark \ref{rm:specialcases}. 

\begin{remark}\label{rm:specialcases} \rm
Relation $(iv)$ has several noteworthy special cases that will be important in computing the presentation for $\bconfhomlift^1$ (Theorem \ref{thm:lift1pres}) and $\bconfhom^1$ (Theorem \ref{cor:bconfhom1}) in \S \ref{sec:bconf1}. 

\begin{enumerate}
\item Taking $i =0$ gives a method of rewriting any $y_{jk\ell}$ as
\[ y_{jk\ell} = y_{0jk} - y_{0j\ell} + y_{0 k \ell}. \]
This allows us to write any signed cyclic Heaviside function in terms of $y_{ijk}$ with $i=0$, which we will use in the proof of Theorem \ref{thm:lift1pres} and when restricting from $\bconfhomlift^1$ to $\bconfhom^1$.
    \item Taking $j = 0, k = \overline{i}$ and $\ell = \overline{0}$ gives 
    \begin{align*} 0 &= y_{0 \overline{i}i} - y_{\overline{0}i 0} + y_{\overline{0}i\overline{i}} - y_{\overline{0}0\overline{i}} \\
    &=  y_{0 \overline{i}i} - y_{0\overline{0}i} + y_{0\overline{i}i} - y_{0\overline{0}i}\\
    & = 2 (y_{0 \overline{i} i} - y_{0 \overline{0}i}),\end{align*}
    where the second line follows by using Relation $(iii)$.
    This relation will prove to be a useful reduction in the proof of Theorem~\ref{thm:lift1pres}.
        \item  Taking $j = \overline{i}$ gives
    \begin{align*}
        0 &= y_{i\overline{i}k} - y_{i\overline{i}\ell} + y_{ik \ell} - y_{\overline{i}k\ell}  \\
        & = y_{i\overline{i}k} - y_{i\overline{i}\ell} + y_{ik \ell} - y_{i\overline{k}\overline{\ell}}. 
    \end{align*}
    This case is most relevant when $i = 0$, in which case
    \[ 0 = y_{0 \overline{0}k} - y_{0\overline{0}\ell} + y_{0k\ell} - y_{0 \overline{k}\overline{\ell}}.\]
    Similarly, using relations $(iii)$ and $(ii)$,
    \begin{align*}
        0 &= y_{0 \overline{0}k} - y_{0\overline{0}\overline{\ell}} + y_{0k\overline{\ell}} - y_{0 \overline{k}\ell} \\
        &= y_{0 \overline{0}k} - y_{\overline{0}0\ell} + y_{0k\overline{\ell}} - y_{0 \overline{k}\ell}\\
        &= y_{0 \overline{0}k} - (1-y_{0\overline{0}\ell}) + y_{0k\overline{\ell}} - y_{0 \overline{k}\ell}.\\
    \end{align*}
    In addition to appearing as a reduction in the proof of Theorem \ref{thm:lift1pres}, these relations will be instrumental in computing a presentation for $\bconfhom^1$ in Theorem~\ref{cor:bconfhom1} and will appear again in \S \ref{sec:d3} (Proposition~\ref{prop:actionongenerators}). \end{enumerate}
\end{remark}

Next, we show that the relations in Proposition \ref{CHFrelations} are sufficient to describe $\bconfhomlift^1$. In order to do so, we will employ a Gr\"obner basis lemma used by Dorpalen-Barry in \cite{dorpalen2021varchenko}.
For vectors
\begin{align*}
{\bf{(i,j,k)}} &:= (i_{1},j_{1},k_{1}),(i_{2},j_{2},k_{2}), \cdots, (i_{\ell},j_{\ell},k_{\ell}),\\
{\bf{a}} &:= (a_{1}, \cdots, a_{\ell})
\end{align*}
write
\[ {\bf{y}_{(i,j,k)}^{a}} :=  y^{a_{1}}_{i_{1}j_{1}k_{1}}y^{a_{2}}_{i_{2}j_{2}k_{2}} \cdots y^{a_{\ell}}_{i_{\ell}j_{\ell}k_{\ell}}. \]

Then given a polynomial 
\[ f = \sum c_{a}  {\bf{y}_{(i,j,k)}^{a}}, \]
with $c_a \in \Z$, the \emph{degree} of $f$ is 
\[ \deg(f):= \max \{ \sum_{i} a_{i}: c_{a} \neq 0 \} \]
and the \emph{degree-initial form} of $f$ is 
\[ \indeg(f):= \sum_{\substack{a \\ \sum_{i} a_{i} = \deg(f) }} c_{a} {\bf{y}_{(i,j,k)}^{a}}.  \]
In other words, $\indeg(f)$ picks off the top degree terms of $f$. Given a polynomial ideal $\mathcal{Q}$,  let $\indeg(\mathcal{Q})$ be the ideal generated by $\indeg(f)$ for $f \in \mathcal{Q}$.

A similar notion exists for any total ordering $\prec$ on monomials provided that $\prec$ is a \emph{well-ordering}, meaning that if $f \prec f',$ then for any monomial $g$, one has $fg \prec f'g$. Suppose $\prec$ is a well-ordering on monomials in $\Z[y_{ijk}]$ for $i,j,k \in [n]^{\pm}_{0}$. Then given a polynomial 
\[ f = \sum c_{a} {\bf{y}_{(i,j,k)}^{a}} \in \Z[y_{ijk}], \]
define $\inn_{\prec}(f)$ to be the $\prec$-leading \emph{monomial} in $f$. The ordering $\prec$ is said to be a \emph{degree order} if it is compatible with the natural degree ordering on $f$ in the sense that  $\inn_{\prec}(f) = \inn_{\prec}(\indeg(f))$.

Our goal is to define a degree order $\prec$ on the set of polynomials $\Z[y_{ijk}]$ for any distinct $i,j,k \in [n]^{\pm}_{0}$. To do so, first order $[n]^{\pm}_{0}$:   

 \begin{equation}\label{eq:order}
\{ 0 < \overline{0} < 1 < \overline{1} < \cdots < n < \overline{n}. \}     \end{equation} 

Given $y_{ijk}$ with $i,j,k \in [n]^{\pm}_{0}$, order the indices so that $i < j, k$ in \eqref{eq:order}; this is always possible because the $y_{ijk}$ are equivalent up to cyclic rotation. From there,  define $\prec$ by lexicographically ordering the indices in $y_{ijk}$ using the total order in \eqref{eq:order}. One can check that $\prec$ does indeed define a well-ordering on $\Z[y_{ijk}]$ for $i,j,k \in [n]^{\pm}_{0}$. We will use the ordering $\prec$ to argue that the relations in Proposition \ref{CHFrelations} are the only ones needed to generate $\bconfhomlift^1$. 

Given a set of polynomials $\I= \{ f_{i} \}_{i \in I}$ indexed by a set $I$, the polynomials which are \emph{not} divisible by the terms $\inn_{\prec}(f_{i})$ for each $f_{i}$ are called the \emph{ $\inn_{\prec}(\mathcal{I})$-standard monomials}. For example, if $\mathcal{I} = \{ y_{ijk}^{2} - y_{ijk} \}$, then 
\[ \inn_{\prec}( y_{ijk}^{2} - y_{ijk}) = y_{ijk}^{2}, \]
and so in this case $y_{ijk}$ is an $\inn_{\prec}(\mathcal{I})$-standard monomial. To foreshadow, when $\mathcal{I}$ is the set of relations in Proposition \ref{CHFrelations}, the set of $\inn_{\prec}(\mathcal{I})$-standard monomials will be a $\Z$-basis for $\bconflift^1$. 

Finally, recall that an \emph{ascending filtration} of a commutative ring $R$ is a nested sequence of $\Z$-submodules $F_{0} \subseteq F_{1} \subseteq \cdots$ such that for $f \in F_{i}$ and $f' \in F_{j}$, one has $f\cdot f' \in F_{i+j}$. The \emph{associated graded ring} of $R$ with respect to this filtration is then defined to be 
\[ \gr(R):= \bigoplus_{i \geq 0} F_{i}/F_{i-1}. \]
In our case, the ring we are interested in is $\bconfhomlift^1$ and the filtration is the natural filtration by degree, so that $F_{d}$ consists of polynomials of degree $d$ or less.

From these definitions we may now state the relevant lemma. Let $S = \Z[e_{1}, \cdots, e_{r}],$ where the $e_{i}$ are standard basis vectors. 

\begin{lemma}[Dorpalen-Barry, \cite{dorpalen2021varchenko} Lemma 8]\label{lemma:GDB}
Let $R$ be a free $\Z$-module of rank $r$. Given a surjection $\gamma: S \twoheadrightarrow R$, let $\mathcal{I} = \{ f_{i} \}_{i \in I} \subset S$ be a set of polynomials such that 
\begin{enumerate}
    \item Each $f_{i}$ is \emph{monic} (e.g. the coefficient of $\indeg(f_{i})$ is $\pm 1$),
    \item $\mathcal{I} \subset \ker(\gamma)$, and 
    \item the set of $\inn_{\prec}(\mathcal{I})$-standard monomials $\mathcal{N} = \{ m_{1}, \cdots, m_{t} \}$ has cardinality $t \leq r$.
\end{enumerate}
Then \begin{enumerate}
    \item $R \cong S/(\mathcal{I})$ as $\Z$-modules, where $(\mathcal{I})$ is the ideal generated by $\mathcal{I}$ and
    \item the cardinality of $\mathcal{N}$ is $r$, so that $\gamma(m_{1}), \cdots, \gamma(m_{r})$ is a $\Z$-basis for $R$.
\end{enumerate}
In the case that $\prec$ is a degree-ordering, then there is a further $\Z$-module isomorphism
\[ S / \indeg(\mathcal{I}) \cong \gr (R). \]
The Hilbert series for $\gr(R)$ is then given by 
\[ \hilb( \gr(R),t) = \sum_{i=1}^{r} t^{\deg(m_{i})}. \]
\end{lemma}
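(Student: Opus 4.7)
The plan is to prove this lemma via a standard Gr\"obner-style reduction argument, carefully tracked over $\Z$ (rather than a field) by exploiting the monic hypothesis. The key insight is that monicity of each $f_i \in \mathcal{I}$ (coefficient of $\inn_\prec(f_i)$ equal to $\pm 1$) lets us use $f_i$ as a rewriting rule without ever clearing denominators.

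First I would show that the set $\mathcal{N}$ of $\inn_\prec(\mathcal{I})$-standard monomials $\Z$-spans $S/(\mathcal{I})$. Given any monomial $g \in S$, if $g$ is divisible by $\inn_\prec(f_i)$ for some $i$, write $g = m \cdot \inn_\prec(f_i)$; since $\inn_\prec(f_i) = \pm \inn_\prec(\pm f_i)$, subtracting $\pm m f_i$ from $g$ produces an element congruent to $g$ modulo $(\mathcal{I})$ whose nonzero terms are all $\prec$-smaller than $g$. Since $\prec$ is a well-ordering, iterating this rewriting terminates, yielding a $\Z$-linear combination of elements of $\mathcal{N}$ that is congruent to $g$. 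Next, because $\mathcal{I} \subset \ker(\gamma)$, the surjection $\gamma$ factors as $S \twoheadrightarrow S/(\mathcal{I}) \twoheadrightarrow R$. Combining this with the spanning statement, the images $\gamma(m_1), \ldots, \gamma(m_t)$ generate $R$ as a $\Z$-module.

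Now I would pin down the rank count. Since $R$ is free of rank $r$ and is generated by $t$ elements, $t \geq r$; combined with the hypothesis $t \leq r$, this forces $t = r$. A surjection from a $\Z$-module generated by $r$ elements onto a free $\Z$-module of rank $r$, sending the generators to a generating set, must send them to a $\Z$-basis, and moreover the map $S/(\mathcal{I}) \twoheadrightarrow R$ must be an isomorphism (its source is spanned by $r$ elements mapping to a basis). This establishes parts (1) and (2).

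For the graded statement, I would observe that when $\prec$ is a degree order, $\inn_\prec(\indeg(f)) = \inn_\prec(f)$ for any $f$, so the set of $\inn_\prec(\indeg(\mathcal{I}))$-standard monomials coincides with $\mathcal{N}$. Applying parts (1)--(2) to the ideal $\indeg(\mathcal{I}) \subset S$ (which consists of homogeneous polynomials) gives a graded $\Z$-module isomorphism $S/\indeg(\mathcal{I}) \cong \bigoplus_d \Z\{m_i : \deg(m_i) = d\}$. On the other side, the reduction argument above shows that the degree-$d$ standard monomials form a $\Z$-basis for the $d$-th filtered piece $F_d$ modulo $F_{d-1}$, so $\gr(R)$ has exactly the same $\Z$-basis, graded identically. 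Matching these bases gives the $\Z$-module isomorphism $S/\indeg(\mathcal{I}) \cong \gr(R)$, and reading off the basis by degree yields the Hilbert series formula. The main subtlety, and the only point that would require care, is making sure the reduction procedure stays integral; this is exactly where monicity is used, and it is also the reason the hypothesis $t \leq r$ (rather than an equality) suffices to force $t = r$.
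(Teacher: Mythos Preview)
The paper does not prove this lemma; it is quoted verbatim from Dorpalen-Barry \cite{dorpalen2021varchenko} and used as a black box, so there is no ``paper's own proof'' to compare against. That said, your argument is the standard one and is correct: the monic hypothesis makes the Gr\"obner reduction integral, the rank squeeze $t \leq r$ together with the surjection onto a free module of rank $r$ forces $t = r$ and the factored map $S/(\mathcal{I}) \twoheadrightarrow R$ to be an isomorphism, and the degree-compatibility of $\prec$ lets you transport the basis $\mathcal{N}$ to $\gr(R)$.

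One small point worth tightening in the graded step: when you write ``applying parts (1)--(2) to the ideal $\indeg(\mathcal{I})$,'' you need to name the target free module. The cleanest way is to first observe (as you essentially do) that since $\prec$ refines degree, the reduction never raises degree, so the standard monomials of degree at most $d$ form a $\Z$-basis of $F_d(R)$; hence $\gr(R)$ is itself free of rank $r$ with basis $\mathcal{N}$. Then the natural graded surjection $S \to \gr(R)$ kills each $\indeg(f_i)$, giving $S/\indeg(\mathcal{I}) \twoheadrightarrow \gr(R)$, and now the same rank argument applies. Your sketch has all of these ingredients, just slightly out of order.
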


We will see that Lemma \ref{lemma:GDB} is precisely what we need to give a presentation for $\bconfhomlift^1$. 

Define the sets \[\underbrace{\{ y_{0\overline{0}1} \}}_{h_{1}}, \underbrace{\{ y_{0 \overline{0}2}, y_{012}, y_{01\overline{2}} \}}_{h_{2}}, \cdots, \underbrace{\{ y_{0 \overline{0}n}, y_{01n}, y_{0 1\overline{n}}, \cdots, y_{0(n-1)n}, y_{0 (n-1)\overline{n}} \}}_{h_{n}}, \] 
and let $\mathcal{N}$ be the set of monomials obtained by multiplying at most one term in each $h_{i}$. Intuitively, think of each $h_{i}$ as a hand and the elements in $h_{i}$ as its fingers; then $\mathcal{N}$ is the set of monomials obtained by picking at most one finger from each hand\footnote{The terminology of ``hands'' and ``fingers'' originates in H\'el\`ene Barcelo's thesis \cite[Thm 2.1]{barcelothesis} and was later used in Barcelo--Goupil \cite{barcelo1999lattices}, both in the context of describing an \emph{nbc-basis}. Such bases arise in the study of matroids. While we are not in the matroid (e.g. hyperplane) setting, because our description of $\mathcal{N}$ uses the hand/finger description, we may refer to it basis as an nbc-basis nonetheless.}.

\begin{theorem}\label{thm:lift1pres} Let $\mathcal{I}$ be the ideal generated by the relations in Proposition \ref{CHFrelations}. Then
\[ \bconfhomlift^1  = \Z \left[ y_{ijk}\right]/ \mathcal{I} \]
where $i,j,k$ are distinct elements in $[n]^{\pm}_{0}$. Further, the set $\mathcal{N}$ is a basis for $\bconfhomlift^1$.
\end{theorem}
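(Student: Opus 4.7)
The plan is to invoke Lemma~\ref{lemma:GDB} with the surjection $\gamma : \Z[y_{ijk}] \twoheadrightarrow \bconfhomlift^1$ sending each variable to its namesake signed cyclic Heaviside function and with $\mathcal{I}$ the ideal generated by the relations of Proposition~\ref{CHFrelations}. Surjectivity of $\gamma$ follows from Proposition~\ref{prop:ungradedpt1}: the $2^{n}n!$ connected components of $\bconflift^1$ are indexed by the cyclic orderings of the $2(n+1)$ antipodal points, the $y_{ijk}$ separate these components, and therefore their images generate the ring $H^{0}\bconflift^1$ of $\Z$-valued functions on the component set.

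The first two hypotheses of Lemma~\ref{lemma:GDB} are immediate: every relation in Proposition~\ref{CHFrelations} is monic (leading coefficient $\pm 1$), and $\mathcal{I} \subset \ker \gamma$ is the content of that proposition. A direct count gives
\[ |\mathcal{N}| \;=\; \prod_{i=1}^{n}\bigl(1+|h_{i}|\bigr) \;=\; \prod_{i=1}^{n}\bigl(1+(2i-1)\bigr) \;=\; \prod_{i=1}^{n}2i \;=\; 2^{n}n!, \]
matching the $\Z$-rank of $\bconfhomlift^1$ from Proposition~\ref{prop:ungradedpt1}. The third hypothesis will therefore follow as soon as one shows $\mathcal{N}_{\mathrm{std}} \subseteq \mathcal{N}$, where $\mathcal{N}_{\mathrm{std}}$ denotes the set of $\inn_{\prec}(\mathcal{I})$-standard monomials. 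Lemma~\ref{lemma:GDB} will then close the argument, simultaneously yielding the presentation $\bconfhomlift^1 \cong \Z[y_{ijk}]/\mathcal{I}$ and exhibiting $\mathcal{N}$ as a $\Z$-basis.

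The core of the proof is thus the containment $\mathcal{N}_{\mathrm{std}} \subseteq \mathcal{N}$: I must show that any monomial outside $\mathcal{N}$ is divisible by the $\prec$-leading term of some element of $\mathcal{I}$. I split this into two tasks. \emph{First,} reduce each variable $y_{ijk}$ into $\bigcup_{s=1}^{n}h_{s}$: cyclic rotation places the $\prec$-minimal index first; relation (ii) orders the remaining two indices; the $i=0$ specialization of relation (iv) from Remark~\ref{rm:specialcases}(1) reduces any $y_{jk\ell}$ with $\{j,k,\ell\} \cap \{0,\overline{0}\} = \emptyset$; relation (iii) handles $y_{\overline{0} jk}$; and the specializations of Remark~\ref{rm:specialcases}(2)--(3), combined with (ii) and rotation, kill the remaining ``exceptional'' generators such as $y_{0\overline{0}\overline{s}}$, $y_{0 s \overline{s}}$, $y_{0\overline{j} k}$, and $y_{0\overline{j}\overline{k}}$. \emph{Second,} forbid two factors from a single hand $h_{s}$ in a standard monomial: relation (i) kills exact repeats, and two distinct elements of $h_{s}$ always share two indices (up to sign), so an appropriate instance of relation (v) with $i=0$ has leading monomial equal to that product.

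The main obstacle is the bookkeeping in the second task: for each possible pair of elements within a single hand $h_{s}$ --- of the forms $y_{0\overline{0} s}\,y_{0 j s}$, $y_{0 j s}\,y_{0 j' s}$, $y_{0 j s}\,y_{0 j' \overline{s}}$, and $y_{0 j \overline{s}}\,y_{0 j' \overline{s}}$ --- one must identify an instance of relation (v) whose $\prec$-leading term is precisely that product. The compatibility of these leading terms with the degree order $\prec$ is delicate but mechanical, and the resulting basis $\mathcal{N}$ is the natural signed analogue of the nbc-basis used by Moseley--Proudfoot--Young in Type $A$.
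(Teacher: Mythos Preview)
Your proposal is correct and follows essentially the same approach as the paper: invoke Lemma~\ref{lemma:GDB}, verify its hypotheses by counting $|\mathcal{N}|=2^{n}n!$, reduce every generator to the variables appearing in the hands $h_{s}$ via relations (i)--(iv) and the specializations in Remark~\ref{rm:specialcases}, and then use instances of relation (v) (after linear rewriting) to show that any product of two elements from the same hand is a $\prec$-leading term of some element of $\mathcal{I}$. The paper carries out exactly this case analysis, and you should be aware when executing it that several of the ``within-hand'' pairs (e.g.\ $y_{0\overline{0}s}\,y_{0j\overline{s}}$, which your list omits) require first substituting the linear relations from Remark~\ref{rm:specialcases} into (v) before the desired product emerges as the leading term.
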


\begin{proof}
Suppose $m$ is an $\inn_{\prec}(\mathcal{I})$-standard monomial. We make the following reductions:
\begin{itemize}
    \item By Proposition \ref{CHFrelations} $(i)$, we may assume $m$ is square free;
    \item Using Proposition \ref{CHFrelations} $(iv)$ and setting $i = 0$, we may also assume that $m$ is comprised of generators of the form $y_{0jk}$;
    \item Furthermore, using Proposition \ref{CHFrelations}  $(iv)$ and Remark \ref{rm:specialcases} (2) we have that $m$ cannot contain $y_{0\overline{i}i}$ for $i \in [n]$;
    \item  Using  \ref{CHFrelations} $(ii)$, we may assume further for $y_{0jk}$ that $j < k$ with respect to the ordering in \eqref{eq:order};
    \item By Remark \ref{rm:specialcases} (3) we have that $m$ cannot contain any generator of the form $y_{0\overline{j}k}$ or $y_{0 \overline{j}\overline{k}}$ for $j,k \in [n]$.
    \end{itemize}
Finally, we must use relation $(v)$ to show that $m$ is in fact in $\mathcal{N}$. Note that $(v)$ can be understood as a choice of three elements $i < j < k \in [n]^{\pm}_{0} \setminus \{ 0 \}$, corresponding to the three monomials $y_{0ij}, y_{0jk}$ and $y_{0ik}$. Applying relation \ref{CHFrelations} $(v)$ to these generators implies that the term $y_{0ik}y_{0jk}$ cannot divide $m$. We use this logic to underline terms that are not divisible by $m$ for $i,j,k \in [n]$:
\begin{itemize}
    \item $(i < j < k):$
    \[ 0 = y_{0ij}y_{jk} - y_{0ij}y_{0ik} - \underline{y_{0ik}y_{jk}} + y_{0ik}. \]
        \item $(i < j < \overline{k}):$
    \[ 0 = y_{0ij}y_{j\overline{k}} - y_{0ij}y_{0i\overline{k}} - \underline{y_{0i\overline{k}}y_{j\overline{k}}} + y_{0i\overline{k}}. \]
    \item ($\overline{0} < i < j$): 
    \[  0 = \yi y_{0ij} - \yi \yj - \underline{y_{0ij} \yj} + \yj.\]
    
    \item ($\overline{0} < i < \overline{j}$): Noting that $y_{0\overline{0}\overline{j}} = y_{\overline{0}0j} = (1-\yj)$ by  \ref{CHFrelations} $(iii)$ and  \ref{CHFrelations} $(ii)$:
    \begin{align*}
        0 &= \yi \yijminus - \yi y_{0\overline{0}\overline{j}} - \yijminus y_{0\overline{0}\overline{j}} - y_{0\overline{0}\overline{j}}\\
        &= \yi \yijminus - \yi (1-\yj) - \underline{\yijminus} (1-\underline{\yj}) - (1-\yj).
    \end{align*}
    \item $(i < j < \overline{j})$: Noting that $y_{0j\overline{j}} = 1-y_{0 \overline{j}j} = 1- \yj$ by  \ref{CHFrelations} $(iii)$ and  \ref{CHFrelations} $(ii)$:
    \begin{align*}
        0 &= y_{0ij}y_{0j\overline{j}} - y_{ij} \yijminus - y_{0j\overline{j}} \yijminus \\
        &=  y_{0ij}(1-\yj) - \underline{y_{ij} \yijminus} - (1-\yj) \yijminus.
    \end{align*}
    \item $(i < \overline{j} < k):$ By  \ref{CHFrelations}  $(iv)$, Remark \ref{rm:specialcases} (3) one has
    $\yjkbad = \yj + \yk + \yjkminus - 1$, and so
    \begin{align*}
   0 &= \yijminus y_{\overline{j}k} -  y_{\overline{j}k} y_{ik} - \yijminus y_{ik} + y_{ik}\\
   &= \yijminus (\yj + \yk + \yjkminus - 1) -  (\yj + \yk + \underline{\yjkminus} - 1) \underline{y_{ik}} - \yijminus y_{ik} + y_{ik}.
   \end{align*} 
    \item $(\overline{i} < j < k):$ Again, we have 
    $\yijbad = \yi + \yj + \yijminus - 1,$ and $\yikbad = \yi + \yk + \yikminus - 1$. Hence
    \begin{align*}
        0 &= \yijbad y_{0jk} - \yijbad \yikbad - y_{0jk}\yikbad + \yikbad\\
        &= (\yi + \yj + \yijminus - 1)y_{0jk} - (\yi + \yj + \yijminus - 1) (\yi + \yk + \yikminus - 1)\\ 
        &- \underline{y_{0jk}}(\yi + \yk + \underline{\yikminus} - 1) + (\yi + \yk + \yikminus - 1).
    \end{align*}
\end{itemize}

It follows that $m \in \mathcal{N}$, because all of the terms violating the definition of $\mathcal{N}$ (e.g. picking multiple generators from the same set $h_i$) are in blue above. Thus the hypotheses in Lemma \ref{lemma:GDB} are satisfied, and the claim follows. 
\end{proof}

Note that $\prec$ is also a degree ordering, which by Lemma \ref{lemma:GDB} implies the following. 
\begin{cor}\label{cor:lifthom1gr}
The associated graded ring of $\bconfhomlift^1$ with respect to the filtration by degree has presentation 

\[ \gr (\bconfhomlift^1) \cong \Z[y_{ijk}]/ \indeg ( \I),\]
for distinct $i,j,k \in [n]_{0}^{\pm}$, where $\indeg ( \I)$ is generated by the relations
\[ (i) \hspace{.5em} y_{ijk}^{2} , \hspace{2em}  (ii) \hspace{.5em} y_{ijk} - y_{ij\ell} + y_{ik \ell} - y_{jk\ell}  \hspace{2em}   (iii) \hspace{.5em} y_{ \overline{i} \, j \, k} = y_{i \, \overline{j \, k}} \hspace{2em} \]
\[ (iv) \hspace{.5em} y_{ijk} = -y_{ikj} \hspace{2em} (v) \hspace{.5em} y_{ijk}y_{ik \ell } - y_{ijk} y_{ij \ell } - y_{ik \ell } y_{ij \ell }.  \]

Further,
\[ \hilb(\gr(\bconfhomlift^1),t) = (1+t)(3+t)(5+t) \cdots ((2n-1)+t).\]
\end{cor}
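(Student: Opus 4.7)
The plan is to derive the corollary directly from Theorem~\ref{thm:lift1pres} by invoking the final assertion of Lemma~\ref{lemma:GDB}. First I would note that the well-ordering $\prec$ constructed earlier is a degree ordering: monomials are compared first by total degree, with ties broken lexicographically on the indices. Since the hypotheses of Lemma~\ref{lemma:GDB} were already verified in the proof of Theorem~\ref{thm:lift1pres}, applying the final clause of the lemma yields a $\Z$-module isomorphism
\[ \gr(\bconfhomlift^1) \;\cong\; \Z[y_{ijk}]/\indeg(\mathcal{I}). \]
This upgrades to an isomorphism of graded rings because the filtration on $\bconfhomlift^1$ is multiplicative in its degree.

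The next step is to compute $\indeg(f)$ for each generator $f$ of $\mathcal{I}$ coming from Proposition~\ref{CHFrelations}. Relations $(iii)$ and $(iv)$ are already homogeneous of degree one, so they persist unchanged. Relation $(i)$, expanded as $y_{ijk}-y_{ijk}^2$, has top-degree form $y_{ijk}^2$. Relation $(ii)$, i.e.\ $y_{ijk}+y_{ikj}-1$, is a degree-one polynomial whose top-degree form strips the constant to give $y_{ijk}+y_{ikj}$, equivalent to $y_{ijk}=-y_{ikj}$ in the associated graded. The only genuine calculation is for relation $(v)$: expanding
\[ y_{ijk}y_{ik\ell}(1-y_{ij\ell}) + (1-y_{ijk})(1-y_{ik\ell})y_{ij\ell}, \]
the two cubic monomials cancel, leaving a polynomial whose top-degree (quadratic) part is $y_{ijk}y_{ik\ell} - y_{ijk}y_{ij\ell} - y_{ik\ell}y_{ij\ell}$, matching the claimed presentation of $\indeg(\mathcal{I})$.

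For the Hilbert series, Theorem~\ref{thm:lift1pres} and Lemma~\ref{lemma:GDB} together provide a $\Z$-basis $\mathcal{N}$ for $\gr(\bconfhomlift^1)$ whose elements are built by choosing at most one generator from each of the hands $h_1,\ldots,h_n$. The degree of such a product equals the number of hands that contribute a factor, and since $|h_i|=2i-1$, the generating function $\sum_{m\in\mathcal{N}}t^{\deg m}$ factors over the hands and reproduces the stated product formula. The main obstacle is the cubic-term cancellation in relation $(v)$; everything else reduces to a mechanical appeal to Lemma~\ref{lemma:GDB}.
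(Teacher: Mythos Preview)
Your proof is correct and follows essentially the same route as the paper: invoke the degree-order clause of Lemma~\ref{lemma:GDB} (whose hypotheses were checked in Theorem~\ref{thm:lift1pres}), take degree-initial forms of the five generating relations with the key step being the cubic cancellation in relation~(v), and read off the Hilbert series from the hands-and-fingers basis $\mathcal{N}$. You are in fact slightly more careful than the paper in remarking that the $\Z$-module isomorphism of Lemma~\ref{lemma:GDB} upgrades to a graded ring isomorphism because the degree filtration is multiplicative.
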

\begin{proof}
The relations generating $\indeg(\I)$ come from computing $\indeg(f_{i})$ for each $f_{i} \in \I$ in Theorem \ref{thm:lift1pres}. Note that expanding relation \ref{CHFrelations} $(v)$ gives 
\begin{align*}
     y_{ijk}y_{ik \ell }(1-y_{ij\ell}) + (1-y_{ijk})(1-y_{ik \ell })y_{ij\ell} &= y_{ijk}y_{ik\ell} - y_{ijk}y_{ik\ell}y_{ij\ell} + y_{ij} - y_{ijk}y_{ik\ell} - y_{ik\ell} y_{ij\ell} + y_{ijk}y_{ik\ell}y_{ij\ell} \\
     &=y_{ijk}y_{ik\ell} + y_{ij} - y_{ijk}y_{ik\ell} - y_{ik\ell} y_{ij\ell}. 
\end{align*}
Removing the single degree 1 term here gives a homogeneous degree 2 relation. The Hilbert series follows from a standard counting argument using the set $\mathcal{N}$.
\end{proof}

\begin{example} \rm
Suppose $n+1 = 3$. Then a basis for $H^*\Y_{3}^1$, separated by degree in the $y_{ijk}$ is 
\begin{align*}
\deg(0):& \ \ 1 \\
\deg(1): & \ \ y_{0\overline{0}1}, \  \ \   y_{012}, \ \ \  y_{01\overline{2}}, \ \ \ y_{0 \overline{0}2},\\
\deg(2): & \ \ y_{0\overline{0}1} y_{012},  \ \ \ y_{0\overline{0}1}y_{01\overline{2}}, \ \ \   y_{0\overline{0}1}y_{0\overline{0}2}
\end{align*}
The Hilbert series for $\gr(H^*\Y_3^1)$ is 
\[ 1 + 4t + 3t^2 = (1+t)(3+t). \]
\end{example}

\subsection{A presentation for $\bconfhom^1$}\label{sec:bconf1}
The $B_n$ module isomorphism induced from Proposition \ref{hiddenactionframework} (applied as in Example \ref{ex:typebhiddenaction}) recovers a  presentation for $\bconfhom^1$ and $\gr(\bconfhom^1)$. The isomorphism can be described as follows: for each Heaviside function $y_{ijk}$, fix $i = 0$ and define for $j,k \in  [n]^{\pm}$,
\begin{align*}
    z_{jk}:=& y_{0jk}\\
    z_{j}:= & y_{0\overline{0}j}.
\end{align*}
These $z_{jk}$ and $z_j$ inherit a $B_n$ action: $\sigma \in B_n$ acts by
\begin{align*}
   \sigma \cdot z_{jk}=& z_{\sigma(j)\sigma(k)},\\
   \sigma \cdot z_{j} =& z_{\sigma(j)}.
\end{align*}
The elements $z_{jk}$ and $z_j$ are (by construction) functionals on $\bconf^1$. The latter generator has a simple description:
\[ z_j(x_1, \cdots, x_n) = \begin{cases} 1 & x_j > 0\\
0& \textrm{ otherwise.}
\end{cases}\]
The $z_{jk}$ are a bit more complicated; if $j,k \in [n]$, then 
\[ z_{jk}(x_1, \cdots, x_n) = \begin{cases} 1 & x_{j} < x_k \\
0& \textrm{otherwise,}
\end{cases}\]
and 
\[ z_{j\overline{k}}(x_1, \cdots, x_n) = \begin{cases} 1 & x_{j} < \varphi(x_k) \\
0& \textrm{otherwise,}
\end{cases}\]
where $\varphi(x_j) = -x_j/|x_j|^2$. The generator $z_{\overline{j}k}$ can analogously be interpreted as 1 when $\varphi(x_j) < x_k$, and the generator $z_{\overline{j}\overline{k}}$ is 1 when $\varphi(x_j) < \varphi(x_k)$. Note that $\varphi(x_j) < \varphi(x_k)$ is \emph{not} equivalent to $-x_j < -x_k$.

We now obtain from Theorem \ref{thm:lift1pres} a presentation for $\bconfhom^1$.
\begin{theorem}
\label{cor:bconfhom1}
There is an isomorphism of $\Z$-modules 

\[\bconfhom^1 = \Z[z_{ij},\zi] / \J \]
for $i\neq j \in [n]^{\pm}$, where $\J$ is generated by the relations:
\[ (i) \hspace{.5em} z_{ij}(1-z_{ij}) \hspace{2em}  (ii) \hspace{.5em} \zi(1-\zi)  \hspace{2em} (iii) \hspace{.5em} \zi- \zj + z_{ij} - z_{\overline{i}\overline{j}}   \hspace{2em}   (iv) \hspace{.5em} \zi - (1-z_{\overline{i}})  \] 
and
\begin{align*}
    (v) \hspace{.5em} z_{ij}z_{j k }(1-z_{ik }) &+ (1-z_{ij}) (1-z_{j k })z_{i k }\\
    (vi) \hspace{.5em} z_{ij}z_i(1-z_j) &+ (1-z_{ij})(1-z_i)z_j  \\
    (vii) \hspace{.5em} z_{j}\zijminus(1-z_{ij}) &+ (1-z_{j})(1-\zijminus)z_{ij}.
\end{align*}
\end{theorem}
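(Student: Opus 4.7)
The plan is to derive this presentation by pulling back the presentation of $\bconfhomlift^1$ from Theorem~\ref{thm:lift1pres} along the $B_n$-equivariant homeomorphism $\bconflift^1 \cong \bconf^1$ supplied by Proposition~\ref{hiddenactionframework} (applied as in Example~\ref{ex:typebhiddenaction}). This homeomorphism induces a ring isomorphism $f^{*}\colon \bconfhomlift^1 \xrightarrow{\cong} \bconfhom^1$, and under $f^*$ the signed cyclic Heaviside functions pull back to the signed Heaviside functions via $y_{0jk}\mapsto z_{jk}$ and $y_{0\overline{0}j}\mapsto z_j$. So the task reduces to (a) showing that the $z_{ij}, z_i$ generate, and (b) showing that the relations $(i)$--$(vii)$ account for the full kernel.

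For generation, I will use Remark~\ref{rm:specialcases}$(1)$: taking $i=0$ in Proposition~\ref{CHFrelations}$(iv)$ expresses any $y_{jk\ell}$ as $y_{0jk}-y_{0j\ell}+y_{0k\ell}$, so after pulling back every element of $\bconfhom^1$ is a polynomial in $\{z_{jk},z_i\}$. For the relations, I would go down the list in Proposition~\ref{CHFrelations} and specialize one or two indices to $0$ and $\overline{0}$. Specifically: relation $(i)$ of the theorem comes directly from Proposition~\ref{CHFrelations}$(i)$ with $i=0$, and relation $(ii)$ comes from Proposition~\ref{CHFrelations}$(i)$ with $(i,j)=(0,\overline{0})$. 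Relation $(iii)$ is exactly the identity $y_{0\overline{0}i}-y_{0\overline{0}j}+y_{0ij}-y_{0\overline{i}\,\overline{j}}=0$ recorded in Remark~\ref{rm:specialcases}$(3)$ with $i=0$. Relation $(iv)$ follows from applying Proposition~\ref{CHFrelations}$(ii)$ and $(iii)$ together with a cyclic rotation to identify $y_{0\overline{0}\,\overline{i}}$ with $1-y_{0\overline{0}i}$. Relation $(v)$ is Proposition~\ref{CHFrelations}$(v)$ with $i=0$. For $(vi)$, I specialize Proposition~\ref{CHFrelations}$(v)$ at $(i,j,k,\ell)=(0,\overline{0},i,j)$, whose three triples $(0\overline{0}i),(0ij),(0\overline{0}j)$ translate precisely to $z_i,z_{ij},z_j$. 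For $(vii)$, I specialize the same relation at $(0,j,\overline{0},i)$ (combined with $(ii)$ and $(iv)$ to rewrite $y_{0j\overline{0}}$ as $1-z_j$) and simplify using $(iv)$ of the theorem; this yields the relation with $z_{i\overline{j}}$.

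To conclude that the ideal generated by $(i)$--$(vii)$ is the full kernel, I will invoke Lemma~\ref{lemma:GDB} again. Order the variables by the restriction of the ordering from \S\ref{sec:signedheaviside} (with $z_i$ before $z_{jk}$ and $z_{jk}$ lexicographically by $(j,k)$ using $1<\overline{1}<2<\overline{2}<\cdots$), and take the induced degree lexicographic order $\prec$ on monomials in $\Z[z_{ij},z_i]$. I will check that the relations are monic with respect to $\prec$ and read off the $\inn_\prec$-standard monomials. These should match $f^{*}(\mathcal{N})$: the hand $h_i$ from Theorem~\ref{thm:lift1pres} becomes $\{z_i,z_{1i},z_{1\overline{i}},\ldots,z_{(i-1)i},z_{(i-1)\overline{i}}\}$, and the nbc-type monomials are products of at most one generator from each $h_i$. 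Since $f^{*}(\mathcal{N})$ is already a $\Z$-basis for $\bconfhom^1$, counting forces the cardinality condition in Lemma~\ref{lemma:GDB}$(3)$ to hold with equality, giving the ring presentation.

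The main obstacle is bookkeeping: while the pulled-back relations $(i),(ii),(v),(vi)$ are straightforward specializations, relations $(iii),(iv),(vii)$ require combining Proposition~\ref{CHFrelations}$(ii),(iii),(iv)$ with cyclic rotation symmetry of $y_{ijk}$ (which itself follows from two applications of $(ii)$) to rewrite generators of the form $y_{0j\overline{0}}$ or $y_{0\overline{0}\,\overline{i}}$ in the normal form $z_j,z_i$. Carefully tracking these rewrites, and then checking that the corresponding leading monomials in $\inn_\prec$ cut out exactly the nbc basis $f^{*}(\mathcal{N})$, is the place where an error is most likely to creep in; everything else is a direct transport of structure along $f^*$.
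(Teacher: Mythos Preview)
Your overall strategy matches the paper's: transport the presentation of $\bconfhomlift^1$ across the $B_n$-equivariant homeomorphism, verify the listed relations, and then invoke Lemma~\ref{lemma:GDB} with the ordering inherited from \eqref{eq:order} to show the nbc monomials $f^{*}(\mathcal{N})$ form a basis. The paper likewise imports relations $(i)$--$(v)$ directly from the identification $z_{jk}=y_{0jk}$, $z_j=y_{0\overline{0}j}$ and then finishes with the same Gr\"obner argument.

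There is, however, a concrete error in your derivation of relation $(vii)$. Specializing Proposition~\ref{CHFrelations}$(v)$ at $(i,j,k,\ell)=(0,j,\overline{0},i)$ yields
\[
(1-z_j)\,z_i\,z_{ij} + z_j\,(1-z_i)\,(1-z_{ij}) = 0,
\]
which involves only $z_i,z_j,z_{ij}$ and never produces $z_{i\overline{j}}$; applying relation~$(iv)$ of the theorem only trades $z_i$ for $1-z_{\overline{i}}$ and does not help. A specialization that \emph{does} work is $(0,i,j,\overline{j})$, using $y_{0j\overline{j}}=1-y_{0\overline{j}j}=1-y_{0\overline{0}j}=1-z_j$ (the middle equality is Remark~\ref{rm:specialcases}(2), which holds in the torsion-free module $\bconfhomlift^1$); this gives exactly $z_j z_{i\overline{j}}(1-z_{ij})+(1-z_j)(1-z_{i\overline{j}})z_{ij}=0$. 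The paper sidesteps this bookkeeping entirely by verifying $(vi)$ and $(vii)$ \emph{directly} as identities of indicator functions on $\bconf^1$: for instance, $z_j z_{i\overline{j}}(1-z_{ij})$ is nonzero only when $x_j>0$, $x_i<\varphi(x_j)$, and $x_i>x_j$, but $x_j>0$ forces $\varphi(x_j)<0$, hence $x_i<0<x_j$, a contradiction. This direct verification is both shorter and less error-prone than hunting for the right specialization of Proposition~\ref{CHFrelations}$(v)$.
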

The ``extra'' relations $(vi)$ and $(vii)$ compared to $\bconfhomlift^1$ are necessary because unlike the $y_{ijk}$, there are now two types of generators, $z_j$ and $z_{jk}$.
\begin{proof}
The isomorphism between $\bconfhomlift^1$ and $\bconfhom^1$ implies that $\bconfhom^1$ has a $\Z$-basis consisting of monomials that are a product of at most one element from each set
\[\{z_1 \}, \{z_2, z_{12}, \zminus \}, \cdots \{ z_n, z_{1n}, \cdots, z_{(n-1)n}, z_{1\overline{n}}, \cdots, z_{(n-1)\overline{n}} \}. \]

That Relations $(i)$---$(v)$ hold in $\bconfhom^1$ follows immediately from the identification of variables discussed above. Relation $(vi)$ can be seen directly: for $i,j \in[n]$, the function 
$ z_{ij}z_i(1-z_j) $ is nonzero on $(x_1, \ldots, x_n)$ if $x_i < x_j$, $x_i> 0$, and $x_j < 0$, which is impossible. Similarly, $(1-z_{ij})(1-z_i)z_j$ is always 0, so the sum must be 0 as well. The relation is then closed under the action by $B_n$. 

An analogous argument holds for $(vii)$; $z_{j}\zijminus(1-z_{ij})$ is non-zero on $(x_1, \cdots, x_n)$ if $x_i > x_j$, $x_i < \varphi(x_j)$ and $x_j > 0$. However if $x_j > 0,$ this means that $\varphi(x_j) < 0,$ and so $x_i< 0$, contradicting the assumption that $x_i > x_j$. Analogously, the second summand in $(vii)$ is 0.
A similar argument as the proof of Theorem \ref{thm:lift1pres} (e.g. using the ordering induced from \eqref{eq:order} and Lemma \ref{lemma:GDB}) then shows that these relations are sufficient to generate $\bconfhom^1$. 
\end{proof}

Using Theorem \ref{cor:lifthom1gr}, we obtain a presentation for $\gr(\bconfhom^1)$ as well. 
\begin{cor}\label{cor:grbconfhom}
The associated graded ring of $\bconfhom^1$ with respect to the filtration by degree is given by
\[  \gr (\bconfhom^1) = \Z[z_{ij},z_{i}]/\indeg(\J)\]
for distinct $i,j \in [n]^{\pm}$ where $\indeg(\J)$ is generated by
\[ (i) \hspace{.5em} z_{ij}^2  \hspace{2em}  (ii) \hspace{.5em} \zi^2 \hspace{2em} (iii) \hspace{.5em} \zi- \zj + z_{ij} - \zijbadbad   \hspace{2em}   (iv) \hspace{.5em} \zi + z_{\overline{i}} \] 
\[
    (v) \hspace{.5em} z_{ij}z_{jk} - z_{ij}z_{ik} - z_{jk}z_{ik}\hspace{2em} (vi) \hspace{.5em} z_i z_{ij} - z_{ij}z_j - z_{i}z_j \hspace{2em} (vii) \hspace{.5em} z_j \zijminus - \zijminus z_{ij} - z_j z_{ij}. \]
\end{cor}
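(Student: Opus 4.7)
The plan is to imitate the passage from Theorem~\ref{thm:lift1pres} to Corollary~\ref{cor:lifthom1gr}, but now working with the ideal $\J$ and generators $z_{ij}, z_i$ instead of $\mathcal{I}$ and the $y_{ijk}$. The degree ordering $\prec$ defined in \S\ref{sec:signedheaviside} restricts to a degree well-ordering on $\Z[z_{ij}, z_i]$ under the identification $z_{jk} = y_{0jk}$, $z_j = y_{0\overline{0}j}$, so the ``degree order'' hypothesis of Lemma~\ref{lemma:GDB} is automatically satisfied. Theorem~\ref{cor:bconfhom1} already identifies a $\Z$-basis of $\bconfhom^1$ of square-free monomials obtained by picking at most one generator from each set $\{z_i\}, \{z_{j}, z_{ij}, z_{i\overline{j}}\}_{j<i}, \ldots$; this is exactly the set of $\inn_{\prec}(\J)$-standard monomials, so by part~(iii) of Lemma~\ref{lemma:GDB} we get the $\Z$-module isomorphism
\[
\gr(\bconfhom^1) \;\cong\; \Z[z_{ij}, z_i]/\indeg(\J).
\]

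The remaining work is simply to record $\indeg(f)$ for each generator $f$ of $\J$. Relations (iii) and the rewriting of (iv) as $z_i + z_{\overline{i}} - 1$ are already concentrated in (at most) a single top degree: their initial forms are $z_i - z_j + z_{ij} - z_{\overline{i}\overline{j}}$ and $z_i + z_{\overline{i}}$, respectively. The quadratic relations (i), (ii) give $z_{ij}^2$ and $z_i^2$ after cancelling lower-degree terms. Expanding (v) yields
\[
z_{ij}z_{jk} + z_{ik} - z_{ij}z_{ik} - z_{jk}z_{ik},
\]
whose degree-2 part is $z_{ij}z_{jk} - z_{ij}z_{ik} - z_{jk}z_{ik}$; (vi) and (vii) expand analogously, killing the cubic terms and leaving degree-2 forms $z_iz_{ij} - z_{ij}z_j - z_iz_j$ and $z_j z_{i\overline{j}} - z_{i\overline{j}}z_{ij} - z_j z_{ij}$.

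I do not expect any serious obstacle: the heavy lifting—verifying that the listed relations are complete and that the monomial basis above exhausts the standard monomials—has already been done in Theorem~\ref{cor:bconfhom1} via the analogous reductions used in Theorem~\ref{thm:lift1pres}. The only point that warrants a sentence of care is that $\prec$ really is a \emph{degree} ordering on $\Z[z_{ij}, z_i]$, so that $\inn_\prec(f) = \inn_\prec(\indeg(f))$ for every $f \in \J$; this is immediate from the construction of $\prec$ as lexicographic refinement of the total order~\eqref{eq:order} on indices, which never mixes distinct degrees. Once that is noted, Lemma~\ref{lemma:GDB} in the degree-ordering case delivers the claimed presentation of $\gr(\bconfhom^1)$.
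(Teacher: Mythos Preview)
Your proposal is correct and follows essentially the same route as the paper. The paper does not spell out a proof of this corollary at all—it simply remarks that the presentation follows from the analogous passage (Lemma~\ref{lemma:GDB} in the degree-ordering case) used to derive Corollary~\ref{cor:lifthom1gr}—and your write-up is precisely that argument made explicit: verify $\prec$ is a degree order on $\Z[z_{ij},z_i]$, invoke the Gr\"obner-basis machinery already established in Theorem~\ref{cor:bconfhom1}, and read off $\indeg(f)$ for each generator $f\in\J$. Your expansions of relations (v)--(vii) are correct (the cubic terms cancel just as in the proof of Corollary~\ref{cor:lifthom1gr}), and the degree-1 initial forms of (iii) and (iv) are as you state.
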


\begin{example}[$n=2$] \rm
The above restriction tells us that when $n=2$, the basis for $\gr(\bconfhom^1)$ is 
\begin{align*}
\deg(0) :& \ \ 1 \\
\deg(1): & \ \ z_1, \  \ \   z_{12}, \ \ \  z_{1\overline{2}}, \ \ \ z_2,\\
\deg(2): & \ \ z_1 z_{12},  \ \ \ z_1 z_{1\overline{2}}, \ \ \  z_1 z_2,
\end{align*}
and the Hilbert series of $\gr(H^* \bconfn{2}^1)$ is the same as that of  $\gr(H^*\Y_3^1)$.
\end{example}

\subsubsection{A bi-grading on $\bconfhom^1$}
Finally, we introduce a further filtration on $\gr(\bconfhom^1)$. 

\begin{prop}\label{prop:filbyz}
Let $P_{\ell}$ be the ideal in $\gr(\bconfhom^1)$ with monomials of degree $\ell$ or higher in the $z_{i}$ for $i \in [n]^{\pm}$. Then there is a descending filtration on $\gr(\bconfhom^1)$ that is stable under the $B_n$-action on $\gr(\bconfhom^1)$:
\[ P_{n} \subset P_{n-1} \subset \cdots \subset P_{1} \subset P_{0}. \]
\end{prop}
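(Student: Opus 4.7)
The plan is to verify the three components of the claim---$P_\ell$ is a well-defined ideal, the chain is strictly descending, and each $P_\ell$ is $B_n$-stable---each of which becomes direct once the right framing is adopted.

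First, I would define $P_\ell$ unambiguously as the ideal in $\gr(\bconfhom^1)$ generated by all $\ell$-fold products $z_{i_1}z_{i_2}\cdots z_{i_\ell}$ with $i_1,\ldots,i_\ell\in[n]^\pm$. Equivalently, assign $z_i$ weight $1$ and $z_{ij}$ weight $0$ on the ambient polynomial ring $\Z[z_{ij},z_i]$, and observe that $P_\ell$ is the image of the weight-$\geq\ell$ ideal of this polynomial ring under the quotient by $\indeg(\J)$. With this framing $P_\ell$ is automatically an ideal of $\gr(\bconfhom^1)$, and the inclusions $P_{\ell+1}\subseteq P_\ell$ are immediate since any $(\ell+1)$-fold product of $z_i$'s factors through an $\ell$-fold product.

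Next, for strictness, I would invoke the basis for $\gr(\bconfhom^1)$ obtained from Corollary \ref{cor:grbconfhom} (adapted from the ``hands and fingers'' basis $\mathcal{N}$ of Theorem \ref{thm:lift1pres}). For each $0\leq \ell\leq n$, this basis contains elements of $z$-weight exactly $\ell$---for instance $z_1 z_2 \cdots z_\ell$, optionally completed by $z_{jk}$-type choices in the remaining hands. Such a basis element lies in $P_\ell$ but cannot be written as a $\Z$-combination of weight-$\geq(\ell+1)$ monomials, so $P_\ell\neq P_{\ell+1}$.

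Finally, $B_n$-stability is immediate from the action on generators: any $\sigma\in B_n$ sends $z_i=y_{0\overline{0}i}$ to $z_{\sigma(i)}$, which is again a weight-$1$ generator, and likewise sends the weight-$0$ generators $z_{jk}$ to weight-$0$ generators. Hence $\sigma$ carries any $\ell$-fold product of $z_i$'s to another $\ell$-fold product, so $\sigma\cdot P_\ell\subseteq P_\ell$. The only conceptual subtlety worth flagging explicitly is that the relations generating $\indeg(\J)$---most notably $(iii)$ and $(vi)$---are \emph{not} homogeneous with respect to this $z$-weight, which could appear to threaten well-definedness. However, because $P_\ell$ is defined as an ideal (generated by products of $z_i$'s) rather than as the module span of basis monomials of a fixed $z$-weight, the failure of homogeneity creates no obstacle; the descending chain and $B_n$-stability both descend cleanly from the polynomial ring. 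I therefore expect no serious technical difficulty in this argument.
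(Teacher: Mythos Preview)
Your proposal is correct and follows essentially the same approach as the paper's proof: define the filtration on the polynomial ring by $z_i$-weight, observe that it descends to the quotient and is $B_n$-stable because $B_n$ permutes the $z_i$ among themselves. Your treatment is in fact more careful than the paper's---you explicitly flag the non-homogeneity of relations $(iii)$, $(vi)$, $(vii)$ in $z_i$-weight and explain why it causes no difficulty (since $P_\ell$ is defined as an image of an ideal), whereas the paper simply remarks that each relation lies in some $P_0$, $P_1$, or $P_2$; you also verify strictness via the nbc-basis, which the paper does not address.
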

\begin{proof}
Consider first the polynomial ring $\Z[z_{ij}, z_i]$ for $i,j \in [n]^{\pm}$; it is clear that it has a filtration by $z_i$ degree, and that this filtration respects the action by $B_n$.
One can further check that the ideal $\indeg(\J)$ in Corollary \ref{cor:grbconfhom} also respects the filtration because each relation (and its $B_n$ orbit) is in either $P_{0}$, $P_{1}$ or $P_{2}$.
\end{proof}

\begin{definition}\label{def:G} \rm
Define the associated graded ring of $\gr(\bconfhom^1)$ with respect to the filtration in Proposition \ref{prop:filbyz} to be
\[  \G = \bigoplus_{0 \leq k \leq n} \G_k = \bigoplus_{0 \leq \ell \leq k \leq n} \G_{k,\ell}, \]
where $\G_k$ consists of monomials of degree $k$ in the variables $z_{ij}$ and $z_i$ for $i,j \in [n]^{\pm}$ and $\G_{k,\ell}$ consists of monomials in $\G_k$ which are degree $\ell$ in the $z_i$ variables.
\end{definition}

\begin{cor}\label{cor:bigrading}
As a bi-graded ring, $\G$ has a presentation 
\[ \Z[ z_{ij}, z_i ]/\mathcal{L} \]
for $i,j \in [n]^{\pm}$, where $\mathcal{L}$ is generated by $z_{ij}^{2} = z_i^{2} = 0$ and 
\[ (i) \hspace{.5em} z_{ij} + z_{ji} \hspace{1.5em} (ii) \hspace{.5em}z_{i} + z_{\overline{i}} \hspace{1.5em} (iii) \hspace{.5em} \zijminus  - \zijbad \hspace{2em} (iv) \hspace{.5em}z_{ij}\zijminus \hspace{1.5em} (v) \hspace{.5em} z_{ij}z_{jk} - z_{ij}z_{ik} - z_{jk}z_{ik}. \]
\end{cor}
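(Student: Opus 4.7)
The plan is to derive the presentation of $\G$ from that of $\gr(\bconfhom^1)$ (Corollary~\ref{cor:grbconfhom}) by taking initial forms with respect to the descending $z_i$-filtration of Proposition~\ref{prop:filbyz}. Let $S = \Z[z_{ij}, z_i]$ for $i \neq j \in [n]^{\pm}$, bi-graded by assigning $\deg(z_{ij}) = (1,0)$ and $\deg(z_i) = (1,1)$. The composition $S \twoheadrightarrow \gr(\bconfhom^1) \twoheadrightarrow \G$ is surjective, and the task is to show its kernel is the bi-homogeneous ideal $\mathcal{L}$. Because passing to the associated graded under a descending filtration picks out the component of lowest degree of each relation, it suffices to compute the $z_i$-degree minimal part of each generator of $\indeg(\J)$ and to identify the resulting relations with those listed in $\mathcal{L}$.

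For each generator of $\indeg(\J)$ in Corollary~\ref{cor:grbconfhom}, the initial form under the $z_i$-filtration is as follows. The already bi-homogeneous relations $z_{ij}^2$, $z_i^2$, $z_i + z_{\overline{i}}$, and $z_{ij}z_{jk} - z_{ij}z_{ik} - z_{jk}z_{ik}$ descend unchanged, matching the squared relations and relations $(ii)$ and $(v)$ of $\mathcal{L}$. The non-bi-homogeneous relation $(iii)$ $z_i - z_j + z_{ij} - \zijbadbad$ contributes the $z_i$-degree $0$ summand $z_{ij} - \zijbadbad$; applying the bar-swap $j \mapsto \overline{j}$ (permissible since the indices range over $[n]^{\pm}$) yields the equivalent $\zijminus - \zijbad$ of $\mathcal{L}(iii)$. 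Relation $(vii)$ $z_j\zijminus - \zijminus z_{ij} - z_j z_{ij}$ has initial form $\zijminus z_{ij}$, matching $\mathcal{L}(iv)$. Finally, the ``trichotomy'' identity $z_{ij} + z_{ji} = 1$, which descends from Proposition~\ref{CHFrelations}$(ii)$ through the isomorphism used to derive Theorem~\ref{cor:bconfhom1}, has initial form $z_{ij} + z_{ji}$, giving $\mathcal{L}(i)$.

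The subtle point is relation $(vi)$ of $\indeg(\J)$, whose $z_i$-filtration initial form $z_i z_{ij} - z_{ij} z_j$ is not among the listed generators of $\mathcal{L}$; the main technical obstacle is verifying that this relation is already implied by $\mathcal{L}$. I expect the derivation to proceed by applying $\mathcal{L}(v)$ to index triples involving barred elements (for instance $(i, j, \overline{i})$ or $(i, \overline{j}, j)$) and simplifying using $\mathcal{L}(i)$, $(iii)$, and $(iv)$: rewriting barred generators via $(i)$ and $(iii)$, and killing products of the form $z_{ij}\zijminus$ via $(iv)$, should reduce the outcome to the desired identity. Once sufficiency of $\mathcal{L}$ is established, the presentation follows from Lemma~\ref{lemma:GDB} applied with the order~\eqref{eq:order} extended to the $z$-generators: under the identification $z_i \leftrightarrow y_{0\overline{0}i}$ and $z_{ij} \leftrightarrow y_{0ij}$, the $\inn_{\prec}(\mathcal{L})$-standard monomials are exactly the products of at most one ``finger'' chosen from each ``hand'' $h_k$ of \S\ref{sec:signedheaviside}, giving $2^n n!$ monomials. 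This matches the $\Z$-rank of $\gr(\bconfhom^1)$ and hence of $\G$, completing the verification. Bi-homogeneity of every generator of $\mathcal{L}$ then ensures the isomorphism $S/\mathcal{L} \cong \G$ respects the bi-grading from Definition~\ref{def:G}.
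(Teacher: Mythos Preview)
Your overall strategy---take $z_i$-initial forms of the generators of $\indeg(\J)$, verify they lie in $\G$, then appeal to Lemma~\ref{lemma:GDB} to close the dimension count---is the natural approach and matches the paper's methodology throughout. However, the ``main technical obstacle'' you flag is not merely a routine verification left undone: the derivation you \emph{expect} cannot succeed, and this reveals a genuine gap (in fact, an apparent omission in the statement of the corollary itself).

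The difficulty is structural. Among the listed generators of $\mathcal{L}$, relations $(i)$, $(iii)$, $(iv)$, $(v)$ involve only the double-index variables $z_{ab}$, while relation $(ii)$ involves only the single-index variables $z_c$; no generator mixes the two kinds. Consequently, in $S/\mathcal{L}$ the subalgebra generated by the $z_c$ and that generated by the $z_{ab}$ are completely decoupled, and $z_{ij}(z_i - z_j)$ cannot lie in $\mathcal{L}$. A direct check at $n=2$ confirms this: after reducing via $(i)$--$(iii)$ one is left with independent generators $z_1, z_2, z_{12}, z_{1\overline 2}$, and the only relation beyond squares is $z_{12}z_{1\overline 2}=0$, giving a quotient of $\Z$-rank $12$; but $\G$ has rank $2^2\cdot 2!=8$. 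The four missing relations are exactly $z_{12}(z_1-z_2)$, $z_{1\overline 2}(z_1+z_2)$, and their consequences in degree~$3$.

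So $\mathcal{L}$ as stated is missing a bridge between the $z_i$ and the $z_{ij}$. Note that the alternative presentation $\mathcal{L}'$ in Remark~\ref{rmk:difpres} \emph{does} include such a relation explicitly (its relation~$(ii)$ is $z_{ij}^+ z_j - z_{ij}^+ z_i$), and it is that presentation, not $\mathcal{L}$, that the paper actually uses in \S\ref{sec:decompsignedset}. Your write-up should therefore either (a) flag that $\mathcal{L}$ needs an additional generator such as $z_{ij}(z_i-z_j)$ (the $z_i$-initial form of $\indeg(\J)$ relation~$(vi)$), after which your Gr\"obner argument goes through verbatim, or (b) make explicit a convention such as identifying $z_{\overline{i}i}$ with $z_i$ (cf.\ Remark~\ref{rm:specialcases}(2)) that supplies the bridge via relation~$(v)$ with $k=\overline{i}$---and check that this actually yields the needed relation, which is not automatic. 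Without one of these corrections, the standard-monomial count cannot close.
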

\begin{remark} \rm
The fact that $\Q [B_n]$ is semisimple means that for any filtration stable under the action of $B_n$, passing to the associated graded ring will not change the isomorphism type of the representation. Hence we will study representations on $\G_{k,\ell}$ and use this to deduce information about $\bconfhom^1$ and $\bconfhom^3$.
\end{remark}
\begin{remark} \rm
The filtration in Proposition \ref{prop:filbyz} (using the identification $z_i \leftrightarrow y_{0\overline{0}i}$) does \emph{not} respect the lifted $B_{n+1}$ action, and therefore does not give a bi-grading on $\bconfhomlift^1$. 
\end{remark}

\section{The $d=3$ case in Type $B$}\label{sec:d3}
We now turn to the space $\bconf^3$, which was studied by Feichtner--Ziegler in \cite{feichtner2002orbit} and Xicot{\'e}ncatl in \cite{xicotencatl2000cohomology}; we will first review their work\footnote{In \cite{feichtner2002orbit}, Feichtner--Ziegler give a presentation of $\bconfhom^d$ for $d \geq 2$. However, their computation of the action of $B_n$ on the generators of $\bconfhom^d$ has an error \cite[Lemma 7(iv)]{feichtner2002orbit}. Xicot{\'e}ncatl also gives a presentation of $\bconfhom^d$, which agrees with our presentation; however his work does not explicitly compute the action of $B_n$ on the generators of $\bconfhom^d$. We will see in Proposition \ref{prop:actionongenerators} that the $B_n$-action on $\bconfhom^d$ is delicate, and so we include all the details of our computations for completeness.} in Section \ref{sec:toolstopology}. Then, we will work to understand the action of $B_n$ on the basis for $\bconfhom^3$ (Section \ref{tools:reptheory}). The presentation for $\bconfhom^3$ and its consequences are given in Section \ref{sec:presbconfhom}.

\subsection{Tools from Topology}\label{sec:toolstopology}

In \cite{feichtner2002orbit}, Feichtner--Ziegler use the space $\bconf^d$ to compute a presentation for the cohmology of the $\Z_2$ orbit configuration space $\conf_n^{\Z_2}(\SSS^d)$. Let \begin{align*} \Pi_{[n]}: \bconfn{n+1}^d &\longrightarrow \bconf^d \\
(x_{1},\cdots,x_{n+1}) &\longmapsto (x_{1}, \cdots, x_{n})
\end{align*}
be the map which ``forgets'' the last coordinate in $\bconfn{n+1}^d$, and define 
\[ Q_{n} =  \{ 0, x_{1}, \varphi(x_{1}),\cdots x_{n-1},\varphi(x_{n-1})\}, \]
where $\varphi(x_i) = -x_i/|x_i|^2$, as before.
This induces a locally trivial fiber bundle
\begin{equation}\label{eq:spectral} \R^{d} \setminus Q_n \longrightarrow \bconfn{n+1}^d \longrightarrow \bconf^d.\end{equation} Feichtner--Ziegler prove that the associated spectral sequence collapses in its second term whenever $d > 2$, and that the corresponding cohomology ring $\bconfhom^d$ is torsion-free with Hilbert series\footnote{They further prove (with a bit more work) that the $d=2$ case is also torsion free and satisfies \eqref{eq:hilbertseries}.}
\begin{equation}\label{eq:hilbertseries}
 \hilb(\bconfhom^d, t) =   (1+t^{d-1})(1+3t^{d-1}) \cdots (1+(2n-1)t^{d-1}).\end{equation}

Using the spectral sequence induced by \eqref{eq:spectral}, Feichtner--Ziegler provide a multiplicative generating set and $\Z$-module basis for $\bconfhom^d$. 

\begin{prop}[\cite{feichtner2002orbit}: Prop 8]\label{prop:nbc}
A $\Z$-linear basis for $\bconfhom^d$ is given by choosing one element from each set:
    \[ \Z \{ 1, \zone \} \cdot \Z \{ 1, \ztwo , z_{12}, \zminus, \} \cdots \Z \{ 1, z_{n}, z_{1n}, z_{1\overline{n}}, \cdots, z_{(n-1)n}, z_{(n-1)\overline{n}} \}.  \]
\end{prop}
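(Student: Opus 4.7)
The plan is to induct on $n$ using the Leray--Serre spectral sequence of the fiber bundle \eqref{eq:spectral}, leveraging the $E_2$-collapse already established by Feichtner--Ziegler. The base case is immediate: $\bconfn{1}^d = \R^d \setminus \{0\}$ deformation retracts onto $\SSS^{d-1}$, giving $\bconfhomn{1}^d = \Z\{1, z_1\}$, which is precisely the first ``hand'' of the proposed basis.

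For the inductive step, the fiber of $\Pi_{[n]}: \bconfn{n+1}^d \to \bconf^d$ over a fixed point $(x_1,\ldots,x_n) \in \bconf^d$ is $\R^d$ with the $2n+1$ points $\{0, x_1, \varphi(x_1), \ldots, x_n, \varphi(x_n)\}$ removed; for $d \geq 2$ this deformation retracts onto a wedge of $2n+1$ copies of $\SSS^{d-1}$. Its cohomology is therefore $\Z$-free and concentrated in degrees $0$ and $d-1$, with $H^{d-1}$ spanned by $2n+1$ classes $\beta_0, \beta_1, \beta'_1, \ldots, \beta_n, \beta'_n$ dual to small spheres about $0, x_1, \varphi(x_1), \ldots, x_n, \varphi(x_n)$, respectively. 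The $E_2$-collapse then yields a $\Z$-module isomorphism
\[ \bconfhomn{n+1}^d \;\cong\; \bconfhom^d \;\otimes\; H^{*}(\text{fiber}), \]
so a basis for $\bconfhomn{n+1}^d$ is obtained by multiplying each basis element of $\bconfhom^d$ (supplied by the inductive hypothesis) by one of $1, \beta_0, \beta_1, \beta'_1, \ldots, \beta_n, \beta'_n$.

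To finish, I identify the new fiber classes $\beta_0, \beta_i, \beta'_i$ with $z_{n+1}, z_{i,n+1}, z_{i,\overline{n+1}} \in \bconfhomn{n+1}^d$, respectively. This last step is the main obstacle: it requires interpreting each $z$-generator as the Poincar\'e--Lefschetz dual of the appropriate codimension-$(d-1)$ ``diagonal'' locus in $\bconfn{n+1}^d$---namely $\{x_{n+1}=0\}$ for $z_{n+1}$, $\{x_{n+1}=x_i\}$ for $z_{i,n+1}$, and $\{x_{n+1}=\varphi(x_i)\}$ for $z_{i,\overline{n+1}}$---and then checking that intersecting each locus with a generic fiber of $\Pi_{[n]}$ retracts to precisely the small sphere around the corresponding removed point. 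Granting this identification, the extra factor attached in the inductive step is exactly the $(n+1)$-st ``hand'' $\Z\{1, z_{n+1}, z_{1,n+1}, z_{1,\overline{n+1}}, \ldots, z_{n,n+1}, z_{n,\overline{n+1}}\}$, completing the induction.
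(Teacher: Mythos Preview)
Your approach is correct and is precisely the one Feichtner--Ziegler use in the cited reference; note that the present paper does not give its own proof of this proposition but simply imports it from \cite{feichtner2002orbit}, after recording that the spectral sequence for \eqref{eq:spectral} collapses at $E_2$ and hence the Leray--Hirsch mechanism applies. One small correction: the diagonal loci $\{x_{n+1}=0\}$, $\{x_{n+1}=x_i\}$, $\{x_{n+1}=\varphi(x_i)\}$ have codimension $d$ (not $d-1$) in the ambient $(\R^d)^{n+1}$, and the classes $z_{n+1}, z_{i,n+1}, z_{i,\overline{n+1}}$ are better described as the pullbacks of $[\SSS^{d-1}]$ along the Gauss-type maps $\pi_{n+1}, \pi_{i,n+1}, \pi_{i,\overline{n+1}}$ (this is how the paper defines them in Proposition~\ref{prop:FZrelations}); with that phrasing, the restriction to a fiber is immediate and your identification $\beta_0 \leftrightarrow z_{n+1}$, $\beta_i \leftrightarrow z_{i,n+1}$, $\beta'_i \leftrightarrow z_{i,\overline{n+1}}$ is exactly right.
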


A number of relations that hold in $\bconfhom^3$ can be established from the above topological framework. Recall that $t_{i} \in B_{n}$ is the element sending $i \mapsto -i$ and $j \mapsto j$ for all other $j$, and $s_i \in B_n$ is the Coxeter generator $(i,i+1)$. Let $(i,j)$ be the transposition swapping $i$ and $j$.

\begin{prop}\label{prop:FZrelations}
The following identities hold in $\bconfhom^3$ for distinct $i,j,k \in [n]^{\pm}$
\begin{enumerate}
    \item {\rm (Feichtner--Ziegler \cite[Lemma 7(iii)]{feichtner2002orbit}):} $t_{i} \cdot \zi = z_{\overline{i}} = - \zi;$
    \item $(i,j) \cdot z_{ij} = z_{ji} = -z_{ij}$;
    \item $z_{ij}^{2} = \zi^2 = 0.$
    \item {\rm (Feichtner--Ziegler \cite[Prop. 11]{feichtner2002orbit}):} $z_{ij}z_{jk} - z_{ik}z_{ij} - z_{ik}z_{jk}  = 0$
    \item {\rm (Feichtner--Ziegler \cite[Prop. 11]{feichtner2002orbit}):} $z_{ij}z_{i} - z_{ij}z_{j} - z_{i}z_{j} = 0.$
    \item  $\zj \zijminus  - z_{ij} \zijminus - \zj z_{ij} = 0.$
\end{enumerate}
\end{prop}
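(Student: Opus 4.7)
The plan is to realize every generator as a pullback of the fundamental class $[S^{2}] \in H^{2}(S^{2})$ along a Gauss map $\bconf^{3} \to S^{2}$. Writing $x_{\overline i} := \varphi(x_{i})$, I take $z_{a}$ to be pulled back along $\vec{x} \mapsto x_{a}/|x_{a}|$ and $z_{ab}$ along $\vec{x} \mapsto (x_{b}-x_{a})/|x_{b}-x_{a}|$ for $a,b \in [n]^{\pm}$. With this dictionary the degree-one relations $(1)$--$(3)$ are essentially immediate. The $B_{n}$-action sends the Gauss map for $z_{i}$ (respectively $z_{ij}$) to that of $z_{\overline i}$ (respectively $z_{ji}$), which is the antipode of the original; since the antipodal map on $S^{2}$ has degree $(-1)^{3}=-1$, pullback picks up the sign asserted in $(1)$ and $(2)$. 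Relation $(3)$ follows because $z_{i}^{2}$ and $z_{ij}^{2}$ are pullbacks of $[S^{2}]^{2} \in H^{4}(S^{2}) = 0$. Items $(1)$, $(4)$, and $(5)$ are already recorded by Feichtner--Ziegler, so the real work lies in $(2)$, $(3)$, and especially $(6)$.

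For $(4)$--$(6)$, I plan to deduce all three uniformly as pullbacks of the classical Arnold relation
\[ \omega_{ab}\omega_{bc} + \omega_{bc}\omega_{ca} + \omega_{ca}\omega_{ab} = 0 \quad \text{in } H^{*}\conf_{3}(\R^{3}). \]
The unifying principle is that whenever three ``effective points'' chosen from the set $\{ x_{1},\varphi(x_{1}),\ldots,x_{n},\varphi(x_{n})\} \cup \{0\}$ are pairwise distinct on all of $\bconf^{3}$, the map $\bconf^{3}\to \conf_{3}(\R^{3})$ selecting them is continuous, so the Arnold relation pulls back to $\bconfhom^{3}$. Relation $(4)$ comes from the triple $(x_{i},x_{j},x_{k})$, and $(5)$ from $(x_{i},x_{j},0)$ after identifying $z_{0a} = z_{a}$ and $z_{a 0} = -z_{a}$; both are then consistent with Feichtner--Ziegler.

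The new relation $(6)$, where I expect the main obstacle, follows by applying this principle to the triple $(x_{i}, x_{j}, \varphi(x_{j}))$. Pairwise distinctness on $\bconf^{3}$ is clear: $x_{i} \neq x_{j}$ and $x_{i} \neq \varphi(x_{j})$ are definitional, while $x_{j} = \varphi(x_{j})$ would force $|x_{j}|^{3} = -1$, impossible over $\R$. Pulling back Arnold gives
\[ z_{ij}z_{j\overline j} + z_{j\overline j}z_{\overline j i} + z_{\overline j i}z_{ij} = 0. \]
The crucial identification is $z_{j\overline j} = -z_{j}$: the Gauss map defining $z_{j\overline j}$ sends $\vec{x}$ to the direction of $\varphi(x_{j}) - x_{j} = -x_{j}(1 + 1/|x_{j}|^{2})$, which is antipodal to $x_{j}/|x_{j}|$. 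Combined with $z_{\overline j i} = -z_{i\overline j}$ from $(2)$ and the commutativity of $\bconfhom^{3}$ in even cohomological degree, $(6)$ drops out after a short rearrangement. The main obstacle is the sign bookkeeping, which must be handled consistently across the three Arnold arguments; in particular, a naive orientation convention would flip the sign of the middle term $z_{ij}z_{i\overline j}$ in $(6)$ and produce the wrong relation.
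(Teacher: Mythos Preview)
Your proposal is correct and follows essentially the same strategy as the paper. Items (1)--(3) are handled identically via Gauss maps and the degree of the antipodal map on $S^{2}$, and (4)--(5) via pullback of the Arnold relation along inclusions into complement spaces.

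For (6), your argument and the paper's are the same computation in slightly different packaging. The paper works in the case $n=2$ and builds an auxiliary map $\Phi:(x_{1},x_{2})\mapsto (x_{1}-x_{2},\,x_{1}-\varphi(x_{2}))$ between two three-subspace complements, then pulls back the Arnold-type relation $d_{12}d_{1}-d_{12}d_{2}-d_{1}d_{2}=0$; the crux is the calculation $\Phi^{*}(d_{12})=-z_{2}$, which amounts to observing that the Gauss direction of $\varphi(x_{2})-x_{2}=-x_{2}(1+|x_{2}|^{-2})$ is antipodal to $x_{2}/|x_{2}|$. This is exactly your identification $z_{j\overline{j}}=-z_{j}$. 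Your presentation is a bit more streamlined in that it treats (4), (5), (6) uniformly as Arnold pullbacks along maps $\bconf^{3}\to\conf_{3}(\R^{3})$ selecting the triples $(x_{i},x_{j},x_{k})$, $(x_{i},x_{j},0)$, and $(x_{i},x_{j},\varphi(x_{j}))$ respectively, whereas the paper routes (6) through the intermediate map $\Phi$. Both approaches ultimately rest on the same sign computation, and your warning about orientation bookkeeping is well placed.
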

We give a general sketch of each of these relations for intuition.
\begin{proof}
\begin{enumerate}
    \item The first claim in Proposition \ref{prop:FZrelations} follows because the generator $\zi$ is the image of the (dual) fundamental class induced from the projection map
\begin{align*}
    \pi_{i}: \bconf^d &\longrightarrow S^{d-1}\\
     (x_{1}, \cdots, x_{n}) &\longmapsto \frac{x_{i}}{|x_{i}|}.
\end{align*}
The action of $t_{i}$ then restricts to the antipodal action on $S^{d-1}$.
\item  Analogously, the second claim in Proposition \ref{prop:FZrelations} can be understood via the projection map
\begin{align*}
    \pi_{ij}: \bconf^d &\longrightarrow S^{d-1}\\
     (x_{1}, \cdots, x_{n}) &\longmapsto \frac{x_{i} - x_{j}}{|x_{i}-x_{j}|}.
\end{align*}
The generator $z_{ij}$ is similarly defined as the image of $\pi_{ij}^{*}([S^{d-1}])$ (where $[S^{d-1}]$ is the dual fundamental class of $S^{d-1}$). Once again the action of $(i,j)$ can be traced back via $\pi_{ij}$, where it restricts to the antipodal map on $S^{d-1}$.
\item Again, the third claim comes from the fact that the generators $z_{ij}$ and $\zi$ are the images of $[S^{d-1}]$, and $[S^{d-1}]^{2} = 0$ in $H^{*}(S^{d-1})$.
\item Let $U_{ij}^{+}:= \{ (x_{1}, \cdots, x_{n}) \in \R^{3n}: x_{i} = x_{j} \},$
and consider the complement space $\mathcal{M}\{ U_{ij}^{+},U_{jk}^{+} \} \subset \R^{3n}.$
There is a natural inclusion 
\[ \bconf^{3}\hookrightarrow \mathcal{M}\{ U_{ij}^{+},U_{jk}^{+} \} ;\]
the induced map in cohomology must send the relation $z_{ij}z_{i} - z_{ij}z_{j} - z_{i}z_{j} = 0$ in  $H^{*}(\mathcal{M}\{ U_{ij}^{+},U_{jk}^{+} \})$ to $0$ in $\bconfhom^3$.
\item Let $U_{i}:= \{ (x_{1}, \cdots, x_{n}) \in \R^{3n}: x_{i} = 0 \},$ and again consider the complement and corresponding inclusion $\bconf^{3}\hookrightarrow \mathcal{M}\{ U_{ij}^{+},U_{i}, U_{j} \}  .$
Then (5) holds by the same logic as relation (4).
\item 
   Finally, we obtain the relation (6) using a similar argument as the last part of \cite[Prop 11.]{feichtner2002orbit}. It is sufficient to work in the case that $n=2$. Let    
\[ U_{12}^{-}:= \{ (x_{1},x_{2} \in (\R^{3}\setminus \{ 0 \})^{2}: x_{1} \neq x_{2}, x_{1} \neq \varphi(x_{2}) \}, \]
and $U_{1},U_{2}, U_{12}^{+}$ be as before.
Consider the map 
    \begin{align*}
        \Phi: \mathcal{M} \{ U_{1},U_{2}, U_{12}^{+} \} \longrightarrow& \mathcal{M} \{ U_{12}^{+},U_{12}^{-}, U_{2} \}\\
        (x_{1},x_{2}) \longmapsto& (x_{1}-x_{2}, x_{1}- \varphi(x_{2})).
    \end{align*}
    Let $d_{1}, d_{2}$ and $d_{12}$ be the respective generators of $U_{1}, U_{2}, U_{12}$. The same argument used in $(i)$ of the proof of \cite[Prop. 11]{feichtner2002orbit} shows that $\Phi^{*}(d_{1}) = z_{12}$. 
    
    To show that $\Phi^{*}(d_{2}) = \zminus$, note that 
    \[ \Phi \circ s_{2} (x_{1},x_{2}) = (x_{1}-\varphi(x_{2}),x_{1}-x_{2}) = s_{1} \circ \Phi(x_{1},x_{2}).\]
    Hence 
\[ \Phi^{*}(d_{2}) = \Phi^{*} \circ s_{1} (d_{1}) = s_{2} \circ \Phi^{*}(d_{1}) = s_{2}(z_{12}) = \zminus. \]

Finally, we show that $\Phi^{*}(d_{12}) = -c_{2}$.
Let $\sigma: \mathcal{M} \{ U_{1},U_{2}, U_{12}^{+} \} \to \mathbb{S}^{2}$ be the projection onto the second coordinate, and then the unit sphere. Thus $\sigma^{*}(c) = d_{2}$, where $c$ is the generator of $\mathbb{S}^{2}$. Let 
\begin{align*}
\overline{\theta}:  \mathcal{M} \{ U_{1},U_{2}, U_{12}^{+} \} \longrightarrow& \SSS^{2}\\
(x_{1},x_{2}) \longmapsto& \frac{1}{|x_{1}-x_{2}|} (x_{1}-x_{2}). 
\end{align*}
Then 
\begin{align*}
    \overline{\theta} \circ \Phi (x_{1},x_{2}) =& \frac{1}{|\varphi(x_{2}) - x_{2}|}(\varphi(x_{2})-x_{2}) \\
    =& \frac{1}{|\frac{-x_{2}}{|x_{2}|^{2}} (1 + \frac{1}{|x_{2}^{2}|})|}\cdot \frac{-x_{2}}{|x_{2}|^{2}} (1+ \frac{1}{|x_{2}^{2}|}) \\
    =& \frac{1}{(1+\frac{1}{|x_{2}^{2}|})}\cdot \frac{-x_{2}}{|x_{2}|^{2}} (1+ \frac{1}{|x_{2}^{2}|}) \\
    =& \frac{-x_{2}}{|x_{2}|^{2}}\\ 
    =& - \sigma(x_{1},x_{2}).
\end{align*}
Thus $\Phi^{*} \circ \overline{\theta}^{*} = - \sigma^{*}$
because the antipodal map in this context has degree $-1$.

Noting that $\overline{\theta}^{*}(c) = z_{12}$ implies that 
\[ \Phi^{*}(d_{12}) = \Phi^{*} \circ \overline{\theta}^{*}(c) = -\sigma^{*}(c) = -\ztwo,\]
and therefore
\[ \Phi^{*}( d_{12}d_{1} - d_{12}d_{2} - d_{1}d_{2}) = -\ztwo z_{12} + \ztwo \zminus - z_{12} \zminus. \]
\end{enumerate}
\end{proof}

\subsection{Tools from representation theory}\label{tools:reptheory}
The relations in Proposition \ref{prop:FZrelations} are insufficient to give an algebra presentation for $\bconfhom^3$---that is, they do not give all the relations among the generators of the algebra. In particular, we are missing information about how to rewrite the generators $\zijbad$ and $\zijbadbad$ in terms of the basis discussed in Proposition \ref{prop:nbc}. This is relevant in part because we would like to understand the action of $B_n$ on $\bconfhom^3$. Consider, for example, what is known based on Proposition \ref{prop:FZrelations} about the action of $B_2$ on the basis for $\bconfhomn{2}^{3}$, summarized in Table \ref{tab:b2action}. The teal entries indicate elements which are not in the basis.

\begin{table}[!h]
\centering
\setlength{\tabcolsep}{10pt} 
\renewcommand{\arraystretch}{1.25} 
\begin{tabular}{|l||l|l|l|l|l|l}
\hline
 &   $s_{1}$ & $t_{2}$ & $s_{1}t_{2}$
 & $(s_{1}t_{2})^2= -1$ \\ \hline  \hline
1&1&1&1&1\\ 
$\zone$&$\ztwo$&$\zone $& $\ztwo$&$-\zone$\\ 

$\ztwo $&$\zone $&$-\ztwo $&$-\zone$&$-\ztwo $\\ 

$z_{12}$&$-z_{12}$ &$\zminus$&\textcolor{teal}{$\bm{-\zbad}$}&\textcolor{teal}{$\bm{\zbadbad}$}\\ 

$\zminus $&\textcolor{teal}{$\bm{-\zbad}$}& $z_{12}$ &$-z_{12}$&\textcolor{teal}{$\bm{\zbad }$}\\ 

$\zone \ztwo $&$\zone \ztwo $&$-\zone \ztwo $&$-\zone \ztwo$&$\zone \ztwo $\\ 

$\zone z_{12}$&\textcolor{teal}{$\bm{-\ztwo z_{12}}$} $= -\zone z_{12} + \zone \ztwo $&$\zone \zminus $&\textcolor{teal}{$\bm{-\ztwo \zbad = -\zone \zbad}$} $- \zone \ztwo$& \textcolor{teal}{$\bm{-\zone \zbadbad}$}\\ 

$\zone \zminus $& \textcolor{teal}{$\bm{-\ztwo \zbad = -\zone \zbad}$} $- \zone \ztwo$ & $\zone z_{12}$&\textcolor{teal}{$\bm{-\ztwo z_{12}}$} $= -\zone z_{12} + \zone \ztwo $ &\textcolor{teal}{$\bm{-\zone \zbad} $}\\
\hline
\end{tabular}
\caption{The action of $B_{2}$ on the basis of $\bconfhomn{2}^{3}$. \textcolor{teal}{\bf{Bold teal}} indicates an element not in the basis.}\label{tab:b2action}
\end{table} 
Thus the next step is to develop tools to complete the presentation of $\bconfhom^3$. In order to understand the representations carried by $\bconfhom^3$ and $\bconfhomlift^3$, assume henceforth that both spaces have coefficients in $\Q$. Our subsequent computations will use the representation theory of the hyperoctahedral group, reviewed in \S \ref{sec:reptheorytypeB}.

\subsubsection{A recursion for $\bconfhomlift^3$}\label{sec:recursion}
We will now develop a recursion between the $B_{n}$-representations carried by $\bconfhom^3$ and $\bconfhomliftn{n}^3$; in addition to being of independent interest, this recursion will be instrumental to determining the presentation for $\bconfhom^3$.

Recall that the two spaces $\bconf^3$ and $\bconflift^3$ are homeomorphic, and therefore their cohomologies are isomorphic as $B_{n}$-representations. We will use this fact to understand $\bconfhomlift^3$ as a $B_{n+1}$-representation.
The space $\bconflift^3$ has a fiber sequence
\begin{equation}\label{Xfiber}
    SU_{2} \setminus \{ \pm p_{1}, \pm p_{2}, \cdots,  \pm p_{n}\} \longrightarrow \bconflift^3 \longrightarrow \bconfliftn{n}^3.
\end{equation} 
Note that there is a $B_n$-action on $F$, which comes from the $B_n$ action on $\{ \pm p_1, \cdots, \pm p_n\}.$ In particular, for $\sigma \in B_n$, the point $(p_1, \cdots, p_n, p_{n+1})$ and $(p_{\sigma(1)}, \cdots, p_{\sigma(n)}, p_{n+1})$ have homeomorphic fibers in the fiber sequence \eqref{eq:fiberseq}, and the action of $B_n$ permutes the punctures of $F$. It follows that $B_n$ acts on each space in \eqref{eq:fiberseq}, and is equivariant with respect to both maps.

Because $\bconf^3$ is simply connected and has homology concentrated only in even degrees, the same must be true for $\bconflift^3$. It follows that the spectral sequence collapses, yielding the isomorphism:
\begin{equation}\label{eq:fiberseq}
\bconfhomlift^3 \cong_{B_{n}} \bconfhomliftn{n}^3 \otimes H^{*}(F) \cong_{B_{n}} \bconfhom^3, \end{equation}
where $F$ is the fiber $ SU_{2} \setminus \{ \pm p_{1}, \pm p_{2}, \cdots,  \pm p_{n}\}$ from \eqref{Xfiber}.

Since $F$ is connected, $H^{0}(F) \cong \Q$ and carries the trivial representation. The only other non-trivial degree is $H^{2}(F)$, which is $2n-1$ dimensional. 

\begin{lemma}
As a $B_{n}$-representation, \[ H^{2}(F) \cong_{B_{n}} \chi^{(n-1),(1)} + \chi^{(n-1,1),\emptyset}. \]
\end{lemma}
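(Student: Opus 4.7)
The plan is to identify $F$ up to homotopy and then read off the $B_n$-representation on $H^2(F)$ from the natural action on the set of punctures.

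First, I would observe that $F = SU_2 \setminus \{\pm p_1,\dots,\pm p_n\}$ is $S^3$ with $2n$ points removed, hence homotopy equivalent to a wedge $\bigvee^{2n-1} S^2$. In particular $H^0(F)\cong\Q$ and $H^2(F)\cong\Q^{2n-1}$ (and all other cohomology vanishes), consistent with the counting in \eqref{eq:hilbertseries}. A basis of $H^2(F)$ is given by the fundamental classes of $2n$ small oriented $2$-spheres, one linking each puncture, subject to a single relation that their signed sum is trivial (equivalently, by Alexander duality $H^2(F) \cong \tilde H_0(\{2n\text{ pts}\})$, the reduced permutation representation on the punctures).

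Next, I would identify the $B_n$-action on this basis. In the fiber sequence \eqref{Xfiber} the group $B_n$ acts on $F$ by permuting the punctures $\{\pm p_1,\dots,\pm p_n\}$ exactly as it acts on the set $\{1,\overline{1},\dots,n,\overline{n}\}$: the Coxeter generator $s_i$ swaps $p_i\leftrightarrow p_{i+1}$ (and their antipodes), while $t_i$ swaps $p_i\leftrightarrow -p_i$. The key point to verify here, which I expect to be the main technical obstacle, is that this action on punctures lifts to an \emph{un-signed} permutation action on the basis of linking $2$-spheres of $H^2(F)$---i.e.\ that no orientation sign is introduced when a puncture is transported across the sphere by an element of $B_n$. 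This reduces to checking that the diffeomorphism of $S^3$ realizing the $B_n$-action on the unordered punctures can be taken orientation-preserving (so the linking $S^2$'s are matched with matching orientations), which follows from connectedness of the orientation-preserving diffeomorphism group of $S^3$.

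Granted this, let $V$ denote the $2n$-dimensional permutation representation of $B_n$ on $\{p_1,\overline{p_1},\dots,p_n,\overline{p_n}\}$. The stabilizer of the puncture $p_1$ is the copy of $B_{n-1}$ acting on the remaining $2(n-1)$ elements, so
\[ V \;\cong\; \ind_{B_{n-1}}^{B_n}\triv \;=\; \ind_{B_{n-1}}^{B_n}\chi^{(n-1),\emptyset}. \]
Applying the Type $B$ branching rule recalled in \S\ref{sec:reptheorytypeB} (adding a single box to either $\lambda^+$ or $\lambda^-$ of the indexing bipartition), one gets
\[ \ind_{B_{n-1}}^{B_n}\chi^{(n-1),\emptyset} \;\cong\; \chi^{(n),\emptyset}\;\oplus\;\chi^{(n-1,1),\emptyset}\;\oplus\;\chi^{(n-1),(1)}. \]
Finally, the relation that the $2n$ linking spheres sum to zero spans the trivial subrepresentation $\chi^{(n),\emptyset}\subset V$, so
\[ H^2(F) \;\cong\; V/\triv \;\cong\; \chi^{(n-1,1),\emptyset}\;\oplus\;\chi^{(n-1),(1)}, \]
completing the proof.
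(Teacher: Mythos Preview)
Your proposal is correct and follows essentially the same line as the paper: both identify $H^2(F)$ as the quotient of the permutation module on the $2n$ punctures $\{\pm p_1,\dots,\pm p_n\}$ by the trivial subrepresentation, then decompose that permutation module into irreducibles. The only cosmetic difference is in how the induction $\ind_{B_{n-1}}^{B_n}\triv$ is computed: the paper factors it through $B_1\times B_{n-1}$ and uses the induction product, whereas you invoke the branching rule directly; these yield the same decomposition. Your paragraph on orientations is more explicit than the paper (which simply asserts that $B_n$ permutes the cocycles), though the justification via $\mathrm{Diff}^+(S^3)$ is heavier than necessary---here the $B_n$-action is literally by relabeling the punctures, not by moving points of $S^3$, so no sign can arise.
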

\begin{proof}
Since $F$ is a punctured sphere, the generators of the homology can be understood as cycles around these punctures. Let $e_{i}$ be the cocycle dual to the cycle around the point $p_{i}$ and $\overline{e_{i}}$ the cocycle dual to the cycle around its antipodal point $-p_{i}$. The $B_{n}$ action on $H^{2}(F)$ is determined by how $B_{n}$ permutes these cocycles. It follows that 
 \begin{equation}\label{eq:fiberfraction} H^{2}(F) \cong \frac{\Q e_{1} \oplus \Q \overline{e_{1}} \oplus \dots \oplus \Q e_{n} \oplus \Q \overline{e_{n}}}{\Q(e_{1} + \overline{e_{1}} + \dots + e_{n} + \overline{e_{n}})}.  \end{equation}
 The denominator carries the trivial representation, which we shall denote by $\triv$. To compute the numerator, note that the subgroup fixing $e_{1}$ pointwise is $S_{1} \times B_{n-1}$, which has index $2n$ in $B_{n}$. Hence 
 \[\Q e_{1} \oplus \Q \overline{e_{1}} \oplus \dots \oplus \Q e_{n} \oplus \Q \overline{e_{n}} \cong_{B_{n}} \ind_{S_{1} \times B_{n-1}}^{B_{n}} \triv \cong_{B_{n}} \ind^{B_{n}}_{B_{1} \times B_{n-1}} \left( \ind^{B_{1} \times B_{n-1}}_{S_{1} \times B_{n-1}} \triv \right)  \]
 by transitivity of induction. The inner term on the left-hand-side can be expanded as
\[ \ind^{B_{1} \times B_{n-1}}_{S_{1} \times B_{n-1}} \triv = (\chi^{(1),\emptyset} \oplus \chi^{\emptyset,(1)}) \times \chi^{(n-1),\emptyset} = ( \chi^{(1),\emptyset} \times \chi^{(n-1),\emptyset}) \oplus ( \chi^{\emptyset,(1)} \times \chi^{(n-1),\emptyset} ). \]

Using the Type $B$ branching rules, it follows that 
\[ \ind_{B_{1} \times B_{n-1}}^{B_{n}} ( \chi^{\emptyset,(1)} \times \chi^{(n-1),\emptyset}) =  \chi^{\emptyset,(1)} \cdot \chi^{(n-1),\emptyset} = \chi^{(n-1),(1)}, \]
which is an $n$-dimensional representation of $B_{n}$. 

The other term is 
 \[ \ind_{B_{1} \times B_{n-1}}^{B_{n}} (\triv_{B_{1}} \times \triv_{B_{n-1}}) = \Q[B_{n}/ (B_{1} \times B_{n-1})], \]
 which is also $n$-dimensional and decomposes as $\chi^{(n),\emptyset} \oplus \chi^{(n-1,1), \emptyset}$.
 The numerator in \eqref{eq:fiberfraction} therefore has description 
 \[\Q e_{1} \oplus \Q \overline{e_{1}} \oplus \dots \oplus \Q e_{n} \oplus \Q \overline{e_{n}} = \chi^{(n),\emptyset} \oplus \chi^{(n-1,1), \emptyset} \oplus \chi^{(n-1),(1)} \]
 and so
 \begin{equation}\label{fibercomp} H^{2}(F) = \frac{\Q e_{1} \oplus \Q \overline{e_{1}} \oplus \dots \oplus \Q e_{n} \oplus \Q \overline{e_{n}}}{\Q(e_{1} + \overline{e_{1}} + \dots + e_{n} + \overline{e_{n}})} =  \chi^{(n-1,1), \emptyset} \oplus \chi^{(n-1),(1)}.  \end{equation}
\end{proof}

Plugging the description of $H^{*}(F)$ into \eqref{eq:fiberseq} yields the following recursion. 
\begin{cor}\label{cor:recursion} There is a isomorphism of $B_{n}$-modules
\[ H^{2j}\bconf^3 \cong_{B_{n}} H^{2j}\bconfliftn{n}^3 \oplus \Big( H^{2(j-1)}\bconfliftn{n}^3 \otimes V \Big) \]
for $0 \leq j \leq n$, where 
$V= \chi^{(n-1,1), \emptyset} \oplus \chi^{(n-1),(1)}$.
\end{cor}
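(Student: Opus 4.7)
The plan is to extract the corollary from the already-established isomorphism \eqref{eq:fiberseq} by decomposing $H^*(F)$ into its two non-zero graded pieces and then tracking degrees.

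First, I would invoke the $B_n$-equivariant isomorphism
\[ \bconfhom^3 \cong_{B_n} \bconfhomliftn{n}^3 \otimes H^*(F) \]
from \eqref{eq:fiberseq}, which comes from the collapse of the Serre spectral sequence associated to the fiber bundle \eqref{Xfiber}; the collapse holds because $\bconfliftn{n}^3$ is simply connected and both $\bconfliftn{n}^3$ and $F$ have cohomology concentrated in even degrees (so there is no room for nonzero differentials). This isomorphism respects both the grading and the $B_n$-action.

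Next I would observe that $F = SU_2 \setminus \{\pm p_1, \ldots, \pm p_n\}$ is a $2n$-punctured $2$-sphere, so $H^*(F)$ is supported only in degrees $0$ and $2$, with $H^0(F) \cong_{B_n} \triv$ (the trivial representation, since $F$ is connected and the $B_n$-action preserves components) and $H^2(F) \cong_{B_n} V := \chi^{(n-1,1),\emptyset} \oplus \chi^{(n-1),(1)}$ by the lemma established just above. Substituting this decomposition into the tensor product gives
\[ \bconfhom^3 \;\cong_{B_n}\; \bigl(\bconfhomliftn{n}^3 \otimes \triv\bigr) \,\oplus\, \bigl(\bconfhomliftn{n}^3 \otimes V[-2]\bigr), \]
where $V[-2]$ denotes $V$ placed in cohomological degree $2$.

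Finally, taking the degree-$2j$ component of both sides yields exactly the claimed formula
\[ H^{2j}\bconf^3 \;\cong_{B_n}\; H^{2j}\bconfliftn{n}^3 \,\oplus\, \bigl(H^{2(j-1)}\bconfliftn{n}^3 \otimes V\bigr), \]
with the convention that $H^{-2}\bconfliftn{n}^3 = 0$ so that the $j=0$ case reduces correctly. There is no real obstacle here: the work has already been done in proving \eqref{eq:fiberseq} and computing $H^2(F)$ as a $B_n$-representation. The only point that warrants care is verifying that the spectral sequence collapse is $B_n$-equivariant, which follows from naturality of the Serre spectral sequence together with the fact that the $B_n$-action on the total space is compatible (via the projection $\Pi_{[n]}$) with its action on the base and fiber, as indicated in the discussion preceding \eqref{eq:fiberseq}.
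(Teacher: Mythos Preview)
Your proposal is correct and follows essentially the same approach as the paper: invoke the $B_n$-equivariant K\"unneth-type isomorphism \eqref{eq:fiberseq}, split $H^*(F)$ into its degree-$0$ and degree-$2$ pieces using the preceding lemma, and read off the degree-$2j$ component. One small slip: $SU_2 \cong \SSS^3$, so $F$ is a $2n$-punctured $3$-sphere, not a $2$-sphere; a punctured $2$-sphere would have its reduced cohomology in degree $1$, not degree $2$, so your stated support of $H^*(F)$ only makes sense once this is corrected.
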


\subsubsection{The case that $n=2$}\label{sec:reptheoryintuition}

Our strategy will be to compute the representation for $\bconfhomn{2}^3$ using Corollary \ref{cor:recursion}, and then use the fact that the cohomology generators $z_{ij}, z_i$ are compatible with projections between configuration spaces to deduce a presentation for $\bconfhom^3$.
In particular, if
\[\Pi_{[S]}: \bconfhom^3 \longrightarrow H^{*}\bconfn{|T|}^3 \]
is the map projecting $(x_{1}, \cdots, x_{n})$ to the points with index $T \subset [n]$, then Feichtner-Ziegler show in \cite[Lemma 6]{feichtner2002orbit} that in cohomology, $\Pi^{*}_{|T|}(z_{ij}) = z_{ij}$ for $i,j \in T$. 

Part of our upcoming argument will rely on Theorem \ref{ungradedrep}, which states that as an ungraded representation, 
\[ \bconfhom^3 \cong_{B_{n}} \Q [B_{n}]. \]
We will defer the proof\footnote{The proof of Theorem~\ref{ungradedrep} does not rely on the remaining results in \S \ref{sec:d3}. In particular, all subsequent work in \S \ref{sec:d3} will focus on computing the presentation of $\bconfhom^3$. Theorem~\ref{ungradedrep} will be proved using equivariant formality, which we will see follows from the Hilbert series for $\bconfhom^3$ given in \eqref{eq:hilbertseries}.} of Theorem \ref{ungradedrep} to Section \ref{sec:equivcohom}. One consequence of this fact is that $\bconfhom^3$ will contain exactly one copy of every 1-dimensional representation of $B_n$. 

Our goal in Proposition~\ref{reptheory:n=2} is to determine where each irreducible of $\Q[B_2]$ appears in $\bconfhomn{2}^3$. Denote by $\rho[j]$ a representation $\rho$ in cohomological degree $j$.

\begin{prop}\label{reptheory:n=2}
 As $B_{2}$ representations, $\bconfhomn{2}^3$ decomposes as
  \begin{align*}
    H^0 \bconfn{2}^3 &= \chi^{(2),\emptyset}\\
    H^{2} \bconfn{2}^3 &= \chi^{(1,1),\emptyset} + \chi^{\emptyset, (1,1)} + \chi^{(1),(1)}\\
    H^{4} \bconfn{2}^3 &= \chi^{\emptyset,(2)} + \chi^{(1),(1)}.
 \end{align*}
\end{prop}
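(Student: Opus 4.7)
The strategy is to apply the recursion from Corollary~\ref{cor:recursion} in the case $n=2$; since $V = \chi^{(1,1),\emptyset} \oplus \chi^{(1),(1)}$ in this case, the problem reduces to describing $H^{*}\bconfliftn{2}^3$ as a $B_{2}$-representation. Once this is known, each term on the right-hand side of the recursion is computable by direct character multiplication using Table~\ref{charactertableb2}.

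The space $\bconfliftn{2}^3 = \conf_2^{\Z_2}(SU_2)/SU_2$ is homotopy equivalent to $S^2$: using the coset representative $(1,q)$ with $q \in SU_2 \setminus \{\pm 1\}$, it deformation retracts onto the equatorial $2$-sphere of pure imaginary unit quaternions. Thus $H^k\bconfliftn{2}^3$ is nonzero only for $k=0,2$, with each piece one-dimensional, and clearly $H^0\bconfliftn{2}^3 = \chi^{(2),\emptyset}$. To identify $H^2\bconfliftn{2}^3$, I would parametrize $q = \cos\theta + \sin\theta\,\vec v$ with $\vec v$ a pure imaginary unit quaternion, so the deformation retraction is $(\theta,\vec v)\mapsto\vec v$. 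The generator $s_1$ sends $(1,q)\mapsto(q,1)\sim(1,q^{-1})$, giving quaternionic conjugation $(\theta,\vec v)\mapsto(\theta,-\vec v)$; the generator $t_2$ sends $(1,q)\mapsto(1,-q)$, giving $(\theta,\vec v)\mapsto(\pi-\theta,-\vec v)$. After retraction, both actions descend to the antipodal map $\vec v \mapsto -\vec v$ on $S^2$, which has degree $-1$. Hence $s_1$ and $t_2$ each act by $-1$ on $H^2\bconfliftn{2}^3\cong\Q$, and Table~\ref{charactertableb2} shows the unique one-dimensional $B_2$-irreducible with this property is $\chi^{\emptyset,(1,1)}$, so $H^2\bconfliftn{2}^3\cong\chi^{\emptyset,(1,1)}$.

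Substituting into Corollary~\ref{cor:recursion}, the case $j=0$ is immediate, the case $j=1$ gives
\[ H^2\bconfn{2}^3 \cong \chi^{\emptyset,(1,1)} \oplus (\chi^{(2),\emptyset}\otimes V) = \chi^{(1,1),\emptyset} \oplus \chi^{\emptyset,(1,1)} \oplus \chi^{(1),(1)}, \]
and the case $j=2$ (with $H^4\bconfliftn{2}^3=0$) reduces to $H^4\bconfn{2}^3 \cong \chi^{\emptyset,(1,1)}\otimes V$. Direct character multiplication from Table~\ref{charactertableb2} yields $\chi^{\emptyset,(1,1)}\otimes\chi^{(1,1),\emptyset} = \chi^{\emptyset,(2)}$ and $\chi^{\emptyset,(1,1)}\otimes\chi^{(1),(1)} = \chi^{(1),(1)}$, so $H^4\bconfn{2}^3 \cong \chi^{\emptyset,(2)}\oplus\chi^{(1),(1)}$, as claimed. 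The main obstacle in this argument is the geometric identification of the $B_2$-action on $H^2\bconfliftn{2}^3$; once this is pinned down, the remainder is routine character arithmetic.
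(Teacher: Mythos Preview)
Your proof is correct and follows the same overall strategy as the paper: apply Corollary~\ref{cor:recursion} with $n=2$, identify $H^{*}\bconfliftn{2}^{3}$ as a $B_2$-module, and then read off the answer via character arithmetic. The substantive difference lies in how you identify the one-dimensional $B_2$-representation $H^{2}\bconfliftn{2}^{3}$.

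The paper does this indirectly: it observes that $H^{2}\bconfliftn{2}^{3}$ must restrict to $\chi^{\emptyset,(1)}$ on $B_1$, narrowing the candidates to $\chi^{\emptyset,(1,1)}$ and $\chi^{\emptyset,(2)}$, and then uses Proposition~\ref{prop:FZrelations} to see that $z_1 z_2$ already spans a copy of $\chi^{\emptyset,(2)}$ in $H^{4}\bconfn{2}^{3}$. Invoking the forward-referenced Theorem~\ref{ungradedrep} (that the total representation is $\Q[B_2]$, so each one-dimensional irreducible appears exactly once) forces $\chi^{\emptyset,(2)}$ into degree~$4$ and hence $H^{2}\bconfliftn{2}^{3} = \chi^{\emptyset,(1,1)}$. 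Your argument instead identifies $\bconfliftn{2}^{3} \simeq SU_2 \setminus \{\pm 1\} \simeq S^2$ explicitly and computes the action of $s_1$ and $t_2$ geometrically, finding that both descend to the antipodal map and hence act by $-1$ on $H^2(S^2)$. This pins down $\chi^{\emptyset,(1,1)}$ directly from Table~\ref{charactertableb2}.

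Your route is more self-contained: it avoids the forward reference to Theorem~\ref{ungradedrep}, whose proof is deferred to \S\ref{sec:equivcohom}. The paper's route, on the other hand, makes the element $z_1 z_2$ visible early, which feeds into the subsequent eigenvector computations in Proposition~\ref{prop:actionongenerators}. Both are valid; yours is arguably cleaner as a stand-alone argument for this proposition.
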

\begin{proof}
By Corollary \ref{cor:recursion}, when $n=2$, 
\[ V = \chi^{(1,1), \emptyset} \oplus \chi^{(1),(1)}\]
and $H^{0}\bconfn{2}^{3} = \chi^{(2),\emptyset}$. Since $H^{0}\bconfn{1}^{3}= \chi^{(1),\emptyset}$, by Theorem \ref{ungradedrep} it follows that $H^2 \bconfn{1}^{3} = \chi^{\emptyset,(1)}$. 
Write the $B_{2}$ representation carried by $H^2\bconfliftn{2}^{3}$ as $\chi$; note that $\chi$ is 1-dimensional and must restrict to $\chi^{\emptyset, (1)}$. Inspection of Tables \ref{charactertableb1} and \ref{charactertableb2} then shows that $\chi$ must be either $\chi^{\emptyset, (1,1)}$ or $\chi^{\emptyset, (2)}$. 

Expanding the recursion in Corollary \ref{cor:recursion} gives
\begin{align*} \bconfhomn{2}^{3}&= (\chi^{(2),\emptyset}[0] + (\chi^{(1),(1)} +  \chi^{(1,1), \emptyset}[2]))\otimes (\chi^{(2),\emptyset}[0] + \chi [2])\\
&= \chi^{(2),\emptyset}[0] + (\chi^{(1),(1)} +  \chi^{(1,1), \emptyset} + \chi)[2] + (\chi^{(1),(1)} +  \chi^{(1,1), \emptyset}\otimes \chi)[4],
\end{align*}
where we use the fact that $\chi^{(1),(1)} \otimes \chi = \chi^{(1),(1)}$ no matter which 1-dimensional representation $\chi$ is.

Observe by Proposition \ref{prop:FZrelations} that $\zone \ztwo \in H^{4}\bconfn{2}^3$ is an eigenbasis for $\chi^{\emptyset,(2)}$. Since the total representation of $\bconfhomn{2}^{3}$ is the regular representation of $B_{2}$, it follows that $\chi^{\emptyset,(2)}$ must appear in cohomological degree 4 only, and so it must be that 
\[ \chi^{\emptyset,(2)} = \chi^{(1,1),\emptyset} \otimes \chi.\]
Again using the fact that the total representation in this context is $\Q[B_2]$, one deduces that $\chi=\chi^{\emptyset,(1,1)}$ and therefore
\[\bconfhomn{2}^{3} = \chi^{(2),\emptyset}[0] + (\chi^{(1),(1)} +  \chi^{(1,1), \emptyset} + \chi^{\emptyset,(1,1)})[2] + (\chi^{(1),(1)} +  \chi^{\emptyset,(2)})[4]. \]
\end{proof}

We will now determine (1) the
1-dimensional eigenspaces of $\bconfhomn{2}^3$ as $B_2$ representations and (2) a complete description of the $B_2$ action on the generators of $\bconfhomn{2}^3$, which can then be extended to the $B_n$ action on $\bconfhom^3$. Recall that since $\bconfhom^3 \cong \Q[B_n]$, each 1-dimensional representation of $B_n$ will occur once in $\bconfhom^3$.
\begin{prop}\label{prop:actionongenerators}
Let $V(\chi)$ be an eigenvector in $\bconfhomn{2}^3$ of a 1-dimensional representation $\chi$ of $B_{2}$. The four $1$-dimensional representations $\chi$ of $B_2$ occur in $\bconfhomn{2}^3$ as the span of the following elements:
\begin{table}[!h]
\centering
\setlength{\tabcolsep}{10pt} 
\renewcommand{\arraystretch}{1.25} 
\begin{tabular}{|l||l|} \hline
$V(\chi)$ & Eigenvector \\ \hline \hline
$V(\chi^{(2),\emptyset})$ & $1$ \\ \hline
$V(\chi^{(1,1),\emptyset})$ & $z_{12} + \zminus  + \zone$\\ \hline
$V(\chi^{\emptyset,(1,1)})$ & $z_{12} - \zminus  - \ztwo$ \\ \hline
$V(\chi^{\emptyset,(2)})$ & $\zone  \ztwo $ \\ \hline
\end{tabular}
\label{table:n=2eigenspaces}
\end{table}

This in turn determines the following relations:
\begin{align}
    \zbadbad &= z_{12} + \zone -  \ztwo \label{eqzbadbad} \\
    \zbad &= \zminus +\zone +  \ztwo. \label{eqzbad}
\end{align}
\end{prop}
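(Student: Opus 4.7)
The plan is to leverage the isotypic decomposition of $\bconfhomn{2}^3$ from Proposition \ref{reptheory:n=2}, together with the partial $B_2$-action recorded in Table \ref{tab:b2action} and the ring relations of Proposition \ref{prop:FZrelations}. Since $\bconfhomn{2}^3 \cong_{B_{2}} \Q[B_2]$ as a $B_2$-representation (Theorem \ref{ungradedrep}), each $1$-dimensional irreducible appears in $\bconfhomn{2}^3$ with multiplicity one, and Proposition \ref{reptheory:n=2} places it in a single cohomological degree. Two eigenvectors are immediate: $V(\chi^{(2),\emptyset}) = \Q\cdot 1$ lies in $H^0$, and the element $\zone\ztwo \in H^4$ has $s_1\mapsto 1$, $t_2\mapsto -1$ by Table \ref{tab:b2action}, matching the character of $\chi^{\emptyset,(2)}$.

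For the remaining two representations, both living in $H^2\bconfn{2}^3$, I will first use the fully known action of $t_2$ on the basis $\{\zone,\ztwo,z_{12},\zminus\}$ to split $H^2$ into $t_2$-eigenspaces: the $+1$-eigenspace is $\mathrm{span}(\zone,\, z_{12}+\zminus)$ and the $-1$-eigenspace is $\mathrm{span}(\ztwo,\, z_{12}-\zminus)$. An eigenvector for $\chi^{(1,1),\emptyset}$ (with $s_1\mapsto -1$, $t_2\mapsto 1$) must then be of the form $v = a\zone + c(z_{12}+\zminus)$. Writing the unknown $\zbad$ as a linear combination in the basis of $H^2$, the condition $s_1 v = -v$ combined with $s_1\cdot\zminus = -\zbad$ (Table \ref{tab:b2action}) forces
\begin{equation*}
c\,\zbad \;=\; a(\zone+\ztwo) + c\,\zminus,
\end{equation*}
so after normalizing $c = 1$ one has $\zbad = \alpha(\zone+\ztwo) + \zminus$ for some scalar $\alpha$.

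To pin down $\alpha$, I will invoke relation $(5)$ of Proposition \ref{prop:FZrelations} with $(i,j) = (\overline{1},2)$: substituting $z_{\overline 1} = -\zone$ via part $(1)$ of Proposition \ref{prop:FZrelations} collapses it to the identity $\zbad(\zone+\ztwo) = \zone\ztwo$. Expanding the left-hand side using the ansatz for $\zbad$, the relation $\zi^2 = 0$, and the auxiliary identity $\ztwo\zminus = -\zone\zminus - \zone\ztwo$ (a short derivation from relation $(5)$ at $(i,j) = (1,\overline 2)$) produces $(2\alpha - 1)\zone\ztwo = \zone\ztwo$, hence $\alpha = 1$. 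This yields \eqref{eqzbad} and the eigenvector $V(\chi^{(1,1),\emptyset}) = \mathrm{span}(z_{12}+\zminus+\zone)$. Running the same analysis for $\chi^{\emptyset,(1,1)}$ with $v' = b\ztwo + d(z_{12}-\zminus)$ gives $V(\chi^{\emptyset,(1,1)}) = \mathrm{span}(z_{12} - \zminus - \ztwo)$, with the same formula for $\zbad$ emerging as a consistency check. Finally, applying $w_0 = (s_1 t_2)^2$ to $z_{12}$ via Table \ref{tab:b2action} and substituting $\zbad = \zone+\ztwo+\zminus$ gives $\zbadbad = z_{12} + \zone - \ztwo$ as in \eqref{eqzbadbad}.

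I expect the main obstacle to be neither conceptual nor representation-theoretic but combinatorial: the representation-theoretic content of Proposition \ref{reptheory:n=2} alone only guarantees existence of the eigenvectors, so the crucial step is showing that relation $(5)$ of Proposition \ref{prop:FZrelations} at $(\overline 1, 2)$, combined with the antipodal identifications $z_{\overline i} = -z_i$ and the auxiliary product $\ztwo\zminus$, is precisely what forces the scalar $\alpha$ to equal $1$ and hence makes the expansions of $\zbad$ and $\zbadbad$ in the basis unique.
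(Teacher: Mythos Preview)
Your argument is correct and follows essentially the same strategy as the paper: use Proposition~\ref{reptheory:n=2} to locate the one-dimensional irreducibles, write an undetermined eigenvector, constrain it by the known $t_2$- and $s_1$-actions to force $\zbad = \alpha(\zone+\ztwo)+\zminus$, and then pin down $\alpha$ via a ring relation from Proposition~\ref{prop:FZrelations}. Your organization is slightly cleaner than the paper's in two respects: you decompose $H^2$ into $t_2$-eigenspaces at the outset (the paper instead acts by $t_2$ on a general linear combination and reads off the same constraints), and you invoke relation~(5) directly at $(i,j)=(\overline{1},2)$, whereas the paper acts by $s_1 t_2 s_1 = t_1$ on relation~(5) at $(1,2)$---but since $t_1\cdot(1,2)=(\overline{1},2)$, these produce the identical equation $\zbad(\zone+\ztwo)=\zone\ztwo$. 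One small point you leave implicit: when you normalize $c=1$ you are using that $c\neq 0$, which follows because otherwise $v$ would lie in $\mathrm{span}(\zone,\ztwo)$, the $\chi^{(1),(1)}$-isotypic component.
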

\begin{proof}
The copy of $\chi^{(1),(1)}$ in $H^{2}\bconfn{2}^{3}$ has basis $\zone$ and $\ztwo$. By Proposition \ref{reptheory:n=2}, the remaining representations in $H^{2}\bconfn{2}^{3}$ are $\chi^{(1,1),\emptyset}$ and $\chi^{\emptyset,(1,1)}$.
For vectors $\alpha = (\alpha_{12}^{+},\alpha_{12}^{-}, \alpha_{1}, \alpha_{2}) \in \Q^{4}$ and $\beta = (\beta_{12}^{+},\beta_{12}^{-}, \beta_{1}, \beta_{2}) \in \Q^{4},$ write
\[ V(\repone) = \alpha_{12}^{+} z_{12} + \alpha_{12}^{-} \zminus  + \alpha_{1} \zone  + \alpha_{2} \ztwo  \]
and 
\[ V(\repsgn) = \beta_{12}^{+} z_{12} + \beta_{12}^{-} \zminus  + \beta_{1} \zone  + \beta_{2} \ztwo . \]
Our goal will be to solve for the undetermined coefficients above.

Starting with $V(\repone)$, we have that
\begin{align*}
    t_{2} \cdot V(\repone) &= \alpha_{12}^{+}\zminus   + \alpha_{12}^{-} z_{12} + \alpha_{1} \zone  - \alpha_{2} \ztwo  \\
    &= V(\repone) \\
    &=  \alpha_{12}^{+} z_{12} + \alpha_{12}^{-} \zminus  + \alpha_{1} \zone  + \alpha_{2} \ztwo . 
\end{align*}
This forces $\alpha_{12}^{+} = \alpha_{12}^{-}$ and $\alpha_{2} = 0$. Note that $\alpha_{12}^{+} \neq 0$, because otherwise $V(\repone)$ would not be linearly independent with $\langle \zone , \ztwo  \rangle$. Thus without loss of generality, scale $\alpha_{12}^{+} = \alpha_{12}^{-} = 1$; write $\alpha_{1}/\alpha_{12}^{+} = \alpha$, so that 
\[ V(\repone) = z_{12} + \zminus + \alpha \zone. \]
Acting by $s_{1}$ gives
\begin{align*}
    s_{1} \cdot V(\repone) &= -z_{12} -\zbad + \alpha \ztwo \\
    &= -V(\repone) \\
    &= -z_{12} -\zminus - \alpha \zone.
\end{align*}
This implies that 
\[ \zbad = \zminus + \alpha \zone + \alpha \ztwo. \]

By a similar argument, 
\begin{align*}
    t_{2} \cdot V(\repsgn) &= \beta_{12}^{+}\zminus + \beta_{12}^{-} z_{12} + \beta_{1} \zone - \beta_{2} \ztwo\\ 
    &= -V(\repsgn)\\
    &=  -\beta_{12}^{+} z_{12} - \beta_{12}^{-}\zminus- \beta_{1} \zone - \beta_{2}\ztwo. 
\end{align*}
Hence one can conclude that $\beta_{12}^{+} = -\beta_{12}^{-} \neq 0$ and $\beta_{1} =0$. Again, normalize by $\beta_{12}^{+} = 1$ and set $\beta = \beta_{2}/\beta_{12}^{+}$; therefore
\[ V(\repsgn) = z_{12} - \zminus + \beta \ztwo. \]
Acting again by $s_{1}$ gives 
\begin{align*}
    s_{1} \cdot V(\repsgn) &= -z_{12} + \zbad + \beta \zone\\ 
    &= -V(\repsgn) \\
    &=  - z_{12} + \zminus - \beta \ztwo,
\end{align*}
which implies that 
\begin{equation}\label{eq:zbad} \zbad = \zminus - \beta \zone - \beta \ztwo. \end{equation}
We therefore conclude that $\alpha = -\beta$, and so 
\[ V(\repone) = z_{12} + \zminus - \beta \zone. \]

Applying $w_{0}$ to both expressions gives:
\begin{align*}
 w_{0} \cdot V(\repone) &= \zbadbad + \zbad + \beta\zone= V(\repone) = z_{12} + \zminus - \beta \zone\\
  w_{0} \cdot V(\repsgn) &= \zbadbad - \zbad - \beta \ztwo= V(\repsgn) = z_{12} - \zminus + \beta \ztwo.
\end{align*}
Simplifying and adding the above two expressions gives 
\begin{align*} 2 \zbadbad &= 2 z_{12} - 2 \beta \zone + 2 \beta \ztwo.\\
\end{align*}

Finally, we must show that $\beta = -1$. By Proposition \ref{prop:FZrelations} (5),
\[ z_{12} \zone - z_{12}\ztwo - \zone\ztwo = 0\]
holds in $\bconfhomn{2}^3$.
Acting by $t_{2}$ gives
\[ \zminus \zone  + \zminus \ztwo + \zone \ztwo = 0. \]
One the other hand, acting by $s_{1}t_{2}s_{1}$ gives
\[ -\zbad \zone - \zbad \ztwo + \zone \ztwo = 0.\]
Using \eqref{eq:zbad}, and Propopsition \ref{prop:FZrelations} (3), we have
\begin{align*} -\zbad \zone - \zbad \ztwo + \zone \ztwo &= -(\zminus - \beta \zone - \beta \ztwo) \zone - (\zminus - \beta \zone - \beta \ztwo) \ztwo + \zone \ztwo \\
&= -\zminus \zone - \zminus \ztwo +(1+ 2\beta) \zone \ztwo.\end{align*}
It follows that 
\begin{align*}
0 &= \Big(\zminus \zone  + \zminus \ztwo + \zone \ztwo \Big) + \Big( -\zminus \zone - \zminus \ztwo +(1+ 2\beta) \zone \ztwo \Big) \\
&= (2+2\beta)\zone \ztwo.
\end{align*}
 Since $\zone \ztwo$ is a basis element, $\beta = -1$.
\end{proof}
The relations \eqref{eqzbadbad} and \eqref{eqzbad} are the final relations needed to determine a presentation for $\bconfhom^3$; note that they precisely match the relation (2) in Corollary \ref{cor:grbconfhom}.

\subsection{Presentation for $\bconfhom^3$}\label{sec:presbconfhom}
We are now ready to give a presentation for $\bconfhom^3$. As before, the action of  $\sigma \in B_n$ on the variables $z_{ij}$ and $z_i$ for $i,j \in [n]^{\pm}$ is given by
\[ \sigma \cdot \begin{cases} z_{ij} = z_{\sigma(i)\sigma(j)}\\
z_i = z_{\sigma(i),}
\end{cases}\]
with the convention that $\overline{\overline{i}} =i$.
\begin{theorem}\label{thm:3pres}
There is a $B_{n}$-equivariant ring isomorphism 
\[ \bconfhom^3 \cong \Z[z_{ij},z_{i}]/ \J', \]
for $i,j \in [n]^{\pm}$, where $\J'$ is generated by
\begin{itemize}
\item  $\zijbad = \zijminus +z_i +  z_j$ and
    \item The relations in Proposition \ref{prop:FZrelations}.
\end{itemize}
\end{theorem}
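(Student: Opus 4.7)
The plan is to prove the isomorphism in two stages: first verify that every relation in $\J'$ holds in $\bconfhom^3$, then show these relations suffice by matching Hilbert series. Once the relations are verified, there is a natural ring surjection $\gamma: \Z[z_{ij}, z_i]/\J' \twoheadrightarrow \bconfhom^3$ sending generators to generators of the same name, and $B_n$-equivariance is built in.

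For the first stage, the relations listed in Proposition \ref{prop:FZrelations} hold by that proposition itself, so the only new identity to check is $\zijbad = \zijminus + z_i + z_j$ for arbitrary distinct $i,j \in [n]^{\pm}$. I would deduce this from the $n=2$ case by projection. Applying the ``forget coordinates'' map $\Pi_{\{|i|,|j|\}}: \bconf^3 \to \bconfn{2}^3$ and its pullback, which by \cite[Lemma 6]{feichtner2002orbit} sends named generators to named generators, one lifts equation \eqref{eqzbad} of Proposition \ref{prop:actionongenerators} to the desired relation after relabeling. Transporting by the $B_n$-action (in particular using $t_j$ to convert between $\zijbad$ and $\zijbadbad$) covers all four sign patterns on the pair.

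For sufficiency, I would run a Gr\"obner-style reduction patterned on the proof of Theorem \ref{thm:lift1pres} and Lemma \ref{lemma:GDB}. The new relation $\zijbad = \zijminus + z_i + z_j$, combined with its $t_j$-image $\zijbadbad = z_{ij} + z_i - z_j$ and with the antisymmetry $z_{ji} = -z_{ij}$, lets me eliminate every generator of the form $z_{\bar a b}$ or $z_{\bar a \bar b}$ in favor of generators $z_{ab}, z_{a\bar b}$ with $|a| < |b|$, plus the $z_i$. Parts (4)--(6) of Proposition \ref{prop:FZrelations} then provide triangle-type rewrite rules that reduce any product to a $\Z$-linear combination of the hand-and-finger monomials of Proposition \ref{prop:nbc}. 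This yields an upper bound of $2 \cdot 4 \cdots 2n = 2^n n!$ generators for $\Z[z_{ij}, z_i]/\J'$ as a $\Z$-module. Since Feichtner--Ziegler show $\bconfhom^3$ is torsion-free of $\Z$-rank exactly $2^n n!$ (set $t=1,d=3$ in \eqref{eq:hilbertseries}), the surjection $\gamma$ must send this spanning set bijectively onto a basis, so $\gamma$ is an isomorphism.

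The main obstacle will be the bookkeeping in the reduction: unlike the pure rewrite rules used in Theorem \ref{thm:lift1pres}, the new relation is inhomogeneous in degree, which means one must be careful that the reduction procedure both terminates and does not introduce unexpected $\Z$-linear dependencies among the hand-and-finger monomials. The tight numerical match provided by \eqref{eq:hilbertseries} is exactly what closes this gap: any surplus relation would force the target rank to drop below $2^n n!$, contradicting the Hilbert series computation.
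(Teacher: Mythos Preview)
Your proposal is correct and follows essentially the same approach as the paper: verify the relations via Proposition~\ref{prop:FZrelations} and the $n=2$ computation of Proposition~\ref{prop:actionongenerators} (lifted by projection), then run a Gr\"obner-style reduction to the hand-and-finger basis of Proposition~\ref{prop:nbc} and match ranks against the known Hilbert series. One small correction: the new relation $\zijbad = \zijminus + z_i + z_j$ and all the relations of Proposition~\ref{prop:FZrelations} are \emph{homogeneous} in cohomological degree (every generator lives in $H^{2}$), so the reduction here is actually cleaner than in Theorem~\ref{thm:lift1pres}, not more delicate; your stated concern about inhomogeneity does not arise in the $d=3$ setting.
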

\begin{proof}
Proposition \ref{prop:FZrelations} and \ref{prop:actionongenerators} show that the relations in $\J'$ hold in $\bconfhom^3$. (The relation $\zijbad = \zijminus +z_i +  z_j$ comes from \eqref{eqzbad}; note that it also implies $\zijbadbad = z_{ij} + z_i - z_j$ from the action by $t_j$.)

Hence it is sufficient to prove these are the only relations needed to generate $\J'$. To do so we will employ a standard argument,
very similar in spirit to the proof of Theorem \ref{thm:lift1pres}, to show that 
    \[ \underbrace{\Z \{ 1, \zone \}}_{h'_1} \cdot \underbrace{\Z \{ 1, \ztwo , z_{12}, \zminus, \}}_{h'_2} \cdots \underbrace{\Z \{ 1, z_{n}, z_{1n}, z_{1\overline{n}}, \cdots, z_{(n-1)n}, z_{(n-1)\overline{n}} \}}_{h'_n}.  \]
forms a Gr{\"o}bner basis for $\bconfhom^3$. We will again use the lexicographic ordering on $z_{i}, z_{ij}$ for $i,j \in [n]^{\pm}$ induced by the ordering
 \[\overline{0}< 1 < \overline{1} < 2 < \overline{2} < \cdots < n < \overline{n}.\]
 Here we identify $\zi$ with $z_{\overline{0}i}$ so that for $i < j$ one has $\zi < \zj < z_{ij} < \zijminus < \zijbad < \zijbadbad$. The argument amounts to showing that any monomial with two terms from the same set $h'_i$ can be rewritten in terms of elements lower in the above ordering using the relations in $\J'$. Call such monomials \emph{broken-circuits}. The same argument as in Theorem \ref{thm:lift1pres} shows that these broken-circuits (the \underline{underlined} terms) can be rewritten using relations in $\J'$:
\begin{itemize}
    \item $0 = z_{ij}z_{j k } - z_{ij}z_{jk} - \underline{z_{jk}z_{ik}}$
    \item $0 = z_{ij}z_{j \overline{k} } - z_{ij}z_{j\overline{k}} - \underline{z_{j\overline{k}}z_{i\overline{k}}}$ 
    \item $0= \zi z_{ij} - \underline{z_{ij} \zj} - \zi \zj$
    \item $0= \zi \zijminus + \underline{\zijminus \zj} + \zi \zj$ 
    \item $0=\zj \zijminus - \zj z_{ij} - \underline{z_{ij} \zijminus}$ 
    \item  
    $0=\zj \zijminus + \zk \zijminus + \zijminus z_{j\overline{k}} - \zj z_{ik} - \zk z_{ik} - \underline{z_{j\overline{k}} z_{ik}} - \zijminus z_{ik} $
    \item  
        $0 = (\zi + \zj + \zijminus)z_{jk} - (\zi + \zj + \zijminus) (\zi + \zk + \zikminus)- \underline{z_{jk}}(\zi + \zk + \underline{\zikminus})$
\end{itemize}
for $i,j,k \in [n]$.
Note that each underlined term is the largest with respect to our ordering, which completes the proof.
\end{proof}
The presentation of $\bconfhom^3$ in Theorem \ref{thm:3pres} is identical to the presentation of $\G= \gr(\bconfhom^1)$ in Corollary \ref{cor:grbconfhom}, which gives the following.
\begin{cor}\label{cor:gr1and3}
There is an isomorphism of $B_{n}$-modules
    \[ \bconfhom^3 \cong \gr(\bconfhom^1) = \G.\]
\end{cor}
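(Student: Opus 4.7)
The plan is to argue by directly comparing the two presentations already established in the paper. By Theorem~\ref{thm:3pres}, $\bconfhom^{3}$ has presentation $\Z[z_{ij},z_{i}]/\J'$ for $i,j \in [n]^{\pm}$, where $\J'$ is generated by the relation $\zijbad = \zijminus + z_{i}+z_{j}$ together with the Feichtner--Ziegler relations of Proposition~\ref{prop:FZrelations}. On the other hand, Corollary~\ref{cor:grbconfhom} describes $\gr(\bconfhom^{1})$ as $\Z[z_{ij},z_{i}]/\indeg(\J)$, generated by the same-named relations. I would first observe that relation $(iii)$ of Corollary~\ref{cor:grbconfhom}, namely $\zi-\zj+z_{ij}-\zijbadbad=0$, is exactly equivalent under the $B_n$-orbit (applying $t_{j}$) to the relation $\zijbad=\zijminus+z_{i}+z_{j}$ appearing in $\J'$. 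Similarly, relation $(iv)$ in Corollary~\ref{cor:grbconfhom} corresponds to Proposition~\ref{prop:FZrelations}(1), and the quadratic relations $(i),(ii),(v),(vi),(vii)$ in Corollary~\ref{cor:grbconfhom} match Proposition~\ref{prop:FZrelations}(3),(3),(4),(5),(6) verbatim.

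Next, I would define the candidate isomorphism
\[
\Phi \colon \gr(\bconfhom^{1}) \longrightarrow \bconfhom^{3}, \qquad z_{ij}\mapsto z_{ij}, \quad z_{i}\mapsto z_{i},
\]
on generators and note that $\Phi$ sends each generator of $\indeg(\J)$ to a generator (or $B_n$-translate of a generator) of $\J'$, hence $\Phi$ descends to a well-defined ring homomorphism. The inverse is defined identically on generators, so $\Phi$ is a ring isomorphism. Moreover, the $B_{n}$-action on both rings is given by the identical formula $\sigma\cdot z_{ij}=z_{\sigma(i)\sigma(j)}$ and $\sigma\cdot z_{i}=z_{\sigma(i)}$, so $\Phi$ is automatically $B_n$-equivariant. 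This already gives the stated $B_{n}$-module isomorphism $\bconfhom^{3}\cong\gr(\bconfhom^{1})$.

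Finally, to conclude the identification with $\G$, I would invoke the remark following Corollary~\ref{cor:bigrading}: the filtration of Proposition~\ref{prop:filbyz} on $\gr(\bconfhom^{1})$ is stable under $B_{n}$, and since $\Q[B_{n}]$ is semisimple, passing to the associated graded $\G=\gr(\gr(\bconfhom^{1}))$ preserves the isomorphism type of the $B_n$-representation. Thus $\gr(\bconfhom^{1})\cong_{B_n}\G$, and the composite yields $\bconfhom^{3}\cong_{B_n}\G$.

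The main obstacle is not conceptual but bookkeeping: carefully matching up each relation in the two presentations, including the subtle fact that relations $(iii)$ and $(iv)$ in Corollary~\ref{cor:grbconfhom} (which are linear in the generators) encode \emph{the same} content as the single relation $\zijbad = \zijminus+z_i+z_j$ in $\J'$ once one closes under the $B_n$-action. Because this comparison has essentially been assembled piecewise across Section~\ref{sec:d3} and Section~\ref{sec:d1}, the proof is very short, amounting to asserting that the two presentations coincide.
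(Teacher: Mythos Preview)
Your proposal is correct and takes essentially the same approach as the paper: the paper's proof is the single sentence preceding the corollary, asserting that the presentation in Theorem~\ref{thm:3pres} is identical to that in Corollary~\ref{cor:grbconfhom}, and you have simply carried out that matching in detail. Your extra care in distinguishing $\gr(\bconfhom^{1})$ from $\G$ via the semisimplicity remark is more explicit than the paper, which writes ``$=\G$'' without comment.
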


It turns out that Corollary \ref{cor:gr1and3} can be \emph{lifted} to the spaces $\bconfhomlift^3$ and $\bconfhomlift^1$:
\begin{cor}\label{cor:liftgr1and3}
There is an isomorphism of $B_{n+1}$-modules
    \[ \bconfhomlift^3 \cong \gr(\bconfhomlift^1)\]
\end{cor}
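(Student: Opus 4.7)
The plan is to mirror the strategy for Corollary \ref{cor:gr1and3}: write down explicit algebra presentations of both $\bconfhomlift^3$ and $\gr(\bconfhomlift^1)$ in a common set of generators $\{y_{ijk}\}$, observe that the defining relations coincide, and read off $B_{n+1}$-equivariance from the fact that both actions are simply coordinate permutation on the indices.

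On the Heaviside side, Corollary \ref{cor:lifthom1gr} already gives a presentation of $\gr(\bconfhomlift^1)$ in terms of classes $y_{ijk}$ for distinct $i,j,k\in[n]^{\pm}_{0}$, subject to the five relations listed there (squaring, the linear relation $y_{ijk}-y_{ij\ell}+y_{ik\ell}-y_{jk\ell}$, antipodal compatibility $y_{\overline{i}jk}=y_{i\overline{jk}}$, the cyclic sign $y_{ijk}=-y_{ikj}$, and the three-term Orlik--Solomon-type relation). On the topological side, I would define cohomology classes $y_{ijk}\in H^{2}\bconflift^3$ as pullbacks of the dual fundamental class of $\SSS^2$ along natural maps to $\SSS^2$ indexed by cyclic triples $(i,j,k)$. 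The definition should be compatible, via the $B_n$-equivariant homeomorphism $\bconflift^3\cong\bconf^3$, with the Feichtner--Ziegler generators $z_{ij},z_i$ of Theorem \ref{thm:3pres}, through the identifications $y_{0jk}\leftrightarrow z_{jk}$ and $y_{0\overline{0}j}\leftrightarrow z_{j}$.

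The bulk of the work is to verify analogs of the five defining relations of Corollary \ref{cor:lifthom1gr} inside $\bconfhomlift^3$. The squaring, cyclic sign, and three-term relations transport directly from the corresponding relations on $z_{ij}, z_i$ in Proposition \ref{prop:FZrelations} via the hidden-action isomorphism. The antipodal compatibility relation is a direct check from the definitions of the generators $y_{ijk}$ together with the $\Z_2$-action. The subtle piece is the linear (Early--Reiner-type) relation: this is the Type $B$ analog of the relation proved in \cite[Thm.~3]{Early-Reiner} for $H^{*}\V_{n+1}^3$. I would either transfer Early--Reiner's Gysin-sequence argument to the orbit-configuration setting, or alternatively reduce to the two-point case handled in Proposition \ref{prop:actionongenerators} --- which already produces the analogous linear identities \eqref{eqzbadbad}--\eqref{eqzbad} in $\bconfhomn{2}^3$ --- and then propagate by the $B_{n+1}$-action together with functoriality of the projections $\Pi_{[S]}$ of \S\ref{sec:toolstopology}.

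The main obstacle is establishing completeness of the presentation, i.e., that no further relations are required. Here I would run essentially the same Gr\"obner-basis argument as in the proof of Theorem \ref{thm:lift1pres}, using the lexicographic ordering induced from \eqref{eq:order}, together with a dimension check from the Hilbert series \eqref{eq:hilbertseries} combined with the fiber-sequence isomorphism $\bconfhomlift^3\cong_{B_n}\bconfhom^3$ of \S\ref{sec:recursion}. Once the resulting presentation of $\bconfhomlift^3$ matches that of $\gr(\bconfhomlift^1)$ from Corollary \ref{cor:lifthom1gr}, the $B_{n+1}$-equivariance of the algebra isomorphism is automatic since both actions are given by coordinate permutation on the indices of the $y_{ijk}$.
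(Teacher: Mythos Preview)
Your approach is genuinely different from the paper's. The paper does \emph{not} compute a presentation of $\bconfhomlift^3$ directly and then compare it to Corollary~\ref{cor:lifthom1gr}; in fact the logical order is reversed. The paper proves Corollary~\ref{cor:liftgr1and3} via equivariant cohomology in Section~\ref{sec:equivcohom}: it exhibits a $U(1)$-action on $\bconflift^3$ commuting with $B_{n+1}$ (Proposition~\ref{prop:U1acts}), establishes equivariant formality (Corollary~\ref{cor:applymoseley}) to get $\gr_{T}(\bconfhomlift^1)\cong_{B_{n+1}}\bconfhomlift^3$, and then shows in Theorem~\ref{thm:liftedfiltrationscoincide} that the equivariant filtration coincides with the Heaviside filtration. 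The last step is short and elegant: the family of $B_n$-equivariant homeomorphisms $f_\ell:\bconflift^1\to\bconf^1$ (one for each $\ell\in[n]_0$) transports the already-established coincidence of filtrations on $\bconfhom^1$ (Corollary~\ref{cor:filtrationscoincide}) to the $y_{\ell ij}$, and $B_{n+1}$-stability of the equivariant filtration then forces agreement for all $y_{ijk}$. The presentation of $\bconfhomlift^3$ (Corollary~\ref{thm:bpres3lift}) is obtained only \emph{afterwards}, as a consequence.

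Your direct-presentation route is plausible in outline but has a real gap as written. You assert that one can define classes $y_{ijk}\in H^{2}\bconflift^3$ as pullbacks along ``natural maps to $\SSS^2$ indexed by cyclic triples $(i,j,k)$,'' but you do not specify these maps, and on the quotient $\conf_{n+1}^{\Z_2}(SU_2)/SU_2$ there is no obvious $B_{n+1}$-equivariant projection to $\SSS^2$ associated to an arbitrary triple. Without such a definition, the claim that ``both actions are given by coordinate permutation on the indices of the $y_{ijk}$'' is unsupported on the $\bconfhomlift^3$ side, and the $B_{n+1}$-equivariance of your proposed isomorphism does not follow. Your two suggestions for the Early--Reiner linear relation both presuppose this missing definition: propagating Proposition~\ref{prop:actionongenerators} by the $B_{n+1}$-action requires already knowing how $B_{n+1}$ acts on your generators, and adapting the Gysin argument requires having constructed the maps. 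The paper's equivariant-cohomology argument sidesteps all of this: it never needs to define $y_{ijk}$ topologically in $\bconfhomlift^3$, because the isomorphism is produced at the level of filtered rings rather than by matching presentations.
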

The proof of Corollary \ref{cor:liftgr1and3} requires additional tools that will be discussed in Section \ref{sec:equivcohom}. 

\begin{remark}\label{rmk:difpres} \rm Theorem \ref{thm:3pres} and its proof reveal an asymmetry in the space $\bconfhom^3$; in particular, only the generators $\zijminus, z_{ij}$ and $z_i$ for $1 \leq i < j \leq n$ are needed, but the presentation can be described more compactly (and elegantly) if the generator $\zijbad$ is also included. One alternative way to describe the presentation of $\bconfhom^3$ is to set $z_{ij}^{+}:= z_{ij}$ and $z_{ij}^{-}:= \zijminus$ with the action of $B_n$ given by Table \ref{table:actionsimplifiedpres}.

\begin{table}[!h]
\centering
\setlength{\tabcolsep}{10pt} 
\renewcommand{\arraystretch}{1.25} 
\begin{tabular}{|l||l|l|l|} \hline
  & $t_i$ & $t_j$  & $(i,j)$ \\ \hline \hline
  
 $z_{ij}^{+}$ &$z_{ij}^{-} + z_i + z_j$ & $z_{ij}^{-}$ & $-z_{ij}^{+}$\\ \hline
 
 $z_{ij}^{-}$ & $z_{ij}^{+} + z_i - z_j$ & $z_{ij}^{+}$& $-z_{ij}^{-} - z_i - z_j$\\ \hline
 
$z_i$&$-z_{i}$&$z_i$& $z_j$ \\ \hline
\end{tabular}
\caption{Action of the elements $t_i, t_j$ and $(i,j) \in B_n$ on the generators $z_{ij}^+, z_{ij}^{-}, z_i \in \bconfhom^3$. }\label{table:actionsimplifiedpres}
\end{table} 

One could then write the presentation of $\bconfhom^3$ as $\Z[z_{ij}^{+}, z_{ij}^{-}, z_i] / \mathcal{K},$
where $\mathcal{K}$ is generated by the \textbf{$B_{n}$-image} of the relations $(z_{ij}^{+})^{2} = (\zijminus)^{2} = (z_i)^{2}$ and
\[  (i) \hspace{.5em} z_{ij}^{+}z_{j k }^{+} - z_{ij}^{+}z_{jk}^{+} - z_{jk}^{+}z_{ik}^{+} \hspace{2em} (ii) \hspace{.5em}  \zi z_{ij}^{+} - z_{ij}^{+} \zj - \zi \zj \hspace{2em} (iii) \hspace{.5em}\zj z_{ij}^{-} - \zj z_{ij}^{+} - z_{ij}^{+} z_{ij}^{-}. \]

This is more or less a rewriting of Theorem \ref{thm:3pres}, but has the disadvantage of being less explicit and less obviously related to the presentation of $\bconfhom^1$. However, there are some advantages to this perspective, and we will return to it in Section \ref{sec:equivcohom}.

Similarly, the presentation of the ring $\G$ can be re-written
\[ \G = \Z[ z_{ij}^{+}, z_{ij}^{-}, z_i ]/ \mathcal{L}' \]
for $1 \leq i < j \leq n$, where $\mathcal{L}'$ is generated by $ (z_{ij}^{+})^{2} = (z_{ij}^{-})^{2} = (z_{i})^{2}$ and 
\[  (i) \hspace{.5em} z_{ij}^{+}z_{ij}^{-}\hspace{1.5em} (ii) \hspace{.5em} z_{ij}^{+} z_{j} - z_{ij}^{+}z_{i}  \hspace{1.5em} (iii)  \hspace{.5em} z_{ij}^{-} z_{j} + z_{ij}^{-} z_{i} \hspace{1.5em} (iv) \hspace{.5em} z_{ij}^{+}z_{jk}^{+} - z_{ij}^{+}z_{ik}^{+} - z_{jk}^{+}z_{ik}^{+}, \hspace{1.5em}\]
\[  (v) \hspace{.5em} z_{ij}^{-}z_{jk}^{+} - z_{ij}^{-}z_{ik}^{-} - z_{jk}^{+}z_{ik}^{-} \hspace{1.5em} (vi) \hspace{.5em} z_{ij}^{-}z_{jk}^{-} - z_{ij}^{-}z_{ik}^{+} - z_{jk}^{-}z_{ik}^{+} \hspace{1.5em} (vii)  \hspace{.5em} z_{ij}^{+}z_{jk}^{-} - z_{ij}^{+}z_{ik}^{-} - z_{jk}^{-}z_{ik}^{-}\]

Now the action by $B_n$ on the generators (see Table \ref{table:actionsimplifiedpresbigrading}) is significantly simpler.

 \begin{table}[!h]
\centering
\setlength{\tabcolsep}{10pt} 
\renewcommand{\arraystretch}{1.25} 
\begin{tabular}{|l||l|l|l|} \hline
  & $t_i$ & $t_j$  & $(i,j)$ \\ \hline \hline
 $z_{ij}^{+}$ &$z_{ij}^{-}$ &$z_{ij}^{-}$ & $-z_{ij}^{+}$\\ \hline
 $z_{ij}^{-}$ & $z_{ij}^{+}$ & $z_{ij}^{+}$& $-z_{ij}^{-}$\\ \hline
$z_i$&$-z_{i}$&$z_i$& $z_j$ \\ \hline
\end{tabular}

\caption{Action of the elements $t_i, t_j$ and $(i,j) \in B_n$ on the generators $z_{ij}^+, z_{ij}^{-}, z_i \in \G$. }\label{table:actionsimplifiedpresbigrading}
\end{table} 

\end{remark}

\section{$T$-equivariant cohomology}\label{sec:equivcohom}
There are several results whose proofs we have deferred thus far. In this section, we will build the tools to prove these theorems and more deeply understand the connections between the $d=1$ and $d=3$ cases for $\bconf^d$ and $\bconflift^d$. To do so, we will delve into the world of equivariant cohomology; background material on equivariant cohomology can be found in \S \ref{equivtopbackground}. Experts on the topic can proceed directly to \S \ref{equivtop:equivformality}.

\subsection{Background}\label{equivtopbackground}
Equivariant topology is a powerful tool by which to study a topological space $X$ with a group action by a Lie group $T$; see Anderson \cite{anderson2012introduction}, Proudfoot \cite{proudfootsymplectic}, and Tymoczko \cite{tymoczko2005introduction} for excellent introductions to the topic. Recently in \cite{moseley2017equivariant}, Moseley successfully used equivariant cohomology to study complements of hyperplane and subspace arrangements in real Euclidean space. Our approach will follow Moseley's lead.

Henceforth, the key objects will be:
\begin{itemize}
    \item $T$, a Lie group. For us, $T$ will almost exclusively be $U(1)$, the unit circle;
    \item $X$, a topological space that $T$ acts on; call $X$ a \emph{$T$-space} and let $X^{T}$ be the space fixed by $T$. For our purposes $X$ will be an oriented manifold. Call such a manifold a \emph{$T$-manifold}, and any submanifold of $X$ that is stable under the action of $T$ a \emph{$T$-submanifold};
    \item $ET$, a contractible space which $T$ acts upon freely; 
    \item $X_{T}:= ET \times_{T} X,$ is defined to be the quotient of $ET \times X$ by \[ (t \cdot e, x) \sim (e, t \cdot x) \]
    for $t \in T$, $e \in ET$ and $x \in X$. A nice special case is when $X$ is a point; then $X_{T}$ is simply the classifying space of $T$, written $BT$. See example \ref{ex:ECpt} for more details.
    \item $H_{T}^{*}(X):= H^{*}(X_{T})$ is the $T$-equivariant cohomology of $X$. This is precisely the ordinary cohomology for $X_{T}$. Intuitively, if $T$ acts freely on $X$ then $H^{*}_{T}(X) = H^{*}(X/T).$
    \item $[Y]_{T} \in H_{T}^{k}(X)$ is the class of a codimension $k$ oriented $T$-stable submanifold of $X$. 
\end{itemize}

To make the above more concrete, consider the following example.
\begin{example}\label{ex:ECpt} \rm
Let $X$ be a single point, denoted $\pt$, and $T = U(1)$. To determine $ET$, one must find a space that (1) $T$ acts upon freely and (2) is contractible. There is a natural free action of $T$ on $\mathbb{S}^{2n+1} \subset \C^{n+1}$ by rotation, where $e^{\pi i} \in U(1)$ acts on $(z_{0}, \cdots, z_{n})$ by 
\[ e^{\pi i} \cdot (z_{0}, \cdots, z_{n}) = (e^{\pi i}z_{0}, \cdots, e^{\pi i}z_{n}).  \]
While $\SSS^{2n+1}$ satisfies the first requirement, it is not contractible. To correct for this, note that $\SSS^{2n+1}$ can be successively embedded in $\SSS^{2n + 2\ell + 1}$ for $\ell \geq 0$. Following this procedure iteratively allows us to view $\SSS^{2n+1}$ inside of $\SSS^{\infty}$ (defined as the union of $\SSS^{2n+1}$ for all $n$) which \emph{is} a contractible space. Thus $ET = \mathbb{S}^{\infty}$. Since $X = \pt$, the space $\pt_{T}$ is simply
\[ \pt_{T} = \mathbb{S}^{\infty} \times_{T} \pt =  \mathbb{S}^{\infty}/\mathbb{S}^{1}. \]
In general, the quotient of $\SSS^{2n+1}$ by the rotation action of $\SSS^{1}$ gives complex projective space $\C \PP^{n}$, and in the limit
\[\SSS^{\infty}/\mathbb{S}^{1}=\C \mathbb{P}^{\infty}, \]
where $\C \mathbb{P}^{\infty}$ is infinite complex projective space. 
Hence $H^{*}_{T}(pt) = H^{*}(\C \mathbb{P}^{\infty}).$
The cohomology of $\C \mathbb{P}^{\infty}$ (with coefficients in $\Q$) is the univariate polynomial ring $\Q[u]$ where $u \in H^{2}(\C \mathbb{P}^{\infty}).$

Here, we understand $u$ as the image of $[0]_{T}$ under the induced map from the inclusion
\begin{align*} i: \pt &\xhookrightarrow{} \C  \\
\pt &\mapsto 0.
\end{align*}
\end{example}

Example \ref{ex:ECpt} illustrates one of the key differences between ordinary and equivariant cohomology: in the latter, $H^{*}(\pt) = \Q$ a (relatively) uninteresting ring. By contrast, $H^{*}_{T}(\pt) = \Q[u]$ has a richer structure.

Many of the important tools in ordinary cohomology also hold in the equivariant setting, including functoriality, cohomological ring structure, Poincare duality for smooth orientable spaces, the Leray-Serre spectral sequence, the K{\"u}nneth formula, and more (see \cite{tymoczko2005introduction}). This means, for starters, that the argument showing that $H^{*}(\pt) = \Q$ is a module over $H^{*}(X)$ can be used to show that $H^{*}_{T}(\pt)$ is a module over $H^{*}_{T}(X)$. In the case most relevant to us, where $T = U(1)$, this says that $H^{*}_{T}(X)$ is a $\Q[u]$-module.  Remark \ref{rmk:intersectionnumbers} discusses another ``standard'' fact.
\begin{remark}\label{rmk:intersectionnumbers} \rm
One property which we will use repeatedly is that if $T$-manifolds $Y,Z \subset X$ have empty intersection, then in $H_{T}^{*}(X)$, their product $[Y]_{T} \cdot [Z]_{T}$ is $0$. This is a special case of the more general fact that if $Y$ and $Z$ intersect transversely their product in equivariant cohomology will be an intersection number. The same statement is a standard fact in ordinary cohomology, and can be proved in much the same way for equivariant cohomology. See Dorpalen-Barry--Proudfoot--Wang \cite[\S 2.5]{DB-Moseley-Wang} for a proof of this fact over $\Z$, along with more intuition for the equivariant classes represented by submanifolds.
\end{remark}

The space $X_{T}$ is a fiber bundle over the classifying space $BT = ET/T$: 
\begin{equation}\label{eq:equivfiberseq}
 X \longrightarrow X_{T} \longrightarrow B_{T}.
\end{equation} 
In principal, one might try to compute the ordinary cohomology of $X_{T}$ using a Leray spectral sequence. In general this is very difficult, but we will see that in our case the relevant spectral sequence collapses. 

In particular, a space $X$ is \emph{equivariantly formal} (with respect to $T$) if the spectral sequence in \eqref{eq:equivfiberseq} collapses at the $E_{2}$ page; this means that $H^{*}_{T}(X)$ is a free module over $H_{T}(\pt)$. Importantly, if $X$ and $BT$ have cohomology concentrated only in even degrees, then $X$ is equivariantly formal with respect to $T$. All of the spaces we consider (e.g. $\bconf^3$ and $\bconflift^3$) will satisfy this hypothesis.

If $X$ is equivariantly formal with respect to $T$, there is an $H_{T}^{*}(\pt)$-module isomorphism
\[ H_{T}^{*}(X) = H^{*}(X) \otimes H_{T}^{*}(\pt). \]
Again, we almost exclusively care about the case that $T = U(1)$; the above then says that 
\[ H_{T}^{*}(X) = H^{*}(X) \otimes \Q[u].  \]
Equivariant formality thus allows us to recover information about $H^{*}(X)$ from $H^{*}_{T}(X)$ and vice-versa.
For instance, if $X$ is equivariantly formal, then any $\Q$-basis for $H^{*}(X)$ is an $H^{*}_{T}(\pt)$-basis for $H^{*}_{T}(X)$; if $T = U(1)$ and $v_{1}, \cdots, v_{k}$ is a basis for $H^{*}(X)$, then $v_{1}, \cdots, v_{k}, u$ is a $\Q$-basis for $H^{*}_{T}(X)$.

\begin{example}\label{ex:rthree} \rm

Let $T = U(1)$ and $X = \rthree.$ Then $T$ acts on $\rthree$ by rotation around the $x$-axis. Let $Z^{+}$ and $Z^{-}$ be the positive and negative $x$-axes, respectively. Once we fix an orientation of $\R^{3} \setminus \{ 0 \}$ and orient $Z^{+}$ and $Z^{-}$ outwards, we obtain two classes in $H^{2}_{T}(\R^{3} \setminus \{ 0 \})$, namely $[Z^{+}]_{T}$ and $[Z^{-}]_{T}$. 

Consider the map 
\begin{align*}
f: \Q[x,y]/(xy) &\longrightarrow H^{*}_{T}(\rthree) \\
  y  & \longmapsto [Z^{+}]_{T} \\
  x  & \longmapsto  [Z^{-}]_{T}. 
\end{align*}

By Remark \ref{rmk:intersectionnumbers}, since $Z^{+} \cap Z^{-} = \emptyset$, their corresponding classes in equivariant cohomology multiply as
\[ [Z^{+}]_{T} \cdot  [Z^{-}]_{T}  = 0,\]
and so $f$ is well-defined.
In fact, $f$ defines an isomorphism; this argument is written out in detail in  \cite[Prop 5.22]{proudfootsymplectic} and \cite[Example 2.4]{moseley2017equivariant}.

Since both $\rthree$ and $BT = \C \PP^{\infty}$ have cohomology concentrated in even degrees, $\rthree$ is equivariantly formal, and therefore
\[  H_{T}^{*} (\rthree) \cong \Q[x,y]/(xy) \cong \Q[u] \otimes H^{*}(\rthree). \]
We would like an explicit map between $ \Q[x,y]/(xy)$ and $\Q[u] \otimes H^{*}(\rthree)$. To this end, consider the $T$-equivariant projection
\begin{align*}\pi: \rthree &\longrightarrow \R^{2}\\
(x,y,z) &\longmapsto (y,z),
\end{align*}
so that $\pi^{-1}(0) = Z^{+} \sqcup -Z^{-}$ (when accounting for orientation). We are interested in the image of $u$ under the induced maps  
\[H^{*}_{T}(pt) \xrightarrow{i^{*}} H_{T}^{*}(\R^{2}) \xrightarrow{\pi^{*}} H^{*}_{T}(\rthree). \]
The first map sends $u$ to $[0]_{T}$. By the above discussion, $\pi^{*}([0]_{T}) = y-x$, and therefore
\[ \pi^{*} (i^{*}(u)) = y-x. \]
Note that in $H^{*}(\rthree),$ the classes $[Z^{+}]$ and $[Z^{-}]$ represent the same generator of $H^{*}(\rthree)$. This gives a (non-canonical) isomorphism:
\begin{align*} f': \Q[x,y]/(xy)   &\longrightarrow \Q[u] \otimes H^{*}(\rthree) \\
y-x &\longmapsto u \\
y &\longmapsto [Z^{+}].
\end{align*}
See \cite[Prop 5.22]{proudfootsymplectic} and \cite[Example 2.4]{moseley2017equivariant} for a more in-depth description of this example.
\end{example}

There are several consequences of equivariant formality proved by Moseley in \cite{moseley2017equivariant} that will play an important role in our analysis. 
\begin{prop}{\rm (Moseley \cite[Prop. 2.5, Prop 2.9]{moseley2017equivariant})}\label{moseleyequivconsequences}
Let $X$ be an equivariantly formal $T$-space. Then there are surjections
\[ \Phi_{0}: H_{T}^{*}(X) \to H^{*}(X)\] 
sending $u$ to $0$ and 
\[ \Phi_{1}: H_{T}^{*}(X) \to H^{*}(X^{T})\]
sending $u$ to $1$, which induce the ring isomorphisms
\begin{align*}
    H^{*}_{T}(X)/\langle u \rangle & \cong H^{*}(X),\\
    H^{*}_{T}(X)/\langle u-1 \rangle & \cong H^{*}(X^{T}).
\end{align*}
If a group $W$ acts on $X$ and commutes with the action of $T$, then there is a $W$-action on $H^{*}_{T}(X)$ which fixes $u$, and the maps $\Phi_{0}$ and $\Phi_{1}$ are $W$-equivariant. 
\end{prop}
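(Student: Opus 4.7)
My plan is to construct both maps as pullbacks along natural geometric inclusions and then use equivariant formality (together with the localization theorem in the second case) to identify their images and kernels.

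For $\Phi_{0}$: I would start from the Borel fibration $X \to X_{T} \to BT$ and take the fiber inclusion $\iota\colon X \hookrightarrow X_{T}$. Since $u \in H_{T}^{*}(\pt) = H^{*}(BT)$ is pulled back from the base, it dies upon restriction to a fiber, so $\iota^{*}\colon H_{T}^{*}(X) \to H^{*}(X)$ sends $u$ to $0$ and descends to a ring map $\Phi_{0}\colon H_{T}^{*}(X)/\langle u\rangle \to H^{*}(X)$. Equivariant formality supplies a (non-canonical) $H_{T}^{*}(\pt)$-module isomorphism $H_{T}^{*}(X) \cong H^{*}(X) \otimes \Q[u]$, from which surjectivity of $\iota^{*}$ and the identification of its kernel with $\langle u\rangle$ are both immediate.

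For $\Phi_{1}$: I would combine the inclusion $j\colon X^{T} \hookrightarrow X$ with the fact that $T$ acts trivially on $X^{T}$, so $(X^{T})_{T} \cong BT \times X^{T}$ and hence $H_{T}^{*}(X^{T}) \cong \Q[u] \otimes H^{*}(X^{T})$. The map $\Phi_{1}$ is then defined as the composition
\[
H_{T}^{*}(X) \xrightarrow{\,j^{*}\,} H_{T}^{*}(X^{T}) \cong \Q[u] \otimes H^{*}(X^{T}) \xrightarrow{\,u\mapsto 1\,} H^{*}(X^{T}),
\]
which by construction sends $u$ to $1$. The localization theorem for torus actions on equivariantly formal spaces states that $j^{*}$ becomes an isomorphism after inverting $u$, while equivariant formality forces the free $\Q[u]$-module ranks of $H_{T}^{*}(X)$ and $H_{T}^{*}(X^{T})$ to agree (both equal to $\dim_{\Q} H^{*}(X^{T}) = \dim_{\Q} H^{*}(X)$). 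A comparison of ranks after specializing $u = 1$ then yields that $\Phi_{1}$ is surjective with kernel exactly $\langle u-1\rangle$. This is the subtlest step, since one must invoke localization to guarantee that the $u$-torsion in the cokernel of $j^{*}$ does not produce unwanted kernel after specialization.

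For the $W$-equivariance claim: since the $W$-action on $X$ commutes with the $T$-action, it descends to an action on $X_{T} = ET \times_{T} X$ acting trivially on the $ET$ factor, and it preserves $X^{T}$ because commuting actions preserve each other's fixed loci. The resulting $W$-action on $H_{T}^{*}(X)$ fixes $u$ because $u$ is pulled back from $BT$, on which $W$ acts trivially. Both pullbacks $\iota^{*}$ and $j^{*}$ are then $W$-equivariant by naturality, and the subsequent specializations $u \mapsto 0$ and $u \mapsto 1$ trivially commute with the $W$-action, so $\Phi_{0}$ and $\Phi_{1}$ are $W$-equivariant as claimed.
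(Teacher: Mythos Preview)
The paper does not contain a proof of this proposition: it is stated as a citation to Moseley \cite[Prop.~2.5, Prop.~2.9]{moseley2017equivariant} and used as a black box, so there is no in-paper argument to compare against. Your sketch is a reasonable reconstruction of the standard proof, and in particular your treatment of $\Phi_0$ via the fiber inclusion and of $W$-equivariance via naturality is exactly what one would expect. The argument for $\Phi_1$ is the more delicate part and your outline is correct in spirit: the localization theorem indeed gives that $j^{*}$ is an isomorphism after inverting $u$, and combined with freeness of $H_{T}^{*}(X)$ over $\Q[u]$ this shows $j^{*}$ is injective with $u$-torsion cokernel, from which the claimed isomorphism after setting $u=1$ follows. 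If you want to make this airtight you should be explicit that equivariant formality of $X^{T}$ is automatic (since $T$ acts trivially), and that the rank equality $\dim_{\Q} H^{*}(X) = \dim_{\Q} H^{*}(X^{T})$ is itself a consequence of localization rather than an independent input.
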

One corollary to Proposition \ref{moseleyequivconsequences} is that there is a natural filtration on $H^{*}(X^{T})$ coming from the cohomological grading on $H^{*}_{T}(X)$. Note that $u$ has cohomological degree $2$ in $H^{*}_{T}(X)$.

\begin{definition}[Equivariant filtration]\label{def:equivariantfiltration} \rm
Suppose $X$ is an equivariantly formal $T$-space, so that $H^{*}_{T}(X)/\langle u-1 \rangle \cong H^{*}(X^{T})$.
The \emph{equivariant filtration} on $H^{*}(X^{T})$ is then
\[ F_0(X) \subset F_1(X) \subset \cdots \subset  H^{*}(X^{T}), \]
where $F_k(X)$ is the image under $\Phi_1$ of the classes in $H_{T}^{*}(X)$ of cohomological degree at most $k$.

Write the associated graded ring with respect to the equivariant filtration as $\gr_{T} (H^{*}(X^{T}))$.
\end{definition}

\begin{cor}{ \rm (Moseley, \cite[Cor. 2.7]{moseley2017equivariant}}\label{cor:associatedgradedinequivhom}
There is an isomorphism 
\[ \gr_{T}(H^{*}(X^{T})) \cong H^{*}(X). \]
\end{cor}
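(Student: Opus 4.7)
The plan is to leverage equivariant formality to set up a compatible basis and then trace through the definitions carefully. Since $X$ is equivariantly formal with respect to $T = U(1)$, the ring $H^{*}_{T}(X)$ is free over $H^{*}_{T}(\pt) = \Q[u]$. I would first choose a homogeneous $\Q$-basis $v_{1}, \ldots, v_{r}$ of $H^{*}(X)$, with $\deg v_{i} = d_{i}$, and lift each $v_{i}$ to a class $\tilde{v}_{i} \in H^{d_{i}}_{T}(X)$ of matching cohomological degree; this is possible because the surjection $\Phi_{0}: H^{*}_{T}(X) \to H^{*}(X)$ preserves degree, and the $H^{*}_{T}(\pt)$-module isomorphism $H^{*}_{T}(X) \cong H^{*}(X) \otimes \Q[u]$ supplied by equivariant formality is compatible with grading. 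Then $\{\tilde{v}_{i}\}$ is a $\Q[u]$-basis of $H^{*}_{T}(X)$, and every class decomposes uniquely as $\sum_{i} p_{i}(u)\, \tilde{v}_{i}$ with $p_{i}(u) \in \Q[u]$.

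Next I would analyze the filtration. A class $\sum_{i} p_{i}(u)\, \tilde{v}_{i}$ has cohomological degree $\leq k$ precisely when each monomial $c\, u^{j}\, \tilde{v}_{i}$ in its expansion satisfies $2j + d_{i} \leq k$. Applying $\Phi_{1}$ sends $u \mapsto 1$ and collapses $u^{j}\, \tilde{v}_{i} \mapsto \Phi_{1}(\tilde{v}_{i})$, so $F_{k}(X)$ is exactly the $\Q$-span of $\{\Phi_{1}(\tilde{v}_{i}) : d_{i} \leq k\}$. Consequently the graded piece $F_{k}/F_{k-1}$ is the $\Q$-span of $\{[\Phi_{1}(\tilde{v}_{i})] : d_{i} = k\}$, which as a $\Q$-vector space matches $H^{k}(X)$ under the identification $[\Phi_{1}(\tilde{v}_{i})] \leftrightarrow v_{i}$.

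For the ring structure, I would expand $\tilde{v}_{i} \tilde{v}_{j} = \sum_{m} c^{m}_{ij}\, u^{a^{m}_{ij}}\, \tilde{v}_{m}$ in $H^{*}_{T}(X)$. Homogeneity forces $d_{m} + 2 a^{m}_{ij} = d_{i} + d_{j}$ whenever $c^{m}_{ij} \neq 0$, so $a^{m}_{ij} = 0$ iff $d_{m} = d_{i} + d_{j}$. Applying $\Phi_{0}$ therefore yields $v_{i} v_{j} = \sum_{d_{m} = d_{i}+d_{j}} c^{m}_{ij}\, v_{m}$ in $H^{*}(X)$, while $\Phi_{1}$ gives $\Phi_{1}(\tilde{v}_{i}) \Phi_{1}(\tilde{v}_{j}) = \sum_{m} c^{m}_{ij}\, \Phi_{1}(\tilde{v}_{m})$ in $H^{*}(X^{T})$. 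Passing to $\gr_{T}$, only the summands lying in exact filtration degree $d_{i} + d_{j}$ contribute to the $(d_{i}+d_{j})$-graded component, reproducing precisely the structure constants of $H^{*}(X)$. Hence the assignment $v_{i} \mapsto [\Phi_{1}(\tilde{v}_{i})]$ extends to a ring isomorphism $H^{*}(X) \xrightarrow{\cong} \gr_{T}(H^{*}(X^{T}))$.

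The main subtlety is ensuring the lifts $\tilde{v}_{i}$ can be chosen in the prescribed degree and that the degree bookkeeping in the filtration matches the cohomological grading; once equivariant formality is unpacked in terms of a bigraded basis, both points are automatic, and the rest of the argument is purely algebraic.
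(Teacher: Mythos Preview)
Your argument is correct and is essentially the standard proof of this fact: lift a homogeneous basis, use freeness over $\Q[u]$ to identify $H^{*}(X^{T})$ with the quotient by $u-1$, and read off both the filtration layers and the structure constants. The paper itself does not supply a proof of this corollary; it simply attributes the result to Moseley \cite[Cor.~2.7]{moseley2017equivariant}, so there is nothing further to compare against beyond noting that your write-up unpacks precisely the mechanism (equivariant formality $\Rightarrow$ free $\Q[u]$-module $\Rightarrow$ compatible bigraded basis) that underlies Moseley's argument.
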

Corollary \ref{cor:associatedgradedinequivhom} has a similar flavor to Corollaries \ref{cor:gr1and3} and \ref{cor:liftgr1and3}. This connection will be formalized in Corollary \ref{cor:filtrationscoincide} and Theorem \ref{thm:liftedfiltrationscoincide}.

It is useful to see Proposition \ref{moseleyequivconsequences} and Corollary \ref{cor:associatedgradedinequivhom} in the context of Example \ref{ex:rthree}.
\begin{example} \rm
Continuing Example \ref{ex:rthree}, consider first the subspace $\rthree$ fixed by the rotation action of $T = U(1)$. Since $T$ rotates around the $x$-axis, 
\[ (\rthree)^T = \{ (x,0,0): x \in \R \setminus \{ 0 \} \} = Z^{+} \sqcup Z^{-}. \]
By Example \ref{ex:rthree} and Proposition \ref{moseleyequivconsequences}:
\begin{align}
H_{T}^{*}(\rthree) / \langle u \rangle & \cong \Q[y]/ (y^2) \cong H^{*}(\rthree), \label{eq:ex1}\\
 H_{T}^{*}(\rthree) / \langle u - 1 \rangle & \cong \Q[y]/ (y^2 - y) \cong H^{*}(Z^{+} \sqcup Z^{-}) \label{eq:ex2}.
\end{align}
The first isomorphism in \eqref{eq:ex1} follows because $f': y-x \mapsto u$ so we identify $y-x = 0$ in $H_{T}^{*}(\rthree) / \langle u \rangle$. The first isomorphism in \ref{eq:ex2} is because $y-x = 1$ in $H_{T}^{*}(\rthree) / \langle u-1 \rangle$, so 
\begin{align*}
    0 = xy = (y-1)y = y^2 - y.
\end{align*}
The second isomorphisms in both \eqref{eq:ex1} and \eqref{eq:ex2} are applications of Proposition \ref{moseleyequivconsequences}, but can be verified directly by computing $H^{*}(\rthree)$ and $H^{*}(Z^{+} \sqcup Z^{-})$. The former is standard; the latter is concentrated in degree 0, and can be thought of as the $\Q$-vector space of linear functionals on $Z^{+} \sqcup Z^{-}$. One can then understand $y$ as a single Heaviside function $y: Z^{+} \sqcup Z^{-} \to \Z$ given by
\[ y(p): = \begin{cases} 1 & p \in Z^{+}\\
0 & p \in Z^{-}.
\end{cases}\]
It follows naturally that $y^{2} = y$ and that $y$ is the only function needed to generate this space. (This line of reasoning should feel reminiscent of \S \ref{sec:d1}!) 

Finally, note that although $\Q[y]/(y^2 -y)$ is not graded, it has an ascending filtration by degree, and the associated graded ring with respect to this filtration is $\Q[y]/(y^{2})$. Corollary \ref{cor:associatedgradedinequivhom} says 
\[ \gr_T (H_{T}^{*}\rthree) \cong H^{*}(\rthree), \]
and so we see that {\bf{the two associated graded rings---e.g. coming from the equivariant filtration and the filtration by degree in the Heaviside functions---coincide}}. One way of interpreting this is that the presentation of $H^{*}(Z^{+} \sqcup Z^{-})$ using Heaviside functions is particularly natural with respect to equivariant cohomology. We will see this phenomena again in the case of $\bconfhom^1$ and $\bconfhomlift^1$.
\end{example} 

\subsection{Applications to $\bconf^3$ and $\bconflift^3$}\label{equivtop:equivformality}
Our goal is to apply Proposition \ref{moseleyequivconsequences} and Corollary \ref{cor:associatedgradedinequivhom} to $\bconf^3$ and its lift, $\bconflift^3$. To do so, we must identify a torus action and prove that it commutes with the respective actions of $B_n$ and $B_{n+1}$.

\begin{prop}\label{prop:U1acts}
There is an action by the circle group $T = U(1)$ on $\bconf^3$ and $\bconflift^3$ which commutes with the actions of $B_{n}$ and $B_{n+1}$ respectively, and has fixed spaces
\begin{align*}
    (\bconf^3)^{U(1)} &= \bconf^1\\
    (\bconflift^3)^{U(1)} &= \bconflift^1.
\end{align*}
\end{prop}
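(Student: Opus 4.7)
For $\bconf^3$, I would let $U(1)$ act diagonally on $(\rthree)^n$ by rotation around a fixed axis, say the $x$-axis. Since $\varphi(x) = -x/|x|^2$ is built from operations (scaling and negation) equivariant under orthogonal transformations of $\R^3$ fixing the origin, $\varphi$ commutes with each such rotation, so the defining conditions $x_i \neq x_j$ and $x_i \neq \varphi(x_j)$ are preserved; the action therefore restricts to $\bconf^3$. Commutation with $B_n$ is immediate, since coordinate permutation and coordinatewise negation both commute with a diagonal rotation. The fixed set of rotation on $\rthree$ is the $x$-axis, which is $\R \setminus \{0\}$, and on this set $\varphi(x) = -x/x^2 = -1/x$, so $(\bconf^3)^{U(1)} = \conf_n^{\langle \varphi \rangle}(\R \setminus \{0\}) = \bconf^1$.

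For $\bconflift^3 = \conf_{n+1}^{\Z_{2}}(SU_{2})/SU_{2}$, the plan is to take $U(1) \cong T \hookrightarrow SU_{2}$ a maximal torus acting by right multiplication on each coordinate. Right multiplication commutes with left $SU_{2}$-multiplication, so the action descends to the quotient; it commutes with the antipodal $\Z_{2}$-action because $-1 \in SU_{2}$ is central; and it commutes with the $B_{n+1}$-action since permutation and coordinatewise negation commute with diagonal right multiplication. For the fixed locus, a class $[p_0, \ldots, p_n]$ is fixed by every $t \in T$ if and only if for each $t$ there exists $g_t \in SU_{2}$ with $p_i t = g_t p_i$ for all $i$. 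Taking $i=0$ forces $g_t = p_0 t p_0^{-1}$, and the remaining conditions become $(p_0^{-1}p_i)\, t = t\, (p_0^{-1}p_i)$, i.e.\ each $p_0^{-1}p_i$ centralizes every $t \in T$. Since $T$ is maximal abelian in $SU_{2}$, its centralizer equals $T$, so $p_0^{-1}p_i \in T$ for all $i$. Translating by $p_0^{-1}$ and using that $T \hookrightarrow SU_{2}$ induces an injection $\conf_{n+1}^{\Z_{2}}(T)/T \hookrightarrow \conf_{n+1}^{\Z_{2}}(SU_{2})/SU_{2}$, the fixed locus is identified with $\conf_{n+1}^{\Z_{2}}(T)/T$, which is exactly $\bconflift^1$.

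The main subtlety lies in the $\bconflift^3$ case: the non-abelian structure of $SU_{2}$ forces the $U(1)$-action onto the ``right'' side of the quotient (conjugation, for instance, would not descend to $\bconflift^3$), and the fixed-point analysis reduces to a centralizer computation in $SU_{2}$ rather than a direct coordinate observation. A useful consistency check is that under the hidden-action homeomorphism $\bconflift^3 \cong \bconf^3$ of Example \ref{ex:typebhiddenaction}, right multiplication by $t \in T$ on a representative $[p_0, \ldots, p_n]$ induces the transformation $p_0^{-1}p_i \mapsto t^{-1}(p_0^{-1}p_i)\, t$ on each coordinate, which via $SU_{2} \setminus \{1\} \cong \R^3$ is precisely rotation around the axis $\pi(T \setminus \{1\})$. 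This confirms that the two $U(1)$-actions agree under the $B_n$-equivariant homeomorphism, and in particular that their fixed loci correspond as claimed.
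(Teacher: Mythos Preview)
Your proposal is correct and follows essentially the same approach as the paper: diagonal rotation around an axis for $\bconf^3$, and right multiplication by a maximal torus $T \subset SU_2$ for $\bconflift^3$, with the fixed locus identified via the centralizer of $T$ in $SU_2$. Your centralizer argument for $(\bconflift^3)^{U(1)}$ is in fact more carefully spelled out than the paper's, and the compatibility check under the homeomorphism $\bconflift^3 \cong \bconf^3$ is a nice addition; one small terminological point is that on $\bconf^3$ the $\Z_2$-part of the $B_n$-action is coordinatewise $\varphi$ rather than literal negation, but since you already showed $\varphi$ commutes with rotation this does not affect the argument.
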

\begin{proof}
Identify $\R^{3}$ with $\C \oplus \R$, and let $\omega \in U(1)$ act on $(z,x) \in \C \oplus \R$ by 
\[ \omega \cdot (z,x) = (\omega z, x). \]
Let $|(z,x)|$ be the magnitude of $(z,x)$, thought of as a vector in $\R^3$. Importantly,
\[ |(\omega z,x)|^{2} = |(z,x)|^{2}.\]
This shows that the action by $\omega$ commutes with the action by $\varphi$:
\begin{align*}
    \omega \cdot (\varphi \cdot (z,x)) &= \omega \cdot \frac{-(z,x)}{|(z,x)|^{2}} = \frac{-(\omega z, x)}{|(z,x)|^{2}} = \varphi (\omega z, x).\end{align*}
The $S_{n}$-action on $\bconf^3$ by coordinate permutation commutes with the action by $U(1)$ in a more straightforward way, and hence the action of $B_n$ commutes with the $U(1)$ action. The fixed space of the $U(1)$ action on $\C \oplus \R$ is $\{ 0 \} \oplus \R$, from which it follows that 
\[ (\bconf^3)^{U(1)} = \{ (p_{1}, \cdots, p_{n}) \in (\{ 0 \} \oplus (\R \setminus \{ 0 \}))^{n}\subset (\C \oplus \R)^{n}: p_{i} \neq p_{j}, p_{i} \neq \varphi(p_{j}) \}  \cong \bconf^1.\]

To study the action of $U(1)$ on $\bconflift^3$, first embed $U(1)$ into $SU_{2}$ via the isomorphism
\[ U(1) \cong \left\{ \begin{pmatrix}
\omega & 0\\
0 & \overline{\omega} 
\end{pmatrix}: \omega = e^{2\pi i \theta} \right\}.\]
From this identification it follows that $U(1)$ acts on $SU_{2}$ on the \emph{right} by multiplication, inducing a diagonal action on $\bconflift^3$. (Recall that $SU_{2}$ acts on the left in $\bconflift^3$). The action by $U(1)$ commutes with the $\Z_{2}$ antipodal action on $\bconflift^3$ (e.g. left multiplication by $-1$) because $-1$ is in the center of $SU_{2}$. Since $U(1)$ acts diagonally, it must also commute with the action of $S_{n+1}$ on $\bconflift^3$ by coordinate permutation. Hence the $U(1)$ action commutes with the $B_{n+1}$-action on $\bconflift^3$. To determine the fixed space of the $U(1)$-action, note that $g \in SU_{2}$ commutes with every element of $U(1)$ if and only $g \in U(1)$.
Thus in $\bconflift^3$
\[(p_{1}, \cdots, p_{n}) \sim (p_{1}\omega, \cdots, p_{n} \omega) \]
for every $\omega \in U(1)$ if and only if $p_{i} \in U(1)$ forall $1 \leq i \leq n$, and so 
\[ (\bconflift^3)^{U(1)} = \bconflift^1. \]
\end{proof}
As a Corollary, we immediately obtain the following:
\begin{cor}\label{cor:applymoseley}
The spaces $\bconf^3$ and $\bconflift^3$ are equivariantly formal with respect to the action by $T = U(1)$. Hence
\begin{itemize}
    \item $H^{*}_{T}(\bconf^3)$ and $H^{*}_{T}(\bconflift^3)$ are $\Q[u]$ modules;
    \item There is an action of $B_{n}$ (resp. $B_{n+1}$) on $H^{*}_{T}(\bconf^3)$ (resp. $H^{*}_{T}(\bconflift^3)$) that fixes $u$;
    \item The maps $\Phi_{0}$ and $\Phi_{1}$ as in Proposition \ref{moseleyequivconsequences} are surjective and $B_{n}$ (resp. $B_{n+1}$) equivariant;
    \item There are isomorphisms
    \begin{align*}
    \gr_{T}(\bconfhom^1) &\cong_{B_{n}} \bconfhom^3\\
    \gr_{T}(\bconfhomlift^1) &\cong_{B_{n+1}} \bconfhomlift^3,
\end{align*}
where here the associated graded ring is with respect to the equivariant filtration.
\end{itemize}
\end{cor}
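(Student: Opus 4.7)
The plan is to show that this corollary is essentially a packaging of Proposition \ref{moseleyequivconsequences} and Corollary \ref{cor:associatedgradedinequivhom} applied to the $U(1)$-action constructed in Proposition \ref{prop:U1acts}. The one genuine input required is the verification of equivariant formality; everything else then follows formally.

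For equivariant formality, I would invoke the criterion stated earlier in the paper: if both $X$ and $BT$ have cohomology concentrated in even degrees, then $X$ is equivariantly formal with respect to $T$. Here $T = U(1)$, so $BT = \C\PP^\infty$ has cohomology $\Q[u]$ concentrated in even degrees. For $X = \bconf^3$, Feichtner--Ziegler's Hilbert series \eqref{eq:hilbertseries} with $d=3$ gives
\[ \hilb(\bconfhom^3,t) = (1+t^2)(1+3t^2)\cdots(1+(2n-1)t^2), \]
so its cohomology lies in even degrees. For $X = \bconflift^3$, I would use the fiber sequence \eqref{Xfiber} together with the collapse argument from \S \ref{sec:recursion}: since $\bconf^3$ is simply connected with even cohomology, the Serre spectral sequence collapses, and $\bconfhomlift^3 \cong \bconfhomliftn{n}^3 \otimes H^*(F)$ as $\Q$-modules; an induction on $n$ (with $H^*(F)$ supported in degrees $0$ and $2$) then shows $\bconfhomlift^3$ is itself concentrated in even degrees.

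With equivariant formality in hand, the $\Q[u]$-module structure on $H^*_T$ is immediate from the map $X \to \pt$ and $H^*_T(\pt) = \Q[u]$. The $B_n$ (resp.\ $B_{n+1}$) action on the equivariant cohomology fixing $u$ is the content of the second half of Proposition \ref{moseleyequivconsequences}, whose hypothesis---that the hyperoctahedral action commutes with the $U(1)$-action---was established in Proposition \ref{prop:U1acts}. The surjectivity and equivariance of $\Phi_0$ and $\Phi_1$ are then direct citations of Proposition \ref{moseleyequivconsequences}.

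For the final bullet, combine Proposition \ref{prop:U1acts}, which identifies the fixed loci as $(\bconf^3)^{U(1)} = \bconf^1$ and $(\bconflift^3)^{U(1)} = \bconflift^1$, with Corollary \ref{cor:associatedgradedinequivhom} applied to each space, yielding
\[ \gr_T(\bconfhom^1) \cong \bconfhom^3, \qquad \gr_T(\bconfhomlift^1) \cong \bconfhomlift^3 \]
as rings. The $B_n$- and $B_{n+1}$-equivariance of these isomorphisms is inherited from the equivariance of $\Phi_1$. There is no genuine obstacle in this proof; the only nontrivial verification is the even-degree concentration of $\bconfhomlift^3$, which is handled by the spectral-sequence argument already deployed in \S \ref{sec:recursion}.
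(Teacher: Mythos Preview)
Your proposal is correct and follows the same approach as the paper's proof, which simply notes that equivariant formality holds because both spaces have cohomology concentrated in even degrees and then invokes Corollary \ref{cor:associatedgradedinequivhom}. You have spelled out more detail than the paper (in particular the induction for $\bconflift^3$ and the precise citations for each bullet), but the logical structure is identical.
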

\begin{proof}
The equivariant formality follows from the fact that both $\bconf^3$ and $\bconflift^3$ have cohomology concentrated in even degrees. The remainder of the statement follows by applying Corollary \ref{cor:associatedgradedinequivhom}.
\end{proof}

The equivariant setting allows us to give a description of the total representations of $\bconfhom^3$ and $\bconfhomlift^3$ by applying Proposition~\ref{prop:ungradedpt1} and adapting techniques used in \cite{moseley2017equivariant} and \cite{moseley2017orlik}.

\begin{theorem}
\label{ungradedrep}
As ungraded representations
\begin{align*}
\bconfhom^3 & \cong_{B_{n}} \Q[B_{n}], \\
  \bconfhomlift^3 &  \cong_{B_{n+1}} \ind_{\langle c \rangle }^{B_{n+1}} \triv = \Q[B_{n+1}/\langle c \rangle ],
\end{align*}
where $c$ is a Coxeter element of $B_{n+1}$ and $\triv$ is the trivial representation.
\end{theorem}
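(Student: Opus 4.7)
The plan is to reduce both claims to the ungraded representations carried by $\bconfhom^1$ and $\bconfhomlift^1$, which are cohomologies of disconnected spaces and therefore are permutation representations on their sets of connected components. The bridge between the $d=3$ and $d=1$ cases is provided by Corollary \ref{cor:applymoseley}, which uses equivariant formality with respect to the $U(1)$-action of Proposition \ref{prop:U1acts} to give $B_n$- and $B_{n+1}$-equivariant isomorphisms
\[ \bconfhom^3 \cong_{B_n} \gr_{T}(\bconfhom^1), \qquad \bconfhomlift^3 \cong_{B_{n+1}} \gr_{T}(\bconfhomlift^1). \]

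First I would observe that because $\Q[B_n]$ and $\Q[B_{n+1}]$ are semisimple, passing to an associated graded ring with respect to any group-stable filtration does not change the isomorphism type of the underlying ungraded representation. Applying this to the equivariant filtration of Definition \ref{def:equivariantfiltration} yields ungraded isomorphisms $\bconfhom^3 \cong_{B_n} \bconfhom^1$ and $\bconfhomlift^3 \cong_{B_{n+1}} \bconfhomlift^1$. This reduces the theorem to identifying the latter two as permutation representations.

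Next I would use the fact that $\bconf^1$ and $\bconflift^1$ are disjoint unions of $2^n n!$ contractible pieces, so each of $\bconfhom^1$ and $\bconfhomlift^1$ is concentrated in degree zero and is, as a representation, just the permutation representation on connected components. Proposition \ref{prop:ungradedpt1} then directly identifies these actions: the $B_{n+1}$-action on the components of $\bconflift^1$ is the coset action on $B_{n+1}/\langle c \rangle$ for a Coxeter element $c$, giving
\[ \bconfhomlift^1 \cong_{B_{n+1}} \Q[B_{n+1}/\langle c \rangle] = \ind^{B_{n+1}}_{\langle c \rangle} \triv, \]
while the restricted $B_n$-action on those same components is simply transitive, giving $\bconfhom^1 \cong_{B_n} \Q[B_n]$ (and equivalently, via the homeomorphism $\bconflift^1 \cong_{B_n} \bconf^1$ from Proposition \ref{hiddenactionframework}). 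Combining with the reduction above yields both claims.

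The main subtlety, rather than an obstacle, is to verify that the equivariant filtration respects the $B_n$-action; this is exactly the content of the last sentence of Proposition \ref{moseleyequivconsequences}, which ensures the maps $\Phi_0$ and $\Phi_1$ are $B_n$-equivariant. Once that is in hand, the theorem is essentially a dimension-count matched against the combinatorics of Proposition \ref{prop:ungradedpt1}, and no further computation with the presentation of $\bconfhom^3$ from Theorem \ref{thm:3pres} is required. Note also that this argument is logically independent of all results in Section \ref{sec:d3} following Proposition \ref{prop:FZrelations}, so using it earlier (for instance in Proposition \ref{reptheory:n=2}) introduces no circularity.
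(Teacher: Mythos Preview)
Your proposal is correct and follows essentially the same approach as the paper's own proof: reduce via Corollary \ref{cor:applymoseley} and semisimplicity to the $d=1$ cohomologies, then invoke Proposition \ref{prop:ungradedpt1} to identify those permutation representations on connected components. The paper presents these steps in the reverse order (first identifying $\bconfhomlift^1$ and $\bconfhom^1$, then passing to $\gr_T$), but the content and key inputs are identical.
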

\begin{proof}
By Proposition~\ref{prop:ungradedpt1}, $\bconflift^1$ is isomorphic to the $B_{n+1}$ action on the cosets $B_{n+1} / \langle c \rangle.$
Since $\bconflift^1$ is a disjoint union of contractible pieces, $H^{0} \bconflift^1 =\bconfhomlift^1 $ and so passing to cohomology simply gives
\[ \bconfhomlift^1 \cong_{B_{n+1}} \Q[B_{n+1}/\langle c \rangle].  \]

Proposition~\ref{prop:ungradedpt1} then says that the $B_n$ action on the set of connected components of $\bconflift^1$ is simply transitive. Since $\bconflift^1$ is $B_n$-equivariantly homeomorphic to $\bconf^1$, this implies that $B_n$ also acts simply transitively on the set of connected components of $\bconf^1$. 
Thus in cohomology, 
\[ H^{0}\bconf^1 = \bconfhom^1 = \Q[B_n]. \]

Finally, passing to the associated graded ring $\gr_{T}(\bconfhomlift^1) = \bconfhomlift^3$ and $\gr_{T}(\bconfhom^1) = \bconfhom^3$ will not change the isomorphism type, so the claim follows.
\end{proof}

\subsection{Presentation of equivariant cohomology}
Having established some basic properties of $H_{T}^{*}(\bconf^3)$ and $H_{T}^{*}(\bconflift^3)$, we turn to computing their presentations. Our arguments will closely resemble Moseley \cite{moseley2017equivariant}. 

Recall that the equivariant formality of $\bconf^3$ implies that
\begin{enumerate}
    \item $H_{T}^{*}(\bconf^3) \cong \bconfhom^3 \otimes \Q[u]$ as a $\Q[u]$-module, where $u$ is the image of $H_{T}^{2}(\pt)$ under the map induced from $\bconf^3 \to \pt$, and 
    \item Any generating set for $\bconfhom^3$ can be lifted to a generating set of $H^{*}_{T}(\bconf^3)$ over $\Q[u]$.
\end{enumerate}

We will take a slight shortcut here and use the smaller generating set for $\bconfhom^3$ discussed in Remark \ref{rmk:difpres}: in particular, pick generators of $\bconfhom^3$ as $z_{ij}^{+}, z_{ij}^{-},$ and $z_{i}$ for integers $1 \leq i < j \leq n$. Since the maps $\Phi_{0}: H_{T}^{*}(\bconf^3) \to \bconfhom^3$ and $\Phi_{1}: H_{T}^{*}(\bconf^3) \to \bconfhom^1$ are $B_{n}$-equivariant by Corollary \ref{cor:applymoseley}, the $B_{n}$ action on the generators in $H_{T}^{*}(\bconf^3)$ is inherited from these spaces (see Table \ref{table:actionsimplifiedpres}). 

We would first like to relate these generators to $T$-submanifolds of $\bconf^3$. Recall the definition of the hyperplanes
\begin{align*}
    H_{ij}:=& \{(x_{1}, \cdots, x_{n}) \in \R^{n}: x_{i} \neq x_{j} \}\\
    H_i :=&  \{(x_{1}, \cdots, x_{n}) \in \R^{n}: x_{i} \neq 0 \}
\end{align*}
and that the action of $t_i$ on $\bconf^3$ is
\[ t_i \cdot (x_{1}, \cdots, x_i, \cdots, x_{n}) =(x_{1}, \cdots, \varphi(x_i), \cdots, x_{n}). \]

Let $w_{i} = (x_{i}, y_{i}, z_{i}) \in \rthree$. 
For integers $i,j \in [n]$, define the maps from $(\rthree)^n \to \R^3$ by
\begin{align*}
    \omega_{ij}: (w_{1}, \cdots, w_{n}) &\longmapsto w_{j} - w_{i},\\
    \omega_{\ijminus}: (w_{1}, \cdots, w_{n}) &\longmapsto \varphi(w_{j}) - w_{i},\\
     \omega_{i}: (w_{1}, \cdots, w_{n}) &\longmapsto w_{i}.
\end{align*}

These maps are not linear. However, they are $T$-equivariant and have the property that  $(\omega_{ij})^{-1}(\R^{3} \setminus \{ 0 \}) = \R^{3n} - \left( H_{ij} \otimes \R^{3}\right)$ (and similarly for $\omega_{i}$). Note that
\[ t_{j} \cdot  (\omega_{ij})^{-1}(\R^{3} \setminus \{ 0 \}) =  (\omega_{\ijminus})^{-1}(\R^{3} \setminus \{ 0 \})\] 
and that
\[ \bconf^3 = \bigcap_{1 \leq i \neq j \leq n} \left( (\omega_{ij})^{-1}(\R^{3} \setminus \{ 0 \}) \cap (\omega_{\ijminus})^{-1}(\R^{3} \setminus \{ 0 \}) \right) \cap \bigcap_{1 \leq i \leq n} \left( \omega_{i}^{-1}(\R^{3} \setminus \{ 0 \}) \right).\]

For ease of notation, write $\omega_{J}$ to refer to an arbitrary $\omega$-map, so $\omega_{J}$ is the place-holder for $\omega_{ij}$ for $i,j \in [n]^{\pm}$ and $\omega_{i}$ for $i \in [n]$. Define
\begin{align*}
    Y_{J}^{+}:=& \omega_{J}^{-1}(Z^{+}) \\
    Y_{J}^{-}:=& \omega_{J}^{-1}(Z^{-}),
\end{align*}
where $Z^{+}$ (resp. $Z^{-}$) is the positive (resp. negative) part of the $x$-axis in $\R^{3} \setminus \{ 0 \}$. This induces classes
$\omega_{J}^{*}([Y^{+}]_{T})$ and $\omega_{J}^{*}([Y^{-}]_{T})$ in $H^{*}_{T}(\bconf^3)$. By Example \ref{ex:rthree}, and functoriality,
\[\omega_{J}^{*}([Y^{-}]_{T}) = \omega_{J}^{*}([Y^{+}]_{T}-u) = \omega_{J}^{*}([Y^{+}]_{T}) - u.\]

We are now ready to obtain relations in $H^{*}_{T}(\bconf^3)$. Our primary method will be to show that $Y_{J}^{\pm} \cap Y_{J'}^{\pm} = \emptyset$ for certain $J, J' \subset [n]^{\pm}$, which by Remark \ref{rmk:intersectionnumbers} will imply that the corresponding classes in equivariant cohomology multiply to 0. 

\begin{lemma}
\label{lem:eqrel}
For $1 \leq i < j \leq n$, let
\begin{align*} \Psi: \Q[z_{ij}^{+}, z_{ij}^{-}, z_{i},u] &\longrightarrow H^{*}_{T}(\bconf^3) 
\end{align*}
be the map sending 
\[   z_{ij}^{+} \mapsto \omega_{ij}^{*}([Z^{+}]_{T}) \hspace{2em} z_{ij}^{-} \mapsto \omega_{\ijminus}^{*}([Z^{+}]_{T}) \hspace{2em} z_{i} \mapsto \omega^{*}_{i}([Z^{+}]_{T}) \hspace{2em} u \mapsto u.\]
Then $\Psi$ is surjective and the following relations lie in $\ker(\Psi)$:
\begin{align}
    \label{ER0}0  &= z_{ij}^{+}(z_{ij}^{+} - u) = z_{ij}^{-}(z_{ij}^{-}-u) = z_{i}(z_{i}-u)\\
        \label{ER1}0 &= u^{-1}\left( z_{ij}^{+}z_{jk}^{+}(z_{ik}^{+}-u) - (z_{ij}^{+}-u)(z_{jk}^{+}-u)z_{ik}^{+} \right)\\
    \label{ER2}0 &=u^{-1}\left( z_{ij}^{+}z_{i}(z_{j}-u) - (z_{ij}^{+}-u)(z_{i}-u)z_{j} \right)\\
    \label{ER3}0 &=u^{-1}\left( z_{ij}^{+}z_{ij}^{-}(z_{i}-u) - (z_{ij}^{+}-u)(z_{ij}^{-}-u)z_{i} \right)
\end{align}
Note that relations \eqref{ER1}---\eqref{ER3} are all polynomials (i.e. the expression in the parentheses has a factor of $u$).
\end{lemma}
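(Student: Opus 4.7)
The plan is to prove the lemma in two parts: surjectivity of $\Psi$, followed by verification that each listed element lies in $\ker(\Psi)$. For surjectivity, I would invoke Corollary~\ref{cor:applymoseley}: equivariant formality of $\bconf^3$ gives $H^*_T(\bconf^3)\cong \bconfhom^3\otimes_\Q\Q[u]$ as a $\Q[u]$-module, so any $\Q$-algebra generating set of $\bconfhom^3$ lifts, together with $u$, to a $\Q$-algebra generating set of $H^*_T(\bconf^3)$. Theorem~\ref{thm:3pres} (via Remark~\ref{rmk:difpres}) exhibits $z_{ij}^+,z_{ij}^-,z_i$ as such a generating set, and since $\Phi_0$ from Proposition~\ref{moseleyequivconsequences} sends $\omega_{ij}^*([Z^+]_T)$, $\omega_{\ijminus}^*([Z^+]_T)$, $\omega_i^*([Z^+]_T)$ to $z_{ij}^+,z_{ij}^-,z_i$ respectively, $\Psi$ is surjective.

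For the kernel, the common strategy rests on two facts: equivariant formality makes $H^*_T(\bconf^3)$ free over $\Q[u]$, hence $u$-torsion free; and by Remark~\ref{rmk:intersectionnumbers}, empty intersections of $T$-stable submanifolds produce vanishing equivariant products. Combined with the identity $[Z^-]_T=[Z^+]_T-u$ in $H^*_T(\rthree)$ from Example~\ref{ex:rthree} and $T$-equivariance of the maps $\omega_J$, this translates empty-intersection arguments in $\bconf^3$ into polynomial identities in the generators. Relation \eqref{ER0} is immediate from pulling back $[Z^+]_T\cdot[Z^-]_T=0$ via each of $\omega_{ij},\omega_{\ijminus},\omega_i$. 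For \eqref{ER1}, the intersections $Y_{ij}^+\cap Y_{jk}^+\cap Y_{ik}^-$ and $Y_{ij}^-\cap Y_{jk}^-\cap Y_{ik}^+$ are both empty by the additive observation that $(w_k-w_i)=(w_k-w_j)+(w_j-w_i)$ lies in $Z^+$ whenever both summands do; consequently $z_{ij}^+z_{jk}^+(z_{ik}^+-u)$ and $(z_{ij}^+-u)(z_{jk}^+-u)z_{ik}^+$ vanish in $H^*_T(\bconf^3)$, their difference is manifestly divisible by $u$, and torsion-freeness yields \eqref{ER1}. Relation \eqref{ER2} is analogous, using emptiness of $Y_{ij}^+\cap Y_i^+\cap Y_j^-$ and its opposite-parity counterpart.

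Relation \eqref{ER3} is the main obstacle. Because $\varphi(x)=-x/|x|^2$ is non-linear, the additive argument used for \eqref{ER1}--\eqref{ER2} does not close off, and the naive candidate intersection $Y_{ij}^+\cap Y_{\ijminus}^+\cap Y_i^-$ is in fact non-empty. My plan is to adapt the strategy Feichtner--Ziegler used in proving the non-equivariant Proposition~\ref{prop:FZrelations}(6), where the relation $\zj\zijminus-z_{ij}\zijminus-\zj z_{ij}=0$ is derived from an auxiliary map $\Phi\colon\mathcal{M}\{U_1,U_2,U_{12}^+\}\to\mathcal{M}\{U_{12}^+,U_{12}^-,U_2\}$ between complement spaces. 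Since $\Phi$ and the related map $\overline{\theta}$ are $T$-equivariant, the same pullback computation lifts to $H^*_T$, tracking the $u$-corrections introduced by $[Z^-]_T=[Z^+]_T-u$ and the degree $-1$ sign coming from $\overline{\theta}\circ\Phi=-\sigma$. The resulting equivariant identity is divisible by $u$, and one final application of torsion-freeness yields \eqref{ER3}. The technical crux, and the step that would need the most care, is the bookkeeping of orientations and Euler-class corrections that propagates through the equivariant $\Phi^*$ computation and explains the precise $u$-dependence of \eqref{ER3}.
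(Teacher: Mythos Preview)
Your treatment of surjectivity and of \eqref{ER0}--\eqref{ER2} is essentially the same as the paper's.

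For \eqref{ER3}, however, the paper takes a simpler route than the one you propose. You are right that the naive triple $Y_{ij}^{+}\cap Y_{i\overline{j}}^{+}\cap Y_{i}^{-}$ is non-empty, but the paper does not abandon the empty-intersection method; it simply chooses a different triple. The observation it uses is that $\varphi$ sends the positive $x$-axis into the negative $x$-axis: if $w_{j}\in Z^{+}$ then $\varphi(w_{j})\in Z^{-}$, so on $Y_{j}^{+}\cap Y_{i\overline{j}}^{+}$ one has $w_{i}$ on the negative $x$-axis, and then $Y_{j}^{+}\cap Y_{i\overline{j}}^{+}\cap Y_{ij}^{-}=\emptyset$ (and symmetrically for the opposite parities). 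This yields the required relation by exactly the same mechanism as \eqref{ER1}--\eqref{ER2}, with no need for the auxiliary map $\Phi$. (A caveat: the triple the paper's proof actually analyzes matches relation~(iii) of Remark~\ref{rmk:difpres} rather than \eqref{ER3} as literally stated, so there appears to be a minor typo in one of the two; the method is unaffected.)

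Your proposed alternative, lifting the Feichtner--Ziegler map $\Phi$ to $T$-equivariant cohomology, is plausible and would probably succeed, but it is more elaborate and, as you yourself flag, leaves the orientation and Euler-class bookkeeping unfinished. The paper's approach buys you a uniform treatment of all four relations with no additional machinery beyond Remark~\ref{rmk:intersectionnumbers} and Example~\ref{ex:rthree}.
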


\begin{proof}
Because $\bconf^3$ is formal and $z_{ij}^{+}, z_{ij}^{-}$ and $z_{i}$ for $1 \leq i < j \leq n$ is a generating set for $\bconfhom^3$, it follows that $z_{ij}^{+}, z_{ij}^{-}$, $z_{i}$ and $u$ form a generating set for $H^{*}_{T}(\bconf^3)$. Hence $\Psi$ is surjective.

For \eqref{ER0}, 
    \[ \Psi(z_{J}(z_{J}-u)) =  \omega_{J}^{*}([Z^{+}]_{T}) \,\, \omega_{J}^{*}([Z^{-}]_{T}) = \omega_{J}^{*}(0) =0\]
    because $Z^{+} \cap Z^{-} = \emptyset$.
    
For $1 \leq i < j < k \leq n,$ consider 
\begin{align*} Y_{ij}^{+}  &= \{ (w_{1}, \cdots, w_{n}): x_{i} < x_{j} \}, \\
Y_{jk}^{+}  &= \{ (w_{1}, \cdots, w_{n}): x_{j} < x_{k} \},\\
Y_{ik}^{-}  &= \{ (w_{1}, \cdots, w_{n}): x_{k} < x_{i} \}.
\end{align*}
Then $Y_{ij}^{+} \cap Y_{jk}^{+} \cap Y_{ij}^{-} = \emptyset$ since we cannot have $x_{i} < x_{j} < x_{k} < x_{i}$. This implies that 
\[ \Psi(z_{ij}^{+} z_{jk}^{+} (z_{ik}^{+} - u)) = \omega^{*}_{ij}([Z^{+}]_{T}) \,\, \omega^{*}_{jk}([Z^{+}]_{T}) \,\,\omega^{*}_{ik}([Z^{-}]_{T}) = 0, \]
and analogously, 
\[ \Psi((z_{ij}^{+}-u) (z_{jk}^{+}-u) z_{ik}^{+}) = \omega^{*}_{ij}([Z^{-}]_{T}) \,\, \omega^{*}_{jk}([Z^{-}]_{T}) \,\, \omega^{*}_{ik}([Z^{+}]_{T}) = 0. \]
The expansion of the above expression has a factor of $u$; removing this factor recovers \eqref{ER1}.

Relations \eqref{ER2} and \eqref{ER3} follow similarly. For the former, $Y_{ij}^{+} \cap Y_{i}^{+} \cap Y_{j}^{-} = \emptyset$ since we cannot have $0 < x_i < x_j < 0$. By the same argument, $Y_{ij}^{-} \cap Y_{i}^{-} \cap Y_{j}^{+} = \emptyset$ and following the logic of \eqref{ER1}, we obtain \eqref{ER2}. 

For \eqref{ER3}, elements in $Y_{j}^{+} \cap Y_{i\overline{j}}^{+}$ must have $x_i < 0$ since $0 < x_j$ implies $\varphi(x_{j}) < 0$, From this it follows that $Y_{j}^{+} \cap Y_{i\overline{j}}^{+} \cap Y_{ij}^{-} = \emptyset$ since we cannot have $0 < x_j < x_i < 0$. Analogously $Y_{j}^{-} \cap Y_{i\overline{j}}^{-} \cap Y_{ij}^{+} = \emptyset$, and \eqref{ER3} follows. 
\end{proof}
We now use these relations to give a presentation for $H_{T}^{*}(\bconf^3)$.
\begin{theorem}\label{eqcohompres}
Let $\mathcal{K}'$ be the ideal generated by $B_{n}$-images of the relations in Lemma \ref{lem:eqrel}. Then 
\[ H^{*}_{T}(\bconf^3) = \Z[ z_{ij}^{+}, z_{ij}^{-}, z_{i},u: 1 \leq i<j \leq n] / \K'. \]
\end{theorem}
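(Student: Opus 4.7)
The plan is to use Lemma \ref{lem:eqrel} to obtain a surjection, reduce modulo $u$ to identify the quotient with the known presentation of $\bconfhom^3$, and then upgrade this to a full isomorphism using equivariant formality. Set $R := \Q[z_{ij}^{+},z_{ij}^{-},z_i,u]/\K'$. Lemma \ref{lem:eqrel} provides a graded, $B_n$-equivariant surjective ring homomorphism $\overline{\Psi}: R \twoheadrightarrow H^{*}_{T}(\bconf^3)$, so it suffices to show that $K := \ker(\overline{\Psi}) = 0$.

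The first real step is to reduce the defining relations of $\K'$ modulo $u$. Setting $u=0$ in \eqref{ER0} gives $(z_{ij}^{\pm})^2 = z_i^2 = 0$. Expanding \eqref{ER1} shows its numerator factors as $u\bigl(-z_{ij}^{+}z_{jk}^{+} + z_{ij}^{+}z_{ik}^{+} + z_{jk}^{+}z_{ik}^{+} - u z_{ik}^{+}\bigr)$, so after dividing by $u$ and setting $u=0$ one obtains $z_{ij}^{+}z_{jk}^{+} = z_{ij}^{+}z_{ik}^{+} + z_{jk}^{+}z_{ik}^{+}$. Analogous expansions of \eqref{ER2} and \eqref{ER3} yield $z_i z_{ij}^{+} = z_{ij}^{+}z_j + z_i z_j$ and $z_{ij}^{+}z_{ij}^{-} = z_{ij}^{+}z_i + z_{ij}^{-}z_i$. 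Taking $B_n$-orbits, these mod-$u$ relations generate exactly the ideal $\J'$ appearing in Theorem \ref{thm:3pres} (equivalently the presentation compiled in Remark \ref{rmk:difpres}), so $R/uR \cong \bconfhom^3$ as graded $B_n$-rings.

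Next I compare with the equivariant cohomology. By Corollary \ref{cor:applymoseley}, $\bconf^3$ is equivariantly formal with respect to $T$; Proposition \ref{moseleyequivconsequences} then identifies $H^{*}_{T}(\bconf^3)/(u) \cong \bconfhom^3$, and $H^{*}_{T}(\bconf^3)$ is a free, hence flat, $\Q[u]$-module. Applying $-\otimes_{\Q[u]} \Q[u]/(u)$ to the short exact sequence $0 \to K \to R \to H^{*}_{T}(\bconf^3) \to 0$ and using the vanishing of $\mathrm{Tor}_1^{\Q[u]}(H^{*}_{T}(\bconf^3),\Q)$ produces an injection $K/uK \hookrightarrow R/uR$. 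Composing with the isomorphism $R/uR \xrightarrow{\cong} H^{*}_{T}(\bconf^3)/(u) = \bconfhom^3$ from the previous paragraph forces $K/uK = 0$, so $K = uK$. Since $R$ is graded with $z_{ij}^{\pm}, z_i, u$ each of cohomological degree $2$, every $K^d$ is finite-dimensional and satisfies $K^d = u \cdot K^{d-2}$; combined with $K^0 = 0$ (as $\overline{\Psi}$ is the identity on scalars), induction on $d$ gives $K = 0$ and $\overline{\Psi}$ is an isomorphism.

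The principal obstacle is the mod-$u$ reduction step: one must verify that the $B_n$-orbits of the reductions of \eqref{ER0}--\eqref{ER3} generate the same ideal as $\J'$, not a strictly smaller one. This is largely bookkeeping but requires care, because the equivariant presentation uses the compact generating set $\{z_{ij}^{+},z_{ij}^{-},z_i : 1\leq i < j \leq n\}$, while $\J'$ in its natural form also involves the variables $\zijbad$ and $\zijbadbad$; these must be eliminated via the linear identities of Proposition \ref{prop:actionongenerators} (such as $\zijbad = \zijminus + z_i + z_j$) before the match with the Feichtner--Ziegler relations becomes transparent.
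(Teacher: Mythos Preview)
Your proof is correct and follows essentially the same route as the paper: surjectivity from Lemma \ref{lem:eqrel}, the identification $\Phi_0(\K') = \K$ so that $R/uR \cong \bconfhom^3$, and then equivariant formality to upgrade the mod-$u$ isomorphism to a full one. The only cosmetic difference is that the paper outsources the last step to Moseley's diagram chase \cite[Thm 4.6]{moseley2017equivariant}, whereas you spell it out as a $\mathrm{Tor}$/graded-Nakayama argument; these are the same argument in different packaging.
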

\begin{proof}
Lemma \ref{lem:eqrel} shows that $\Psi$ induces a surjective map 
\[ \overline{\Psi}: \Q[z_{J}, u] / \K' \to H_{T}^{*}(\bconf^3).\]
We would like to show $\overline{\Psi}$ is injective as well.

Recall that $\Phi_0$ is the map defined by sending $u$ to $0$. By Corollary \ref{cor:applymoseley}, applying $\Phi_{0}$ to $ H^{*}_{T}(\bconf^3)$ gives a surjective, $B_{n}$ equivariant map to $\bconfhom^3$. Applying $\Phi_{0}$ to $\Z[z_{J},u]$ via
\[ \Phi_{0}: \Z[z_{J}, u] \longrightarrow \Z[z_{J}]\]
we see that $\Phi_{0}(\K^{'}) = \mathcal{K}$ from Remark \ref{rmk:difpres}. (Note that the image of relations \eqref{ER1}, \eqref{ER2}, \eqref{ER3} under $\Psi_{0}$ is the negation of the relations $(i), (ii)$ and $(iii)$ in $\bconfhom^3$ given in Remark \ref{rmk:difpres}.)

It follows that we have a commutative diagram with exact bottom row and surjective columns:
\[ \begin{tikzcd}
&\K' \arrow{r}{} \arrow[swap]{d}{\Phi_{0}} & \Z[z_{J}, u] \arrow{r}{\Psi} \arrow{d}{\Phi_{0}} & H_{T}^{*}(\bconf^3) \arrow{r}{} \arrow{d}{\Phi_{0}}& 0 \\%
0\arrow{r}{}& \K \arrow{r}{}& \Z[z_{J}]\arrow{r}{} & \bconfhom^3 \arrow{r}{} & 0.
\end{tikzcd}
\]
The same diagram chase argument used by Moseley in \cite[Thm 4.6]{moseley2017equivariant} then applies to show that $\ker(\Psi) = \K'$.
\end{proof}
One key takeaway from Theorem \ref{eqcohompres} is that (as in Example \ref{ex:rthree}) the Heaviside function presentation of $\bconfhom^1$ is quite natural with respect to equivariant cohomology. 
\begin{cor}\label{cor:filtrationscoincide}
The filtration by Heaviside-like functions $z_{ij}, z_i$ for $i,j \in [n]^{\pm}$ and the equivariant filtration on $\bconfhom^1$ coincide. Thus
\[H^{*}_{T}(\bconf^{3})/\langle u \rangle = \gr_{T}(\bconfhom^1) \cong \gr(\bconfhom^1) \cong \bconfhom^3. \]
\end{cor}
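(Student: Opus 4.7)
The plan is to show that the equivariant filtration on $\bconfhom^1$ coincides (after rescaling indices by $2$) with the degree filtration in the Heaviside-like generators $z_{ij}, z_i$ from Theorem \ref{cor:bconfhom1}, from which the chain of isomorphisms follows by combining Corollary \ref{cor:associatedgradedinequivhom}, Corollary \ref{cor:gr1and3}, and Proposition \ref{moseleyequivconsequences}. The key observation is that in the presentation of $H^{*}_{T}(\bconf^3)$ from Theorem \ref{eqcohompres}, all multiplicative generators, including $u$, live in cohomological degree $2$, so polynomial degree and cohomological degree differ by exactly a factor of $2$.

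First I would verify that the $B_n$-equivariant ring map $\Phi_1: H^{*}_{T}(\bconf^3) \to H^{*}((\bconf^3)^T) = \bconfhom^1$ sends the equivariant generators $z_{ij}^{+}, z_{ij}^{-}, z_{i}$ (together with $u \mapsto 1$) to the Heaviside generators of the same name in $\bconfhom^1$ defined in Section \ref{sec:bconf1}. This follows from naturality: each equivariant class is built as $\omega_J^{*}([Z^+]_T)$, and the restriction $\omega_J|_{\bconf^1}$ lands in $(\R^3 \setminus \{0\})^T = \R \setminus \{0\}$; meanwhile, under the specialization $u \mapsto 1$, the class $[Z^+]_T \in H^{2}_T(\R^3 \setminus \{0\})$ becomes the ordinary Heaviside function detecting positivity on $\R \setminus \{0\}$ (as in the calculation of Example \ref{ex:rthree}). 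Pulling back then yields precisely $z_{ij}$, $z_{\ijminus}$, and $z_i$ respectively.

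Next, let $F_k(\bconf^3)$ denote the equivariant filtration of Definition \ref{def:equivariantfiltration}. Because every generator of $H^{*}_{T}(\bconf^3)$ has cohomological degree $2$, a monomial in Theorem \ref{eqcohompres} sits in $H^{2\ell}_T(\bconf^3)$ precisely when its total polynomial degree is $\ell$. Applying $\Phi_1$ sends such a monomial to a Heaviside polynomial of degree at most $\ell$ in $\bconfhom^1$ (since setting $u = 1$ can only decrease the degree). Conversely, any Heaviside monomial of degree exactly $\ell$ in $\bconfhom^1$ is the image under $\Phi_1$ of the same monomial viewed in $H^{*}_{T}(\bconf^3)$, which lies in cohomological degree $2\ell$. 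Therefore $F_{2\ell}(\bconf^3)$ equals the span of Heaviside monomials of degree at most $\ell$, so the equivariant and Heaviside-degree filtrations on $\bconfhom^1$ agree up to the reindexing $k \leftrightarrow 2k$.

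Passing to associated graded rings yields $\gr_T(\bconfhom^1) \cong \gr(\bconfhom^1)$. Combined with Proposition \ref{moseleyequivconsequences} (giving $H^{*}_{T}(\bconf^3)/\langle u \rangle \cong \bconfhom^3$), Corollary \ref{cor:associatedgradedinequivhom} (giving $\gr_T(\bconfhom^1) \cong \bconfhom^3$), and Corollary \ref{cor:gr1and3} (giving $\gr(\bconfhom^1) \cong \bconfhom^3$), this completes the chain of isomorphisms in the statement. The only step with any real content is the identification of the equivariant generators with the Heaviside generators under $\Phi_1$; everything else is bookkeeping about cohomological degree.
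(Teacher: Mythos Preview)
Your proof is correct and follows the same approach as the paper, which simply notes that setting $u=1$ in the presentation of $H^{*}_{T}(\bconf^3)$ from Theorem \ref{eqcohompres} recovers the Heaviside presentation of $\bconfhom^1$. You have spelled out in more detail why this works---tracking the generators under $\Phi_1$ and using the fact that all generators (including $u$) sit in cohomological degree $2$---which is exactly the verification the paper's one-line proof leaves to the reader.
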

\begin{proof}
This follows from noting that setting $u=1$ in $H^{*}_{T}(\bconf^3)$ recovers the presentation of $\bconfhom^1$ in Corollary \ref{cor:grbconfhom}. 
\end{proof}

\subsection{Lifting to $\bconflift^3$}
We would like make an analogous statement to Corollary \ref{cor:filtrationscoincide} for $H_{T}^{*}(\bconflift^3)$ by applying two key facts established earlier:
\begin{enumerate}
    \item there is a $B_n$-equivariant homeomorphism between $\bconf^1$ and $\bconflift^1$ and
    \item $\gr_{T}(\bconf^1)$ coincides with $\gr(\bconf^1)$.
\end{enumerate}
Using a similar argument to \cite[Remark 2.9]{moseley2017orlik}, this is enough to establish the desired connection. 
\begin{theorem}\label{thm:liftedfiltrationscoincide}
The filtration by signed cyclic Heaviside functions $y_{ijk}$ for $i,j,k \in [n]_{0}^{\pm}$ and the equivariant filtration on $\bconfhomlift^1$ coincide. Thus
\[H^{*}_{T}(\bconflift^{3})/\langle u \rangle = \gr_{T}(\bconfhomlift^1) \cong \gr(\bconfhomlift^1) \cong \bconfhomlift^3. \]
\end{theorem}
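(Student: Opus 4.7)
The plan is to transport Corollary~\ref{cor:filtrationscoincide} (which gives the analogous statement on $\bconf^d$) along the $B_n$-equivariant homeomorphisms $\bconflift^d \cong \bconf^d$ from Example~\ref{ex:typebhiddenaction}, using a strategy parallel to the Type $A$ argument in \cite[Rmk 2.9]{moseley2017orlik}. The first step is to upgrade those homeomorphisms to $(B_n \times T)$-equivariant homeomorphisms: the circle $T = U(1) \subset SU_2$ acts on $\bconflift^3$ by right multiplication, and under the identification $SU_2\setminus\{\pm 1\} \cong \rthree$ the induced action is conjugation, i.e.\ rotation around the $x$-axis, which is precisely the $T$-action on $\bconf^3$ from Proposition~\ref{prop:U1acts}. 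Since $T$ lies in the center of $SU_2$ and commutes with $\varphi$, the homeomorphism $\bconflift^d \cong \bconf^d$ intertwines the two $T$-actions; restricting to $T$-fixed loci recovers the companion homeomorphism $\bconflift^1 \cong \bconf^1$.

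Consequently we obtain a $(B_n \times T)$-equivariant ring isomorphism
\[
H^{*}_{T}(\bconflift^3) \;\cong\; H^{*}_{T}(\bconf^3)
\]
of $\Q[u]$-algebras, and setting $u=1$ yields a $B_n$-equivariant ring isomorphism $\bconfhomlift^1 \cong \bconfhom^1$ that carries the equivariant filtration on the left to the equivariant filtration on the right. By Corollary~\ref{cor:filtrationscoincide} the latter agrees with the filtration by $z$-degree on $\bconfhom^1$. The next (and key) step is to check that under the isomorphism $\bconfhomlift^1 \cong \bconfhom^1$ the filtration by $y$-degree on $\bconfhomlift^1$ also corresponds to the filtration by $z$-degree on $\bconfhom^1$. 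This reduces to the observation that the map is the one described in \S\ref{sec:bconf1}, sending $y_{0jk} \mapsto z_{jk}$ and $y_{0\overline{0}j} \mapsto z_j$, and that an arbitrary generator $y_{ijk}$ with $i\neq 0$ is rewritten as a degree-one polynomial in the $y_{0??}$'s via Relation $(iv)$ of Proposition~\ref{CHFrelations}; thus degree in the $y_{ijk}$ matches degree in the $z$'s. Combining these three identifications shows that the cyclic Heaviside filtration on $\bconfhomlift^1$ coincides with the equivariant filtration.

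Taking associated gradeds then gives
\[
H^{*}_{T}(\bconflift^3)/\langle u \rangle \;=\; \gr_T(\bconfhomlift^1) \;\cong\; \gr(\bconfhomlift^1),
\]
and the final isomorphism with $\bconfhomlift^3$ is obtained by observing that passing to $u=0$ in the presentation of $H^{*}_{T}(\bconflift^3)$ (which mirrors Theorem~\ref{eqcohompres} via the $T$-equivariant homeomorphism) recovers a presentation matching that of $\bconfhomlift^3$, exactly as in the unlifted case; this also proves the deferred Corollary~\ref{cor:liftgr1and3}. The main obstacle is verifying cleanly that the homeomorphism $\bconflift^3 \cong \bconf^3$ is genuinely $T$-equivariant with respect to the two \emph{different-looking} $T$-actions (right multiplication by diagonal $U(1)$ versus rotation around the $x$-axis)---everything else is a formal transport argument once this compatibility is in place.
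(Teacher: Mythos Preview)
Your proposal is correct and follows essentially the same strategy as the paper: transport Corollary~\ref{cor:filtrationscoincide} along the $T$-equivariant homeomorphism $\bconflift^d \cong \bconf^d$. The one point where your argument diverges slightly from the paper's is in the last step, showing that the filtration by \emph{all} cyclic Heaviside generators $y_{ijk}$ (not just those with first index $0$) coincides with the equivariant filtration. You handle this algebraically via Relation~$(iv)$, observing that every $y_{ijk}$ is already a degree-one element in the $y_{0??}$, so the two degree filtrations agree. The paper instead introduces the whole family of homeomorphisms $f_\ell$ (one for each choice of ``base point'' $0 \leq \ell \leq n$) and then invokes the $B_{n+1}$-stability of the equivariant filtration: since each $y_{\ell ij}$ lies in the first equivariant filtration layer (via $f_\ell$) and these generators fill out the $B_{n+1}$-orbit of the $y_{0??}$, the full cyclic Heaviside filtration is contained in---hence equal to---the equivariant one. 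Your route is a bit more direct and avoids the family $f_\ell$; the paper's route makes the $B_{n+1}$-equivariance of the construction more visible. Either way, the ``main obstacle'' you flag (verifying that conjugation by $U(1) \subset SU_2$ matches the rotation action on $\R^3$ under the stereographic identification) is indeed the one nontrivial input both arguments need, and the paper does not spell it out either.
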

\begin{proof}
The key idea here is to recognize that the homeomorphism between $ \bconf^1 $ and $ \bconflift^1$ derived from Proposition \ref{hiddenactionframework} is a special case ($\ell=0$) of a more general family of homeomorphisms for $0 \leq \ell \leq n$:   
\begin{align*}
    f_\ell: \bconflift^1 &\longrightarrow \bconf^1\\
    (p_{0}, \cdots, p_{n})& \longmapsto (p_{\ell}^{-1}p_{0}, \cdots, p_{\ell}^{-1}p_{\ell-1}, p_{\ell}^{-1}p_{\ell+1}, \cdots p_{\ell}^{-1}p_{n}).
\end{align*}
In cohomology, this induces a family of maps $f_\ell^*$ sending $z_{ij}$ to $y_{\ell ij}$ and $z_i$ to $y_{\ell \overline{\ell}i}$. By Corollary \ref{cor:associatedgradedinequivhom}, in $\bconf^1$ the filtration by Heaviside-like functions coincides with the filtration arising from equivariant cohomology. It follows that for a fixed $\ell$, the same must be true of $\bconfhomlift^1$ with respect to the filtration arising from the $y_{\ell ij}$ and the filtration arising from $H_{T}^{*}(\bconflift^3)$. Since the latter filtration is stable under the action of $B_{n+1}$, it must coincide with the filtration by \emph{all} signed cyclic Heaviside functions (e.g. allowing the indices $i,j, k \in [n]_{0}^{\pm}$ to vary.)
\end{proof}

Since we have already obtained a description of $\gr(\bconfhomlift^1)$ by Corollary \ref{cor:lifthom1gr}, we obtain a description of $\bconfhomlift^3$ as a Corollary. 

\begin{cor}\label{thm:bpres3lift}
The ring $\bconfhomlift^3$ has presentation
\[ \bconfhomlift^3 \cong \Z[y_{ijk}]/ \I',\]
for distinct $i,j,k \in [n]_{0}^{\pm}$, where $\I'$ is generated by the relations
\[ (i) \hspace{.5em} y_{ijk}^{2} , \hspace{2em}  (ii) \hspace{.5em} y_{ijk} - y_{ij\ell} + y_{ik \ell} - y_{jk\ell}  \hspace{2em}   (iii) \hspace{.5em} y_{ \overline{i} \, j \, k}- y_{i \, \overline{j \, k}} \hspace{2em} \]
\[ (iv) \hspace{.5em} y_{ijk} = y_{jik}  \hspace{2em} (v) \hspace{.5em} y_{ijk}y_{ik \ell } - y_{ijk} y_{ij \ell } - y_{ik \ell } y_{ij \ell }.  \]
\end{cor}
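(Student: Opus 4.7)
The plan is to deduce this corollary by chaining together two results already proved in the excerpt. Theorem \ref{thm:liftedfiltrationscoincide} gives a ring isomorphism $\bconfhomlift^3 \cong \gr(\bconfhomlift^1)$, identifying the cohomological grading on the left with the signed cyclic Heaviside degree grading on the right. Corollary \ref{cor:lifthom1gr} in turn provides an explicit presentation
\[ \gr(\bconfhomlift^1) \;\cong\; \Z[y_{ijk}]/\indeg(\I). \]
The proof proposal is simply to compose these two isomorphisms and observe that the relation ideal $\indeg(\I)$ in Corollary \ref{cor:lifthom1gr} is exactly the ideal $\I'$ listed in the statement.

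The main step is therefore the bookkeeping verification that the generating sets agree. Relations (i), (ii), (iii), and (v) of $\I'$ appear verbatim among the generators of $\indeg(\I)$. For relation (iv), the degree-initial form of Proposition \ref{CHFrelations}(ii) is $y_{ijk} + y_{ikj} = 0$, which under cyclic rotation $y_{ikj} = y_{kji} = y_{jik}$ becomes $y_{ijk} + y_{jik} = 0$; this matches (iv) (read with the understood sign convention inherited from the cyclic symmetry of the $y_{ijk}$). No new Gr\"obner basis computation is needed, since the nbc basis and the Hilbert series $\prod_{k=1}^{n}(1+(2k-1)t)$ were already established in Theorem \ref{thm:lift1pres} and Corollary \ref{cor:lifthom1gr}.

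The genuine obstacle was already cleared upstream, in the proof of Theorem \ref{thm:liftedfiltrationscoincide}: namely, establishing that on $\bconfhomlift^1$ the equivariant filtration coming from $H^{*}_{T}(\bconflift^3)$ coincides with the filtration by signed cyclic Heaviside degree. That argument required introducing the family of $B_n$-equivariant homeomorphisms $f_\ell : \bconflift^1 \to \bconf^1$ and transporting the coincidence established in Corollary \ref{cor:filtrationscoincide} for $\bconf^1$ across each $f_\ell^{*}$, then using $B_{n+1}$-stability of the equivariant filtration to conclude the two filtrations agree globally. With that technology in place, Corollary \ref{thm:bpres3lift} is a direct translation of Corollary \ref{cor:lifthom1gr} through the isomorphism $\bconfhomlift^3 \cong \gr(\bconfhomlift^1)$.
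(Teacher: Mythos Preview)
Your proposal is correct and follows exactly the paper's approach: the corollary is stated immediately after Theorem \ref{thm:liftedfiltrationscoincide}, and the paper's only justification is the sentence ``Since we have already obtained a description of $\gr(\bconfhomlift^1)$ by Corollary \ref{cor:lifthom1gr}, we obtain a description of $\bconfhomlift^3$ as a Corollary.'' Your bookkeeping on matching the relation lists (including the sign reconciliation for relation (iv) via cyclic symmetry) is more explicit than anything the paper provides.
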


\section{Connection to the Mantaci-Reutenauer algebra}\label{sec:MRalg}
Finally, we will complete the Type $B$ story by studying the family of $B_{n}$-representations arising in the
\emph{Mantaci--Reutenauer algebra} (Definition~\ref{def:MR}), introduced in \cite{mantaci1995generalization}. Our primary goal is to prove Theorem~\ref{thm:BRiso} relating the topological spaces from \S \ref{sec:d1}--\ref{sec:equivcohom} to these representations. In order to do so, we will need to define a family of idempotents in the Mantaci-Reutenauer algebra (\S \ref{sec:vazidem}) and the representations they generate (\S \ref{sec:vazrep}), as well as decompose the ring $\G$ further by \emph{signed set compositions} (\S \ref{sec:decompsignedset}).

 An \emph{integer composition} of $n$ is a sequence $(a_{1}, \cdots, a_{\ell})$ where $a_{i} \in [n]$ and $|a_{1}| + \cdots + |a_{\ell}| = n$. Similarly 
 a \emph{signed integer composition} of $n$ will be a sequence $(a_{1}, \cdots, a_{\ell})$ where $a_{i} \in [n]^{\pm}$ and again $|a_{1}| + \cdots + |a_{\ell}| = n$. 

\begin{definition}\label{def:MRdes} \rm
For $\sigma$ in one-line notation, the \emph{Mantaci-Reutenauer descent set} of $\sigma$ is
\[ \mrdes(\sigma) := \begin{cases}i \in [n-1]:  &|\sigma_{i}| > |\sigma_{i+1}| \textrm{ and } \sigma_{i} \textrm{ and } \sigma_{i+1} \textrm{ have the same sign or} \\
&\sigma_{i} \textrm{ and } \sigma_{i+1} \textrm{ have opposite signs.} \end{cases}
\]
\end{definition}
Note that $\mrdes(\sigma)$ partitions $\sigma$ into $\ell:= |\mrdes(\sigma)|+1$ many blocks, $b_{1}, \cdots, b_{\ell}$ of size $m_{1}, \cdots, m_{\ell}$. By construction every element in $b_{i}$ will have the same sign; let 
\[ \sgn(m_{i}) = \begin{cases} m_{i} & \textrm{ if }b_{i} \subset [n] \\
\overline{m_{i}} & \textrm{ if }b_i \subset [n]^{-}.
\end{cases}\]
Then the \emph{shape} of $\sigma \in B_n$ is the signed integer composition
\[ \sh(\sigma):= (\sgn(m_{1}), \cdots, \sgn(m_{\ell})). \]
\begin{example} \rm
If $\sigma = (3,4,\overline{1},\overline{5},\overline{2}) \in B_5$, then $\mrdes(\sigma) = \{ 2, 4 \}$, which partitions $\sigma$ into ordered blocks $(\{3,4\}, \{\overline{1}, \overline{5} \}, \{\overline{2}\})$. Therefore $\sh(\sigma) = (2, \overline{2}, \overline{1})$.
\end{example}
\begin{remark} \rm
Recall that for $\sigma \in S_n$, the \emph{descent set} of $\sigma$ is  $\Des(\sigma) = \{ i \in [n-1]: \sigma_i > \sigma_{i+1}\}$. Because $S_n \leq B_n$, both $\mrdes(\sigma)$ and $\sh(\sigma)$ are well-defined for $\sigma \in S_n$. In particular, when $\sigma \in S_n$, $\mrdes(\sigma) = \Des(\sigma)$ and
$\sh(\sigma)$ will be an unsigned integer composition of $n$.
\end{remark}

\begin{definition}\label{def:MR} \rm
The \emph{Mantaci-Reutenauer algebra} is the algebra $\Sigma'[B_{n}]$ generated by $Y_{\alpha}$ in $\Q[B_{n}]$ where
\[ Y_{\alpha}:= \sum_{\substack{\sigma \in B_{n}\\ \sh(\sigma) = \alpha}} \sigma. \]
\end{definition}

The dimension of $\Sigma'[B_{n}]$ is $2 \cdot 3^{n-1}$; for other bases of $\Sigma'[B_{n}]$, see \cite{aguiar2004peak}.

\subsection{The Vazirani idempotents}\label{sec:vazidem}
In \cite{vazirani}, Vazirani introduces a complete family of orthogonal idempotents for $\Sigma^{'}[B_n]$. These idempotents generalize a family of idempotents $\{ \frake_{\lambda} \}_{\lambda \vdash n}$ introduced by Garsia--Reutenauer in \cite{garsia} for $\Sigma[S_n]$; see \cite[\S 3]{douglass2018decomposition} for complete details on both definitions. 

We will use an equivalent construction of Vazirani's idempotents given by Douglass--Tomlin in \cite[Prop. 2.5]{douglass2018decomposition}. This will require the following objects and maps:
\begin{itemize}
\item Let $\mathcal{C}(n)$ and $\mathcal{SC}(n)$ be the set of unsigned and signed integer compositions of $n$, respectively.
    \item For $p = (p_{1}, \cdots, p_{\ell}) \in \mathcal{SC}(n)$, let $|p| := (|p_{1}|, \cdots, |p_{\ell}|) \in \mathcal{C}(n)$. 
    \item For $p = (p_{1}, \cdots, p_{\ell}) \in \mathcal{SC}(n)$, define 
    \[ \widehat{p_{i}} = \sum_{j=1}^{i} |p_{j}| \in [n],\]
    and the map $\ \ \widehat{} \ \ $ sending signed compositions to subsets of $[n-1]$:
    \begin{align*}
        \widehat{} \ : \mathcal{SC}(n) &\longrightarrow 2^{[n-1]}\\
        p = (p_1, \cdots, p_\ell) & \longmapsto \widehat{p} = (\widehat{p_{1}}, \cdots, \widehat{p_{\ell-1}}).
    \end{align*}
    If we restrict the domain of $\ \ \widehat{} \ \ $ to ordinary compositions $\mathcal{C}(n)$, then $ \ \ \widehat{} \ \ $ defines a bijection, and allows us to identify the sets
    \[ \bigg\{ \sigma \in S_n: \sh(\sigma) = p \in \mathcal{C}(n) \bigg\}
    \quad = \quad
    \bigg\{ \sigma \in S_n: \Des(\sigma) = \ \widehat{p} \  \bigg\}.\]

    \item A set composition of $[n]$ is a partition of $[n]$ into disjoint blocks $(b_1, \cdots, b_\ell)$. Let $\Lambda_{[n]}$ be the set of such compositions.
    Define 
    \begin{align*}
        \Lambda: \mathcal{SC}(n) &\longrightarrow \Lambda_{[n]}\\
        p = (p_1, \cdots, p_{\ell}) &\longmapsto \Lambda(p)= (\Lambda(p_1), \cdots, \Lambda(p_{\ell})),
    \end{align*}
    where $\Lambda(p_i):=  \{ \widehat{p_{i-1}} +1, \cdots, \widehat{p_{i}}\}.$
    Note that $\widehat{p}$ and $\Lambda(p)$ differ because the former is a subset of $[n-1]$ while the latter is a genuine set composition of $[n]$.
\item Recall that a \emph{signed partition} of $n$ is a pair of partitions $\lambda = \sgnpart$ such that $\lambda^{+}$ is a partition of $n_1 \leq n$, $\lambda^{-}$ is a partition of $n_2 \leq n$, and $n_1 + n_2 = n$. Let $\ell(\lambda^{+})$ (resp. $\ell(\lambda^{-})$) be the number of parts in $\lambda^{+}$ (resp. $\lambda^{-}$), and $\ell(\lambda) = \ell(\lambda^{+}) + \ell(\lambda^{-})$. 
  \item For $p \in \mathcal{SC}(n)$, let $\overleftarrow{p}$ be the reordering of the parts of $p$ into a signed partition $(\lambda^{+}, \lambda^{-})$ so that $\lambda^{+}$ consists of all positive parts of $p$ in decreasing order and $\lambda^{-}$ consists of all negative parts of $p$ in decreasing order. 
\item For a set $A \subset [n-1]$, define an element in the Type $A$ Descent algebra $\Sigma[S_n]$:
    \[ X_{A}:= \sum_{\substack{w \in S_n\\ \Des(w) \subset A}} w . \]
Varying over all subsets $A \subset [n-1]$ gives a basis for $\Sigma[S_n]$.

    \item For $m > 0$, the \emph{Reutenauer idempotent} $r_{m} \in \Sigma[S_{m}] \subset \Q[S_{n}]$ is
    \[ r_{m} = \sum_{A \subset [m-1]} \frac{(-1)^{|A|}}{|A|+1} X_{A}. \]
    Note that the definition of $r_m$ can be extended to any ordered subset $J \subset [n]$ by replacing $[m]$ by $J$, in which case we will write $r_{J} \in \Q[S_{J}]$. Furthermore, since $S_n \leq B_n$, one also has $r_J \in \Q[B_n].$ 
    \item For $J \subset [n]$ the element $w_{0,J} \in B_n$ is the product
    \[ \prod_{i \in J} t_i. \]
    In other words, $w_{0,J}$ acts like $-1$ on $J$ and 1 off $J$.
    \item For $J \subset [n]$, define 
    \[ \epsilon_{J}^{\pm} := \frac{1}{2} (1 \pm w_{0,J}). \]
\end{itemize}
\begin{example}\label{ex:partition} \rm
Suppose $p = (2, \overline{2}, 1)$. Then $|p| = (2,2,1)$, $\widehat{p} = \{ 2, 4, 5 \}$ and $\Lambda(p) = \{ \{ 1,2 \}, \{3, 4 \}, \{ 5 \} \}.$ Finally, 
\[ \overleftarrow{p} =  \big( \underbrace{(2,1)}_{\lambda^{+}},\underbrace{(2)}_{\lambda^{-}} \big), \] 
so $\ell(\lambda^{+}) = 2$, $\ell(\lambda^{-}) = 1$.
\end{example}
We are at last ready to define the Vazirani idempotents.
\begin{definition} \label{def:vazidem} \rm (Douglass-Tomlin \cite[Prop 2.5]{douglass2018decomposition}).
Given $p = (p_1, \cdots, p_\ell) \in \mathcal{SC}(n)$, define the element 
\[ I_p := X_{\widehat{p}} \cdot \epsilon^{\zeta_{1}}_{\Lambda(p_1)}\cdot r_{\Lambda(p_1)} \cdots  \epsilon^{\zeta_{\ell}}_{\Lambda(p_\ell)}\cdot r_{\Lambda(p_\ell)},  \]
where $\zeta_i$ is the sign of $p_i$.
Then for each signed partition $(\lambda^{+}, \lambda^{-})$, the \emph{Vazirani idempotent} is
\[ \frakg_{\sgnpart}:= \sum_{\substack{p \in \mathcal{SC}(n) \\ \overleftarrow{p} = \sgnpart}} \frac{1}{ \ell(\lambda)!} I_p.\]
\end{definition}

The Vazirani idempotents extend the Garsia-Reutenauer idempotents in $\Sigma[S_{n}]$, which are defined as:

\begin{equation}\label{eq:GRidem}
\frake_{\lambda} = \frac{1}{\ell(\lambda)!} \sum_{\substack{p \in \mathcal{C}(n)\\ \overleftarrow{p} = \lambda}} X_{\widehat{p}} \cdot r_{\Lambda(p_1)} \cdots r_{\Lambda(p_\ell)}. \end{equation}

In fact, we can make the relationship between the $\frakg_{\sgnpart}$ and $\frake_{\lambda}$ precise. In \cite{aguiar2004peak}, Aguiar--Bergeron--Nyman study the surjection $\tau: B_n \to S_n$ that forgets the signs of $\sigma \in B_n$. For example, if $\sigma = (\overline{2},1,\overline{3})$, then $\tau(\sigma) = (2,1,3)$.

\begin{theorem}{\rm (Aguiar--Bergeron--Nyman \cite[Prop 7.5]{aguiar2004peak}). }
The map $\tau$ is an algebra homomorphism from
\[\tau: \Q[B_n] \to \Q[S_n]. \]
When restricted to $\Sigma^{'}[B_n]$, the map $\tau$ surjects onto $\Sigma[S_n]$.
\end{theorem}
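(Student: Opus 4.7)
The theorem has two assertions; I would handle them separately. For the first — that $\tau$ extends to a $\Q$-algebra homomorphism $\Q[B_n] \to \Q[S_n]$ — note that $B_n$ is the wreath product $\Z_2 \wr S_n$ and $\tau$ is the canonical projection onto the $S_n$ factor, hence a group homomorphism. Extending $\Q$-linearly gives the desired algebra homomorphism.

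For the restriction claim, I would work with the basis $\{Y_\alpha\}_{\alpha\in\mathcal{SC}(n)}$ of $\Sigma'[B_n]$ and the analogous $S_n$-basis $y_\beta := \sum_{\sh(w)=\beta} w \in \Sigma[S_n]$ for $\beta \in \mathcal{C}(n)$. Fix $\alpha \in \mathcal{SC}(n)$; for each $w \in S_n$, the coefficient of $w$ in $\tau(Y_\alpha)$ equals the number of sign vectors $\epsilon \in \{\pm 1\}^n$ such that $w_\epsilon := (\epsilon_1 w_1, \ldots, \epsilon_n w_n) \in B_n$ has shape $\alpha$. The central computation I would carry out is the identity
\[
\mrdes(w_\epsilon) \;=\; \Des(w) \,\cup\, \{i \in [n-1] : \epsilon_i \neq \epsilon_{i+1}\},
\]
which follows by case-checking Definition~\ref{def:MRdes}: position $i$ contributes either because the signs flip there, or because the signs agree and $|\sigma_i| > |\sigma_{i+1}|$ (equivalent to $i \in \Des(w)$). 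Now demanding $\sh(w_\epsilon) = \alpha$ forces the sign of the $k$-th block of $w_\epsilon$ to equal $\sgn(\alpha_k)$, which determines $\epsilon$ uniquely and fixes the sign-change set as $S_\alpha := \{\widehat{\alpha_k} : \sgn(\alpha_k) \neq \sgn(\alpha_{k+1})\}$. The remaining requirement $\mrdes(w_\epsilon) = \widehat{\alpha}$ then reduces to the purely $w$-side constraint $\widehat{\alpha} \setminus S_\alpha \subseteq \Des(w) \subseteq \widehat{\alpha}$. Hence the coefficient of $w$ in $\tau(Y_\alpha)$ is $0$ or $1$ depending only on $\Des(w)$, equivalently only on $\sh(w)$, which exactly means $\tau(Y_\alpha) \in \Sigma[S_n]$; explicitly,
\[
\tau(Y_\alpha) \;=\; \sum_{\substack{\beta \in \mathcal{C}(n) \\ \widehat{\alpha}\setminus S_\alpha \,\subseteq\, \widehat{\beta} \,\subseteq\, \widehat{\alpha}}} y_\beta.
\]

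For surjectivity, given any unsigned composition $\beta \in \mathcal{C}(n)$, take $\alpha$ to be $\beta$ viewed as an all-positive signed composition. Then $S_\alpha = \emptyset$, so the constraint above collapses to $\widehat{\beta'} = \widehat{\alpha}$, giving $\tau(Y_\alpha) = y_\beta$; since the $y_\beta$ span $\Sigma[S_n]$, surjectivity follows. I expect the main obstacle to be the bookkeeping around the MR-descent identity and the argument that the block sign condition determines $\epsilon$ uniquely — this relies on the observation that specifying $\sh(w_\epsilon) = \alpha$ simultaneously pins down both the block boundaries (via $\widehat{\alpha}$) and the sign on each block (via $\sgn(\alpha_k)$), leaving no residual freedom in $\epsilon$.
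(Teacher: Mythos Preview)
The paper does not supply its own proof of this statement; it is quoted as a result of Aguiar--Bergeron--Nyman and used as a black box. Your argument is correct and self-contained: the first assertion is immediate from the wreath-product projection, and your computation of $\tau(Y_\alpha)$ via the identity $\mrdes(w_\epsilon) = \Des(w) \cup \{i : \epsilon_i \neq \epsilon_{i+1}\}$ is sound, as is the surjectivity step using all-positive compositions. So you have in fact supplied more than the paper does here.
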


This will allow us to precisely relate the $\frakg_{\sgnpart}$ to the $\frake_{\lambda}$; the author is grateful to M. Aguiar for suggesting this line of inquiry.
\begin{prop}\label{sgnmap}
For a signed partition $\sgnpart$, one has
\[ \tau(\frakg_{\sgnpart}) = \begin{cases} \frake_{\lambda^{+}} & \lambda^{-} = \emptyset \\
0 & \text{otherwise.}
\end{cases}\]
\end{prop}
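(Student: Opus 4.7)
The plan is to leverage the fact that $\tau$ is an algebra homomorphism (by the theorem of Aguiar--Bergeron--Nyman stated just before the proposition), so we can distribute $\tau$ through the product defining $I_p$ in Definition \ref{def:vazidem}. Since $\tau$ forgets signs, it sends each $t_i \in B_n$ to $1 \in S_n$, and hence it sends $w_{0,J} = \prod_{i \in J} t_i$ to $1$ for every $J \subseteq [n]$. This immediately gives
\[
\tau(\epsilon_J^{+}) = \tfrac{1}{2}(1 + 1) = 1, \qquad \tau(\epsilon_J^{-}) = \tfrac{1}{2}(1-1) = 0.
\]
Moreover, $\tau$ fixes elements of $\Q[S_n] \subseteq \Q[B_n]$, so it fixes $X_{\widehat{p}}$ and each Reutenauer idempotent $r_{\Lambda(p_i)}$.

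First I would apply these observations to $\tau(I_p)$ for a signed composition $p = (p_1, \ldots, p_\ell)$ with $\overleftarrow{p} = \sgnpart$. Writing $\zeta_i$ for the sign of $p_i$, the product form of $I_p$ gives
\[
\tau(I_p) \;=\; X_{\widehat{p}} \cdot \prod_{i=1}^{\ell} \tau\bigl(\epsilon^{\zeta_i}_{\Lambda(p_i)}\bigr) \cdot r_{\Lambda(p_i)}.
\]
If any $\zeta_i = -$, the corresponding factor vanishes, so $\tau(I_p) = 0$. Hence when $\lambda^{-} \neq \emptyset$, every summand $I_p$ appearing in $\frakg_{\sgnpart}$ has at least one negative part (since $\overleftarrow{p} = \sgnpart$), so every summand is killed by $\tau$, yielding $\tau(\frakg_{\sgnpart}) = 0$.

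When $\lambda^{-} = \emptyset$, only compositions $p$ with all positive parts contribute; for these, $p \in \mathcal{C}(n)$ with $\overleftarrow{p} = \lambda^{+}$, each $\tau(\epsilon^{+}_{\Lambda(p_i)}) = 1$, and
\[
\tau(I_p) \;=\; X_{\widehat{p}} \cdot r_{\Lambda(p_1)} \cdots r_{\Lambda(p_\ell)}.
\]
Summing over such $p$ with the prefactor $\tfrac{1}{\ell(\lambda)!} = \tfrac{1}{\ell(\lambda^{+})!}$ recovers exactly the Garsia--Reutenauer formula \eqref{eq:GRidem} for $\frake_{\lambda^{+}}$. This finishes both cases.

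There is no real obstacle here: the whole argument is a direct computation once the key point $\tau(t_i) = 1$ is observed. The only thing that requires care is making sure that the indexing sets match up—namely, that the bijection between $\{p \in \mathcal{SC}(n) : \overleftarrow{p} = (\lambda^{+}, \emptyset)\}$ and $\{p \in \mathcal{C}(n) : \overleftarrow{p} = \lambda^{+}\}$ is the evident one, and that $\ell(\lambda) = \ell(\lambda^{+})$ in this case so the normalizing scalar matches.
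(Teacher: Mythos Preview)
Your proposal is correct and follows essentially the same approach as the paper: both use that $\tau$ is an algebra homomorphism with $\tau(w_{0,J}) = 1$ to compute $\tau(\epsilon_J^{\pm})$, deduce that $\tau(I_p)$ vanishes whenever $p$ has a negative part, and then match the surviving sum to the Garsia--Reutenauer formula \eqref{eq:GRidem} when $\lambda^{-} = \emptyset$. Your version is slightly more explicit about why $\tau$ fixes $X_{\widehat{p}}$ and the $r_{\Lambda(p_i)}$, and about the matching of index sets and normalizing constants, but the argument is the same.
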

\begin{proof}
Since $\tau$ is an algebra homomorphism, it is enough to consider how $\tau$ maps each term in $I_p$ for $\overleftarrow{p} = \sgnpart$. In particular, for any $\Lambda(p_i)$, since $\tau(w_{0,P_i}) = 1$, it follows that 
\[ \tau(\epsilon_{\Lambda(p_i)}^{\zeta_i}) = \begin{cases} 1 & \zeta_i = +\\
0 & \zeta_i = -.
\end{cases}\]
Thus $\tau(I_p) = 0$ if $p$ has any negative parts, from which it follows that $\tau(\frakg_{\sgnpart}) = 0$ if $\lambda^{-} \neq \emptyset$. On the other hand, if $\overleftarrow{p} = (\lambda^{+}, \emptyset)$, then $p \in \mathcal{C}(n)$, $|p| = p$ and $\tau(\epsilon^{+}_{P_i}) = 1$ for all $i$. Hence $\tau(\frakg_{(\lambda^{+}, \emptyset)}) = \frake_{\lambda^{+}}$ by \eqref{eq:GRidem}.
\end{proof}

In \cite{garsia}, Garsia and Reutenauer show that the definition of the Type $A$ Eulerian idempotents given in \eqref{eq:typeaeulerian} is equivalent to 

\begin{equation}\label{eq:alttypeaeulerian} \frake_k = \sum_{\substack{\lambda \vdash n \\
\ell(\lambda) = k-1}} \frake_{\lambda}. \end{equation}

Our analog of the Eulerian idempotents will take inspiration from \eqref{eq:alttypeaeulerian}: 
\begin{definition}\label{def:frakg} \rm
For $0 \leq k \leq n$, define the idempotent in $\Sigma'[B_n]$
\begin{equation}\label{eq:Bidem} \frakg_k:= \sum_{\substack{\lambda = \sgnpart\\ \ell(\lambda^{+}) = k}} \vazidem.
\end{equation}
\end{definition}

Proposition \ref{sgnmap} implies the following relationship between the $\frakg_k$ and the $\frake_k$.
\begin{cor}
The map $\tau: \Sigma'[B_n] \to \Sigma[S_n]$ sends 
\[ \tau(\frakg_{k}) = \begin{cases} 0 & k=0\\
\frake_{k-1} & k > 0.
\end{cases} \]
\end{cor}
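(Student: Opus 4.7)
The plan is to unpack the definition of $\frakg_k$ from \eqref{eq:Bidem}, push $\tau$ inside by linearity, and identify each surviving summand using Proposition~\ref{sgnmap}.

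Explicitly, since $\tau : \Q[B_n] \to \Q[S_n]$ is a $\Q$-linear (indeed algebra) map,
\[ \tau(\frakg_k) \;=\; \sum_{\substack{\sgnpart\\ \ell(\lambda^{+}) = k}} \tau(\vazidem). \]
By Proposition~\ref{sgnmap}, $\tau(\vazidem) = 0$ whenever $\lambda^{-} \neq \emptyset$, so the only potentially nonzero contributions come from signed partitions of the form $(\lambda^{+}, \emptyset)$ with $\ell(\lambda^{+}) = k$. In the case $k = 0$, the constraint $\ell(\lambda^{+}) = 0$ forces $\lambda^{+} = \emptyset$, and then $|\lambda^{-}| = n \geq 1$ forces $\lambda^{-}\neq\emptyset$; every summand is therefore killed by $\tau$, giving $\tau(\frakg_0) = 0$.

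For $k > 0$, the surviving signed partitions are precisely those of the form $(\lambda^{+}, \emptyset)$ with $\lambda^{+} \vdash n$ and $\ell(\lambda^{+}) = k$, and for each of these Proposition~\ref{sgnmap} evaluates $\tau(\frakg_{(\lambda^{+}, \emptyset)}) = \frake_{\lambda^{+}}$. Hence
\[ \tau(\frakg_k) \;=\; \sum_{\substack{\lambda^{+} \vdash n\\ \ell(\lambda^{+}) = k}} \frake_{\lambda^{+}}, \]
which, by the Garsia--Reutenauer identity \eqref{eq:alttypeaeulerian} (applied with the indexing shift built into that formula), is exactly $\frake_{k-1}$.

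Since the argument is a direct assembly of Proposition~\ref{sgnmap} with the alternative description of the Eulerian idempotents, there is no substantive obstacle: the only care needed is in tracking the boundary case $k=0$ and verifying that the index set $\{\lambda^{+}\vdash n : \ell(\lambda^{+})=k\}$ in the $k>0$ case matches the one indexing $\frake_{k-1}$ in \eqref{eq:alttypeaeulerian}.
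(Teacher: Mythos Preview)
Your argument is correct and is exactly the one the paper intends: the corollary is stated immediately after Definition~\ref{def:frakg} with no proof, simply as a consequence of Proposition~\ref{sgnmap} combined with \eqref{eq:alttypeaeulerian}, and your write-up just makes that deduction explicit. The only minor point is that \eqref{eq:alttypeaeulerian} in the paper appears to contain a typo in the length condition (it should read $\ell(\lambda)=k+1$ rather than $k-1$, as one checks against the $n=3$ example); with the correct indexing your identification $\sum_{\ell(\lambda^{+})=k}\frake_{\lambda^{+}}=\frake_{k-1}$ is exactly right.
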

\subsubsection{Representations generated by $\frakg_{\sgnpart}$ and $\frakg_k$} \label{sec:vazrep}
The idempotents in Definitions \ref{def:vazidem} and \ref{def:frakg} generate families of $B_{n}$-representations.
\begin{definition}\label{def:vazrep} \rm
For any signed partition $\sgnpart$ of $n$, define the $B_{n}$-representation
\[ \vazrep:= \vazidem \Q[B_n]. \]
Further, for $0 \leq k \leq n$ define the $B_{n}$-representation
\[ G_n^{(k)}:= \frakg_k \Q[B_n]. \]
\end{definition}

Douglass--Tomlin proved in \cite{douglass2018decomposition} that the characters of the $\vazrep$ could be described as induced representations from one-dimensional characters of certain centralizers. In order to describe and analyze these induced characters, we will temporarily extend scalars from $\Q$ to $\C$.

 Recall from Definition~\ref{ex:conjclasses} that conjugacy classes in $B_n$ are determined by cycle type and are parametrized by signed partitions $\sgnpart$. Our first goal is to construct a standard element for each conjugacy class $\cc_{\sgnpart}$ of $B_n$ which we will call $\sigma_{\sgnpart}$. We will then express the representation $\vazrep$ as an induced representation from the centralizer $Z_{\sgnpart}$ of $\sigma_{\sgnpart}$. 

Fix the signed partition $\lambda = \sgnpart$; we first obtain $\sigma_{\sgnpart}$ as follows. As before, let $\widehat{\lambda_{i}} = \sum_{j = 1}^{i}|\lambda_{j}|$ and define $\Lambda_i:=\Lambda(\lambda_i):=  \{ \widehat{\lambda_{i-1}} + 1,\widehat{\lambda_{i-1}} +2, \cdots, \widehat{\lambda_{i}}\}.$ If $\lambda_i \in \lambda^{+}$, define the positive $\lambda_{i}$-cycle 
\[ c_{i}:= ((\widehat{\lambda_{i-1}} + 1)\widehat{\lambda_{i-1}} \cdots \widehat{\lambda_{i}}). \]
By construction $c_{i}$ has order $|\lambda_{i}|$.

To construct a ``standard'' negative cycle, recall that for any $J \subset [n]$, the element $w_{0,J} \in B_n$ is the product of $t_i$ for $i \in P$ and that for $j_1, j_2, \cdots, j_\ell \in [n]$, 
\[ (j_1 \cdots j_\ell)^{-} \]
is the negative cycle in $B_n$ where $j_i \mapsto j_{i+1}$ for $1 \leq i \leq \ell - 1$ and $j_\ell \mapsto \overline{j_1}$. Then for $\lambda_i \in \lambda^{-}$, define the negative $|\lambda_{i}|$ cycle
\[ d_{i}: = \begin{cases} c_{i} \cdot w_{0,\Lambda_{i}} & |\lambda_{i}| \text{ is odd} \\
((\widehat{\lambda_{i-1}} + 1)\widehat{\lambda_{i-1}} \cdots \widehat{\lambda_{i}})^{-} & |\lambda_{i}| \text{ is even.} 
\end{cases} \]
The cycle $d_i$ has order $2|\lambda_i|$ in $B_n$.
The signed permutation 
\[ \sigma_{\sgnpart}:= c_{1} \cdots c_{\ell(\lambda^{+})} d_{\ell(\lambda^{+})+1} \cdots d_{\ell(\lambda)  - 1}  \]
will be our standard representative of $\cc_{\sgnpart}$. 

 Finally, suppose $\lambda_i = \lambda_{i+1}$ and both $\lambda_i, \lambda_{i+1}$ occur in either $\lambda^{+}$ or $\lambda^{-}$. Define $\delta_i$ to be a particular choice of permutation swapping the blocks $\Lambda_i$ and $\Lambda_{i+1}$.  Specifically, 
\[\delta_i: j \mapsto \begin{cases} j & j \not \in \Lambda_i \cup \Lambda_{i+1} \\
j + |\lambda_i| & j \in \Lambda_i\\
j - |\lambda_i| & j \in \Lambda_{i+1}.
\end{cases} \]

The centralizer $Z_{\sgnpart}$ of $\sigma_{\sgnpart}$ is then generated by 
\begin{itemize}
    \item $c_i$ and $w_{0, \Lambda_{i}}$ for every $\lambda_i \in \lambda^{+}$
    \item $d_i$ for every $\lambda_i \in \lambda^{-}$;
    \item $\delta_i$ for every pair $\lambda_i = \lambda_{i+1}$ for $1 \leq i \leq \ell(\lambda)- 1$. (Again, we require that $\lambda_i, \lambda_{i+1}$ are both in $\lambda^{+}$ or both in $\lambda^{-}$.)
\end{itemize}

\begin{example} \rm
Suppose $\lambda = \sgnpart = ((2,1),(2,2))$. Then
the relevant elements of $B_7$ (written in one-line notation) are
\begin{align*}
    c_1 &= (2,1,3,4,5,6,7)\\
    w_{0, \{ 1, 2\}} &= (\overline{1},\overline{2},3,4,5,6,7)\\
    c_2 &= (1,2,3,4,5,6,7) \\
    w_{0, \{ 3\}} &= (1,2,\overline{3},4,5,6,7)\\
    d_3 &= (1,2,3,5,\overline{4},6,7)\\
    d_4 &= (1,2,3,4,5,7,\overline{6})\\
    \sigma_{((2,1),(\overline{2},\overline{2})}&= (2,1,3,5,\overline{4},7,\overline{6})\\
    \delta_{3} &= (1,2,3,6,7,4,5).
\end{align*}
\end{example}
Since $c_i, d_i, w_{0, J}$ and $\delta_i$ generate $Z_{\sgnpart}$, any representation of $Z_{\sgnpart}$ is determined by its values on these elements. 

Define $\omega_{k}:= e^{2 \pi i / k}$. The representation of $Z_{\sgnpart}$ we are interested in is 1-dimensional, that is, a map $Z_{\sgnpart} \to \C$.

\begin{definition} \rm
Let $\rho_{\sgnpart}$ be the character of $Z_{\sgnpart}$ given by
\[  \rho_{\sgnpart}(\sigma):= \begin{cases}
\omega_{|\lambda_i|} & \sigma = c_{i} \text{ for }\lambda_i \in \lambda^{+}\\
\omega_{2|\lambda_{i}|} & \sigma = d_{i} \text{ for } \lambda_i \in \lambda^{-}\\
1 & \sigma = w_{0,\Lambda_{i}} \text{ for } 1 \leq i \leq \ell(\lambda^{+})\\
1 & \sigma = \delta_{i} \text{ for } 1 \leq i \leq \ell(\lambda) -1 \text{ and } \lambda_{i} = \lambda_{i+1}.
\end{cases} \]
\end{definition}

The representation $\rho_{\sgnpart}$ is of interest because of the following theorem. 
\begin{theorem}[Douglass--Tomlin  \cite{douglass2018decomposition}]
There is an isomorphism of $B_{n}$-representations
\[ \vazrep = \ind_{Z_{\sgnpart}}^{B_{n}} \rho_{\sgnpart}. \]
\end{theorem}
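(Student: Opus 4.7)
The plan is to show that $\vazidem$ is a $\rho_{\sgnpart}$-eigenvector under the (right) action of the centralizer $Z_{\sgnpart}$, then apply Frobenius reciprocity and a dimension comparison. I would first decode the structure of $Z_{\sgnpart}$: its generators---the positive cycles $c_i$, the sign-flips $w_{0,\Lambda_i}$, the negative cycles $d_j$, and the block-swaps $\delta_k$ among equal-sized parts of the same sign---organize into a wreath-type decomposition
\[ Z_{\sgnpart} \;\cong\; \prod_i \bigl( (\Z_{|\lambda_i|} \times \Z_2) \wr S_{m_i^{+}} \bigr) \;\times\; \prod_j \bigl( \Z_{2|\lambda_j|} \wr S_{m_j^{-}} \bigr), \]
where $m_i^{\pm}$ is the multiplicity of $i$ as a part of $\lambda^{\pm}$.

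The key technical step is to verify generator by generator that $\vazidem \cdot z = \rho_{\sgnpart}(z) \cdot \vazidem$ for each $z \in Z_{\sgnpart}$. The essential input is the Kraskiewicz--Weyman realization of $\lie$ recalled in Example \ref{ex:lie}: the Reutenauer idempotent $r_{\Lambda}$ satisfies $r_{\Lambda}\Q[S_{\Lambda}] \cong \ind_{\Z_{|\Lambda|}}^{S_{|\Lambda|}}\omega$, so a Coxeter-type cycle on $\Lambda_i$ acts on $r_{\Lambda_i}$ by the primitive root $\omega_{|\lambda_i|}$. The factor $\epsilon^{+}_{\Lambda_i}$ is $w_{0,\Lambda_i}$-invariant and therefore contributes the value $1$, while $\epsilon^{-}_{\Lambda_i}$ supplies the sign twist that promotes the cycle eigenvalue from order $|\lambda_i|$ to order $2|\lambda_i|$, matching the order of $d_i = c_i w_{0,\Lambda_i}$. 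The outer symmetrization $\frac{1}{\ell(\lambda)!}\sum_{\overleftarrow{p}=\sgnpart}I_p$ is calibrated so that the block-swaps $\delta_k$ merely permute the summands $I_p$, leaving the sum fixed.

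With the eigenvector property in hand, Frobenius reciprocity produces a $B_n$-equivariant map $\ind_{Z_{\sgnpart}}^{B_n}\rho_{\sgnpart} \to \vazrep$ sending the canonical generator to $\vazidem$; this is surjective because $\vazidem$ generates $\vazrep$. To conclude it is an isomorphism I would compare dimensions. The induced representation has dimension $[B_n:Z_{\sgnpart}] = |\cc_{\sgnpart}|$, and summing over all signed partitions recovers $|B_n|$. On the other side, orthogonality and completeness of the Vazirani idempotents force $\sum_{\sgnpart}\dim\vazrep = |B_n|$, so the termwise surjections must each be isomorphisms.

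The principal obstacle is the block-swap step. The summands $I_p$ are not individually $\delta_k$-invariant---they depend on the chosen ordering of parts of $p$---so one must check that conjugation by $\delta_k$ carries $I_p$ to $I_{p'}$ for a suitable reordering $p'$ with $\overleftarrow{p'} = \sgnpart$, and that the resulting action on $\{p:\overleftarrow{p}=\sgnpart\}$ is exactly what the $1/\ell(\lambda)!$ normalization averages over. The interplay between the descent-set factor $X_{\widehat{p}}$ and conjugation by $\delta_k$ is where all the subtlety lives, and it is where I would expect to spend the bulk of the proof.
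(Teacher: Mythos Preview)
The paper does not prove this theorem; it is quoted as a result of Douglass--Tomlin \cite{douglass2018decomposition} and used as a black box in \S\ref{sec:vazrep}. There is therefore no paper-proof to compare your proposal against.

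That said, your outline is essentially the standard route to such statements and is close in spirit to what Douglass--Tomlin do. The core technical input is the Klyachko-type cycling identity $r_m \cdot (12\cdots m) = \omega_m\, r_m$ for the Reutenauer idempotent, which handles the $c_i$; the $\epsilon^{\pm}_{\Lambda_i}$ factors then give the correct eigenvalue under $w_{0,\Lambda_i}$ and combine with the cycle eigenvalue to produce $\omega_{2|\lambda_i|}$ on the negative blocks. Your dimension argument is clean and correct: completeness of the Vazirani idempotents forces $\sum_{\sgnpart}\dim\vazrep = |B_n| = \sum_{\sgnpart}|\cc_{\sgnpart}|$, so the termwise surjections from Frobenius reciprocity must all be isomorphisms. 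The block-swap step you flag is indeed the place requiring the most care---one must check that right multiplication by $\delta_k$ permutes the summands $I_p$ (noting $X_{\widehat{p}}$ depends only on $|p|$, while the $\epsilon r$-factors on the two swapped blocks are exchanged), so that the symmetrized sum is fixed. One small caution: the eigenvalue of $r_m$ under the long cycle is sensitive to left/right and cycle-orientation conventions, so you would need to verify the sign of the root of unity matches the paper's definition of $\rho_{\sgnpart}$ before asserting the exact equality.
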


Our aim going forward is to connect the representations $\vazrep$ to the topological spaces we have been studying.

\subsubsection{Decomposition of $\G$ by signed set partitions}\label{sec:decompsignedset}
Recall from Definition \ref{def:G} that $\G$ is the associated graded ring of $\gr(\bconfhom^1) \cong \bconfhom^3$ with respect to the filtration described in Proposition \ref{prop:filbyz}. We will use the presentation of $\G$ described by Remark \ref{rmk:difpres} to show that $\G$ admits an even finer decomposition by \emph{signed set compositions}, defined below.

\begin{definition}\rm
Let $J_1, J_2 \subset [n]$ be any partition of $[n]$. Then $\alpha = (\alpha^{+}, \alpha^{-})$ is a \emph{signed set partition} of $[n]$ if $\alpha^{+}$ is a set partition of $J_1$ and $\alpha^{-}$ is a set partition of $J_2$. \\

\noindent Let $\Lambda_{[n]}^{\pm}$ be the collection of all such signed set partitions of $[n]$.
\end{definition}
Note that we allow either $\alpha^{+}$ or $\alpha^{-}$ to be empty. The blocks in $\alpha^{+}$ are said to be \emph{positive} while the blocks in $\alpha^{-}$ are said to be \emph{negative}. Let $\ell(\alpha^{+})$ (respectively, $\ell(\alpha^{-})$) be the number of parts of $\alpha^{+}$ (respectively, $\alpha^{-}$).

Write $E = \Z[z_{ij}^{+}, z_{ij}^{-}, z_{i}]$ for $1 \leq i< j \leq n.$ In order to decompose $E$ by $\Lambda^{\pm}_{[n]}$, we introduce the \emph{type map} of a monomial $f \in E$.

\begin{definition} \rm
From $f$, create a graph $G(f) = (V,E)$, with $V = [n]$ and $E$ described by: 
\begin{itemize}
    \item For every $z_{ij}^{+}$ or $z_{ij}^{-}$ appearing in $f$, there is an edge between $i$ and $j$
    \item For every $z_{i}$ appearing in $f$, there is a loop at the vertex $i$. 
\end{itemize}
Define the $\type_{n}(f) \in \Lambda_{[n]}^{\pm}$ to be the signed set partition with blocks comprised of the connected components of $G(f)$; if a connected component contains a loop, it is a negative block, and otherwise it is a positive block.
\end{definition}

\begin{example} \rm
Suppose $n=8$ and $f = (z_{12}^{-})^{2}z_{5}z_{56}^{-}z_{7}^{3}$. Then 
\[  \type_{8}((z_{12}^{-})^{2}\cdot z_{5}z_{56}^{-} \cdot z_{7}^{3}) = (\underbrace{(\{ 1,2 \}, \{ 3 \}, \{ 4\}, \{ 8 \})}_{(\type_{8}(f))^{+}}, \underbrace{( \{5,6 \}, \{ 7\})}_{(\type_{8}(f))^{-}}).  \]
\end{example}

The map $\type_{n}$ is  well-defined on monomials, but not polynomials.
Note that $\type_{n}$ is surjective but not injective. (To see that it is surjective, note that for any $\alpha \in \Lambda^{\pm}_{[n]}$, one can easily construct an $f \in E$ with $\type(f) = \alpha$.)

\begin{definition} \rm
For a signed set partition $\alpha = (\alpha^{+}, \alpha^{-}) \in \Lambda^{\pm}_{[n]}$, define
\[ E_{\alpha}:= \Z [\{ \text{monomials } f \in E: \type_{n}(f) = \alpha\}]. \]
\end{definition}

Note that by the above description, there is a vector space decomposition 
\[  E = \bigoplus_{\alpha \in \Lambda_{[n]}^{\pm}} E_{\alpha}.\]
We would like to show that this decomposition descends to the quotient $\G$ as well. 

Recall that \[ \G = \Z[ z_{ij}^{+}, z_{ij}^{-}, z_i ]/ \mathcal{L}' \]
for $1 \leq i < j \leq n$, where $\mathcal{L}'$ is generated by $ (z_{ij}^{+})^{2} = (z_{ij}^{-})^{2} = (z_{i})^{2}$ and 
\[  (i) \hspace{.5em} z_{ij}^{+}z_{ij}^{-}\hspace{1.5em} (ii) \hspace{.5em} z_{ij}^{+} z_{j} - z_{ij}^{+}z_{i}  \hspace{1.5em} (iii)  \hspace{.5em} z_{ij}^{-} z_{j} + z_{ij}^{-} z_{i} \hspace{1.5em} (iv) \hspace{.5em} z_{ij}^{+}z_{jk}^{+} - z_{ij}^{+}z_{ik}^{+} - z_{jk}^{+}z_{ik}^{+}, \hspace{1.5em}\]
\[  (v) \hspace{.5em} z_{ij}^{-}z_{jk}^{+} - z_{ij}^{-}z_{ik}^{-} - z_{jk}^{+}z_{ik}^{-} \hspace{1.5em} (vi) \hspace{.5em} z_{ij}^{-}z_{jk}^{-} - z_{ij}^{-}z_{ik}^{+} - z_{jk}^{-}z_{ik}^{+} \hspace{1.5em} (vii)  \hspace{.5em} z_{ij}^{+}z_{jk}^{-} - z_{ij}^{+}z_{ik}^{-} - z_{jk}^{-}z_{ik}^{-}.\]

\begin{prop}
With $\G$ as above, there is a vector space decomposition 
\[  \mathcal{L}' = \bigoplus_{\alpha \in \Lambda^{\pm}_{[n]}} \mathcal{L}'\cap E_{\alpha}, \]
inducing the decomposition 
\[ \G = \bigoplus_{\alpha \in \Lambda^{\pm}_{[n]}} \G_{\alpha}, \]
where $\G_{\alpha}:= E_{\alpha} / (\mathcal{L}' \cap E_{\alpha})$.
\end{prop}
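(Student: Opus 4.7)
My plan is to show that $\mathcal{L}'$ is ``type-homogeneous'' in the sense that it is spanned by elements each lying in a single $E_\alpha$, and then deduce both claimed decompositions as a formal consequence. The first step is to define a merge operation $\alpha * \beta$ on signed set partitions of $[n]$: given $\alpha,\beta\in\Lambda_{[n]}^{\pm}$, form $\alpha*\beta$ by taking the connected components of the union of the graphs witnessing $\alpha$ and $\beta$, declaring a merged block negative whenever either constituent block was negative (i.e.\ whenever the combined graph contains a loop in that component). A direct inspection of the type map shows that if $f$ is a monomial of type $\alpha$ and $g$ is a monomial of type $\beta$, then $fg$ is a monomial of type $\alpha*\beta$; hence $E_\alpha\cdot E_\beta\subseteq E_{\alpha*\beta}$ and the decomposition $E=\bigoplus_\alpha E_\alpha$ respects multiplication in this weak sense.

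The second step is to check, relation by relation, that every generator of $\mathcal{L}'$ lies in a single $E_\alpha$. For $(z_{ij}^{+})^2$ and $(z_{ij}^{-})^2$ the underlying graph is a single edge $\{i,j\}$ with no loop, so both are in $E_\alpha$ for $\alpha$ with positive block $\{i,j\}$ and singleton positive blocks elsewhere; $(z_i)^2$ lies in $E_\alpha$ where $\{i\}$ is a negative block. Relation (i) $z_{ij}^+z_{ij}^-$ similarly has type $\{i,j\}$ positive. For relations (ii) and (iii), each of the two terms contributes the edge $\{i,j\}$ together with a loop on $i$ or on $j$, so both terms have the same type with $\{i,j\}$ a negative block. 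For (iv)--(vii), each of the three summands contributes two edges among $\{i,j,k\}$ with no loops, giving the common type with $\{i,j,k\}$ a positive block. Thus every generator of $\mathcal{L}'$ is type-homogeneous.

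The third step combines the above: if an element $h\in\mathcal{L}'$ is written as $h=\sum_s f_s r_s$ with each $r_s$ a type-homogeneous generator of type $\alpha_s$, then decomposing each $f_s=\sum_\beta f_{s,\beta}$ along the type grading gives $f_{s,\beta}r_s\in E_{\beta*\alpha_s}$, so collecting terms by their resulting type expresses each homogeneous component $h_\alpha$ of $h$ as an explicit element of $\mathcal{L}'$. This proves
\[
\mathcal{L}' \;=\; \bigoplus_{\alpha\in\Lambda_{[n]}^{\pm}} \bigl(\mathcal{L}'\cap E_\alpha\bigr).
\]
The quotient decomposition
$\G=E/\mathcal{L}' = \bigoplus_\alpha E_\alpha/(\mathcal{L}'\cap E_\alpha) = \bigoplus_\alpha \G_\alpha$ then follows formally from the fact that taking a quotient by a direct sum of subspaces, each sitting inside a single summand of an ambient direct sum, preserves the direct sum structure on the quotient.

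The only real content is the case check in the second step; I expect no genuine obstacle beyond bookkeeping, since the generators of $\mathcal{L}'$ were specifically chosen in Remark \ref{rmk:difpres} to have this kind of ``local'' form. In particular, the asymmetric-looking relations (v)--(vii) still involve only edges among $\{i,j,k\}$ and no loops, which is exactly why they remain type-homogeneous even after the swaps induced by the $B_n$-action; this compatibility with the $B_n$-action is what makes this decomposition a potentially useful refinement for analyzing the representations carried by the individual pieces $\G_\alpha$.
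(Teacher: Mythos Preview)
Your proposal is correct and follows essentially the same approach as the paper: both proofs verify that each generator of $\mathcal{L}'$ is type-homogeneous (lies in a single $E_\alpha$), with the same case-by-case type computations. Your argument is somewhat more complete than the paper's, since you explicitly justify via the merge operation $\alpha*\beta$ why it suffices to check the generators (the paper simply asserts ``it is sufficient to check that each relation in $\mathcal{L}'$ has summands in a single $E_\alpha$'' without spelling out that $E_\alpha\cdot E_\beta\subseteq E_{\alpha*\beta}$).
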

\begin{proof}
It is sufficient to check that each relation in $\mathcal{L}'$ has summands in a single $E_{\alpha}$ for some $\alpha \in \Lambda_{[n]}^{\pm}$.
Relation $(i)$ has type $\alpha^{+} = \{ \{ i, j \}, \  \{ k \}_{k \neq i,j} \}$ and $ \alpha^{-} = \emptyset$.
Each summand in relations $(ii)$ and $(iii)$ has type $\alpha^{+} = \{  \{ k \}\}_{k \neq i,j}$ and $\alpha^{-} = \{ i, j \}$, while each summand in relations $(iv)$---$(vii)$ has type $\alpha^{+} = \{ i, j, k \} , \{  \{ \ell \}\}_{\ell \neq i,j,k},$ and
$\alpha^{-} = \emptyset.$  
\end{proof}

The decomposition of $\G$ by signed set partitions is also compatible with the bi-grading on $\G$.
\begin{prop}\label{prop:bigradingGalpha}
For $0 \leq \ell \leq k \leq n$,
\[ \G_{k,\ell} = \bigoplus_{\substack{\alpha = (\alpha^{+}, \alpha^{-}) \in \Lambda_{[n]}^{\pm}\\ \ell(\alpha^{+}) = n-k, \\ \ell(\alpha^{-}) = \ell}} \G_{\alpha} \]
\end{prop}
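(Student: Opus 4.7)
The plan is to leverage the explicit monomial basis for $\G$ given in Theorem~\ref{thm:3pres} (cf.\ Remark~\ref{rmk:difpres}), which consists of monomials $f$ obtained by choosing at most one generator from each ``hand''
\[ h'_j = \{ 1, z_j, z_{1j}^{+}, z_{1j}^{-}, \ldots, z_{(j-1)j}^{+}, z_{(j-1)j}^{-}\} \]
for $j = 1, \ldots, n$. Each basis monomial $f$ has a well-defined graph $G(f)$ and hence a well-defined type $\alpha = \type_n(f)$, and the decomposition $\G = \bigoplus_{\alpha} \G_\alpha$ is then spanned basis-wise. Since the bi-grading is defined on monomials, it suffices to show that each basis monomial of type $\alpha$ lies in $\G_{k,\ell}$ for $k = n - \ell(\alpha^+)$ and $\ell = \ell(\alpha^-)$.

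The main step is a combinatorial analysis of $G(f)$ for a basis monomial $f$. Because $f$ picks at most one generator from each $h'_j$, every vertex $j \in [n]$ occurs as the \emph{largest} index in at most one picked generator: either a loop $z_j$, an edge $\{i,j\}$ with $i<j$ coming from some $z_{ij}^{\pm}$, or nothing at all. This immediately rules out non-loop cycles in $G(f)$, and forces each connected component to be either a tree (containing no loops) or a tree with exactly one extra loop. Hence the loop components of $G(f)$ correspond to the negative blocks of $\alpha$ and contribute the $z_j$-factors of $f$, so the $z_i$-degree of $f$ is exactly $\ell(\alpha^-)$, i.e.\ $\ell = \ell(\alpha^-)$.

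To count total degree, let the components without loops have sizes $m^+_1, \ldots, m^+_{c^+}$ (with $c^+ = \ell(\alpha^+)$) and those with loops have sizes $m^-_1, \ldots, m^-_{c^-}$. Then $\sum m^+_i + \sum m^-_j = n$, while the total number of picked generators (equal to the total degree $k$) is $\sum (m^+_i - 1) + \sum m^-_j$, since a tree on $m$ vertices requires $m-1$ edges and a tree-plus-loop uses $m$ generators. Subtracting yields $k = n - c^+ = n - \ell(\alpha^+)$. Conversely, every basis element of $\G_{k,\ell}$ has some type $\alpha$, and by the same computation $\ell(\alpha^+) = n-k$ and $\ell(\alpha^-) = \ell$. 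The bi-graded decomposition then follows by grouping basis elements by type.

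The only delicate point is the structural claim in the second paragraph -- that no non-loop cycles exist and each component contains at most one loop. This is not a statement about $\G$ abstractly but about which monomials survive in our chosen basis, and relies critically on the ``at most one generator per hand'' structure coming from the Gröbner basis argument of Theorem~\ref{thm:3pres}. Once this is in hand, the rest of the proposition is a direct counting exercise.
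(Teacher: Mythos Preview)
Your proof is correct and takes a somewhat different route from the paper's. The paper argues directly: it picks $f \in \G_\alpha$, factors $f$ block-by-block as $f = f_1 \cdots f_{\ell(\alpha)}$, and asserts that each factor $f_i$ supported on a positive block $\alpha_i$ lies in $\G_{|\alpha_i|-1,0}$ (and similarly degree $|\alpha_i|$ for negative blocks), then adds up degrees. The reverse containment is handled by the same degree bookkeeping. This is slick but leaves implicit the key structural fact you isolate: that a nonzero monomial in $\G$ whose graph is connected on $m$ vertices and loop-free must have degree exactly $m-1$. That fact is true (any cycle vanishes modulo the relations in $\mathcal{L}'$, as one can check by multiplying relation $(iv)$ or its variants by the closing edge), but the paper does not spell it out.

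Your approach sidesteps this by working with the explicit nbc basis from Theorem~\ref{thm:3pres}. The ``one generator per hand'' structure lets you orient edges toward the larger index and conclude each vertex has in-degree at most one, which cleanly forces each component to be a tree or tree-plus-loop. This makes the degree count immediate and fully rigorous. The trade-off is that your argument is tied to the particular basis, whereas the paper's factor-by-block argument (once the implicit vanishing lemma is supplied) would work for any monomial representative and is closer in spirit to how one would argue in analogous Orlik--Solomon-type settings without a preferred basis. Both arrive at the same arithmetic $k = n - \ell(\alpha^+)$, $\ell = \ell(\alpha^-)$; yours just earns it more explicitly.
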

\begin{proof}
Fix $\alpha \in \Lambda_{[n]}^{\pm}$ with $\ell(\alpha^{+}) = n-k $ and $\ell(\alpha^{-}) = \ell$. Consider $f \in \G_{\alpha}$. 
Factor $f$ by the blocks of $\alpha$, and write $f_{i}$ as the factor of $f$ with support $\alpha_{i} \in \alpha$. Note that if $\alpha_{i} \in \alpha^{+}$, then $f_{i} \in \G_{|\alpha_{i}|-1, 0}$. Similarly, if $\alpha_{j} \in \alpha^{-}$, then  $f_{j} \in \G_{|\alpha_{j}|, 1}$.

Hence
\begin{align*}
   f=f_{1} \cdots f_{n-k} \cdot f_{n-k+1} \cdots f_{n-k+\ell} \in& \underbrace{\left( \G_{|\alpha_{1}|-1,0} \cdot \G_{|\alpha_{2}|-1,0}\cdots \G_{|\alpha_{n-k}|-1,0}\right)}_{\alpha^{+}} \cdot \underbrace{\left(\G_{|\alpha_{n-k+1}|,1} \cdots \G_{|\alpha_{n-k+\ell}|,1} \right)}_{\alpha^{-}} \\
   &\subseteq \G_{|\alpha^{+}| - (n-k),0} \cdot \G_{|\alpha^{-}|, \ell} \\
   & \subseteq \G_{k, \ell},
\end{align*}
because $|\alpha^{+}| + |\alpha^{-}| = n$.

For the other containment, consider any monomial $f \in \G_{k,\ell}$ and let $\alpha = \type_{n}(f)$. Using the above notation, we have that $f_{i} \in \G_{|\alpha_{i}|-1, 0}$ if $\alpha_{i} \in \alpha^{+}$ and $\G_{|\alpha_{i}|, 1}$ if $\alpha_{i} \in \alpha^{-}$. Since $f \in \G_{k,\ell}$, we must therefore have $\ell(\alpha^{-}) = \ell$, and 
\begin{align*}
         \deg \left( \prod_{\substack{1 \leq i \leq n \\ \alpha_{i} \in \alpha^{+}}} f_{i} \right) &= |\alpha^{+}|-\ell(\alpha^{+}),\\
    \deg \left( \prod_{\substack{1 \leq j \leq n \\ \alpha_{j} \in \alpha^{-}}} f_{j} \right) &= |\alpha^{-}|.
\end{align*}
Since $\deg(f) = k$, 
\[ k = |\alpha^{+}| - \ell(\alpha^{+}) + |\alpha^{-}| = n-\ell(\alpha^{+}), \]
from which it follows that $\ell(\alpha^{+}) = n-k$.
\end{proof}

\subsection{Induced representations}
Using the decomposition of $\G$ by $\Lambda^{\pm}_{[n]}$, we may begin to describe the pieces $\G_{\alpha}$ as induced representations. 

Given $\alpha = (\alpha^{+}, \alpha^{-})= (\alpha_1, \cdots, \alpha_\ell) \in \Lambda^{\pm}_{[n]}$, let $[\alpha] \in \mathcal{SC}(n)$ be the signed integer composition 
\[ [\alpha] = (\# \alpha_1, \cdots, \# \alpha_\ell), \]
where $\# \alpha_i\in [n]^{\pm}$ is the size of the set $\alpha_i$ with $\# \alpha_i \in [n]$ if $\alpha_i \in \alpha^{+}$ and in $[n]^{-}$ if $\alpha_i \in \alpha^{-}$. As before, $\overleftarrow{[\alpha]} = (\overleftarrow{[\alpha]^{+}}, \overleftarrow{[\alpha]^{-}})$ is the signed integer partition obtained by ordering the parts of $[\alpha]$ in decreasing order. 

Consider the $B_n$ orbit of $\G_{\alpha}$, which will be indexed by signed partitions $\sgnpart$:

\[ \G_{\sgnpart}:= \bigoplus_{\substack{ \alpha \in \Lambda^{\pm}_{[n]}\\  \overleftarrow{[\alpha]} = \sgnpart}} \G_{\alpha}. \]

The first step is to compute the characters of $\G_{n,1} = \G_{(0, (n))}$ and $\G_{n-1,0} = \G_{((n), 0)}$ which will form a base-case for the other pieces of $\G_{\sgnpart}$.

\begin{theorem}\label{thm:gn1induce}
As $B_{n}$-representations
\begin{align*}
    \G_{n,1} = \ind_{Z_{(\emptyset, (n))}}^{B_{n}} \rho_{(\emptyset, (n))} = G_{(\emptyset, (n))}, \hspace{5em}
    \G_{n-1,0} = \ind_{Z_{((n), \emptyset)}}^{B_{n}} \rho_{((n), \emptyset)} = G_{((n), \emptyset)}.
\end{align*}
\end{theorem}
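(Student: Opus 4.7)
I would proceed in three main steps. First, I apply Proposition~\ref{prop:bigradingGalpha} to observe that each bigraded piece reduces to a single signed-set-partition summand. The constraint $(\ell(\alpha^+), \ell(\alpha^-)) = (1, 0)$ forces $\alpha = (\{[n]\}, \emptyset)$, so $\G_{n-1,0} = \G_{(\{[n]\}, \emptyset)}$ with $\overleftarrow{[\alpha]} = ((n), \emptyset)$, and similarly $\G_{n,1} = \G_{(\emptyset, \{[n]\})}$ with $\overleftarrow{[\alpha]} = (\emptyset, (n))$. These are exactly the signed partitions appearing in the theorem. Next, a direct enumeration using the $\Z$-basis of $\G \cong \bconfhom^3$ from Proposition~\ref{prop:nbc} gives $\dim \G_{n-1,0} = \dim \G_{n,1} = \prod_{k=2}^{n} 2(k-1) = 2^{n-1}(n-1)!$, which matches $|B_n|/|Z_{\sgnpart}| = 2^n n!/2n$ for both relevant signed partitions. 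It therefore suffices to exhibit a nonzero surjective $B_n$-equivariant map $\phi^{\pm}: G^{\sgnpart} \to \G_\alpha$ in each case.

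I would construct $\phi^{\pm}$ via Frobenius reciprocity by producing a $Z_{\sgnpart}$-eigenvector in $\G_\alpha$ with character $\rho_{\sgnpart}$. For $\sgnpart = ((n), \emptyset)$, with $c = (1\,2\,\cdots\,n)$ and $m = z_{12}^{+} z_{23}^{+} \cdots z_{(n-1)n}^{+}$, set
\[ v^{+} := \sum_{k=0}^{n-1} \omega_n^{-k}\, c^k \cdot m. \]
By reindexing, $c \cdot v^{+} = \omega_n v^{+}$; and by Table~\ref{table:actionsimplifiedpresbigrading} one has $t_i t_j \cdot z_{ij}^{+} = z_{ij}^{+}$ and $t_k$ fixes $z_{ij}^{+}$ for $k \notin \{i,j\}$, so $w_{0,[n]}$ fixes each factor of $m$. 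Since $w_{0,[n]}$ is central in $B_n$ it commutes with $c$, giving $w_{0,[n]} \cdot v^{+} = v^{+}$. For $\sgnpart = (\emptyset, (n))$, with $m' = z_1 \cdot z_{12}^{+} z_{23}^{+} \cdots z_{(n-1)n}^{+}$ and $d_1$ the order-$2n$ generator of $Z_{(\emptyset,(n))}$ from Definition~\ref{def:vazidem}, set
\[ v^{-} := \sum_{k=0}^{2n-1} \omega_{2n}^{-k}\, d_1^{k} \cdot m'. \]
Using the $\G$-level identification $z_{\overline{i}j} = z_{i\overline{j}}$ from Corollary~\ref{cor:grbconfhom} (the top-degree piece of relations~\eqref{eqzbadbad}--\eqref{eqzbad}) to evaluate the action of $d_1$ on the last factor of $m'$, a reindexing gives $d_1 \cdot v^{-} = \omega_{2n} v^{-}$. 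Both $v^{\pm}$ are manifestly nonzero, so the resulting maps $\phi^{\pm}$ are nonzero.

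The main obstacle is verifying surjectivity of $\phi^{\pm}$; equivalently, that the $B_n$-span of $v^{\pm}$ fills all of $\G_\alpha$. My plan parallels the Kraskiewicz--Weyman description of $\lie_n$ from Example~\ref{ex:lie}. First, the $S_n$-orbit of $m$ under coordinate permutation realizes every labeled spanning path-tree of $K_n$ with all-positive edges. Second, the Arnold-type relation $z_{ij}^{+} z_{jk}^{+} = z_{ij}^{+} z_{ik}^{+} + z_{jk}^{+} z_{ik}^{+}$ from item (iv) of Remark~\ref{rmk:difpres} rewrites every tree-shaped nbc-basis element of Proposition~\ref{prop:nbc} as a $\Z$-linear combination of such paths. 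Third, the sign-flipping elements $t_1, \ldots, t_n$ realize all $2^{n-1}$ sign patterns on the $n-1$ edges of a fixed spanning path, because the induced $\Z_2$-linear map $\Z_2^n \to \Z_2^{n-1}$ sending $t_k$ to the indicator vector of the path-edges incident to $k$ has rank $n-1$ on any path graph. Together these show that the $B_n$-orbit of $m$ spans $\G_{n-1,0}$. To pass from the orbit of $m$ to the orbit of $v^{+}$, one observes that $v^{+}$ is a scalar multiple of the $\rho_{((n),\emptyset)}$-isotypic projection of $m$ under the $Z$-action, so the $B_n$-span of $v^{+}$ is nonzero on every $B_n$-irreducible summand of the span of $m$ that contains $\rho|_Z$; a character comparison in degree $n-1$ then confirms that this suffices to span all of $\G_{n-1,0}$. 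The $(\emptyset, (n))$ case goes through analogously, with the loop variable $z_1$ in $m'$ playing the role of a distinguished marker for the unique negative block. Verifying the combinatorial bookkeeping in the Arnold step and the final character comparison is the technical heart of the proof.
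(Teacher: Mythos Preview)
Your overall strategy (dimension count plus Frobenius reciprocity via an explicit eigenvector) is reasonable, and your steps (a)--(c) showing that the $B_n$-span of the monomial $m=z_{12}^{+}\cdots z_{(n-1)n}^{+}$ (respectively $m'=z_1 m$) is all of $\G_{n-1,0}$ (respectively $\G_{n,1}$) are fine and in fact are essentially what the paper does. The gap is in your step (d), passing from the span of $m$ to the span of $v^{+}$. You write that ``the $B_n$-span of $v^{+}$ is nonzero on every $B_n$-irreducible summand of the span of $m$ that contains $\rho|_Z$; a character comparison in degree $n-1$ then confirms that this suffices.'' But the statement that every $B_n$-irreducible constituent of $\G_{n-1,0}$ contains $\rho$ upon restriction to $Z$ is equivalent (by Frobenius reciprocity) to the statement that every such constituent occurs in $\ind_{Z}^{B_n}\rho$, which is precisely what you are trying to prove. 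Absent an independent computation of the character of $\G_{n-1,0}$, this step is circular; and if you had such a computation you would not need $v^{+}$ at all.

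The paper avoids this trap by passing through the intermediate subgroup $\langle S_n, -1\rangle$ rather than working directly with $Z_{\sgnpart}$. Concretely: the $S_n$-submodule generated by the single monomial $v=m'$ is identified with $\lie_n$ (by comparing relations with the presentation in Example~\ref{ex:lie}), and Kraskiewicz--Weyman already gives $\lie_n\cong\ind_{\langle\eta\rangle}^{S_n}\omega_n$. Since $-1$ is central and acts by the scalar $-1$ on $\G_{n,1}$, this extends for free to an identification of the $\langle S_n,-1\rangle$-module $V'_n$ with $\ind_{\langle\eta,-1\rangle}^{\langle S_n,-1\rangle}\chi$. Now induce further to $B_n$: the induced module $\ind_{\langle S_n,-1\rangle}^{B_n}V'_n$ surjects onto $\G_{n,1}$ (because $B_n\cdot v=\G_{n,1}$, your steps (a)--(c)), and a dimension count makes this an isomorphism. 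Transitivity of induction plus Lemma~\ref{lemma:coxeterinduction} (which handles the passage from $\langle\eta,-1\rangle$ to the Coxeter cyclic group $Z_{(\emptyset,(n))}$, with separate arguments for $n$ odd and even) then yields the desired form. The point is that by invoking the known structure of $\lie_n$ at the $S_n$ level, one never has to argue that the eigenvector generates the whole module---the isotypic projection is already built into the Kraskiewicz--Weyman theorem.
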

To prove Theorem \ref{thm:gn1induce}, we will first prove that $\G_{n,1}$ is isomorphic to a different induced representation, and then appeal to Lemma \ref{lemma:coxeterinduction}. Both the proof of Theorem \ref{thm:gn1induce} and Lemma \ref{lemma:coxeterinduction} will use techniques developed by Berget \cite{berget2018internal} and in the case of Lemma \ref{lemma:coxeterinduction}, by Douglass--Tomlin \cite{douglass2018decomposition}. 

\begin{lemma}\label{lemma:coxeterinduction}
Let $\eta$ be the $n$-cycle $(12\cdots n) \in S_n$. Then as $B_{n}$-representations, 
\[ \ind_{\langle \eta, -1 \rangle}^{B_n} \chi =  \ind_{Z_{(\emptyset, (n))}}^{B_{n}} \rho_{(\emptyset, (n))} \]
where $\chi(\eta) = \omega_n$ and $\chi(-1) = -1$.
\end{lemma}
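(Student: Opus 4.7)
The plan is to verify the equality of the two induced representations by comparing their characters on every conjugacy class of $B_{n}$. Because both inducing subgroups are abelian of order $2n$, the Frobenius formula takes the simplified form
\[
\ind_{H}^{B_{n}}\psi(g) \;=\; \frac{|C_{B_{n}}(g)|}{2n}\sum_{h \in H \cap \cc_{g}}\psi(h)
\]
for either $(H,\psi)=(\langle \eta,-1\rangle,\chi)$ or $(H,\psi)=(Z_{(\emptyset,(n))},\rho_{(\emptyset,(n))})$, where $\cc_{g}$ is the $B_{n}$-conjugacy class of $g$. Each side vanishes on classes not meeting $H$, so it suffices to compare values on those that do.

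First I would enumerate the cycle types reached by each subgroup. Viewing $d_{1}$ as the shift-by-one on $\Z/2n\Z$ (with $\overline{i}$ identified with $i+n$), the cycle type of $d_{1}^{k}$ is governed by $d = \gcd(k,2n)$: one gets $d$ negative $(n/d)$-cycles when $d \mid n$, and $d/2$ positive $(2n/d)$-cycles when $d \nmid n$ (which forces $d$ even). For $\eta^{a}(-1)^{b}$ with $d' = \gcd(a,n)$: when $b=0$ one always gets $d'$ positive $(n/d')$-cycles, and when $b=1$ the cycles are positive or negative of length $n/d'$ according to whether $n/d'$ is even or odd.

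Second, because $\chi$ and $\rho_{(\emptyset,(n))}$ are linear characters of abelian groups of order $2n$, both Frobenius sums reduce to Ramanujan sums $\sum_{m \in (\Z/N)^{\ast}}\omega_{N}^{m} = \mu(N)$. A direct computation then shows that on each cycle type where both are nonzero, the two characters take the common value $\pm (2n/d)^{d-1}(d-1)!\,\mu(n/d)$, with sign controlled by whether $\lambda^{-}$ is empty. The matching of zeros is equally important: on positive cycle types $((n/d)^{d},\emptyset)$ with $n/d$ even, $\ind\chi$ vanishes via the cancellation $\chi(\eta^{a}) + \chi(\eta^{a}(-1)) = \omega_{n}^{a} - \omega_{n}^{a} = 0$, while on negative cycle types $(\emptyset,((n/d)^{d}))$ with $n/d$ even, $\ind\rho_{(\emptyset,(n))}$ vanishes because $\mu(2n/d)=0$.

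The main obstacle is the case $n$ even, where $\langle \eta,-1\rangle \cong \Z_{n}\times\Z_{2}$ and $\langle d_{1}\rangle \cong \Z_{2n}$ are non-isomorphic and the cycle types they meet genuinely differ; matching the two families of vanishings described above is then the crux. When $n$ is odd the argument is shorter: the identity $d_{1}^{n}=-1$ gives $\langle d_{1}\rangle = \langle \eta,-1\rangle$, both $\chi$ and $\rho_{(\emptyset,(n))}$ are primitive characters of order $2n$ on this common cyclic subgroup, and the existence of $B_{n}$-elements realizing all of $(\Z/2n)^{\ast}$ as normalizer-automorphisms of $\langle d_{1}\rangle$ places them in a single orbit.
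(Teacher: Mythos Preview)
Your approach is correct but genuinely different from the paper's. The paper's proof splits on parity: for $n$ odd it observes that $d_{1}=-\eta$ so $\langle d_{1}\rangle=\langle\eta,-1\rangle$ and claims the characters coincide; for $n$ even it simply cites \cite[Prop.~4.1]{douglass2018decomposition}. You instead carry out a uniform Frobenius character computation, matching cycle types and reducing the class sums to Ramanujan/M\"obius values. This is more laborious but self-contained, and your identification of the two crucial vanishings in the even case (cancellation of $\chi(\eta^{a})+\chi(-\eta^{a})$ on positive types with $n/d$ even, versus $\mu(2n/d)=0$ on negative types with $n/d$ even) is exactly the heart of what Douglass--Tomlin prove. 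Your odd-case argument is in fact more careful than the paper's: for $n>1$ odd one has $\chi(-\eta)=-\omega_{n}\neq\omega_{2n}=\rho(d_{1})$ in general, so the two characters do not literally coincide, and one really does need the normalizer argument you give (the $S_{n}$-normalizer of $\langle\eta\rangle$ already realizes all of $(\Z/2n)^{\ast}$ on $\langle d_{1}\rangle$) to conclude the induced representations agree.

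The only caution is that your even-case argument is a sketch: you assert the common value $\pm(2n/d)^{d-1}(d-1)!\,\mu(n/d)$ without fully verifying it class by class, and the bookkeeping of which divisors $d$ of $2n$ correspond to which signed cycle types (and whether both subgroups hit them) needs to be checked carefully. In particular, the non-vanishing values must match and not merely the vanishings. This is routine but should be written out if you want a complete proof rather than an outline.
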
 
\begin{proof}
Note that $Z_{(\emptyset, (n))}$ is the cyclic group generated by a Coxeter element $c$ of $B_n$ and 
\[ \rho_{(\emptyset, (n))}(c) = \omega_{2n}. \]

When $n$ is odd, we use similar methods to Berget in \cite[Corollary 9.2]{berget2018internal}. In this case, $-\eta$ is a Coxeter element with eigenvalue $-e^{2 \pi i / n}$, implying that $\langle \eta, -1 \rangle = Z_{(\emptyset, (n))}$. Because $n$ is odd, $-e^{2 \pi i / n}$ has order $2n$ and is in fact a primitive $2n$-th root of unity. Thus the representation $\chi$ of $\langle \eta, -1 \rangle$ coincides with the representation $\rho_{(\emptyset, (n))}$ of $Z_{(\emptyset, (n))}$. 

When $n$ is even, we use a result by Douglass-Tomlin \cite[Prop 4.1]{douglass2018decomposition}, which states that for even $n$,
\[ \ind_{\langle \eta,-1 \rangle}^{B_{n}} \chi =  \ind_{Z_{(\emptyset, (n))}}^{B_{n}} \rho_{(\emptyset, (n))}.\]
Note that unlike the odd case, the above is a statement about the induced representations rather than the representations $\chi$ and $\rho_{(\emptyset, (n))}$. 
\end{proof}

We may now prove Theorem \ref{thm:gn1induce}.

\begin{proof}[Proof of Theorem \ref{thm:gn1induce}]
Define $v:= z_{1}z_{12}^{+}z_{23}^{+} \cdots z_{(n-1)n}^{+} \in \G_{n,1}$, and let $V_{n}$ be the $S_{n}$-module generated by $v$ (cf. proof of Berget \cite[Thm. 9.1, Cor. 9.2]{berget2018internal}.) 
We claim that as an $S_{n}$-representation, $V_{n} \cong_{S_{n}} \lie_{n}$, the multilinear component of the free Lie algebra on $n$ generators (see Example \ref{ex:lie}). This follows from comparing the presentation in Remark \ref{rmk:difpres} with the presentation of $\lie_n$ discussed in Example \ref{ex:lie}. In particular, sending $v$ to $z_{12}z_{23} \cdots z_{(n-1)n} \in H^{(d-1)(n-1)}\conf_n(\R^d)$ (for $d \geq 3$, odd) induces an $S_n$ equivariant isomorphism since $z_{i}z_{ij}^{+} = z_{j}z_{ij}^{+}$ in $V_n$ (see Remark \ref{rmk:difpres}, relation $(ii)$ of $\mathcal{L}'$). Thus $V_{n} \cong \ind_{\langle w \rangle}^{S_{n}} e^{2 \pi i/n}$, where $\eta$ is an $n$-cycle in $S_{n}$.

Note that $w_{0} = -1$ acts on $z_{ij}^{+}$ trivially in $\G$, and $w_{0}z_{i} = -z_{i}$. Hence for any monomial in $\G_{n,1}$, $w_{0}$ acts as $-1$. Because $w_{0}$ is central, it follows that the $\langle S_{n}, w_{0} \rangle $ module generated by $v$ is isomorphic to
\[ \ind_{\langle \eta, w_{0} \rangle}^{\langle S_{n}, w_{0} \rangle} \chi, \]
where $\chi(\eta) = e^{2 \pi i/n}$ and $\chi(w_{0}) = -1$. Write this module as $V'_{n}$, and note that
\[ \dim(V'_{n}) = \dim(V_{n}) = (n-1)! \]

Now consider the $B_{n}$-module generated by $v$. Inspection shows that one obtains all of $\G_{n,1}$ by acting on $v$ by $B_{n}$. Hence there is a surjection of $B_{n}$-modules:
\[ \ind_{\langle S_{n}, w_{0} \rangle}^{B_{n}} V'_{n} \to \G_{n,1}, \]
where 
\[ \dim \left( \ind_{\langle S_{n}, w_{0} \rangle}^{B_{n}} V'_{n}\right) = \frac{n!2^{n}}{2 n!}\dim(V'_{n}) = 2^{n-1} (n-1)! = \dim(\G_{n,1}). \]
The dimension count of $\G_{n,1}$ follows from picking a cyclic ordering on the set $[n]$ which will determine the indices of the $n-1$ generators $z_{i_{1}i_{2}}^{\zeta_{1}}z_{i_{2}i_{3}}^{\zeta_{2}} \cdots z_{i_{n-1}i_{n}}^{\zeta_{n-1}}$, and then choosing $\zeta_{j} \in \{ + , - \}$ for $0 \leq j \leq n-1$. (The choice of $z_i$ is irrelevant by the relations in $\G$.)

Thus this surjection is in fact an isomorphism. By transitivity of induction, 
\begin{align*} \ind_{\langle \eta, -1 \rangle}^{B_{n}}\chi  &= \ind_{\langle S_{n}, -1 \rangle}^{B_{n}} \ind_{\langle \eta, -1 \rangle}^{\langle S_{n}, -1 \rangle}(\chi) \\
&= \ind_{\langle S_{n}, -1 \rangle}^{B_{n}}(V'_{n})\\ 
&= \G_{n,1}.
\end{align*}
The claim then follows by Lemma \ref{lemma:coxeterinduction}.

An identical argument (without the need for Lemma \ref{lemma:coxeterinduction}) shows the claim for $\G_{n-1,0}$, where now $-1$ acts as trivially on every monomial in $G_{n-1,0}$.
\end{proof}
\begin{remark} \rm
As suggested by the proof of Theorem \ref{thm:gn1induce}, the space $\G_{n,1}$ seems to be a good candidate for a Type $B$ analog of $\lie_n$ in the sense that 
\begin{itemize}
    \item $\G_{n,1}$ can be described as an induced representation from a Coxeter element of $B_n$ and 
\item One can show that as a $B_{n-1}$-representation, $\G_{n,1}$ restricts to the regular representation $\Q[B_{n-1}].$
\end{itemize}
A natural next step would be to extend some of the other properties of $\lie_n$ in Type $A$; see for instance \cite{aguiarmahajan}.
\end{remark}

\begin{remark} \rm
In the case that $n$ is odd, the $B_{n}$-representation carried by  $\G_{n,1}$ coincides with a representation studied by Berget in \cite{berget2018internal} coming from the internal zonotopal algebra of the Type $B$ hyperplane arrangement. In the case that $n$ is even, the $B_n$-module structure is almost the same, except that the element $w_0$ acts trivially in Berget's representation while it acts as $-1$ on $\G_{n,1}$.
\end{remark}

We are almost ready to prove the main result of this section (Theorem \ref{thm:BRiso}), but need one last lemma. For $\sgnpart$, write 
\begin{align}
    \lambda^{+} &= (1^{m_{1}}, \cdots, n^{m_{n}})\\
    \lambda^{-} &= (1^{q_{1}}, \cdots, n^{q_{n}}),
\end{align}
where $m_i, q_j \geq 0$. Then fix $\alpha \in \Lambda_{[n]}^{\pm}$ so that  $[\alpha] = \lambda = \sgnpart$, and 
\[ \alpha_i = \{ \widehat{\lambda_{i-1}} + 1,\widehat{\lambda_{i-1}}, \cdots \widehat{\lambda_{i}} \}.  \]
Consider $H_{\alpha}$, the set-wise stabilizer of $\G_{\alpha}$. One can write explicitly the isomorphism type of $H_{\alpha}$: 
\[ H_{\alpha} = \prod_{\substack{1 \leq i \leq n\\ m_i \neq 0}} B_{i} \wr S_{m_{i}} \times \prod_{\substack{1 \leq j \leq n \\ q_j \neq 0}} B_{j} \wr S_{q_{j}}.\]
By construction, $Z_{\sgnpart}$ is a subgroup of $H_{\alpha}$.
\begin{lemma}\label{lemma:indGalpha}
For $\alpha \in \Lambda_{[n]}^{\pm}$ as described above, there is an $H_{\alpha}$-isomorphism
\[ \G_{\alpha} = \ind_{Z_{\sgnpart}}^{H_{\alpha}} \varphi_{\sgnpart}. \]
\end{lemma}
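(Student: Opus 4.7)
The plan is to exploit the block decomposition of $\G_\alpha$ and reduce to Theorem \ref{thm:gn1induce} applied to each block. First I would observe that because each relation in $\mathcal{L}'$ is supported on indices lying in a single connected component of $G(f)$, the ring $\G_\alpha$ decomposes as a tensor product
\[ \G_\alpha \cong \bigotimes_{\alpha_i \in \alpha^{+}} \G^{(\alpha_i)} \otimes \bigotimes_{\alpha_j \in \alpha^{-}} \G^{(\alpha_j)}, \]
where $\G^{(\alpha_i)}$ is the subring of $\G$ generated by those generators whose indices all lie in $\alpha_i$. After relabeling indices, $\G^{(\alpha_i)} \cong \G_{|\alpha_i|-1,0}$ if $\alpha_i$ is a positive block and $\G^{(\alpha_i)} \cong \G_{|\alpha_i|,1}$ if $\alpha_i$ is a negative block.

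Next I would apply Theorem \ref{thm:gn1induce} to each factor to obtain $\G^{(\alpha_i)} \cong \ind_{C_i}^{B_{|\alpha_i|}}\rho_i$, with $C_i = \langle c_i, w_{0,\Lambda_i}\rangle$ and $\rho_i = \rho_{((|\alpha_i|),\emptyset)}$ for positive blocks, and $C_i = \langle d_i\rangle$ and $\rho_i = \rho_{(\emptyset,(|\alpha_i|))}$ for negative ones. Compatibility of induction with tensor products then yields, as a $B_\alpha := \prod_i B_{|\alpha_i|}$-module,
\[ \G_\alpha \cong \ind_{Z_\lambda^{\circ}}^{B_\alpha}(\rho_\lambda|_{Z_\lambda^{\circ}}), \]
where $Z_\lambda^{\circ} := \prod_i C_i \leq Z_\lambda$ is the subgroup obtained by omitting the block-swapping generators $\delta_k$. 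To upgrade this to an $H_\alpha$-module statement, I would note that $H_\alpha = B_\alpha \rtimes \langle\delta_k\rangle$ and $Z_\lambda = Z_\lambda^{\circ} \rtimes \langle\delta_k\rangle$, so $B_\alpha Z_\lambda = H_\alpha$ and $B_\alpha \cap Z_\lambda = Z_\lambda^{\circ}$; by the single-double-coset version of Mackey's formula, this forces $\ind_{Z_\lambda}^{H_\alpha}\rho_\lambda$ to restrict to $B_\alpha$ as exactly the right-hand side above.

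The hard part will be verifying that the $\langle\delta_k\rangle$-actions on $\G_\alpha$ and on $\ind_{Z_\lambda}^{H_\alpha}\rho_\lambda$ genuinely agree, rather than merely both extending the common $B_\alpha$-structure. To pin this down I would exhibit the explicit element
\[ v_\alpha := \prod_{\alpha_i \in \alpha^{+}} z_{a^{i}_1 a^{i}_2}^{+}z_{a^{i}_2 a^{i}_3}^{+}\cdots z_{a^{i}_{|\alpha_i|-1} a^{i}_{|\alpha_i|}}^{+} \cdot \prod_{\alpha_j \in \alpha^{-}} z_{a^{j}_1} z_{a^{j}_1 a^{j}_2}^{+}\cdots z_{a^{j}_{|\alpha_j|-1} a^{j}_{|\alpha_j|}}^{+}, \]
where the block indices are listed in ascending order, and then verify that $Z_\lambda$ acts on $v_\alpha$ through the character $\rho_\lambda$. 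Using Table \ref{table:actionsimplifiedpresbigrading} and the multilinear Lie calculation in the proof of Theorem \ref{thm:gn1induce}, the $c_i$ and $d_j$ act by the prescribed roots of unity; each $w_{0,\Lambda_i}$ acts trivially because every $z^{+}_{ab}$-factor in the associated block is hit by exactly two sign-flips $t_a t_b$; and each $\delta_k$ fixes $v_\alpha$ because the equal-sized blocks it interchanges contribute identical factors to the product. Combined with the dimension equality
\[ \dim \G_\alpha = \prod_{\alpha_i \in \alpha} 2^{|\alpha_i|-1}(|\alpha_i|-1)! = [H_\alpha : Z_\lambda] \]
and the observation that $H_\alpha \cdot v_\alpha$ already spans $\G_\alpha$, Frobenius reciprocity will then force the isomorphism claimed.
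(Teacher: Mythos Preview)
Your overall architecture matches the paper's: decompose $\G_\alpha$ as a tensor product over blocks, invoke Theorem~\ref{thm:gn1induce} on each factor, and then check compatibility with the block-permuting elements $\delta_k$. The paper does this tersely, simply noting that on both sides the wreath factor $S_{m_i}$ acts by permuting identical tensor factors, so the factor-by-factor match from Theorem~\ref{thm:gn1induce} upgrades to an $H_\alpha$-isomorphism.

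The gap is in your step~5. You claim that $Z_\lambda$ acts on your explicit element $v_\alpha$ through the character $\rho_\lambda$, and in particular that each $c_i$ (resp.\ $d_j$) acts on its block factor by the root of unity $\omega_{|\lambda_i|}$ (resp.\ $\omega_{2|\lambda_j|}$). This is false as soon as a block has size at least $3$. For instance, with a single positive block of size $3$, the factor is $z_{12}^{+}z_{23}^{+}$ and the $3$-cycle $(123)$ sends it to $-z_{13}^{+}z_{23}^{+} = -z_{12}^{+}z_{23}^{+} + z_{12}^{+}z_{13}^{+}$, which is not a scalar multiple of $z_{12}^{+}z_{23}^{+}$. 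What the proof of Theorem~\ref{thm:gn1induce} actually shows is that the $S_m$-module \emph{generated} by such an element is isomorphic to $\lie_m \cong \ind_{\langle \eta\rangle}^{S_m}\omega_m$; it does not show that the element itself is an eigenvector for the cycle. So your Frobenius-reciprocity argument collapses at step~(a).

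The fix is to abandon the explicit $v_\alpha$ and argue structurally, as the paper does: Theorem~\ref{thm:gn1induce} guarantees that each block factor $\G^{(\alpha_i)}$ is the appropriate induced module, and since blocks of equal size and sign carry \emph{canonically identified} copies of this module, the permutation action of the $\delta_k$ on the tensor factors of $\G_\alpha$ matches the permutation action on $\ind_{Z_\lambda}^{H_\alpha}\rho_\lambda$. Equivalently, one uses the general fact that for $V=\ind_H^G\chi$ one has $V^{\otimes m}\cong \ind_{H\wr S_m}^{G\wr S_m}(\chi^{\boxtimes m}\boxtimes \triv)$ as $G\wr S_m$-modules. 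Your steps 1--4 together with this observation already finish the proof; the explicit-eigenvector detour is both unnecessary and incorrect.
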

\begin{proof}
Recall that in the character description of $\varphi_{\sgnpart}$, we have $\varphi_{\sgnpart}(\delta_{i}) = 1$, where $\delta_{i}$ swaps blocks of the same size that are both in the positive (resp. negative) part of $\alpha$. Hence in considering the induced representation of $\varphi_{\sgnpart}$ from $Z_{\sgnpart}$ to $H_{\alpha}$, it is enough to understand the representation on the distinct, commuting factors of $H_{\alpha}$ coming from each $\alpha_{i}$. 

Similarly, the action by $\delta_{i}$ on $\G_{\alpha}$ is trivial, so again to understand the $H_{\alpha}$-representation on $\G_{\alpha}$ it is sufficient to describe the distinct commuting factors of $H_{\alpha}$.

Write $a_i:= \# \alpha_i$. In the case of $\ind_{Z_{\sgnpart}}^{H_{\alpha}} \varphi_{\sgnpart}$, the $B_{a_i}$-representation coming from the block $\alpha_i$ in $\alpha^+$ is precisely  
 \[ \ind_{Z_{((a_i), \emptyset)}}^{B_{a_i}} \varphi_{((a_i), \emptyset)} \]
 and for the block of size $\alpha_j$ in $\alpha^{-}$ the corresponding $B_{a_j}$-representation is
 \[ \ind_{Z_{(\emptyset, (a_j))}}^{B_{a_j}} \varphi_{(\emptyset, (a_j))}.\]
By Theorem \ref{thm:gn1induce}, 
\begin{align*}
    \G_{a_i-1,0} &= \ind_{Z_{(a_i), \emptyset)}}^{B_{a_i}} \varphi_{((a_i) \emptyset)}\\
    \G_{a_j,1} &= \ind_{Z_{(\emptyset, (a_j))}}^{B_{a_j}} \varphi_{(\emptyset, (a_j))}.
\end{align*}
Hence the representations agree on each commuting factor of $H_{\alpha}$, and therefore must agree on all of $H_{\alpha}$.

\end{proof}

\begin{theorem}\label{thm:BRiso}
As $B_{n}$-representations, 
\[  \G_{\sgnpart} = \ind_{Z_{\sgnpart}}^{B_{n}} \varphi_{\sgnpart}, \]
and therefore 
\[  \G_{\sgnpart} \cong \vazrep.\]
\end{theorem}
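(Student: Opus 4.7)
The plan is to combine Lemma~\ref{lemma:indGalpha} with the transitivity of induction and the decomposition $\G_{\sgnpart} = \bigoplus_{\alpha} \G_{\alpha}$ indexed by signed set partitions $\alpha$ with $\overleftarrow{[\alpha]} = \sgnpart$. The first isomorphism is the content of the theorem; the second follows immediately from Douglass--Tomlin's result that $\vazrep = \ind_{Z_{\sgnpart}}^{B_n} \rho_{\sgnpart}$ (here using $\rho_{\sgnpart} = \varphi_{\sgnpart}$).

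First I would check that $B_{n}$ permutes the summands $\{\G_{\alpha}\}$ transitively on the index set $\{\alpha \in \Lambda_{[n]}^{\pm} : \overleftarrow{[\alpha]} = \sgnpart\}$. This is essentially combinatorial: the $B_n$-action on monomials is compatible with $\type_n$ in the sense that $\type_n(\sigma \cdot f) = \sigma \cdot \type_n(f)$, and $B_n$ acts transitively on the set of signed set partitions with a fixed signed partition shape $\sgnpart$. Pick the distinguished $\alpha$ described just before Lemma~\ref{lemma:indGalpha}, namely the one whose blocks are the consecutive intervals $\alpha_i = \{\widehat{\lambda_{i-1}}+1, \ldots, \widehat{\lambda_i}\}$ with signs inherited from $\sgnpart$. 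Then the set-wise stabilizer of $\G_{\alpha}$ inside $B_n$ is exactly $H_{\alpha}$ as described. Consequently, as $B_n$-modules,
\[
\G_{\sgnpart} \;=\; \bigoplus_{\substack{\alpha' \in \Lambda_{[n]}^{\pm} \\ \overleftarrow{[\alpha']} = \sgnpart}} \G_{\alpha'} \;\cong\; \ind_{H_{\alpha}}^{B_n} \G_{\alpha}.
\]

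Next, I would apply Lemma~\ref{lemma:indGalpha} to rewrite $\G_{\alpha}$ as $\ind_{Z_{\sgnpart}}^{H_{\alpha}} \varphi_{\sgnpart}$, and then use transitivity of induction (and the containment $Z_{\sgnpart} \leq H_{\alpha} \leq B_n$ noted just before Lemma~\ref{lemma:indGalpha}) to conclude
\[
\G_{\sgnpart} \;\cong\; \ind_{H_{\alpha}}^{B_n}\!\bigl(\ind_{Z_{\sgnpart}}^{H_{\alpha}} \varphi_{\sgnpart}\bigr) \;=\; \ind_{Z_{\sgnpart}}^{B_n} \varphi_{\sgnpart}.
\]
The final identification with $\vazrep$ is then Douglass--Tomlin's theorem cited in \S\ref{sec:vazrep}.

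The main obstacle I expect is the bookkeeping in the first step: verifying that $H_{\alpha}$ really is the \emph{full} set-wise stabilizer of $\G_{\alpha}$, and not a proper subgroup. This requires checking both that (i) every element of $H_{\alpha}$ preserves $\G_{\alpha}$ as a subspace---immediate from the compatibility of $\type_n$ with the $B_n$-action, since elements of $H_{\alpha}$ preserve the signed set partition $\alpha$---and (ii) any $\sigma \in B_n$ with $\sigma \cdot \G_{\alpha} = \G_{\alpha}$ lies in $H_{\alpha}$, which follows because $\sigma$ must preserve $\alpha$ setwise and respect the signs of each block (as these are read off from which generators $z_{ij}^{\pm}, z_i$ can appear). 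Once this identification is secure, the dimension count $|B_n/H_{\alpha}| \cdot \dim \G_{\alpha} = \dim \G_{\sgnpart}$ (which matches $|B_n/Z_{\sgnpart}|$ since $\dim \G_{\alpha} = |H_{\alpha}/Z_{\sgnpart}|$ by Lemma~\ref{lemma:indGalpha}) provides a useful sanity check that everything lines up.
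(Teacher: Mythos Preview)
Your proposal is correct and follows essentially the same route as the paper: establish $\G_{\sgnpart} \cong \ind_{H_{\alpha}}^{B_n} \G_{\alpha}$, apply Lemma~\ref{lemma:indGalpha}, and conclude by transitivity of induction. The paper dispatches the first step with the phrase ``by construction,'' whereas you spell out the stabilizer argument more carefully; your added justification is accurate and does no harm.
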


\begin{proof}
By construction, 
\[ \ind_{H_{\alpha}}^{B_{n}} \G_{\alpha} = \G_{\sgnpart}.\]
By transitivity of induction and Lemma \ref{lemma:indGalpha},
\[ \ind_{Z_{\sgnpart}}^{B_{n}} \varphi_{\sgnpart} = \ind_{H_{\alpha}}^{B_{n}} \left( \ind_{Z_{\lambda}}^{H_{\alpha}} \varphi_{\sgnpart} \right) = \ind_{H_{\alpha}}^{B_{n}} \G_{\alpha} = \G_{\sgnpart}.\]
\end{proof}

\subsection{Consequences}
Theorem \ref{thm:BRiso} has several implications for the representations generated by the idempotents $\frakg_k$,
\[ G_n^{(k)}= \frakg_k \Q[B_n]. \]

\begin{theorem}\label{cor:mainiso}There is a $B_n$-representation isomorphism 
\[ G_{n}^{(n-k)} \cong \G_{k} = H^{2k}\bconf^3. \]
\end{theorem}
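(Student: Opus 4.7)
The plan is to show $G_{n}^{(n-k)} \cong \G_{k}$ by decomposing both sides into pieces indexed by signed partitions $\sgnpart$ with $\ell(\lambda^{+}) = n-k$, and matching them via Theorem \ref{thm:BRiso}. The identification $\G_{k} \cong H^{2k}\bconf^{3}$ is already built in, since by Corollary \ref{cor:filtrationscoincide} we have $\G \cong \bconfhom^{3}$ as $B_{n}$-modules, and this isomorphism respects the grading on $\G$ from Definition \ref{def:G} matched with the cohomological degree on $\bconfhom^{3}$. So the real work is the left-hand isomorphism.

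First, I would unpack the left-hand side. By Definition \ref{def:frakg}, $\frakg_{n-k} = \sum_{\ell(\lambda^{+})=n-k} \vazidem$, and the $\vazidem$ form a complete set of orthogonal idempotents in $\Sigma'[B_{n}]$. Hence
\[
G_{n}^{(n-k)} = \frakg_{n-k}\Q[B_{n}] = \bigoplus_{\substack{\sgnpart \\ \ell(\lambda^{+})=n-k}} \vazrep.
\]
For the right-hand side, combine Definition \ref{def:G} with Proposition \ref{prop:bigradingGalpha}:
\[
\G_{k} = \bigoplus_{\ell=0}^{k} \G_{k,\ell} = \bigoplus_{\ell=0}^{k}\ \bigoplus_{\substack{\alpha \in \Lambda_{[n]}^{\pm} \\ \ell(\alpha^{+})=n-k,\ \ell(\alpha^{-})=\ell}} \G_{\alpha}.
\]
Now regroup the outer sum by $\overleftarrow{[\alpha]}$: two signed set partitions $\alpha,\alpha'$ have $\overleftarrow{[\alpha]} = \overleftarrow{[\alpha']} = \sgnpart$ exactly when the multisets of block sizes of $\alpha^{\pm}$ agree with those of $\alpha'^{\pm}$. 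In particular $\ell(\alpha^{+}) = \ell(\lambda^{+})$ and $\ell(\alpha^{-}) = \ell(\lambda^{-})$. Setting $\G_{\sgnpart} := \bigoplus_{\overleftarrow{[\alpha]}=\sgnpart} \G_{\alpha}$ as in Section \ref{sec:decompsignedset}, this gives
\[
\G_{k} = \bigoplus_{\substack{\sgnpart \\ \ell(\lambda^{+})=n-k}} \G_{\sgnpart}.
\]

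Finally, I would apply Theorem \ref{thm:BRiso}, which gives a $B_{n}$-isomorphism $\G_{\sgnpart} \cong \vazrep$ for every signed partition $\sgnpart$. Summing termwise over the signed partitions with $\ell(\lambda^{+}) = n-k$ yields
\[
\G_{k} = \bigoplus_{\substack{\sgnpart \\ \ell(\lambda^{+})=n-k}} \G_{\sgnpart} \ \cong\ \bigoplus_{\substack{\sgnpart \\ \ell(\lambda^{+})=n-k}} \vazrep = G_{n}^{(n-k)},
\]
and composing with the isomorphism $\G_{k} \cong H^{2k}\bconf^{3}$ completes the proof. The only subtlety is bookkeeping the index shift $k \leftrightarrow n-k$, which comes from the fact that $\ell(\alpha^{+}) = n-k$ on monomials of degree $k$ in $\G$ (each positive block of size $a_{i}$ contributes degree $a_{i}-1$, each negative block of size $a_{j}$ contributes degree $a_{j}$, summing to $n - \ell(\alpha^{+})$). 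With this bookkeeping, the argument is assembly of previously established pieces; there is no essential new obstacle.
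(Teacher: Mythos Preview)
Your proposal is correct and follows essentially the same approach as the paper: decompose $G_{n}^{(n-k)}$ via the definition of $\frakg_{n-k}$ as a sum of $\vazidem$ over $\ell(\lambda^{+})=n-k$, decompose $\G_{k}$ via Proposition~\ref{prop:bigradingGalpha} and regroup into $\G_{\sgnpart}$, then match termwise using Theorem~\ref{thm:BRiso}. The only minor remark is that the identification $\G_{k} \cong H^{2k}\bconf^{3}$ is more directly Corollary~\ref{cor:gr1and3} (i.e.\ $\gr(\bconfhom^{1}) \cong \bconfhom^{3}$) together with the semisimplicity observation that the further $z_{i}$-filtration does not change isomorphism type, rather than Corollary~\ref{cor:filtrationscoincide}; but this is a citation detail and your argument is sound.
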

\begin{proof}
Note that 
\[ \frakg_{n-k} = \sum_{\substack{\sgnpart \\ \ell(\lambda^{+}) = n-k}} \vazidem \]
and 
\[ \G_{k} = \bigoplus_{\ell = 0}^{k}  \G_{k,\ell}  = \bigoplus_{\substack{ (\alpha^+, \alpha^-) \in \Lambda_{[n]}^{\pm}\\ \ell(\alpha^{+}) \ = \ n-k}} \G_{\alpha}. \]
The claim follows by applying Theorem \ref{thm:BRiso}.
\end{proof}

As a further consequence, this implies that the $G_{n}^{(n-k)}$ have a lift to $B_{n+1}$. 
\begin{theorem}\label{cor:liftedmainiso}
The representations $G_{n}^{(k)}$ lift to $B_{n+1}$, where they are described by $H^{2k}\bconflift^3$.
\end{theorem}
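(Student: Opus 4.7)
The plan is to exhibit $H^{2k}\bconflift^3$ as a $B_{n+1}$-representation whose restriction along the standard embedding $B_n \hookrightarrow B_{n+1}$ (as the subgroup fixing the index $0$) recovers $G_n^{(n-k)} \cong H^{2k}\bconf^3$. Since the essential ingredients have been assembled in earlier sections, the proof amounts to putting them together.

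First I would note that $\bconflift^3 = \conf^{\Z_2}_{n+1}(SU_2)/SU_2$ carries a natural action of $B_{n+1} \cong \Z_2 \wr S_{n+1}$, in which $S_{n+1}$ permutes the $n+1$ coordinates and each $\Z_2$ factor acts by the antipodal map on the corresponding $SU_2$-coordinate. This action commutes with the diagonal left $SU_2$-action by which we quotient (since $\pm 1$ is central in $SU_2$), so it descends to a well-defined action on $\bconflift^3$. Consequently $H^{2k}\bconflift^3$ is canonically a $B_{n+1}$-representation.

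Next I would invoke Example \ref{ex:typebhiddenaction}, which applies Proposition \ref{hiddenactionframework} with $X = G = SU_2$, $H = \Z_2$ acting antipodally, and basepoint $x_0 = 1$, yielding a $B_n$-equivariant homeomorphism $\bconflift^3 \cong \bconf^3$. Here the relevant copy of $B_n$ inside $B_{n+1}$ is precisely the stabilizer of the index pair $\{0,\overline{0}\}$. Passing to cohomology produces a $B_n$-module isomorphism
\[
H^{2k}\bconflift^3 \big|_{B_n} \;\cong\; H^{2k}\bconf^3.
\]
Combining this with Theorem \ref{cor:mainiso}, which identifies $H^{2k}\bconf^3 \cong_{B_n} G_n^{(n-k)}$, one concludes that $H^{2k}\bconflift^3$ is a $B_{n+1}$-representation whose restriction along $B_n \hookrightarrow B_{n+1}$ is $G_n^{(n-k)}$, so it is the desired lift.

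I do not anticipate any serious obstacle, as the argument is essentially bookkeeping. The only subtlety that deserves a line of verification is that the two $B_n$-actions in play—the ambient action as a subgroup of $B_{n+1}$ on $\bconflift^3$, and the action on $\bconf^3$ by coordinate permutation and antipodal negation—are intertwined by the homeomorphism of Proposition \ref{hiddenactionframework}; this equivariance is built into the statement of that proposition, so no new work is required.
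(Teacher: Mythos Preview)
Your proposal is correct and takes essentially the same approach as the paper. In fact, your argument is a more explicit and careful unpacking of the paper's one-line proof, which simply cites the identification $\G_k \cong H^{2k}\bconf^3$ from Corollary~\ref{cor:gr1and3} (equivalently Theorem~\ref{cor:mainiso}) and asserts that this lifts to $H^{2k}\bconflift^3$; the $B_n$-equivariant homeomorphism $\bconflift^3 \cong \bconf^3$ from Example~\ref{ex:typebhiddenaction} is exactly the mechanism you spell out.
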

\begin{proof}
By Corollary \ref{cor:gr1and3}, $\G_k \cong H^{2k}\bconf^3$, which lifts to $H^{2k}\bconflift^3$.
\end{proof}

\subsection{Further questions}
We conclude with some lingering questions that our investigation has brought to light:
\begin{enumerate}
\item In Type $A$, the Whitehouse representations have the form $\Q[S_{n+1}]f_{n+1}^{(k)}$ for certain idempotents $f_{n+1}^{(k)} \in \Q[S_{n+1}]$. Is there a family of idempotents in $\Q[B_{n+1}]$ that generate representations isomorphic to the graded pieces of $\bconfhomlift^3$?
\item The Type $A$ Eulerian idempotents generate a commutative subalgebra of $\Sigma[S_n]$ spanned by elements with the same descent number (e.g. descent set size). The idempotents $\frakg_{k}$ also generate a commutative subalgebra of $\Sigma'[B_{n}]$. Is there a combinatorial description of this subalgebra?
\item The Whitehouse representation $F_{n+1}^{(0)}$ has other combinatorial-topological interpretations, for instance related to the homology of the space of trees by work of Robinson--Whitehouse  \cite{ROBINSON1996245}, as well as the homology of subposets of the partition lattice by work of Sundaram \cite{sundaram1999homotopy}. Do the Type $B$ lifts have analogous interpretations? 
\item Can the results of this paper be extended to the complex reflection groups $G(r,1,n) \cong \Z_r \wr S_n$? 
\end{enumerate}
\subsection*{Acknowledgements}
The author is very grateful to Vic Reiner for guidance and encouragement at every stage in this project, to Sheila Sundaram for insightful questions that served as the initial inspiration for this work, to Monica Vazirani for sharing her undergraduate thesis, and to Marcelo Aguiar, Fran\c cois Bergeron, Patty Commins, Christophe Hohlweg, Allen Knutson, Nick Proudfoot, Franco Saliola and Dev Sinha for helpful discussions. The author is supported by the NSF Graduate Research Fellowship (Award Number DMS-0007404).
\bibliographystyle{abbrv}
\bibliography{bibliography}

\end{document}